\def\@tocline#1#2#3#4#5#6#7{\relax
  \ifnum #1>\c@tocdepth 
  \else
    \par \addpenalty\@secpenalty\addvspace{#2}%
    \begingroup \hyphenpenalty\@M
    \@ifempty{#4}{%
      \@tempdima\csname r@tocindent\number#1\endcsname\relax
    }{%
      \@tempdima#4\relax
    }%
    \parindent\z@ \leftskip#3\relax \advance\leftskip\@tempdima\relax
    \rightskip\@pnumwidth plus4em \parfillskip-\@pnumwidth
    #5\leavevmode\hskip-\@tempdima
      \ifcase #1
       \or\or \hskip 1em \or \hskip 2em \else \hskip 3em \fi%
      #6\nobreak\relax
    \dotfill\hbox to\@pnumwidth{\@tocpagenum{#7}}\par
    \nobreak
    \endgroup
  \fi}
 \numberwithin{equation}{section}
\def\bB{{\mathbb{B}}}
\def\bC{{\mathbb{C}}}
\def\bR{{\mathbb{R}}}
\def\bS{{\mathbb{S}}}
\def\bZ{{\mathbb{Z}}}
\def\bN{{\mathbb{N}}}
\def\Stop{{\rm Stop}}
\def\Child{{\rm Child}}
\def\cB{{\mathscr{B}}}
\def\cC{{\mathscr{C}}}
\def\cD{{\mathscr{D}}}
\def\cF{{\mathscr{F}}}
\def\cG{{\mathscr{G}}}
\def\cH{{\mathscr{H}}}
\def\cI{{\mathscr{I}}}
\def\Layer{{\rm Layer}}
\def\cN{{\mathscr{N}}}
\def\cS{{\mathscr{S}}}
\def\Up{{\rm Up}}
\def\cX{{\mathscr{X}}}
\def\one{\mathds{1}}
\def\largetheta{\Theta_}
\def\ve{\epsilon} 
\renewcommand{\d}{{\partial}}
\def\lec{\lesssim}
\def\gec{\gtrsim}
\def\spn{\mathop\mathrm{span}} 	
\DeclareMathOperator{\diam}{diam}
\def\dist{\mathop\mathrm{dist}} 						
\def\supp{\mathop\mathrm{supp}}					
\newcommand{\ps}[1]{\left( #1 \right)}
\newcommand{\ck}[1]{\left\{#1 \right\}}
\newcommand{\av}[1]{\left| #1 \right|}
\newcommand{\cnj}[1]{\overline{#1}}
\def\Xint#1{\mathchoice
{\XXint\displaystyle\textstyle{#1}}%
{\XXint\textstyle\scriptstyle{#1}}%
{\XXint\scriptstyle\scriptscriptstyle{#1}}%
{\XXint\scriptscriptstyle\scriptscriptstyle{#1}}%
\!\int}
\def\XXint#1#2#3{{\setbox0=\hbox{$#1{#2#3}{\int}$ }
\vcenter{\hbox{$#2#3$ }}\kern-.58\wd0}}
\def\avint{\Xint-}
\def\grad{\nabla}
\theoremstyle{plain}
\newtheorem{theorem}{Theorem}
\newtheorem{corollary}[theorem]{Corollary}
\newtheorem{lemma}[theorem]{Lemma}
\newtheorem{proposition}[theorem]{Proposition}
\newtheorem{maintheorem}{Theorem}
\newtheorem{maincor}[maintheorem]{Corollary}
\theoremstyle{definition}
\newtheorem{example}[theorem]{Example}
\newtheorem{definition}[theorem]{Definition}
\newtheorem{remark}[theorem]{Remark}
\numberwithin{equation}{section}
\numberwithin{theorem}{section}
\newcommand\eqn[1]{\eqref{e:#1}}
\newcommand\Theorem[1]{Theorem \ref{t:#1}}
\newcommand\Lemma[1]{Lemma \ref{l:#1}}
  \DeclareFontFamily{U}{mathb}{\hyphenchar\font45} 
\DeclareFontShape{U}{mathb}{m}{n}{
      <5> <6> <7> <8> <9> <10> gen * mathb
      <10.95> mathb10 <12> <14.4> <17.28> <20.74> <24.88> mathb12
      }{}
\DeclareSymbolFont{mathb}{U}{mathb}{m}{n}
\DeclareMathSymbol{\toitself}      {3}{mathb}{"FD}  
\newcommand{\RS}[1]{{  }}
\newcommand{\JA}[1]{{  }}
\newcommand{\postRef}[1]{#1}
  \newcommand{\preRef}[1]{}
\begin{document}

\title[A Traveling Salesman Theorem]{An Analyst's Traveling Salesman Theorem for
sets of dimension larger than one}
\author{Jonas Azzam}
\address{School of Mathematics, University of Edinburgh, JCMB, Kings Buildings,
Mayfield Road, Edinburgh,
EH9 3JZ, Scotland.}
\email{j.azzam ``at" ed.ac.uk}
\author{Raanan Schul}
\address{Department of Mathematics, Stony Brook University, Stony Brook, NY 11794-3651, USA}
\email{schul ``at" math.stonybrook.edu}
\keywords{Rectifiability, Traveling Salesman, beta numbers}
\subjclass[2010]{28A75, 28A78, 28A12}
\thanks{
J.~Azzam  was supported by the ERC grant 320501 of the European Research Council (FP7/2007-2013).
R.~ Schul was partially supported by NSF  DMS 1361473.}

\maketitle

\begin{abstract}

In his 1990 Inventiones paper,  P. Jones characterized subsets of rectifiable curves in the plane via a multiscale sum of $\beta$-numbers. These $\beta$-numbers are geometric quantities measuring how far a given set deviates from a best fitting line at each scale and location.  
Jones' result is a quantitative way of saying that a curve is rectifiable if and only if it has a tangent at almost every point. 
Moreover, computing this square sum for a curve returns the length of the curve up to multiplicative constant. 
K. Okikiolu extended his result from subsets of the plane to subsets of Euclidean space. 
G. David and S. Semmes extended the discussion to include sets of (integer) dimension larger than one, under the assumption of Ahlfors regularity and using a variant of Jones' $\beta$-numbers. This variant has since been used by others to give structure theorems for rectifiable sets and to give upper bounds for the measure of a set. 

In this paper we give a version of P. Jones' theorem for sets of arbitrary (integer) dimension lying in Euclidean space.
Our main result is a lower bound for  the $d$-dimensional Hausdorff measure of a set in terms of an analogous sum of $\beta$-type numbers. We also show an upper bound of this type. The combination of these results gives a Jones theorem for higher dimensional sets.
\preRef{There is no assumption of Ahlfors regularity, or of a measure on the underlying set, but rather, only of a lower bound on the Hausdorff content.}
 \postRef{While there is no assumption of Ahlfors regularity, or of a measure on the underlying set, there is  an assumption of a lower bound on the Hausdorff content.}
 We adapt David and Semmes' version of Jones' $\beta$-numbers by redefining them using a Choquet integral, {  allowing them to be defined for arbitrary sets (and not just sets of locally finite measure).} A key tool in the proof is G. David and T. Toro's parametrization of Reifenberg flat sets (with holes).

\end{abstract}

\tableofcontents

\section{Introduction}

\subsection{Background}

We will begin by recalling the {\it Analyst's Traveling Salesman Theorem}. 
{ 
It will serve as a model for the two main results in this paper.}
For  sets $E,B\subseteq \bR^{n}$,  define
\begin{equation}
\beta_{E,\infty}^{d}(B)=\frac2{\diam(B)}\inf\limits_L\sup\{\dist(y,L):y\in E\cap B\}
\label{e:euclidean-beta}
\end{equation}
where $L$ ranges over  $d$-planes in $\bR^{n}$.
Thus, $\beta_{E,\infty}^{d}(B)\diam(B)$ is the width of the smallest tube containing $E\cap B$. We will typically have $B$ be a ball or cube. We will denote by  $\Delta$  the collection of dyadic cubes in $\bR^{n}$ (see the beginning of Section \ref{s:preliminaries}).

{

\begin{theorem}(Jones: $\bR^2$  \cite{Jones-TSP}; Okikiolu: $\bR^n$  \cite{O-TSP}) Let  $n\geq 2$.
There is a $C=C(n)$ such that the following holds.
Let $E\subset \bR^n$.
Then there is  a connected set $\Gamma\supseteq E$ such that
\begin{equation}
\cH^{1}(\Gamma)\lec_{n} \diam E+\sum_{Q\in \Delta\atop Q\cap E\neq\emptyset} \beta_{E,\infty}^{1}(3Q)^{2}\diam(Q).
\label{e:betaE}
\end{equation}
Conversely, if $\Gamma$ is connected and $\cH^{1}(\Gamma)<\infty$, then 
\begin{equation}
 \diam \Gamma+\sum_{Q\in \Delta\atop Q\cap \Gamma\neq\emptyset} \beta_{\Gamma,\infty}^{1}(3Q)^{2}\diam(Q) \lec_{n}  \cH^{1}(\Gamma).
\label{e:beta_gamma}
\end{equation}
\label{t:TST}
\end{theorem}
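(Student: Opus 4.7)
The plan is to prove the two inequalities \eqn{betaE} and \eqn{beta_gamma} by essentially independent arguments: the first is a \emph{construction} of a connected set $\Gamma$ covering $E$, while the second is a \emph{geometric lower bound} on length of an already-given rectifiable curve. The fundamental common input is the elementary Pythagorean observation that if two points lie at distance $r$ and a third point lies at distance $\beta r$ from the segment between them, then the polygonal path through the three has length at least $r + c\beta^{2} r$.

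For the upper bound \eqn{betaE}, I would build $\Gamma$ as the closure of a nested union of polygonal curves $\Gamma_{k}$, one per dyadic scale. Fix a maximal $2^{-k}$-separated net $X_{k}\subseteq E$, and construct $\Gamma_{k}$ as a polygonal path visiting $X_{k}$. Passing from $\Gamma_{k}$ to $\Gamma_{k+1}$ consists of locally detouring $\Gamma_{k}$ inside each dyadic cube $Q$ of side comparable to $2^{-k}$ so as to visit the new net points of $X_{k+1}\cap Q$. Because all net points of $E$ near $3Q$ lie within the $\beta_{E,\infty}^{1}(3Q)\diam(Q)$-tube of some line, each local detour introduces extra length bounded by $C\beta_{E,\infty}^{1}(3Q)^{2}\diam(Q)$. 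Summing telescopically over scales produces exactly the right-hand side of \eqn{betaE}; the term $\diam E$ accounts for the single initial segment at the largest relevant scale. The set $\Gamma=\overline{\bigcup_{k}\Gamma_{k}}$ contains $E$ because every point of $E$ is a limit of net points.

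For the lower bound \eqn{beta_gamma}, I would proceed by inscribing polygonal approximations of $\Gamma$ at each dyadic scale and comparing their lengths. For each $Q\in\Delta$ meeting $\Gamma$ with $\beta:=\beta_{\Gamma,\infty}^{1}(3Q)$, pick a witness point $y\in\Gamma\cap 3Q$ at distance $\beta\diam(Q)$ from \emph{every} line; since $\Gamma$ is connected, a subarc of $\Gamma$ must pass through $y$ between entry and exit points of a slightly larger ambient neighborhood, and by the Pythagorean inequality that subarc exceeds its chord in length by at least $c\beta^{2}\diam(Q)$. The challenge is that such subarcs overlap heavily across different cubes and different scales, so one cannot simply add the excesses. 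This is resolved scale-by-scale: at each scale, compare the successive inscribed polygons of $\Gamma$ at vertex-spacings $2^{-k}$ and $2^{-k-1}$ and extract the quadratic gain cube by cube, using that each point of $\Gamma$ lies in only boundedly many cubes of any given scale. The length of $\Gamma$, being the supremum of inscribed polygon lengths, then dominates the telescoped sum.

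The main obstacle in both halves is the same: the \emph{bookkeeping} of how local $\beta^{2}\diam(Q)$ contributions combine across overlapping cubes and across scales, without double-counting. In the upper bound one must ensure that the refinement at scale $k+1$ genuinely acts locally inside each cube, so length added at scale $k$ is not re-added at scale $k+1$; in the lower bound one must pack the excess lengths into the fixed budget $\cH^{1}(\Gamma)$, so that the total contribution of all $Q$ at a fixed scale is bounded by a constant multiple of $\cH^{1}(\Gamma)$. The geometric engine---the quadratic gain from approximate flatness---is elementary; the combinatorial management of scales, together with the extension from best-fit tubes in $\bR^{2}$ to tubes around $1$-planes in $\bR^{n}$, is where the substance of Jones' theorem and Okikiolu's extension really lies.
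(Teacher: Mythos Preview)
The paper does not prove this theorem; it is stated as background and attributed to Jones \cite{Jones-TSP} and Okikiolu \cite{O-TSP}, so there is no proof in the paper to compare your proposal against. Your outline does capture the standard shape of the original arguments---the farthest-insertion/detour construction for \eqn{betaE} and the Pythagorean excess-length argument for \eqn{beta_gamma}---and you have correctly identified that the real content lies in the multiscale bookkeeping rather than the local geometry. That said, what you have written is a sketch rather than a proof: in the lower bound, for instance, Okikiolu's extension to $\bR^{n}$ requires a nontrivial filtration of dyadic cubes (by relative orientation of best-fit lines) to make the packing of excess lengths work, and this is not something one can wave away as ``bookkeeping.'' If you intend this as more than a summary of the literature, those details need to be supplied.
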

Here, $\cH^k$ is the $k-$dimensional Hausdorff measure.
Given two functions $a$ and $b$ into $\bR$ we say 
$a\lesssim b$ with constant $C$, when there exists a constant $C=C_{a,b}$ such that $a\leq C b$. We say that $a\sim b$ if  $a\lesssim b$ and $b\lesssim a$.

We remark that a version of Theorem \ref{t:TST} holds for $E$ in an infinite dimensional Hilbert spaces \cite{Schul-TSP}, but that requires switching from $\Delta$ to a multi-resolution of balls centered on $E$. We will revisit this point in Section \ref{s:state-flat-case}.
}

{ 
\begin{remark}{
We remark for the expert that the main result of this paper is Theorem \ref{t:thmii-dyadic}, and is  a version of  \eqref{e:beta_gamma}. We also prove Theorem \ref{t:thmiii-dyadic}, a  version of  \eqref{e:betaE}, though the proof of  Theorem \ref{t:thmii-dyadic}  is about six times longer than that of Theorem \ref{t:thmiii-dyadic}.} 

\end{remark}
 }

%
%

Intrinsically, both implications of the \Theorem{TST} are interesting: a sufficient condition for the existence of a short curve is very useful, as well as the ability to quantify how non-flat a curve can be at various scales and locations. The result has applications to harmonic measure \cite{BJ90}, Kleinian groups \cite{BJ97}, analytic capacity \cite{Tol05}, and brownian motion \cite{BJPY97}. 

A $d$-dimensional analogue of the second half of Theorem \ref{t:TST} has been known to be false since the 90's: in Fang's thesis \cite{Fang-Jones-example} he gives an example of a Lipschitz graph where this sum is infinite. David and Semmes, however, realized that one could still develop a theory of $\beta$-numbers if instead one adjusted the definition of a $\beta$-number. We will state a theorem by them below (Theorem \ref{t:DS}). 
Their inspiration seems like it could come  from the following theorem of Dorronsoro.
\begin{theorem}[Dorronsoro, \cite{Dorronsoro}, Theorem 6]
Let $1\leq p<p(d)$ where 

\begin{equation}
\label{e:pd}
p(d):= \left\{ \begin{array}{cl} \frac{2d}{d-2} & \mbox{if } d>2 \\
 \infty & \mbox{if } d\leq 2\end{array}\right. . 
 \end{equation}
 For $x\in \bR^{d}$, $r>0$, and $f\in W^{1,2}(\bR^{d})$, define
\[\Omega_{f,p}(x,r)=\inf_{A} \ps{\avint_{B(x,r)} \ps{\frac{|f-A|}{r}}^{p}}^{\frac{1}{p}}\]
where the infimum is over all affine maps $A:\bR^{d}\rightarrow \bR$. Set
\[\Omega_{p}(f):=\int_{\bR^{d}}\int_{0}^{\infty}\Omega_{f,p}(x,r)^{2}\frac{dr}{r} dx. \]
Then
\[
\Omega_{p}(f)\lec_{d,p} ||\grad f||_{2}^{2}.\]
\label{t:dorronsoro}
\end{theorem}

The result is actually much stronger than stated here.
We refer the reader to \cite{Dorronsoro} for more details.

Define now the following $\beta$-number analogue: Given a closed ball $B \subset \bR^d$ with radius $r_{B}$, a measure $\mu$, an integer $0<d<D$, and $1\leq p <\infty$, let
\[\beta_{\mu,p}^d(B,L) =\left(\frac1{r_{B}^d} \int_{B} \left(\frac{\dist(y,L)}{r_{B}}\right)^p\,d\mu(y)\right)^{1/p}\]
and  
\[\beta_{\mu,p}^d(B)=\inf \{\beta_{\mu,p}^d(B,L) : L\mbox{ is a d-plane in }\bR^{n}\}.\]

When $\mu=\cH^{d}_{E}$ for some set $E$, we will write $\beta_{E,p}^{d}$ instead of $\beta_{\cH^{d}|_{E},p}^{d}$.
Recall $\sigma$ is a Carleson measure on $E\times (0,\infty)$ if $\sigma(B(x,r)\times (0,r))\leq Cr^{d}$.
\begin{theorem}[David, Semmes \cite{DS}]
Let $E\subseteq \bR^{n}$ be an {\it Ahlfors $d$-regular} set $E$, meaning there is $A>0$ so that
\begin{equation}\label{e:AR}
  r^{d}/A\leq \cH^{d}(B(\xi,r)\cap E) \leq Ar^{d} \mbox{ for }\xi\in E, r\in (0,\diam E).
\end{equation}
Then the following are equivalent:
\begin{enumerate}
\item  The set {\it $E$ has big pieces of Lipschitz images}, i.e. there are constants $L,c>0$ so  for all $\xi\in E$ and $r\in (0,\diam E)$, there is an $L$-Lipschitz map $f:\bR^{d}\rightarrow \bR^{n}$ such that $\cH^{d}(f(\bR^{d})\cap B(\xi,r))\geq cr^{d}$.
\item For $1\leq p<p(d)$, $\beta_{{E},1}^d(B(x,r))^{2}\frac{dxdr}{r}$ is a Carleson measure on $E\times (0,\infty)$. 
  \end{enumerate}
  \label{t:DS}
  \end{theorem}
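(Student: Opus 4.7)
The plan is to prove both implications, using Dorronsoro's Theorem \ref{t:dorronsoro} as the key bridge between Lipschitz structure and $\beta$-number estimates, together with the dyadic cube structure available on any Ahlfors regular set.

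\textbf{For $(1) \Rightarrow (2)$.} I would start by fixing a ball $B_0 = B(\xi_0, r_0)$ centered on $E$. By BPLI there is an $L$-Lipschitz map $f : \bR^d \to \bR^n$ with $\cH^d(f(\bR^d) \cap B_0) \geq c r_0^d$. Since each coordinate of $f$ lies in $W^{1,2}_{\loc}$ with $\|\grad f\|_\infty \leq L$, Dorronsoro's theorem yields a Carleson estimate for the affine-approximation quantities $\Omega_{f,p}(x,r)$. The key observation is that $\Omega_{f,p}$ dominates the $\beta_p$-number of the image $f(\bR^d)$: whenever an affine map $A$ approximates $f$ well in $L^p$ on a ball, the plane $A(\bR^d)$ approximates $f(\bR^d)$ correspondingly well in the $\beta_p$ sense (after discarding a controlled set where $\grad f$ is near-degenerate). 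To transfer this estimate from the big piece onto $E$ itself I would run a corona-type iteration: at any scale and location where $\beta_{E,p}^d$ fails to be small, $E$ cannot agree with the Lipschitz image there, so BPLI is re-applied inside that sub-ball, and the contributions sum to a Carleson estimate via Ahlfors regularity.

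\textbf{For $(2) \Rightarrow (1)$.} Given $B_0 = B(\xi_0, r_0)$, the goal is to construct an $L$-Lipschitz $g: \bR^d \to \bR^n$ with $\cH^d(g(\bR^d) \cap B_0) \geq c r_0^d$. I would fix a small threshold $\epsilon > 0$ and perform a stopping-time construction on dyadic cubes of $E \cap B_0$: declare a cube $Q$ bad when the local $\beta$-Carleson sum over $Q$ first exceeds $\epsilon$, and take the maximal family of such cubes. Chebyshev applied to the Carleson hypothesis bounds the total $\cH^d$-measure of stopping cubes by $C\epsilon^{-2} r_0^d$, so choosing $\epsilon$ appropriately the complement $F$ satisfies $\cH^d(F) \geq c r_0^d$ and has $\beta_{E,p}^d(B) \lesssim \epsilon$ at every scale along $F$. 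This makes $F$ Reifenberg-flat with holes; applying the David-Toro parametrization produces a bi-H\"older map from (a portion of) $\bR^d$ onto a neighborhood of $F$, and the $L^2$-summability of $\beta$-numbers allows one to upgrade this parametrization to Lipschitz via Dorronsoro-type control on its gradient. Restricting this map yields the desired Lipschitz image.

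\textbf{Main obstacle.} The harder implication is $(2) \Rightarrow (1)$: David-Toro alone produces only bi-H\"older regularity, and upgrading to genuine Lipschitz control requires carefully propagating the $L^2$-Carleson $\beta$-hypothesis into a Dorronsoro-type estimate on the constructed parametrization itself. The range $p < p(d)$ is sharp and reflects precisely the Sobolev embedding in which this bootstrapping argument succeeds; tracking the constants through the stopping-time, parametrization, and final Lipschitz-regularity steps is where the bulk of the technical work lies.
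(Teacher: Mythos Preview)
This theorem is not proved in the paper. It is quoted as background from David--Semmes \cite{DS}, with no argument given here; the paper's own contribution begins with the content-$\beta$-numbers and Theorems \ref{t:thmii-dyadic} and \ref{t:thmiii-dyadic}. So there is nothing to compare your proposal against within this manuscript.

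That said, a remark on your sketch for $(2)\Rightarrow(1)$: the stopping-time accounting is inverted. If you stop a cube $Q$ the first time the cumulative sum $\sum_{Q\subseteq R\subseteq Q_0}\beta_{E,p}^d(MB_R)^2$ exceeds a \emph{small} threshold $\epsilon^2$, then Chebyshev combined with the Carleson hypothesis gives $\sum_{Q\ \text{stopped}}\ell(Q)^d\lesssim \epsilon^{-2}r_0^d$, which is \emph{large}, not small, so you cannot conclude that the unstopped set $F$ has measure $\geq c r_0^d$ from this alone. The standard fix is either (a) to stop at a \emph{large} threshold $M$, so the stopped mass is $\lesssim M^{-1}r_0^d$ and the good region has $\beta$-square sum bounded by $M$ (enough for Theorem~\ref{DTtheorem2.5} to give a bi-Lipschitz surface, but then individual $\beta$'s need not be small and Reifenberg flatness must be arranged separately), or (b) to iterate the small-threshold stopping time as in Section~\ref{s:pf-of-thmiii} of this paper, summing over all resulting regions via an analogue of Lemma~\ref{l:sumS_Jlarge}. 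Your single-pass argument as written falls between these and does not close. Also note that the original David--Semmes proof predates \cite{DT12} and builds the Lipschitz graphs directly via a corona decomposition rather than invoking a Reifenberg parametrization.
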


 A closed set $E\subseteq \bR^{n}$ is said to be {\it $d$-uniformly rectifiable} if it is $d$-Ahlfors regular and has big pieces of Lipschitz images. We remark that constants coming out of this theorem (and similar theorems) depend on the constant of Ahlfors-regularity (denoted above by $A$).
 
 The motivation behind this result (and in fact also Jones' original motivation of \Theorem{TST}) was the study of singular integrals. In \cite{DS}, David and Semmes show that the two conditions in \Theorem{DS} are in fact equivalent to five other characterizations, one of them being that all odd singular integrals whose kernels are reasonably behaved are bounded operators. In fact, $\beta_{\mu,p}^d$ has been more amenable in applications to singular integrals even in the plane, see for example \cite{MMM96,Dav98,Leg99,AT15,GT16} and the references therein. 
 
We remark that  using the work of David and Toro \cite{DT12}, for certain kinds of sets $E$, one can obtain a higher dimensional analogue of the first half of \Theorem{TST}---that is, sufficient conditions in terms of $\beta_{E,\infty}^{d}(3Q)^{2}\diam (Q)^{d}$ for when a set can be contained in a $d$-dimensional surface of finite area (see Theorem \ref{t:DT} and \ref{DTtheorem2.5} below). 

{
The quantity $\beta_2$ is closely related to the Singular Value Decomposition of a matrix. 
See  \cite{LW12, ACM12, LMR16} and references therein for applications to data analysis.
 }

Recently, the first author and Tolsa have given a characterization of rectifiability for sets of finite measure beyond the Ahlfors regular category. 

\begin{theorem}[Azzam, Tolsa \cite{AT15}; Tolsa \cite{Tol15})]\label{t:AT15}
Let $E\subseteq \bR^{n}$ be a Borel set with $0<\cH^{d}(E)<\infty$ and $\mu\ll \cH^{d}|_{E}$.
\begin{enumerate}
\item If 
\begin{equation}\label{jonesfunction}
\int_{0}^{1} \beta_{\mu ,2}^d(B(x,r))^{2}\frac{dr}{r}<\infty \;\; \mbox{ for $\cH^{d}$-a.e. $x\in E$,}
\end{equation} 
then $E$ is $d$-rectifiable. 
\item If $E$ is $d$-rectifiable and $p\in [1,2]$, then $\int_{0}^{1} \beta_{\mu,p}^d(B(x,r))^{2}\frac{dr}{r}<\infty$ for $\cH^{d}$-a.e. $x\in E$.
\end{enumerate}
\label{t:AT}
\end{theorem}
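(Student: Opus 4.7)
For part (2), the easier direction, the plan is to invoke Dorronsoro's theorem after decomposing $E$ into Lipschitz graph pieces. First reduce to $\mu=\cH^{d}|_{E}$ via Radon--Nikodym: since $\mu\ll \cH^{d}|_{E}$, a.e.\ finiteness of the Jones square function with respect to $\cH^{d}|_E$ transfers to $\mu$. Write $E=N\cup \bigcup_i \Gamma_i$ with $\cH^d(N)=0$ and each $\Gamma_i$ contained (after rotation) in the graph $\Sigma_i$ of a Lipschitz function $f_i:\bR^d\to\bR^{n-d}$. For $\cH^d$-a.e.\ $x\in\Gamma_i$ one has a density comparison $\cH^d|_E(B(x,r))\sim \cH^d|_{\Sigma_i}(B(x,r))$ as $r\to 0$, with the contribution from $E\setminus\Sigma_i$ of vanishing relative density. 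Under the graph parametrization, $\beta^d_{\cH^d|_{\Sigma_i},p}(B(x,r))$ is controlled (up to the Lipschitz distortion) by the Dorronsoro quantity $\Omega_{f_i,p}(x,Cr)$. Since $p\in [1,2]\subset [1,p(d))$ for every $d\geq 1$, Theorem \ref{t:dorronsoro} applied to $f_i$ yields $\int_0^1 \Omega_{f_i,p}(x,r)^2\,dr/r<\infty$ for a.e.\ $x\in\bR^d$ (Fubini on a finite $L^2$-type integral), and this transfers to the desired a.e.\ finiteness on each $\Gamma_i$.

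For part (1), the converse direction, the plan is a multiscale stopping-time decomposition coupled with David--Toro's parametrization of Reifenberg-flat sets with holes. Fix small $\epsilon>0$ and $N\in\bN$, and set $E_{N,\epsilon}=\{x\in E:\int_0^{2^{-N}} \beta^d_{\mu,2}(B(x,r))^2\,dr/r<\epsilon^2\}$; the hypothesis gives $\cH^d(E\setminus \bigcup_N E_{N,\epsilon})=0$, so it suffices to exhibit a countable Lipschitz-graph cover of each $E_{N,\epsilon}$ up to a null set. Fix a ball $B_0$ of radius $\approx 2^{-N}$ centered on $E_{N,\epsilon}$ where the upper Hausdorff density of $E$ is bounded and the lower density is positive (available since $0<\cH^d(E)<\infty$). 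Run a stopping time on dyadic sub-balls $B\subset B_0$: stop when $\beta^d_{\mu,2}(CB)$ first exceeds some threshold $\epsilon^{1/2}$, or when the local $d$-dimensional content of $E\cap B$ drops too far below $r_B^d$. On the non-stopped tree, a Chebyshev-in-scales argument converts the integral smallness of $\beta_2$ into pointwise closeness to best-approximating planes on a subset of definite $\cH^d$-mass, producing a set that is Reifenberg flat with holes. Apply Theorem \ref{t:DT} (David--Toro) to parametrize this subset by a bi-Lipschitz image of a $d$-plane, hence contained in a Lipschitz graph. Iterate on the stopping children, whose total mass is controlled by a Carleson packing estimate that is itself a consequence of the square-summability of $\beta_2$, and exhaust $\mu$-almost all of $E_{N,\epsilon}$ by countably many Lipschitz pieces.

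The main obstacle is the passage from $L^2$-control of $\beta_{\mu,2}^d$ to the pointwise $\beta_\infty$-type flatness needed to apply David--Toro: a single-scale smallness in average does not force a thin tube, so one must extract, via the stopping time together with a careful scale-by-scale Chebyshev argument, a large sub-set on which flatness holds at \emph{every} intermediate scale, and then control the accumulated rotation of best-approximating planes tightly enough that the David--Toro parametrization is genuinely bi-Lipschitz rather than merely bi-H\"older. Both controls are driven by the square-summability hypothesis, but the quantitative bookkeeping is delicate because $\mu$ is not assumed Ahlfors regular; the lower content bounds that substitute for Ahlfors regularity must be propagated through the iteration without degradation, and this is where one pays with the reliance on $\cH^{d}(E)<\infty$ and the absolute continuity $\mu\ll \cH^{d}|_{E}$.
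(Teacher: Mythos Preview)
This theorem is not proved in the present paper at all: it is merely cited from \cite{AT15} and \cite{Tol15} as background and motivation in the introduction. There is therefore no proof here against which to compare your proposal.

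That said, as a sketch of the arguments in the cited papers your outline is broadly on target. Part~(2) does indeed reduce to Dorronsoro's theorem via a Lipschitz-graph decomposition, and part~(1) in \cite{AT15} does proceed through a stopping-time construction feeding into a David--Toro-type parametrization. Your identification of the main difficulty --- passing from $L^2$-averaged $\beta_{\mu,2}$-smallness to the pointwise bilateral flatness required by \cite{DT12}, in the absence of Ahlfors regularity --- is accurate, and is precisely what makes \cite{AT15} nontrivial. But none of that work is reproduced or re-proved in the paper under review; the authors simply quote the result.
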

The first part of this result was first shown by Pajot under some stronger assumptions (see also  \cite{BS16}).   
In \cite{BS-preprint-1}, Badger and the second author gave a similar characterization for general measures on Euclidean space (see also \cite{BS15}), but with $d=1$.

{ 
{\bf Addendum:}(added 2017/02/15) Part (1) of the above result was very recently improved by Edelen, Naber, and Valtorta in \cite{ENV16}. In their paper, they  give perhaps the most general results to date about how well  the size of $\mu$  is bounded {\it from above}  by the $\beta_{\mu,p}^d$-numbers and describes the support. As a corollary of their work, they show that if $\mu$ is any Radon measure such that \eqref{jonesfunction} holds and 
\[\limsup_{r\rightarrow 0} \mu(B(x,r))/r^{d}>0 \mbox{ for $\mu$-a.e. $x$},\] 
then $\mu$ can be covered by Lipschitz images of $\mathbb{R}^{d}$. This implies teh first part of  \Theorem{AT}.
See also \cite[Sections 3-6]{NV17} for similar results and their connection to singular sets of nonlinear PDEs.  Their results form an analogue of the \eqref{e:betaE}  half of  \Theorem{TST} in higher dimension.  

By comparison, 
our main result, Theorem \ref{t:thmii-dyadic} below, says that for a class of sets $E$, we can obtain {\it lower bounds} for the Hausdorff measure of $E$ in terms of  content-$\beta$-numbers  (see Definition \ref{d:beta-h-def}), that is, a version of the \eqref{e:beta_gamma} half of \Theorem{TST}.  
This is,  to our knowledge, the first time this has been  achieved for sets of dimension larger than one (even for Hausdorff measure, let alone general measures). Additionally, in Theorem \ref{t:thmiii-dyadic} below we can also obtain {\it upper bounds} on the Hausdorff measure of a set $E$ in terms of its $\beta$-numbers (that is, a version of the  \eqref{e:betaE} half of \Theorem{TST}). The proof of Theorem \ref{t:thmiii-dyadic} is much less substantial than Theorem \ref{t:thmii-dyadic}, but the result is interesting for a few reasons: firstly, while the works of \cite{ENV16} and \cite{NV17} obtain upper bounds on more general measures $\mu$ than just the Hausdorff measure, we do not assume the existence of a locally finite measure on our set, and in particular, we don't assume our set $E$ has finite Hausdorff measure a priori. Moreover,  the $\beta$-numbers we use don't require a finite measure in order to be defined like $\beta_{\mu,p}^d$ does, and thus we have a measure-independent and purely geometric method for testing whether a set has finite Hausdorff measure. Finally, the combination of Theorems \ref{t:thmii-dyadic} and \ref{t:thmiii-dyadic} gives an analogue  to the full  \Theorem{TST} for higher dimensional objects.


}

\subsection{Main Results}

Before stating our main results, we first define our $\beta$-number that, firstly, is in some sense an $L^{p}$-average of distances to a plane (rather than an $L^{\infty}$-norm), and secondly, doesn't rely on the underlying measure on the set in question. This seems somewhat self-contradictory, but we are able to achieve this by manipulating the definition of $\beta_{\cH^{d}|_{E},p}^{d}$ and ``integrate" with respect to $\cH^{d}_{\infty}$ (rather than $\cH^{d}$) using a Choquet integral.

\begin{definition}\label{d:beta-h-def}
For  arbitrary  sets $E$, and $B$, a $d$-dimensional plane $L$, define

\[
\beta_{E}^{d,p}(B,L)= \ps{\frac{1}{r_{B}^{d}}\int_{0}^{1}\cH^{d}_{\infty}(\{x\in B\cap E: \dist(x,L)>t r_{B}\})t^{p-1}dt}^{\frac{1}{p}}\]
where $2r_B=\diam(B)$, and set 
\[
\beta_{E}^{d,p}(B)=\inf\{ \beta_{E}^{d,p}(B,L): L\mbox{ is a $d$-dimensional plane in $\bR^{n}$}\}.\]
We will typically take $B$ to be a ball or cube.
\end{definition}

If we assume $E$ is Ahlfors $d$-regular, then this quantity is comparable to  $\beta_{\cH^{d}|_{E},p}^{d}$. Indeed,
for a ball $B$ centered on $E$,
\begin{align*}
\beta_{\cH^{d}|_{E},p}^{d}(B,L)^{p}
&\sim  \frac1{r_{B}^d} \int_{B} \ps{\frac{\dist(y,L)}{r_{B}}}^{p} d\cH^{d}(y) \\
& =\frac{1}{r_{B}^{d}}\int_{0}^{\infty}\cH^{d}(\{x\in B: \dist(x,L)>t r_{B}\})t^{p-1}dt
\end{align*}
and it is not hard to check that this is comparable to $\beta_{E}^{d,p}(B,L)$ using the Ahlfors regularity of $E$. 


{
\begin{definition}\label{d:dxref}
Given  two closed sets $E$ and $F$, and $B$ a set we denote
\[
d_{B}(E,F)=\frac{2}{\diam B}\max\left\{\sup_{y\in E\cap B}\dist(y,F), \sup_{y\in F\cap  B}\dist(y,E)\right\}\]
and
\[\vartheta_{E}^{d}(B)=\inf\{d_{B}(E,L): L\mbox{ is a $d$-dimensional plane in $\bR^{n}$}\}.\]
We will typically have $B$ be a ball or cube.
If $B=B(x,r)$, we will write $d_{B}=d_{x,r}$. We say $E$ is $(\ve,d)$-Reifenberg flat (or just $\ve$-Reifenberg flat when the dimension is given) if 
\[
\vartheta_{E}^{d}(B(x,r))<\ve \mbox{ for all $x\in E$ and $r>0$}.\] 
\end{definition}

}
%
%
%
%
%
%
%
%
%

{
\begin{definition}\label{d:def-theta-delta}
Let $E,B\subseteq \bR^{n}$ and $\ve,r>0$. We define 
\[
\largetheta{E}^{d,\Delta,\ve}(B):=\sum \{\diam(Q)^{d}:  Q\in \Delta,\ Q\cap E\cap B\neq\emptyset, \;\; \ell(Q)\leq r_B,  \mbox{ and } \vartheta_{E}(3Q)\geq \ve \}
.\]
The $\Delta$ in the superscript denotes that this is a quantity involving dyadic cubes.

\end{definition}
}

\begin{remark}
If  $E$ is an $\ve$-Reifenberg flat then $\largetheta{E}^{d,\Delta,\ve}=0$.
\end{remark}

\begin{definition}
A set $E\subseteq \bR^{n}$ is said to be {\it $(c,d)$-lower content regular} in a ball $B$ if 
\[
\cH^{d}_{\infty}(E\cap B(x,r))\geq cr^{d} \;\; \mbox{for all $x\in E\cap B$ and $r\in (0,r_{B})$}.\]
\end{definition}

\postRef{
\begin{remark}
Some examples of sets   $E\subseteq \bR^{n}$ which are  {\it $(c,d)$-lower content regular} for some $c>0$ and integer $d$ are: 
Reifenberg flat sets (see \ref{d:dxref}) and sets satisfying Condition B (see definition \ref{d:condition-b}).  One may also remove subsets from such $E$ in a controlled way to keep lower content regularity, with a smaller $c>0$.
\end{remark}
}
\RS{can the above be improved:  take any set with theta satisfying 
$$\largetheta{E}^{d,\Delta,\ve}(B(x,r))\lesssim r^d$$ and show it is lower content regular?\\
}
\JA{The lower content regularity also comes into the estimates where we pass from beta numbers on the set to beta numbers on a graph. Unless you mean whether this condition implies lower regularity...but that condition also seems too strong, since general Condition B surfaces won't have $\largetheta{E}^{d,\Delta,\ve}(B(x,r))\lesssim r^d$. }

Our first main result is the following. 
{

\begin{maintheorem}
Let $1\leq d<n$, $C_{0}>1$. 
Let $1\leq p<p(d)$ where $p(d)$ was defined in \eqref{e:pd}.  
Let $E\subseteq \bR^{n}$ be a closed set containing 0. 
Suppose that $E$ is $(c,d)$-lower content regular in $B(0,1)$. 
There is $\ve_{0}=\ve_{0}(n,p,c)>0$ 
such that for  $0<\ve<\ve_{0}$. 
Then
\begin{multline}
1+\sum_{Q\in \Delta \atop \ell(Q)\leq 1, Q\cap E\cap B(0,1)\neq\emptyset} \beta_{E}^{d,p}(C_{0}Q)^{2}\diam(Q)^{d}\\
\lec_{C_{0},n,\ve,p,c} 
\cH^{d}(E\cap B(0,1)) + \largetheta{E}^{d,\Delta,\ve}(B(0,1)).
\label{e:beta<hd2-dyadic}
\end{multline}
\label{t:thmii-dyadic}
\end{maintheorem}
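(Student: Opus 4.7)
The plan is to adapt the corona/stopping-time strategy of David--Semmes to the non-Ahlfors-regular setting, where the lack of an a priori measure on $E$ forces us to work throughout with Hausdorff \emph{content} and to use the Choquet definition of $\beta_E^{d,p}$. The two analytic engines are Dorronsoro's theorem (Theorem \ref{t:dorronsoro}) used on local Lipschitz models, and David--Toro's parametrization of Reifenberg-flat sets, which is what produces these Lipschitz models.

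\textbf{Step 1: reduce to ``flat'' cubes.} Split the family $\cF := \{Q \in \Delta : Q \cap E \cap B(0,1) \ne \emptyset\}$ into the \emph{bad} cubes $\cB := \{Q \in \cF : \vartheta_E^d(3Q) \ge \ve\}$ and the \emph{good} cubes $\cG := \cF \setminus \cB$. A crude use of the definition (take any $d$-plane through a point of $E$ and estimate $\cH^d_\infty(C_0Q \cap E) \lesssim \diam(Q)^d$) gives $\beta_E^{d,p}(C_0Q) \lesssim 1$, so the $\cB$-contribution to the left-hand side is already $\lesssim \largetheta_E^{d,\Delta,\ve}(B(0,1))$; and the lower content regularity yields $\cH^d(E \cap B(0,1)) \gtrsim 1$, which absorbs the additive $1$.

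\textbf{Step 2: corona decomposition.} Organize $\cG$ into stopping-time trees $\{\cT_j\}$. Each tree is rooted at a maximal cube $R_j$ and is equipped with a best-approximating $d$-plane $L_{R_j}$ coming from $\vartheta_E^d(3R_j) < \ve$; we add a descendant $Q \subseteq R_j$ to $\cT_j$ provided (i) $Q \in \cG$, and (ii) the locally best-approximating plane $L_Q$ at $Q$ deviates from $L_{R_j}$ by at most some small angular threshold $\delta = \delta(\ve)$. Stopping when either condition fails and iterating yields the tree family; with $\ve_0(n,p,c)$ small enough, David--Toro's parametrization (Theorem \ref{t:DT}) produces, for each $\cT_j$, a $C\ve$-Reifenberg-flat $d$-surface $\Sigma_j$, a bi-Lipschitz image of $L_{R_j}$, that Hausdorff-approximates $E$ in the region covered by $\cT_j$ at every scale $\gtrsim$ the tree's stopping scale, with $\cH^d(\Sigma_j \cap C R_j) \sim \diam(R_j)^d$.

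\textbf{Step 3: transfer content-$\beta$ to measure-$\beta$, then apply Dorronsoro.} Using the $C\ve$-closeness of $\Sigma_j$ and $E$ together with the $(c,d)$-lower content regularity of $E$ (which is precisely what is needed to replace the $\cH^d_\infty$-Choquet integral defining $\beta_E^{d,p}$ by an ordinary $L^p$-integral against $\cH^d|_{\Sigma_j}$, modulo an error decaying geometrically in the distance from $Q$ to the stopping boundary of $\cT_j$), one obtains, for $Q \in \cT_j$,
\[
\beta_E^{d,p}(C_0 Q)^2 \diam(Q)^d \;\lesssim\; \beta_{\Sigma_j, p}^d(C_0' Q)^2 \diam(Q)^d \; + \; \text{error}_{Q,j},
\]
where $\sum_{Q \in \cT_j} \text{error}_{Q,j} \lesssim \cH^d(E \cap C'' R_j)$. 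Applying Dorronsoro (with the restriction $p < p(d)$) to the bi-Lipschitz parametrization $f_j : L_{R_j} \to \Sigma_j$ then gives
\[
\sum_{Q \in \cT_j} \beta_{\Sigma_j, p}^d(C_0' Q)^2 \diam(Q)^d \;\le\; C \, \cH^d(\Sigma_j \cap C R_j) \;\le\; C' \, \cH^d(E \cap C'' R_j),
\]
the last step again using lower content regularity and closeness.

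\textbf{Step 4: packing.} Summing over $j$ requires controlling $\sum_j \cH^d(E \cap C'' R_j)$ by $\cH^d(E \cap B(0,1))$ and controlling the number/mass of new tree tops. Tops arising from a failure of condition (i) are charged directly to $\largetheta_E^{d,\Delta,\ve}(B(0,1))$. Tops arising from a plane-rotation event (failure of (ii)) are charged via a Carleson-type Pythagoras argument: the squared angle change is dominated by the in-tree $\beta^2 \diam^d$ sum of the parent tree, which by Step 3 is already controlled by $\cH^d(E)$ locally. Bounded overlap of the dilates $C''R_j$ (which follows from the fact that the roots sit on well-separated ``layers'' in the tree structure) then closes the estimate.

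\textbf{Expected main obstacle.} The hardest step is the interplay between the packing of the stopping-time tops in Step 4 and the Choquet-to-integral transfer in Step 3: the stopping thresholds $\ve$ and $\delta$ must be chosen small enough that David--Toro applies and the transfer error is absorbable, yet not so small that the plane-rotation packing becomes uncontrollable. This is where the choice $\ve_0 = \ve_0(n, p, c)$ is forced, and it is also the only place in the argument where the $(c,d)$-lower content regularity is genuinely essential rather than just convenient, since it converts content into (a lower bound on) $\cH^d$ at every stopping scale.
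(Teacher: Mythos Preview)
Your Steps 1--3 match the paper's strategy: split off non-flat cubes into $\largetheta{E}^{d,\Delta,\ve}$, run a corona on the rest, build a David--Toro surface $\Sigma_j$ over each tree, push $\beta_E^{d,p}$ to $\beta_{\Sigma_j}^{d,p}$ via the content-transfer Lemma~\ref{l:2-15} (this is indeed one of the two places where lower content regularity is essential), and apply Dorronsoro on the graph of $\Sigma_j$ over $L_{R_j}$ to get $\sum_{Q\in\cT_j}\beta_{\Sigma_j}^{d,p}(C_0'Q)^2\ell(Q)^d\lesssim\ell(R_j)^d$. The paper separates your two stopping conditions into nested arguments---first a theorem for Reifenberg-flat $\Sigma$ with rotation-only stopping (Theorem~\ref{t:flat-case}), then $\vartheta$-stopping layered on top (Section~\ref{s:TheoremII})---but that is organizational.

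The genuine gap is Step 4. After Step 3 you need $\sum_j\ell(R_j)^d\lesssim\cH^d(E\cap B(0,1))+\largetheta{E}^{d,\Delta,\ve}(B(0,1))$, and your packing of the rotation-tops does not deliver this. The ``bounded overlap of $C''R_j$'' claim is false: tree roots are \emph{nested} across corona generations, a point of $E$ can lie in arbitrarily many of them, and $\sum_j\cH^d(E\cap C''R_j)$ is precisely the unknown. Your Pythagoras charging is then circular: the rotation-children of $\cT_j$ are the next generation of roots, you bound their $\ell^d$-sum by $\delta^{-2}$ times the in-tree $\beta$-sum, and Step 3 bounds that by $C\delta^2\ell(R_j)^d$; the resulting recursion $A_{N+1}\le C\,A_N$ carries a dimensional constant $C$ with no reason to be $<1$. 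In the David--Semmes uniformly rectifiable setting this loop is closed by \emph{assuming} a $\beta^2$-Carleson bound; here that bound is the conclusion, so it cannot serve as currency.

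The paper's substitute for your Step 4 is the telescoping area argument of Section~\ref{s:tel-sum} (Proposition~\ref{p:asum}), which is the technical heart of the paper. One builds not one surface per tree but a sequence $\Sigma^N$ indexed by corona generation, with explicit $(1+C\alpha)$-bi-Lipschitz maps $F_N:\Sigma^N\to\Sigma^{N+1}$ (Section~\ref{s:layers}); near each rotation-stop the next surface must bend, and this is paid for either by the area increment $\cH^d(\Sigma^{N+1}\cap B(0,1))-\cH^d(\Sigma^N\cap B(0,1))$ (Lemma~\ref{l:asum3}) or, when that increment is small, by a disjoint positive-$\cH^d$ ``core'' $z(S_Q')$ of the tree (Lemma~\ref{l:asum2}). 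Telescoping then gives $\sum_N\sum_{Q\in\Stop(N)}\ell(Q)^d\lesssim\cH^d(\Sigma\cap B(0,1))$. Constructing the maps $F_N$ also forces an extension of the stopping regions so that adjacent minimal cubes have comparable size (Lemma~\ref{l:QSQS'}), a step with no analogue in a one-pass corona.
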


}

\postRef{
In the case that $E$ is Reifenberg flat, it will automatically be lower content regular by \eqref{e:sigmalr} below. In this case, $\largetheta{E}^{d,\Delta,\ve}$ will be zero by definition, and so Theorem \ref{t:thmii-dyadic} gives a much cleaner result for this class of sets. See also Section \ref{s:theta} for more scenarios where $\largetheta{E}^{d,\Delta,\ve}$ disappears.}

\begin{remark}
We use  the lower content regularity crucially in two places.
One  is the proof of Lemma \ref{l:2-15}, and the other is  the proof of Lemma \ref{l:zeroboundary}.
\end{remark}

%
%
%
%

\RS{we should point out the paragraph below to the referee.  it was there, but i think it addresses point 4.  We should add a few words with an example saying it is neccessary.  I guess i am not convinced myself:  why can we not have a ``1" instead of Theta on the RHS of our theorem I.  this would entail using a ``free interval" method...  future work?\\}
\JA{Check out example 1.16 below.}

There will be a longer discussion about $\largetheta{E}^{d,\Delta,\ve}$ later after Example \ref{thetaexample}.

A converse to Theorem \ref{t:thmii-dyadic} is also possible.

{

\begin{maintheorem}
Let $1\leq d<n$, $C_{0}>1$, and $1\leq p\leq \infty$ and $E\subseteq \bR^{n}$ be $(c,d)$-lower content regular in $B(0,1)$ such that $0\in E$.  
Let 
$\ve>0$ be given. 
Then
\begin{multline}\label{e:thmiii-upper-bound-dyadic}
 \cH^{d}(E\cap B(0,1)) +
      \largetheta{E}^{d,\Delta,\ve}(0,1)\\
 \lec_{n,c,C_{0},\epsilon} 1+\sum_{Q\in \Delta \atop \ell(Q)\leq 1, Q\cap E\cap B(0,1)\neq\emptyset}\beta_{E}^{d,p}(C_0 Q)^{2}\diam(Q)^{d}.
 \end{multline}
{
Furthermore, if the right hand side of \eqref{e:thmiii-upper-bound-dyadic} is finite, then $E$ is $d$-rectifiable
}

\label{t:thmiii-dyadic}
\end{maintheorem}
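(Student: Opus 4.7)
The plan is to use a dyadic stopping-time/corona decomposition combined with David--Toro's parametrization of Reifenberg-flat sets with holes (Theorem \ref{DTtheorem2.5}) to bound $\cH^d(E\cap B(0,1))$ and $\largetheta{E}^{d,\Delta,\ve}(0,1)$ jointly by $C(n,c,C_0,\ve)(1+\sum_Q \beta_E^{d,p}(C_0 Q)^2 \diam(Q)^d)$, and then to extract rectifiability from the resulting Lipschitz parametrizations.

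First, I would set up a corona decomposition of the dyadic cubes $Q\in\Delta$ with $Q\cap E\cap B(0,1)\neq\emptyset$. Call $Q$ \emph{good} if $\vartheta_E^d(3Q)<\ve$ and \emph{bad} otherwise, and organize the cubes into maximal \emph{good trees} separated by maximal \emph{bad stopping cubes}. On each good tree with root $R_0$ (either the top cube containing $B(0,1)$ or a good child of a bad stopping cube), $E$ is $\ve$-Reifenberg flat at every scale inside $R_0$, and the local $\beta^2\diam^d$ sum is controlled by the right-hand side. Theorem \ref{DTtheorem2.5} then produces a $d$-dimensional Lipschitz surface $\Sigma_{R_0}$ containing the flat portion of $E\cap R_0$, with
\[\cH^d(\Sigma_{R_0})\leq C(n)\Bigl(\diam(R_0)^d+\sum_{Q\subset R_0,\,Q\text{ good}}\beta_E^{d,p}(C_0Q)^2\diam(Q)^d\Bigr).\]
Crucially, the David--Toro surface $\Sigma_{R_0}$ is forced by its own topological/Reifenberg structure to "bridge" over each bad stopping child $R$ of $R_0$, contributing $\gtrsim \diam(R)^d$ to the $d$-area of $\Sigma_{R_0}$; this is the step where $(c,d)$-lower content regularity enters, ensuring that $E$ near $R$ forces $\Sigma_{R_0}$ to pass through a macroscopic disk-like piece at scale $\diam(R)$.

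Next, iterating the construction inside every bad stopping cube covers $E\cap B(0,1)$ up to an $\cH^d$-null set by a countable family $\{\Sigma_{R_0}\}$ of Lipschitz surfaces. Summing $\cH^d(\Sigma_{R_0})$ over all trees and generations yields the bound on $\cH^d(E\cap B(0,1))$. In parallel, $\largetheta{E}^{d,\Delta,\ve}(0,1)$, which sums $\diam(Q)^d$ over all bad cubes (not merely stopping cubes), is controlled because every bad cube is a descendant of some top-generation stopping cube, and both the stopping cubes and their bad descendants are absorbed into the bridging area contributions at each generation. The telescoping across generations yields the desired joint bound. The rectifiability statement under finiteness of the right-hand side is then immediate: the $\Sigma_{R_0}$ form a countable cover of $E$ (up to a $\cH^d$-null set) by Lipschitz images of $\bR^d$.

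The main obstacle will be establishing the quantitative bridging bound $\cH^d(\Sigma_{R_0}\cap R)\gtrsim \diam(R)^d$ for each bad stopping child $R$ of $R_0$, since this requires a careful reading of the David--Toro construction and the use of lower content regularity to force the parametrization not to collapse a hole at scale $R$. Once this bridging estimate is in hand, the corona bookkeeping across generations is routine geometric-series telescoping, and the verification that David--Toro's hypotheses (Reifenberg flatness at every scale within a good tree, together with the local $\beta^2$-Carleson bound) are satisfied uniformly is a direct consequence of the stopping-time definition.
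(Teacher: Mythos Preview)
Your stopping criterion---stopping when $\vartheta_E^d(3Q)\geq\ve$---does not mesh with the hypothesis of Theorem~\ref{DTtheorem2.5}. That theorem requires a uniform bound $\sum_{k}\ve_k'(f_k(z))^2\leq M_1$ along every vertical trajectory; on a ``good tree'' where merely $\vartheta_E<\ve$ everywhere, you know each individual $\beta$ is small but nothing prevents the vertical $\beta^2$-sum from being large or even infinite. Consequently you cannot invoke Theorem~\ref{DTtheorem2.5} to produce a bi-Lipschitz $\Sigma_{R_0}$, and the area formula you write, $\cH^d(\Sigma_{R_0})\leq C(n)\bigl(\diam(R_0)^d+\sum_Q\beta^2\diam(Q)^d\bigr)$, is not a conclusion of David--Toro. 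The paper's remedy is to stop instead on the \emph{accumulated} $\beta^2$-sum: $R$ is added to the tree only if $\sum_{R'\subsetneq T\subseteq Q}\beta_E^{d,1}(MB_T)^2<\ve^2$ for every sibling $R'$. This makes the hypothesis of Theorem~\ref{DTtheorem2.5} hold by construction, so $\cH^d(\Sigma_{S})\lesssim_{\ve,n}\ell(Q(S))^d$ with a \emph{uniform} constant, and it makes the ``paying'' lemma for the tree tops (Lemma~\ref{l:sumS_Jlarge}) an immediate consequence of the stopping rule rather than a separate estimate.

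Your treatment of $\largetheta{E}^{d,\Delta,\ve}$ also has a gap. The bridging bound only pays for the \emph{stopping} bad cubes at each generation; it does not pay for the bad cubes lying strictly between a stopping cube and the next layer of good roots (and there can be arbitrarily many such bad generations), nor does it control $\sum_{R_0}\diam(R_0)^d$ over the new good roots inside a fixed bad stopping cube $R$ (disjoint dyadic subcubes of $R$ in $\bR^n$ can have $\sum\diam^d\gg\diam(R)^d$ when $d<n$, so the telescoping does not close). The paper handles $\largetheta$ by a completely separate argument (Proposition~\ref{p:theta-large-bdd}): one splits $\{\vartheta_E\geq\ve\}$ into $\{\beta_{E,\infty}\gtrsim\ve\}$ (controlled directly via Lemma~\ref{l:betainfbeta} and the lower content regularity) and $\{\eta_E\gtrsim\ve\}$, and the latter is bounded by comparing to the already-constructed bi-Lipschitz surfaces $\Sigma_S$ (Proposition~\ref{p:eta-E-large-eta-sigma-small} together with Lemma~\ref{l:large-theta-sigma}). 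This avoids having to telescope bad-cube counts across generations altogether.
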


}

\postRef{As a corollary of the above two theorems, we have that for any $(c,d)$-lower content regular set and $1\leq p<p(d)$ that for $\ve>0$ small enough
\[
\cH^{d}(E\cap B(0,1)) +
      \largetheta{E}^{d,\Delta,\ve}(0,1)\\
 \sim_{n,c,C_{0},p,\epsilon} 1+\sum_{Q\in \Delta \atop \ell(Q)\leq 1, Q\cap E\cap B(0,1)\neq\emptyset}\beta_{E}^{d,p}(C_0 Q)^{2}\diam(Q)^{d}.
 \]
 }

%
%
%

\begin{remark}
The proof of Theorem \ref{t:thmiii-dyadic} contains more information than presented in its statement.  The rectifiability of $E$ comes about from the construction of a sequence of surfaces, which are bi-Lipschitz images of $d$-dimensional cubes.  The bi-Lipschitz constant and sizes of cubes are controlled.  For example, one may slightly modify the construction to yield a connected, rectifiable set $\Gamma$ such that $\cH^d(E\setminus \Gamma)=0$ and 
for all $x\in \Gamma$ , $r\leq 1$ we have
$\cH^d_\infty(\Gamma\cap B(x,r))\geq c' r^d$ for some (explicit) $c'>0$. This $\Gamma$ will have 
$$\cH^d(\Gamma)\leq 
{  C(n,c)\ps{1+\sum_{Q\in \Delta \atop \ell(Q)\leq 1, Q\cap E\cap B(0,1)\neq\emptyset}\beta_{E}^{d,p}(C_0 Q)^{2}\diam(Q)^{d}}.
}
$$

 In fact, one obtains a coronization similar to that in \cite{DS}, but with bi-Lipschitz surfaces in place of Lipschitz graphs. We refer the reader to \cite{DS} for the definition of a coronization.
\end{remark}

\postRef{

\subsection{Examples and Motivation for using $\beta_{E}^{d,p}$}

Having stated these results, we can further motivate our choice of $\beta_{E}^{d,p}$.
There are a few other natural choices that one could try to make:  using averages (w.r.t. $\cH^d$) to define $\beta$, or using a supremum.  These are addressed in Examples \ref{e:ex-1}, \ref{e:ex-2} and \ref{e:ex-3})

 and discuss the difference between studying the geometry of the set $E$ and the geometry of the surface measure $\cH^{d}|_{E}$. Note that there are other $\beta$-type numbers we could have considered. One option is 
\[
\beta_{E}^{d,p,*}(B) = \inf_{L}\ps{\avint_{E\cap B} \ps{\frac{\dist(y,L)}{r_{B}}}^{p}d\cH^{d}(y)}^{\frac{1}{p}}.
\]
This quantity in some sense measures how flat the measure $\cH^{d}|_{E}$ is by measuring how close it is to lying on a $d$-dimensional plane. However, it does require the a priori assumption that $E$ has locally finite $\cH^{d}$-measure, and even so, it does not necessarily give useful {\it geometric} information if the surface measure is unstable, as the following example will show. 
%
%
\begin{example}\label{e:ex-1}
Consider the following set in the complex plane, let $I_{j}= [0,1]+2^{-j^2}i\subseteq \bC$ and set 
\[
E=\d[0,1]^{2}\cup \bigcup_{j=10}^{N} I_{j}.\]
If $Q$ is a dyadic cube contained in $[0,1]^{2}$, and $Q\cap E \neq\emptyset$ but $3Q$ does not contain any of the endpoints of the $I_{j}$ or points in $i+\bR$ (call this set of cubes $\cG$), then $Q\cap I_{j}\neq\emptyset$ for some $j$. If additionally $3Q\cap I_{k}\neq\emptyset$ for some $k>j$, then in fact $3Q\cap \bR\neq\emptyset$ so in fact $3Q\cap I_{\ell}$ for all $\ell\geq j$. Thus, either $\beta_{E}^{1,2,*}(3Q)=0$ or $\beta_{E}^{1,2,*}(3Q)>0$ and $3Q\cap \bR\neq\emptyset$, in which case, if $j(Q)$ be the smallest integer so that $2^{-j(Q)}<\ell(Q)$, then
\begin{align*}
\beta_{E}^{d,2,*}(3Q)^{2} 
& \sim \frac{1}{\ell(Q)(N-j(Q))}\int_{3Q\cap E}\ps{\frac{\dist(x,\bR)}{\ell(Q)}}^{2} d\cH^{1}(x)\\
& \frac{1}{\ell(Q)(N-j(Q))} \sum_{j=j(Q)-1}^{N}2^{-j^{2}+2j(Q)} \ell(Q)\\
& \lec \frac{1}{N-j(Q)}.
\end{align*}
It is not hard to show that if $\cB$ are those cubes $Q$ for which $3Q$ contains an endpoint of some $I_{j}$ then $\sum_{Q\in \cB}\ell(Q)\lec 1$.  If cubes only intersect $i+\bR$ then $\beta_{E}^{d,2,*}=0$.
Thus,
\begin{align*}
\sum_{Q\subseteq Q_{0}\atop Q\cap E\neq\emptyset}\beta_{E}^{d,2,*}(3Q)^{2} \ell(Q)
& \lec 1+ \sum_{Q\in \cG}\frac{1}{N-j(Q)} \ell(Q)
 \lec 1+ \sum_{j=1}^{N-1} \frac{1}{N-j} \lec \log N.
\end{align*}
However, $\cH^{1}(E)\sim N$, and so it is not possible to obtain a theorem like Theorem \ref{t:thmiii-dyadic}, even if we assumed $E$ had finite measure. Additionally, even if a theorem like Theorem \ref{t:thmii-dyadic} held with $\beta_{E}^{1,p,*}$, this estimate above shows that the square sum of $\beta_{E}^{1,p,*}$ in place of $\beta_{E}^{1,p}$ gives a weaker lower bound for $\cH^{1}(E)+\Theta_{E}^{1,\Delta,\ve}(0,\sqrt{2})$. 
\end{example}

\begin{example}\label{e:ex-2}
Using $\beta_{\cH^{d}|_{E},p}^{d}$ won't work either: if we take $E$ to be the union of boundaries of squares in $[0,1]^{2}$ of sidelength $2^{-N}$, then, for each dyadic cube $Q$ that intersects $E$ with $\ell(Q)=2^{-j}\geq 2^{-N}$, 
\[
\beta_{\cH^{1}|_{E},2}^{d}(3Q)^{2}
\gec \frac{2^{2(N-j)} 2^{-N}}{\ell(Q)} = 2^{N-j} = 2^{N}\ell(Q)\]
and so 
\[
\sum_{Q\subseteq [0,1]^{2} \atop Q\cap E\neq\emptyset} \beta_{\cH^{1}|_{E},2}^{2}(3Q)^{2} \ell(Q) 
\gec \sum_{Q\subseteq [0,1]^{2} \atop Q\cap E\neq\emptyset,\ell(Q)\geq  2^{-N}} 2^{N}\ell(Q)^{2}
\gec 2^{N}N.
\]
However, $\cH^{1}(E)\sim 2^{2N}\cdot 2^{-N}=2^{N}\sim \Theta_{E}^{1,\Delta,\ve}(0,\sqrt{2})$, so again, a version of Theorem \ref{t:thmii-dyadic} cannot hold with these $\beta$-numbers. 
\end{example}

\begin{example}\label{e:ex-3}
Using $\beta_\infty$ dos not work.
The following example was constructed by X. Fang in his Ph.D. dissertation under the supervision of P. Jones \cite{Fang-Jones-example}.
It was communicated to the authors by Guy David and Sean Li.

Let $Q_0\subset \bR^3$ be the unit cube.
We will construct a sequence of functions $f_0,f_1, f_2,...: Q_0\to \bR$ which uniformly converge to a function $f$.  All of these functions will be 1-Lipschitz.
To this end, let $\epsilon_1,  \epsilon_2,...<1$ be a sequence of powers of 2,  such that
$\sum \epsilon_k^2$
diverges, and 
$\sum \epsilon_k^3<\frac12.$
Write $\epsilon_k=2^{-j_k}$.
We will set $\cD_k$ to be dyadic subcubes of $Q_0$  of side 
$2^{-j_1-...-j_k}$.
Let $f_0:Q_0\to \bR$ be the constant function. 
Subdivide $Q_0$ into cubes of side $\epsilon_1$, i.e. into the cubes of $\cD_1$.  
Choose one of these $\cD_1$ cubes, and modify $f_0$ on it to obtain $f_1$ by adding a piecewise linear spike of hight $\epsilon_1/2$. 
Take now all other $\cD_1$ cubes, and in each one choose a $\cD_2$ cube. 
Get $f_2$ from $f_1$ by adding a piecewise linear spike of hight $\epsilon_2/2$ at each of these special $\cD_2$ cubes.
Continue this way.  
To get $f_{i+1}$ from $f_i$ choose in each $\cD_i$ cube on which $f_i$ is constant, a single subcube from $\cD_{i+1}$, and add a piecwise linear spike there of hight $\epsilon_{i+1}/2$.
We now make a few observations.
\begin{itemize}
\item If on $f_i$ was modified on a cube $Q\in \cD_{i+1}$, then for all $k>i$, $f_k$ will never be modified inside $Q$.
\item $\|f_{i+1}-f_i\|_\infty<\epsilon_i$
\item The total volume of cubes from $\cD_{k+1}$ which was modified in going from 
$f_k$ to $f_{k+1}$ was 
$$1-\sum_1^k \epsilon_i^3\geq \frac12.$$ 
\item At each of the cubes modified in going from 
$f_k$ to $f_{k+1}$, 
the, $\beta_\infty$ of the correspoonding dyadic cube in $\bR^4$ (for the graph of the function $f_l$ for any $l>k$) is $\sim \epsilon_{k+1}$.
\end{itemize}
Thus we have that if we compute 
$$\sum \beta_\infty^2(Q) \diam(Q)^3$$
for the graph of the limiting function, the sum is proportional to 
$\sum \epsilon_k^2$ which diverges.
The main point  above is that the exponent of the $\beta$  and that of the $\epsilon$ sum differ because of the dimension.
By scaling this example we may get that it is Reifenberg flat. 
\end{example}

Thus, the $\beta_{E}^{d,p}$ that we have are ideal in that none of the previously studied $\beta$-numbers can achieve the same results. 
}

\bigskip

\postRef{

\subsection{Motivation for $\largetheta{E}^{d,\Delta,\ve}$, and when it disappears}
\label{s:theta}

The presence of $\largetheta{E}^{d,\Delta,\ve}$ in our results may seem odd, but our next example shows that it cannot be ignored.

\begin{example}\label{thetaexample}
Let $f_{j}(z)=z/4+z_{j}$ where $z_{0}=0,$ $z_{1}=3/4$, $z_{2}=3i/4$, and $z_{3}=3(1+i)/4$. These are the usual contractions used in defining the $4$-corner Cantor set in the plane. Let $f(K):=\bigcup_{j=0}^{3} f_{j}(K)$,  $K_{0}:=\d[0,1]^{2}$, and $K_{n}=f(K_{n-1})$. Then it is not hard to show that $\cH^{1}(K_{n})\sim 1$ for all $n$ while
\[
\sum_{Q\cap K_{n}\neq\emptyset}\beta_{E}^{1,2}(3Q)^{2}\ell(Q)
\sim n.
\]
Thus, by Theorems \ref{t:thmii-dyadic} and \ref{t:thmiii-dyadic}, for large $n$, $\Theta_{K_{n}}^{1,\Delta,\ve}(B(0,2))\sim n\gg \cH^{1}(K_{n})$. 
\end{example}
}
This exampe should be viewed in the following way. 
One may estimate the length of the shortest curve containing a set $E$ by \Theorem{TST}. This length, however, could be much bigger than the length of $E$. The set $E$ can be seen as the curve $\Gamma$ punctured by  holes, which  $\largetheta{E}^{d,\Delta,\ve}$ accounts for.  This happens since $\vartheta_{E}^{d}$ also measures  how far an optimal plane is from $E$, and hence, if $\vartheta_{E}^{d}$ is large yet $\beta_{E}^{d,p}$ is small, this means that $E$ is very flat but contains a large $d$-dimensional hole.

Finally, we note that the  $\largetheta{E}^{d,\Delta,\ve}$ quantity is subsumed by the $\cH^d$ quantity in some natural situations. Firstly, \Theorem{TST} implies that 
\[\largetheta{\Gamma}^{1,\Delta,\ve}(x,r)\leq C\ve^{-2}\cH^{1}(B(x,r)\cap \Gamma)\] 
for any curve $\Gamma$. There are also some objects of higher topological dimension for which this holds.

\begin{definition}[Condition B]\label{d:condition-b}
We will say $E\subseteq \bR^{n}$ {\it satisfies Condition B} for some $c>0$ if for all $x\in E$ and $r>0$, one can find two balls of radius $rc$ contained in $B(x,r)$ in two different components of $E^{c}$.
\end{definition}

Usually, this definition also assumes $E$ is Ahlfors regular (see for example \cite{Dav88}, \cite{DS93}, and \cite{DJ90}), and there the authors give different proofs that, in this situation, $E$ is uniformly rectifiable. If $E$ has locally finite $\cH^{n-1}$-measure, then one can show that, for any ball $B$ centered on $E$, there is a Lipschitz graph $\Gamma$ so that $\cH^{n-1}(\Gamma\cap B\cap E)\geq r_{B}^{n-1}$ with Lipschitz constant depending on $\cH^{n-1}( B\cap E)$ (see \cite{JKV97} and \cite{Bad12}). 

\begin{theorem}[David, Semmes \cite{DS93}]  Let $E\subseteq \bR^{n}$ satisfy Condition B for some $c>0$. Then for all $x\in E$, and $\ve,r>0$,
\begin{equation}\label{e:star}
\largetheta{E}^{n-1,\Delta,\ve}(x,r)\leq C(\ve,n) \cH^{n-1}(E\cap B(x,r)).\end{equation}
\end{theorem}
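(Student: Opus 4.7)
My plan is to reduce the claim to a Carleson-type packing estimate for bad cubes, where the two key inputs are a lower Ahlfors $(n-1)$-regularity bound that Condition B supplies for free, and a stopping-time argument to sum across all scales. First I would establish the lower regularity bound. For $\xi \in E$ and $s>0$, Condition B yields two balls $B_1, B_2 \subseteq B(\xi, s) \cap E^c$ of radius $cs$ lying in different components of $E^c$, with centers $b_1, b_2$. Let $v=(b_2-b_1)/|b_2-b_1|$ and $\pi$ the orthogonal projection onto $v^\perp$. For each $y\in B_1$, the segment $\{y+t(b_2-b_1):t\in[0,1]\}$ stays inside $B(\xi,s)$ (after slightly shrinking $c$) and crosses from one component of $E^c$ to the other, so it must meet $E$. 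Hence $\pi(E\cap B(\xi,s))\supseteq \pi(B_1)$, which is an $(n-1)$-disk of radius $cs$, and since orthogonal projection is $1$-Lipschitz,
\[
\cH^{n-1}(E\cap B(\xi,s))\geq c_0(c,n)\, s^{n-1}.
\]

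With this in hand, applied to some $\xi\in E\cap Q$ at scale $\diam(Q)$, I immediately obtain $\cH^{n-1}(E\cap 3Q)\gtrsim \diam(Q)^{n-1}$ for every bad cube $Q$. If the bad cubes had uniformly bounded pointwise overlap, integrating $\sum_Q \one_{3Q}$ against $\cH^{n-1}|_E$ would close the argument. Unfortunately, dyadic cubes of arbitrarily many scales can meet at a common point, so a raw overlap count fails and a genuine packing estimate is required. I would also assume without loss of generality that $\cH^{n-1}(E\cap B(x,r))<\infty$, since the inequality is otherwise trivial; the lower regularity then confines us to a regime where one can hope for a Carleson packing.

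The main work, and the step I expect to be the main obstacle, is precisely to prove the Carleson-type packing bound: for every dyadic $R$,
\[
\sum_{Q\subseteq R,\, Q\text{ bad}}\diam(Q)^{n-1} \leq C(\ve,n)\, \cH^{n-1}(E\cap 3R).
\]
I would attack this by a stopping-time construction in the spirit of the uniformly rectifiable set literature: starting from a bad cube $R$, descend dyadically until either $\vartheta_E^{n-1}(3Q')$ drops below a threshold $\ve'\ll \ve$ or sufficient new mass has been accumulated, using the non-flatness $\vartheta_E^{n-1}(3Q)\geq \ve$ at each stopping generation to identify, via the optimizing hyperplane $L_Q$ for $\vartheta$, either a point of $E\cap 3Q$ far from $L_Q$ or a deep hole in $L_Q\cap 3Q$ away from $E$; in either case Step 1 applied to a ball of radius $\sim \ve\diam(Q)$ provides a previously unaccounted chunk of $\cH^{n-1}$-mass of order $\ve^{n-1}\diam(Q)^{n-1}$ that can be charged to $Q$. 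The bookkeeping---making sure the chunks charged to distinct bad cubes are essentially disjoint and that the stopping generations telescope correctly---is the technical heart of the argument. As an alternative route, one can invoke the David--Semmes characterization of uniformly rectifiable sets from \cite{DS93}: Condition B together with local upper regularity (which follows locally from Step 1 plus the finiteness assumption via a covering argument) implies that $E\cap B(x,r)$ is uniformly rectifiable, and the Carleson packing of $\vartheta\geq \ve$ cubes is a standard consequence, yielding the bound directly.
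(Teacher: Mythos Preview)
The paper does not give its own proof; it simply cites \cite[Theorem~1.20]{DS93} and remarks that the argument there only uses the \emph{lower} regularity of $E$ (which, as you correctly show in Step~1, follows from Condition~B alone). So your outline is attempting more than the paper does. That said, there are two concrete gaps.

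\textbf{Case (b) is not handled correctly.} You write that in the hole case ``Step~1 applied to a ball of radius $\sim\ve\diam(Q)$ provides a previously unaccounted chunk of $\cH^{n-1}$-mass.'' But Step~1 is lower regularity, and it requires a center in $E$; a hole is precisely a ball \emph{disjoint} from $E$, so there is nothing to apply it to. The actual mechanism is different and uses Condition~B a second time: if $\beta_{E,\infty}^{n-1}(3Q,L)<\delta$ with $\delta<c$, then $E\cap 3Q$ lies in a slab of width $2\delta\diam(3Q)$ about $L$, the two Condition~B balls (radius $c\diam(3Q)>\delta\diam(3Q)$) must protrude on opposite sides of that slab, and any hole of radius $>\delta\diam(3Q)$ in $L$ would let you join them through the slab---contradicting that they lie in distinct components of $E^{c}$. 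Hence for Condition~B sets one has $\vartheta_{E}^{n-1}\lesssim_{c}\beta_{E,\infty}^{n-1}$ on slightly larger balls, so $\vartheta_{E}^{n-1}(3Q)\geq\ve$ forces $\beta_{E,\infty}^{n-1}(CQ)\gtrsim\ve$ and you are always in case~(a). This is the separation-vs-holes point that makes Condition~B special, and it is the real reason the bilateral quantity $\vartheta$ (not just $\beta_\infty$) admits a Carleson packing here; your sketch does not isolate it.

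\textbf{The UR alternative does not close.} Your claim that ``local upper regularity follows from Step~1 plus the finiteness assumption via a covering argument'' is not justified: lower regularity plus $\cH^{n-1}(E\cap B(x,r))<\infty$ does not imply an upper bound $\cH^{n-1}(E\cap B(\xi,s))\leq Cs^{n-1}$ with $C$ independent of the ball. Even if you could make $E\cap B(x,r)$ Ahlfors regular, the UR/BWGL machinery would produce a Carleson constant depending on that regularity constant---hence on $\cH^{n-1}(E\cap B(x,r))/r^{n-1}$---so you would get $C(\ve,n,\cH^{n-1}(E\cap B(x,r)))$ rather than the stated $C(\ve,n)$. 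This is exactly why the paper stresses that the DS93 proof ``only really depends on the lower regularity''; the direct stopping-time route (with the fix above for case~(b)) is the one that gives the correct uniform constant, and your acknowledged bookkeeping step is indeed where the work lies.
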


This is not stated in \cite{DS93}, but it is implied by the proof of \cite[Theorem 1.20]{DS93}, since it only really depends on the lower regularity of $E$, that is, 
\[
\cH^{d}(B(x,r)\cap E)\geq cr^{d} \mbox{ for all $x\in E$ and $0<r<\diam E$},\] 
and such sets are always lower regular with constant depending on the Condition B constant.

Thus, by Theorems \ref{t:thmii-dyadic} and \ref{t:thmiii-dyadic}, we have the following.

\begin{maincor}
If $E$ satisfies Condition B for some constant $c>0$, then for $C_{0}>1$ and $1\leq p<p(d)$, 
{
\begin{equation}
 \cH^{n-1}(E\cap B(0,1)) 
\sim_{C_{0},n,c,p} 1+\sum_{Q\in \Delta \atop Q\cap E\cap B(0,1)\neq\emptyset}\beta_{E}^{n-1,p}(C_0 Q)^{2}\diam(Q)^{n-1}
\label{e:beta<hd3}
\end{equation}}

\end{maincor}

An interesting problem would be to determine what simple geometric criteria a set $E$ has to have in order to satisfy 
\begin{equation}
\largetheta{E}^{d,\Delta,\ve}(B(x,r))\leq C(\ve,n) \cH^{d}(E\cap B(x,r)).
\end{equation}
much like the program that already exists for finding criteria for when Ahlfors regular sets are  uniformly rectifiable (see \cite{DS,of-and-on}). \\

%
%

\subsection{Outline of the paper}
Section \ref{s:preliminaries} contains preliminaries, a discussion of a theorem of David and Toro (Theorem \ref{t:DT}), as well as some basic properties of our Choquet-style definition of $\beta$.
Other preliminaries regarding  $\cH^d_\infty$ ``integration'' (in the sense of Choquet) were pushed to the Appendix (Section \ref{s:appendix}).

{ In Section \ref{s:state-flat-case}, we restate Theorem \ref{t:thmii-dyadic} differently as Theorem \ref{t:thmii}, using maximal nets and balls instead of cubes, which will be more natural to prove.
We also have a version of Theorem \ref{t:thmiii-dyadic} with nets and balls.
 We then introduce Theorem \ref{t:flat-case}, which is a version of Theorem \ref{t:thmii} for the Reifenberg flat case. 
}
Section \ref{s:description-of-proof} will contain a loose description of the proofs of Theorems \ref{t:flat-case} and \ref{t:thmii}, the details of which will be carried out in Sections \ref{s:stoppingtime}--\ref{s:TheoremII}. 

Sections \ref{s:pf-of-thmiii}  and \ref{s:pf-of-thmiii-part2} contain a proof of Theorem \ref{t:thmiii}. The proof  can be summarized as  {\it a stopping time applied to 
Theorems \ref{t:DT} and \ref{DTtheorem2.5}} (both  from \cite{DT12}).

\subsection{Constants}

We list some important constants, where they appear, are fixed, and their dependancies.
\begin{itemize}
\item $n$: ambient dimension (as in $\bR^n$). Given in main theorems.
\item $d$: intrinsic dimension (as in $\cH^d$). Given in main theorems.
\item $C_0$: given in main theorems.
\item$c_0$:  fixed in Theorem \ref{t:Christ} as $1/500$.
\item$r_k$: fixed in Theorem \ref{t:DT} as $r_{k}=10^{-k}$.
\item$A$: ball dilation factor. Given in the reformulation of the main theorems which happens in Section \ref{s:state-flat-case}.
\item $C_1$: fixed as $C_1=2$ for Lemma \ref{l:four-ten} and $C_{1}=2C_{2}$ for \Lemma{graph}. In section \ref{s:TheoremII}, it will be fixed so that $20C_{0}\ll C_{1}$ where $C_{0}$ is the constant appearing in Theorem \ref{t:thmii}. 
\item $C_{2}$: a constant introduced in \Lemma{sigmagraph} and fixed in Section \ref{s:beta-omega}, see Remark \ref{r:c2}.

\item $\rho$: scale factor. See Theorem \ref{t:Christ}.  The constant $\rho$ is fixed following the statement of Theorem \ref{t:Christ}.

\item $\alpha$: Allowed angle of rotation between planes in stopping time region. See remark \ref{r:params-1} and definition of $S_Q$ which follows it. 
The constant $\alpha$ may be fixed in the proof of Lemma \ref{l:asum2} and depends only on $n$, see Remark \ref{r:alpha}.

\item $\epsilon$: Reifenberg flatness constant.  Depends on $n$, and $\alpha, \tau$, and should be small enough for \Theorem{DT} below to hold. See Remark \ref{r:params-1}.
In relation to $\alpha$ we will require that $\epsilon\ll\alpha^4$.

\item $\tau$, $\tau_0, \tau_1$: Constant used for extending Stopping time regions. $\tau<\tau_0, \tau_1, \frac{c_0}4$, where $\tau_0=\tau_0(\rho)$, and $\tau_1=\min\{\tau_1(2C_2),\tau_{1}(2),\tau_1(4)\}$. See Remark~\ref{r:tau2}.
The constant $\tau$ is fixed (for the proof of Theorem \ref{t:flat-case}) at the start of  Section \ref{s:layers}.

\item $M$: a dilation factor for balls. Used in a similar fashion to $C_0$ and $A$. 

\item $\theta$: an angle of rotation between planes. First introduced in Lemma \ref{l:graph}. One can take $\theta\lesssim \epsilon$.  

\end{itemize}

\subsection{Acknowledgements}

The authors would like to thank Peter Jones, Xavier Tolsa, and Tatiana Toro for their helpful discussions. In fact, some of the core ideas arose from ongoing work between the first author and Xavier Tolsa  as well as the second author and Peter Jones. \postRef{We also thank Silvia Ghinassi and Michele Villa for their careful proofreading of the manuscript. Finally, we would like to thank the anonymous referee who had many useful comments and suggestions that greatly improved the paper.}

\section{Preliminaries}\label{s:preliminaries}

We will write $a\lesssim b$ if there is $C>0$ such that $a\leq Cb$ and $a\lesssim_{t} b$ if the constant $C$ depends on the parameter $t$. We also write $a\sim b$ to mean $a\lesssim b\lesssim a$ and define $a\sim_{t}b$ similarly.

For sets $A,B\subset \bR^{n}$, let 
\[\dist(A,B)=\inf\{|x-y|:x\in A,y\in B\}, \;\; \dist(x,A)=\dist(\{x\},A),\]
and 
\[\diam A=\sup\{|x-y|:x,y\in A\}.\]

{
Recall the definition of $d_{x,r}(E,F)$ from Definition \ref{d:dxref}. It is not hard to show that, for sets $E,F,G$, while this does not satisfy the triangle inequality, we do have
\begin{equation}\label{e:dxrtrngl}
d_{x,r}(E,G)\lec d_{x,2r} (E,F)+d_{x,2r}(F,G).
\end{equation}

}

{
We will denote by $\Delta$ the standard dyadic grid on $\bR^n$, that is
$$\Delta=\left\{\left[\frac{j_1}{2^k},\frac{j_1+1}{2^k}\right)\times \dots \times \left[\frac{j_n}{2^k},\frac{j_n+1}{2^k}\right],\quad k,j_1,\dots,j_n\in\bZ\right\}.$$

For any cube $Q$ and $\lambda>0$, we let $\lambda Q$ the cube which i a dilation by $\lambda$ of $Q$, that is, $\lambda Q$ is concentric with $Q$,  has sides parallel to $Q$ and with  $\diam(\lambda Q)= \lambda \diam(Q)$.
}

\subsection{Hausdorff measure and content}\label{subsec:meas-and-cont}

For a subset $A\subset\bR^{n+1}$, integer $d>0$, and $0<\delta\leq \infty$ one sets
\[\cH^{d}_{\delta}(A)=\inf\ck{\sum (\diam A_i)^{d}: A\subset \bigcup A_i,\,\diam A_i\leq \delta}.\]
 The {\it $d$-dimensional Hausdorff measure} of $A$ is defined as
\[\cH^{d}(A)=\lim_{\delta\downarrow 0}\cH^{d}_{\delta}(A),\]
and $\cH^{d}_{\infty}(A)$ is called the {\it $d$-dimensional Hausdorff content} of $A$. See \cite[Chapter 4]{Mattila} for more details.  
Note that $\cH^{d}_{\infty}$ is not a  measure. We do, however, want to use the notation of  integration with respect to $\cH^{d}_{\infty}$.
For $0< p<\infty$ and $E\subseteq \bR^{n}$ Borel, we will define the $p$-Choquet integral 
\[
\int_{E} f^{p} d\cH^{d}_{\infty}:=\int_{0}^{\infty} \cH^{d}_{\infty}(\{x\in E:f(x)>t\})t^{p-1}dt\]
and 
\[ \int f d\cH^{d}_{\infty} =  \int_{\bR^{n}} f d\cH^{d}_{\infty}.\]

\medskip

The proofs of the following three lemmas may be found in the Appendix (Section \ref{s:appendix}).
\begin{lemma}\label{l:intineq}
Let $0< p<\infty$.
Let $f_{i}$ be a countable collection of Borel functions in $\bR^{n}$. If the sets $\supp f_{i}=\{f_{i}>0\}$ have bounded overlap, meaning there exists a $C<\infty$ such that 
\[
\sum \one_{\supp f_{i} }\leq C,\]
then
\begin{equation}\label{e:intineq}
 \int \ps{\sum f_{i}}^{p}d\cH^{d}_{\infty} \leq C^{p} \sum \int  f_{i}^{p}d\cH^{d}_{\infty}.\end{equation}
\end{lemma}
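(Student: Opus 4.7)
The plan is to unfold both sides using the definition of the Choquet integral as a layer-cake integral of $\cH^{d}_{\infty}$ of superlevel sets, reduce to a pointwise set-inclusion, and then change variables.

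First, I would observe the key pointwise fact: if $\sum_{i}\one_{\supp f_{i}}\leq C$, then at every $x$ at most $C$ of the numbers $f_{i}(x)$ are strictly positive. Hence if $\sum_{i}f_{i}(x)>t$ then the largest term must satisfy $\max_{i}f_{i}(x)>t/C$ (a sum of at most $C$ nonnegative terms exceeding $t$ forces one term to exceed $t/C$). This yields the inclusion
\[
\Bigl\{x:\sum_{i}f_{i}(x)>t\Bigr\}\;\subseteq\;\bigcup_{i}\bigl\{x:f_{i}(x)>t/C\bigr\}.
\]

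Next, using countable subadditivity of the Hausdorff content $\cH^{d}_{\infty}$ and the definition of the Choquet integral, I would estimate
\[
\int\Bigl(\sum_{i}f_{i}\Bigr)^{p}d\cH^{d}_{\infty}
=\int_{0}^{\infty}\cH^{d}_{\infty}\!\Bigl(\Bigl\{\sum_{i}f_{i}>t\Bigr\}\Bigr)t^{p-1}\,dt
\;\leq\;\sum_{i}\int_{0}^{\infty}\cH^{d}_{\infty}(\{f_{i}>t/C\})\,t^{p-1}\,dt.
\]
Finally, the substitution $s=t/C$ in each integral on the right gives a factor of $C^{p}$ and recovers $\int f_{i}^{p}\,d\cH^{d}_{\infty}$, which is exactly the claimed inequality \eqref{e:intineq}.

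There is no real obstacle here; the only thing one needs to be slightly careful about is that countable subadditivity of $\cH^{d}_{\infty}$ holds for arbitrary sets (not just Borel) and that the layer-cake formula is simply the definition of the Choquet integral, so no Fubini/measurability issue arises beyond checking that the superlevel sets of Borel functions are Borel.
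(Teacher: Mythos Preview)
Your proof is correct and follows essentially the same route as the paper: both arguments use the bounded-overlap hypothesis to conclude $\sum_i f_i(x)\le C\max_i f_i(x)$, then pass to superlevel sets, apply countable subadditivity of $\cH^{d}_{\infty}$, and rescale by $C$. The only cosmetic difference is that the paper partitions via the sets $S_j=\{x:f_j(x)=\max_i f_i(x)>0\}$ before invoking subadditivity, whereas you go directly to the union $\bigcup_i\{f_i>t/C\}$; your formulation is slightly more streamlined but the content is the same.
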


\begin{lemma}
\label{l:contentlim}
Let $E\subseteq \bR^{n}$ be compact and $f$ a continuous function on $\bR^{n}$. Let $E_{j}$ be a decreasing sequence of sets containing $E$ and converging to $E$ in the Hausdorff metric. Then 
\begin{equation}\label{e:etetj}
\lim_{j\rightarrow\infty} \int_{E_{j}} f d\cH^{d}_{\infty} \sim \int_{E} fd\cH^{d}_{\infty}.
\end{equation}

\end{lemma}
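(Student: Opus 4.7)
The plan is to reduce the lemma to a level-by-level comparison of Hausdorff contents by using the layer-cake identity
\[
\int_{E_{j}} f\,d\cH^{d}_{\infty}=\int_{0}^{\infty}\cH^{d}_{\infty}(\{x\in E_{j}:f(x)>t\})\,dt
\]
(assuming $f\ge 0$, since a general continuous $f$ splits into its positive and negative parts, both continuous). Writing $g_{j}(t)=\cH^{d}_{\infty}(\{f>t\}\cap E_{j})$ and $g(t)=\cH^{d}_{\infty}(\{f>t\}\cap E)$, the inclusion $E\subseteq E_{j+1}\subseteq E_{j}$ gives $g(t)\le g_{j+1}(t)\le g_{j}(t)$ pointwise, so the sequence $\int_{E_{j}}f\,d\cH^{d}_{\infty}$ is monotone decreasing and bounded below by $\int_{E}f\,d\cH^{d}_{\infty}$. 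In particular the limit on the left of \eqref{e:etetj} exists and automatically dominates the right side; all the work lies in the reverse direction.

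The heart of the proof is the pointwise estimate
\[
\limsup_{j\to\infty}g_{j}(t)\;\lesssim\;\cH^{d}_{\infty}(\{f\ge t\}\cap E)\qquad\text{for every }t>0.
\]
To obtain it I would fix $\epsilon'>0$ and $\eta>0$ and choose a cover $\{A_{i}\}$ of the compact set $K_{\epsilon'}:=\{f\ge t-\epsilon'\}\cap E$ with $\sum_{i}(\diam A_{i})^{d}\le \cH^{d}_{\infty}(K_{\epsilon'})+\eta$, all of whose $A_{i}$ have strictly positive diameter. For $\delta>0$ the open $\delta$-neighborhoods $\{A_{i}^{(\delta)}\}$ form an open cover of $K_{\epsilon'}$, from which compactness extracts a finite subcover $\{A_{i_{1}}^{(\delta)},\dots,A_{i_{N}}^{(\delta)}\}$. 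By uniform continuity of $f$ on a compact neighborhood of $E$, there is a modulus $\omega(\delta)\to 0$ such that once the Hausdorff distance from $E_{j}$ to $E$ is less than $\delta$, any $x\in\{f>t\}\cap E_{j}$ lies within $\delta$ of some $y\in E$ with $f(y)>t-\omega(\delta)$. Choosing $\delta$ so small that $\omega(\delta)<\epsilon'$ and then $j$ large, every such $x$ is covered by the finite subcover, giving
\[
g_{j}(t)\;\le\;\sum_{k=1}^{N}\bigl(\diam A_{i_{k}}+2\delta\bigr)^{d}.
\]
Sending $\delta\to 0$ with the finite cover fixed absorbs the $2\delta$ into a factor $(1+o(1))(\diam A_{i_{k}})^{d}$ (this is where having positive-diameter $A_{i}$ is used); letting then $\eta\to 0$ yields the bound with $\cH^{d}_{\infty}(K_{\epsilon'})$ on the right. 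Finally $\epsilon'\to 0^{+}$ collapses $K_{\epsilon'}$ onto $\{f\ge t\}\cap E$ via continuity of $\cH^{d}_{\infty}$ from above on decreasing compact sets, which itself is proved by the same finite-subcover argument.

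To finish, integrate in $t$: the integrands $g_{j}(t)$ are uniformly bounded by $\cH^{d}_{\infty}(E_{1})<\infty$ and supported in $[0,\|f\|_{\infty}]$, so reverse Fatou gives
\[
\limsup_{j\to\infty}\int_{E_{j}}f\,d\cH^{d}_{\infty}\;\lesssim\;\int_{0}^{\infty}\cH^{d}_{\infty}(\{f\ge t\}\cap E)\,dt.
\]
The monotone decreasing function $t\mapsto\cH^{d}_{\infty}(\{f\ge t\}\cap E)$ has at most countably many discontinuities, and at every continuity point equals $\cH^{d}_{\infty}(\{f>t\}\cap E)$; hence the right side above agrees with $\int_{E}f\,d\cH^{d}_{\infty}$, and combining with the monotone lower bound from the first paragraph gives \eqref{e:etetj}. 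The main technical obstacle I expect is the finite-cover transfer from $K_{\epsilon'}$ to $\{f>t\}\cap E_{j}$: juggling the four parameters $\delta,\eta,\epsilon'$, and the Hausdorff distance so that each limit can be taken cleanly is what naturally produces a comparability ``$\sim$'' rather than a genuine equality, and is what forces the use of $\{f\ge t\}$ on the right in the level-set bound.
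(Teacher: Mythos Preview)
Your approach is essentially the paper's: both use the layer-cake representation, reduce to a level-set convergence $\cH^d_\infty(\{f\ge t\}\cap E_j)\to\cH^d_\infty(\{f\ge t\}\cap E)$ via compactness, and then reconcile the strict versus non-strict level sets. The paper packages the compactness step as the one-line observation that nested sets with compact intersection have converging content (your finite-subcover argument is precisely how one proves that observation), and handles $\{f>t\}$ versus $\{f\ge t\}$ by the rescaling $\{g\ge t\}\subseteq\{g>\alpha t\}$ with $\alpha\uparrow 1$, in place of your countable-discontinuities argument; both devices give the same conclusion.

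One small point of execution: in your step ``sending $\delta\to 0$ with the finite cover fixed'', the finite subcover you extracted genuinely depends on $\delta$, so it is not fixed, and the factor $(1+o(1))$ is not justified as written. The clean repair is to note at the outset that for compact $K_{\epsilon'}$ one may compute $\cH^d_\infty(K_{\epsilon'})$ using a \emph{finite open} cover (enlarge each $A_i$ slightly to an open set of nearly the same diameter, then use compactness to pass to a finite subcover); with the cover finite and open from the start, a Lebesgue-number argument makes the transfer to $\{f>t\}\cap E_j$ work for all small $\delta$ simultaneously, and the rest of your argument goes through unchanged.
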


\begin{lemma}
\label{l:jensens}
Let $E\subseteq \bR^{n}$ be either compact or bounded and open so that $\cH^{d}(E)>0$, and let $f\geq 0$ be continuous on $E$. Then for $1<p\leq \infty$,
\begin{equation}\label{e:jensens}
\frac{1}{\cH^{d}_{\infty}(E)}\int_{E} f d\cH^{d}_{\infty}\lec_{n}  \ps{\frac{1}{\cH^{d}_{\infty}(E)}\int_{E} f^{p}d\cH^{d}_{\infty}}^{\frac{1}{p}} .
\end{equation}
\end{lemma}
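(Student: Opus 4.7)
The plan is to reduce the inequality to a one-dimensional calculation via the layer-cake representation that defines the Choquet integral. Set $M := \cH^{d}_{\infty}(E)$ and
\[ \mu(t) := \cH^{d}_{\infty}(\{x \in E : f(x) > t\}). \]
By monotonicity of $\cH^{d}_{\infty}$ the function $\mu$ is non-increasing in $t$ and satisfies $\mu(t) \leq M$ for every $t \geq 0$; continuity of $f$ together with the topological hypothesis on $E$ ensures that $\mu$ is a measurable function of $t$, so by definition
\[ \int_{E} f \, d\cH^{d}_{\infty} = \int_{0}^{\infty} \mu(t) \, dt, \qquad \int_{E} f^{p} \, d\cH^{d}_{\infty} = p \int_{0}^{\infty} \mu(t) \, t^{p-1} \, dt, \]
the second identity being a change of variables in the definition of the Choquet integral.

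The endpoint $p = \infty$ is immediate from $\mu(t) \leq M \cdot \mathds{1}_{[0,\|f\|_{\infty}]}(t)$, giving $\int_{E} f \, d\cH^{d}_{\infty} \leq M \|f\|_{\infty}$. For $1 < p < \infty$, the plan is to introduce a threshold $T > 0$ and split
\[ \int_{0}^{\infty} \mu(t) \, dt \;=\; \int_{0}^{T} \mu(t) \, dt + \int_{T}^{\infty} \mu(t) \, dt \;\leq\; T M + T^{-(p-1)} \int_{0}^{\infty} \mu(t) \, t^{p-1} \, dt, \]
using $\mu \leq M$ on $[0,T]$ and $1 \leq (t/T)^{p-1}$ on $[T,\infty)$ respectively. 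Writing $I := \int_{0}^{\infty} \mu(t)\, t^{p-1}\, dt$, the two terms on the right balance at $T^{p} \sim I/M$, and optimizing in $T$ yields
\[ \int_{0}^{\infty} \mu(t) \, dt \;\lesssim\; M^{1 - 1/p} \, I^{1/p}. \]
Dividing through by $M$ and rewriting $I$ and $\int \mu$ in terms of the Choquet integrals of $f^{p}$ and $f$ gives exactly the asserted inequality.

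The argument is essentially routine and I do not expect any serious obstacle. All of the structure of $\cH^{d}_{\infty}$ is compressed into the scalar function $\mu$ and the uniform bound $\mu \leq M$, so the compactness or openness of $E$ and the continuity of $f$ enter only to guarantee that $\mu$ is a well-defined measurable function of $t$ (as in the setup for Lemma \ref{l:contentlim}). I do note that the implicit constant in the split-and-optimize step depends on $p$ as well as on $n$; I suspect the notation $\lesssim_{n}$ in the statement simply treats $p$ as one of the ambient constants, since throughout the paper $p$ is restricted to the fixed range $[1, p(d))$ with $p(d)$ determined by $d$.
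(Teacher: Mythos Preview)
Your argument is correct and takes a genuinely different, more elementary route than the paper. The paper invokes Adams' duality result \eqn{adams}, which identifies $\int f\,d\cH^{d}_{\infty}$ (up to constants depending on $n$) with a supremum over Morrey-space measures $\mu$ with $\|\mu\|=1$; it then applies the ordinary H\"older inequality to each such $\mu$ together with the bound $\mu(E)\leq \|\mu\|\cH^{d}_{\infty}(E)$. This requires $f\one_{E}$ to be lower semicontinuous, which is why the paper first treats open $E$ and then passes to compact $E$ via the approximation Lemma~\ref{l:contentlim}. Your layer-cake split-and-optimize argument bypasses all of this: it uses only the scalar function $\mu(t)=\cH^{d}_{\infty}(\{f>t\})$, the trivial bound $\mu(t)\leq M$, and the fact that a non-increasing function is automatically measurable. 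In particular your proof needs neither the Adams theorem nor any topological hypothesis on $E$ or $f$, and it yields an absolute constant (essentially $2$) rather than one depending on $n$; the $n$-dependence in the paper's statement is an artifact of the Adams route. One small remark: with the paper's convention $\int_{E} f^{p}\,d\cH^{d}_{\infty}:=\int_{0}^{\infty}\mu(t)t^{p-1}\,dt$ there is no factor of $p$, so your $I$ already equals $\int_{E} f^{p}\,d\cH^{d}_{\infty}$ on the nose; this only sharpens your constant.
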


\subsection{Reifenberg Flat sets and the theorem of David and Toro}

Our main tool will be the enhanced Reifenberg parametrization theorem of David and Toro. First, we recall Reifenberg's theorem. 

\begin{theorem}\label{t:reif}
For all $0<d<n$ and $0<s<1/10$, we may find $\ve>0$ such that the following holds. Let $E\subseteq \bR^{n}$ be a closed set containing the origin that is $\ve$-Reifenberg flat in $B(0,10)$. Then there is a bijective mapping $g:\bR^{n}\toitself$ such that 
\[ \frac{1}{4}|x-y|^{1+s}\leq |g(x)-g(y)|\leq 10|x-y|^{1-s}\]
and
\[E\cap B(0,1)=g(\bR^{d})\cap B(0,1).\] 
Moreover, $\Sigma:=g(\bR^{n})$ is $C\ve$-Reifenberg flat. 
\end{theorem}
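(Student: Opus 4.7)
\medskip

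\noindent\textbf{Proof proposal.} The plan is to run the classical Reifenberg construction via a telescoping sequence of diffeomorphisms of $\bR^n$ that progressively align best-approximating $d$-planes at dyadic scales $r_k=10^{-k}$. For each $k\ge 0$, pick a maximal $r_k/4$-separated net $\{x_k^i\}\subset E\cap B(0,5)$, and, using the hypothesis $\vartheta_E^d(B(x_k^i,r_k))<\ve$, fix $d$-planes $P_k^i\ni x_k^i$ with $d_{x_k^i,r_k}(E,P_k^i)<\ve$. The triangle-type inequality \eqref{e:dxrtrngl} together with the $\ve$-Reifenberg flatness forces any two planes $P_k^i$ and $P_{k\pm 1}^j$ whose centers are within $Cr_k$ of each other to satisfy $d_{x_k^i,Cr_k}(P_k^i,P_{k\pm 1}^j)\lesssim \ve$; in particular, these planes meet at an angle $O(\ve)$.

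Next I would build the approximating maps. Let $\{\psi_k^i\}$ be a smooth partition of unity on $\bR^n$ subordinate to $\{B(x_k^i,2r_k)\}$ with $\|\nabla\psi_k^i\|_\infty\lesssim r_k^{-1}$, and define
\[
\sigma_k(y)=y+\sum_i \psi_k^i(y)\bigl(\pi_{k+1}^{i}(y)-\pi_k^i(y)\bigr),
\]
where $\pi_k^i$ is orthogonal projection onto $P_k^i$ (with the sum chosen suitably so that $\sigma_k=\mathrm{id}$ outside a neighborhood of $E$). The plane-closeness estimate gives $\|\sigma_k-\mathrm{id}\|_\infty\lesssim \ve r_k$ and $\|D\sigma_k-I\|_\infty\lesssim \ve$. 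Setting $f_k=\sigma_k\circ\sigma_{k-1}\circ\cdots\circ\sigma_0$, the uniform bound $\|f_k-f_{k-1}\|_\infty\lesssim \ve r_k$ and geometric summation in $k$ produce a uniform limit $g=\lim_k f_k$, which is a homeomorphism of $\bR^n$ because each $\sigma_k$ is a diffeomorphism close to the identity.

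For the two-sided Hölder bound, I would propagate the multiplicative estimate $(1-C\ve)\le\|Df_k(y)v\|/\|v\|\le (1+C\ve)$ across scales: for $|x-y|\sim r_k$, iterating through the scales $r_0,\dots,r_k$ yields
\[
(1+C\ve)^{-k}|x-y|\lesssim |g(x)-g(y)|\lesssim (1+C\ve)^k|x-y|,
\]
and choosing $\ve$ so that $(1+C\ve)\le 10^{s}$ (i.e.\ $\ve\ll s$) turns this into the bi-Hölder estimate with exponents $1\pm s$. The explicit constants $\tfrac14$ and $10$ are then forced from the base-of-induction estimate $\|\sigma_0-\mathrm{id}\|\le C\ve\le\tfrac14$ by taking $\ve$ sufficiently small.

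To identify $E\cap B(0,1)$ with $g(\bR^d)\cap B(0,1)$, I would show inductively that $\Sigma_k:=f_k(\bR^d)$ (starting from $\bR^d=P_0^0$ after a rigid motion) stays within Hausdorff distance $C\ve r_k$ of $E$ in $B(0,2)$; passing to the limit gives $E\cap B(0,1)\subseteq g(\bR^d)$, and the reverse inclusion follows because each $\Sigma_k$ lies in the $C\ve r_k$-neighborhood of $E$. Finally, the $C\ve$-Reifenberg flatness of $\Sigma=g(\bR^n)$ is a direct consequence of the same Hausdorff-closeness statement applied at every scale and location, combined with \eqref{e:dxrtrngl}. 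The main obstacle in this program is the careful bookkeeping of the multiplicative constants in the bi-Hölder inequality, because the exponent $1+s$ on the lower bound is sharp: one must control the degradation of $\|\sigma_k-\mathrm{id}\|$ and $\|D\sigma_k-I\|$ across infinitely many scales and show that the cumulative distortion respects the prescribed Hölder exponent, which in turn dictates the required smallness $\ve=\ve(s,d,n)$.
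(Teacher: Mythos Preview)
The paper does not prove this theorem at all: it is stated as a preliminary result and immediately deferred to \cite{DT12} (see the sentence right after the theorem, ``What we don't cite below is covered in Sections 2, 3 and 4 of \cite{DT12}''). So there is no ``paper's own proof'' to compare against; the actual proof lives in David--Toro.

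Your sketch is essentially the David--Toro/Reifenberg construction, and in particular matches the skeleton recorded in \Theorem{DT}: nets $\{x_{j,k}\}$ at scales $r_k=10^{-k}$, best planes $P_{j,k}$, partition-of-unity interpolations $\sigma_k$, and a limit map $g=\lim \sigma_k\circ\cdots\circ\sigma_0$. Two small points are worth tightening. First, your formula $\sigma_k(y)=y+\sum_i\psi_k^i(y)(\pi_{k+1}^i(y)-\pi_k^i(y))$ mixes scales $k$ and $k+1$ with the same index $i$; compare \eqn{sigmak}, where $\sigma_k$ is simply a smooth average of the scale-$k$ projections (plus the identity far from the nets), and the passage between scales is accomplished by composition, not inside a single $\sigma_k$. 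Second, your identification $E\cap B(0,1)=g(\bR^d)\cap B(0,1)$ needs both inclusions argued separately: the inclusion $g(\bR^d)\subset$ (neighborhood of $E$) follows from the Hausdorff-closeness of $\Sigma_k$ to $E$, but the reverse inclusion $E\cap B(0,1)\subset g(\bR^d)$ requires an argument that the limit surface actually captures every point of $E$ (in \cite{DT12} this is the $E_\infty\subset\Sigma$ statement \eqref{eqahj0}, and uses that $E$ itself furnishes the nets). Both are standard but not automatic from what you wrote.
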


This is the main result as stated, but there are special properties that are implicit in David and Toro's version that we shall employ. What we don't cite below is covered in Sections 2,3 and 4 of \cite{DT12}.

\begin{theorem}\cite{DT12}
For $k\in \bN\cup\{0\}$, set $r_{k}=10^{-k}$ and let $\{x_{jk}\}_{j\in J_{k}}$ be a collection of points so that for some $d$-plane $P_{0}$,
\[\{x_{j0}\}_{j\in J_{0}}\subset P_{0},\] 
 \[|x_{ik}-x_{jk}|\geq r_{k},\]
and, denoting $B_{jk}=B(x_{jk},r_{k})$, 
\begin{equation}
x_{ik}\in V_{k-1}^{2}
\label{V2}
\end{equation}
where
\[ V_{k}^{\lambda}:=\bigcup_{j\in J_{k}}\lambda B_{jk}.\]
To each point $x_{jk}$, associate a $d$-plane $P_{jk}\subset \bR^n$  
{such that $P_{jk} \ni x_{jk}$}
and set
\begin{multline*}
\ve_{k}(x)=\sup\{d_{x,10^{4}r_{l}}(P_{jk},P_{il}): j\in J_{k}, |l-k|\leq 2, i\in J_{l}, \\
x\in 100 B_{jk}\cap 100 B_{il}\}.
\end{multline*}
There is $\ve_{0}>0$ such that if $\ve\in (0,\ve_{0})$ and 
\begin{equation}
\label{e:vek<ve}
\ve_{k}(x_{jk})<\ve \mbox{ for all }k\geq 0\mbox{ and }j\in J_{k},
\end{equation}
then there is a bijection $g:\bR^{n}\rightarrow \bR^{n}$ so that the following hold
\begin{enumerate}
\item We have 
\begin{equation}\label{eqahj0}
E_{\infty}:=\bigcap_{K=1}^{\infty}\overline{\bigcup_{k= K}^{\infty} \{x_{jk}\}_{j\in J_{k}}}\subseteq \Sigma:= g(\bR^{d}).
\end{equation}
\item $g(z)=z$ when $\dist(z,P_{0})>2$.
\item For $x,y\in \bR^{n}$,
\[ \frac{1}{4}|x-y|^{1+\tau}\leq |g(x)-g(y)|\leq 10|x-y|^{1-\tau}.\]
\item $|g(z)-z|\lec \ve$ for $z\in \bR^{n}$
\item For $x\in P_0$, $g(x)=\lim_{k} \sigma_{k}\circ\cdots \sigma_{1}(x)$ where 
\begin{equation}
\sigma_{k}(y)=\psi_{k}(y)y+\sum_{j\in J_{k}}\theta_{j,k}(y) \pi_{j,k}(y).
\label{e:sigmak}
\end{equation}
Here, $\{x_{j,k}\}_{j\in L_{k}}$ is a maximal $\frac{r_{k}}{2}$-separated set  in $\bR^{n}\backslash V_{k}^{9}$, 
\[
B_{j,k}=B(x_{j,k},r_{k}/10) \mbox{ for $j\in L_{k}$},\] 
$\{\theta_{j,k}\}_{j\in L_{k}\cup J_{k}}$ is a partition of unity such that $\one_{9B_{j,k}}\leq \theta_{j,k}\leq \one_{10 B_{j,k}}$ for all $k$ and $j\in L_{k}\cup J_{k}$, and $\psi_{k}=\sum_{j\in L_{k}}\theta_{j,k}$. 
\item \cite[Equation (4.5)]{DT12}For $k\geq 0$,
\begin{equation}\label{e:v10c}
\sigma_{k}(y)=y \mbox{ and } D_{k}(y)=I \mbox{ for }y\in \bR^{n}\backslash V_{k}^{10}.
\end{equation}
\item\cite[Proposition 5.1]{DT12} Let $\Sigma_{0}=P_{0}$ and
\[
\Sigma_{k}=\sigma_{k}(\Sigma_{k-1}).\] 
There is a function $A_{j,k}: P_{j,k}\cap 49B_{jk}\rightarrow P_{j,k}^{\perp}$ of class $C^{2}$ such that $|A_{jk}(x_{jk})|\lec \ve r_{k}$, $|DA_{jk}|\lec \ve$ on $P_{jk}\cap 49B_{jk}$, and if $\Gamma_{jk}$ is its graph over $P_{jk}$, then 
\begin{equation}\label{e:49graph}
\Sigma_{k}\cap D(x_{jk},P_{jk},49r_{k})=\Gamma_{jk}\cap  D(x_{jk},P_{jk},49r_{k})
\end{equation}
where 
\begin{equation}\label{e:D-is-a-cylinder}
D(x,P,r)=\{z+w: z\in P\cap B(x,r), w\in P^{\perp} \cap B(0,r)\}. 
\end{equation}
(Above $P^\perp$ is the plane perpendicular to $P$ going through 0.)
In particular,
\begin{equation}
d_{x_{jk},49r_{jk}}(\Sigma_{k},P_{jk})\lec\ve.
\label{e:49r}
\end{equation}
\item \cite[Lemma 6.2]{DT12} For $k\geq 0$ and $y\in \Sigma_{k}$, there is an affine $d$-plane $P$ through $y$ and a $C\ve$-Lipschitz and $C^{2}$ function $A:P\rightarrow P^{\perp}$ so that if $\Gamma$ is the graph of $A$ over $P$, then 
\begin{equation}\label{e:ygraph}
\Sigma_{k}\cap B(y,19r_{k})=\Gamma\cap B(y,19r_{k}).
\end{equation}
\item \cite[Proposition 6.3]{DT12} $\Sigma=g(P_0)$ is $C\ve$-Reifenberg flat in the sense that for all $z\in \Sigma$, and $t\in (0,1)$, there is $P=P(z,t)$ so that $d_{z,t}(\Sigma,P)\lec \ve$. 
\item For all $y\in \Sigma_{k}$, 
\begin{equation}\label{e:sky-y}
|\sigma_{k}(y)-y|\lec \ve_{k}(y) r_{k}
\end{equation}
This is not stated as such in \cite{DT12}, but it follows from (7.13) in \cite{DT12} and the definition of $\sigma_{k}$. In particular, it follows that
\begin{equation}\label{e:ytosigma}
\dist(y,\Sigma)\lec \ve r_{k} \mbox{ for }y\in \Sigma_{k}.
\end{equation}
\item \cite[Lemma 7.2]{DT12} For $k\geq 0$, $y\in \Sigma_{j}\cap V_{k}^{8}$, choose $i\in J_{k}$ such that $y\in 10 B_{i,k}$. Then
\begin{equation}\label{e:skCloseInBall}
|\sigma_{k}(y)-\pi_{i,k}(y)|\lec \ve_{k}(y)r_{k}
\end{equation} 
and
\begin{equation}
|D\sigma_{k}(y)-D\pi_{i,k}|\lec \ve_{k}(y) \end{equation}
 If $T\Sigma_{k}(x)$ denotes the tangent space at $x\in \Sigma_{k}$, then 
 \begin{equation}\label{e:Tsig}
 \angle(T\Sigma_{k+1}(\sigma_{k}(x)),P_{i,k})\lec \ve_{k}(y) \mbox{ for }x\in \Sigma_{k}\cap B(x_{i,k},10r_{k}).
 \end{equation}
 
\item \cite[Lemma 9.1]{DT12} For $x,y\in \Sigma_{k}$, 
\begin{equation}
\angle(T\Sigma_{k}(x),T\Sigma_{k}(y))\lec \ve \frac{|x-y|}{r_{k}}.
\label{e:Tlip}
\end{equation}
\item \cite[Lemma 13.2]{DT12} For $x\in \Sigma$ and $r>0$, 
\begin{equation}
\cH^{d}_{\infty}(B(x,r)\cap \Sigma)\geq (1-C\ve)\omega_{d} r^{d}
\label{e:sigmalr}
\end{equation}
where $\omega_{d}$ is the volume of the unit ball in $\bR^{d}$. They prove this statement with $\cH^{d}$ in place of $\cH^{d}_{\infty}$, but the same proof works for $\cH^{d}_{\infty}$. 
\end{enumerate}

%
\label{t:DT}
\end{theorem}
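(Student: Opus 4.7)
\textbf{Proof Proposal for Theorem \ref{t:DT}.}

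The strategy is the iterative Reifenberg-style construction carried out in \cite{DT12}: build a sequence of diffeomorphisms $\sigma_k \colon \bR^n \to \bR^n$ whose infinite composition, starting from the initial plane $P_0$, produces the surface $\Sigma$. First I would verify that $\sigma_k$ as written in \eqref{e:sigmak} is well defined and smooth: $\{\theta_{j,k}\}_{j \in L_k \cup J_k}$ forms a locally finite $C^2$ partition of unity on $\bR^n$ subordinate to $\{10 B_{j,k}\}$, and $\pi_{j,k}$ denotes the affine orthogonal projection onto $P_{j,k}$. Away from $V_k^{10}$ all the bumps in $J_k$ vanish and $\psi_k \equiv 1$, giving \eqref{e:v10c}. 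Inside $V_k^{10}$ the map $\sigma_k$ convex-combines nearby projections; using $|\pi_{j,k}(y) - \pi_{i,k}(y)| \lesssim \ve_k(y) r_k$ on $10 B_{j,k} \cap 10 B_{i,k}$ (this is the geometric content of the hypothesis \eqref{e:vek<ve}), the standard partition-of-unity computation gives $|\sigma_k - \mathrm{Id}| \lesssim \ve r_k$ and $|D\sigma_k - D\pi_{i,k}| \lesssim \ve$ on each $10 B_{i,k}$, which is \eqref{e:skCloseInBall} and its derivative form. This also yields that $\sigma_k$ is a $C^2$-diffeomorphism with bi-Lipschitz constant $1 + O(\ve)$ for $\ve$ small.

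Next I would set $\Sigma_0 = P_0$, $\Sigma_k = \sigma_k(\Sigma_{k-1})$, and prove the graph representation \eqref{e:49graph} and its small-ball companion \eqref{e:ygraph} by induction on $k$. The base case is immediate. For the inductive step, the inductive hypothesis says $\Sigma_{k-1}$ is locally a $C\ve$-Lipschitz graph over the nearby plane $P_{i,k-1}$; using the coherence estimate $d_{x_{i,k}, 10^4 r_k}(P_{i,k-1}, P_{j,k}) < \ve$ from \eqref{e:vek<ve} together with the near-projection property $\sigma_k \approx \pi_{j,k}$ just proved, one concludes that $\sigma_k(\Sigma_{k-1})$ inside $49 B_{j,k}$ is a Lipschitz graph over $P_{j,k}$ with constant $\lesssim \ve$, giving \eqref{e:49graph}. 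Equation \eqref{e:ygraph} follows by covering $B(y, 19 r_k)$ by a bounded number of the $49 B_{j,k}$ and gluing graph patches, and \eqref{e:49r} is an immediate consequence. For the tangent-plane Lipschitz estimate \eqref{e:Tlip}, differentiate the graph equation twice and use that the second derivative of the graph function is $\lesssim \ve / r_k$; this also propagates through the iteration.

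To obtain the bi-Hölder estimate (3) and pointwise estimate (4), iterate the one-step bound $|\sigma_k(y) - y| \lesssim \ve r_k$ on $\Sigma_{k-1}$. For a point $x \in P_0$ and a perturbation $h$, the difference $|\sigma_k \circ \cdots \circ \sigma_1(x+h) - \sigma_k \circ \cdots \circ \sigma_1(x)|$ can be estimated by splitting according to the scale $r_k$ comparable to $|h|$: for $r_k \gtrsim |h|$ the linear part dominates and gives factor $(1 + C\ve)^k$, while for $r_k \lesssim |h|$ one uses triviality outside $V_k^{10}$. Choosing $\tau = C\ve / \log 10$ converts the geometric error accumulation into the Hölder exponent $1 \pm \tau$, producing (3). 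Estimate (4) follows by telescoping and bounding by a geometric series $\sum_k \ve r_k \lesssim \ve$. The limit $g$ exists by uniform Cauchy estimates on compact sets, and $\Sigma := g(P_0) = \lim_k \Sigma_k$ in the Hausdorff metric on compact subsets.

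Finally, for the inclusion \eqref{eqahj0}: given $x \in E_\infty$ with $x = \lim x_{j_k, k_\ell}$, each $x_{j,k} \in \Sigma_k$ by construction (since $x_{j,k} \in P_{j,k}$ and the graph representation \eqref{e:49graph} passes through $x_{j,k}$), hence $x \in \Sigma$. The Reifenberg flatness (9) of $\Sigma$ comes from \eqref{e:ygraph} plus the convergence $d_{y,r}(\Sigma_k, \Sigma) \to 0$, and the content lower bound \eqref{e:sigmalr} follows because $\Sigma$ is locally a $C\ve$-Lipschitz graph of a $d$-plane, so projecting $\Sigma \cap B(x,r)$ onto that plane is $(1+C\ve)$-Lipschitz and the image contains a $d$-ball of radius nearly $r$. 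The main obstacle throughout is the careful tracking of how errors on scale $r_k$ accumulate under composition: one must show that the $C^2$ graph structure and Lipschitz constant of $\Sigma_k$ over the \emph{local} plane $P_{j,k}$ is preserved, not merely over the \emph{initial} plane $P_0$, which is precisely what the coherence hypothesis \eqref{e:vek<ve} is designed to guarantee.
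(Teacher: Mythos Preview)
This theorem is not proved in the paper: it is a compilation of results from David and Toro \cite{DT12}, with each item pointing to the specific statement there (Equation (4.5), Proposition 5.1, Lemma 6.2, Proposition 6.3, Lemma 7.2, Lemma 9.1, Lemma 13.2). The authors only add the observation that item (10) follows from (7.13) of \cite{DT12} and that the proof of Lemma 13.2 there goes through verbatim with $\cH^d_\infty$ in place of $\cH^d$. So there is no ``paper's own proof'' to compare against; your outline is a sketch of the David--Toro argument itself, and at that level of detail it is faithful to their construction: partition-of-unity averaged projections for $\sigma_k$, inductive propagation of the $C\ve$-Lipschitz graph structure over the local planes $P_{j,k}$, geometric summation of one-step errors $\ve r_k$ to get the bi-H\"older map, and local graph structure for the Reifenberg flatness and content bounds.

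One small correction in your sketch of \eqref{eqahj0}: you write that $x_{j,k}\in\Sigma_k$ because the graph passes through $x_{j,k}$. It does not; item (7) only gives $|A_{j,k}(x_{j,k})|\lesssim \ve r_k$, so $\dist(x_{j,k},\Sigma_k)\lesssim \ve r_k$ rather than zero. The inclusion $E_\infty\subseteq\Sigma$ still follows, since for $x\in E_\infty$ one has a sequence $x_{j_\ell,k_\ell}\to x$ with $k_\ell\to\infty$, and combining $\dist(x_{j_\ell,k_\ell},\Sigma_{k_\ell})\lesssim\ve r_{k_\ell}$ with \eqref{e:ytosigma} gives $\dist(x_{j_\ell,k_\ell},\Sigma)\lesssim\ve r_{k_\ell}\to 0$. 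This is a minor slip and does not affect the overall strategy.
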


\begin{lemma}
\label{l:skflat}
With the notation as in \Theorem{DT}, there is $C>0$ depending only on $n$ so that for all $k\geq 0$, $\Sigma_{k}$ is $C\ve$-Reifenberg flat. 
\end{lemma}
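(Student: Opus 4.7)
Fix $k\geq 0$, $z\in \Sigma_{k}$, and $t>0$; the plan is to produce a $d$-plane $P$ with $d_{z,t}(\Sigma_{k},P)\lec\ve$ by splitting into cases according to how $t$ compares to the construction scale $r_{k}=10^{-k}$. The case $t\leq 18 r_{k}$ is immediate from the graph representation \eqref{e:ygraph}: take $P$ and $A:P\to P^{\perp}$ as in property (7) of \Theorem{DT}, so $\Sigma_{k}\cap B(z,19r_{k})$ is the graph of the $C\ve$-Lipschitz function $A$ with $A(z)=0$. Then for $w\in\Sigma_{k}\cap B(z,t)$, $\dist(w,P)=|A(\pi_{P}(w))|\lec\ve|\pi_{P}(w)-z|\lec\ve t$; and for $p\in P\cap B(z,t)$ the point $p+A(p)$ lies in $\Sigma_{k}\cap B(z,19r_{k})$ and is within $C\ve t$ of $p$. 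This gives $d_{z,t}(\Sigma_{k},P)\lec\ve$.

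For the case $t>18r_{k}$, the idea is to ``step back'' to a coarser scale $r_{j}$ of the construction where the graph representation already applies. When $t\leq 18r_{0}=18$, pick the unique $j\in\{0,1,\dots,k-1\}$ with $18r_{j+1}<t\leq 18r_{j}$, so $t\sim r_{j}$; when $t>18$, simply take $j=0$ and $P=P_{0}$. The next step is to locate a point $z_{j}\in\Sigma_{j}$ with $|z-z_{j}|\lec\ve r_{j+1}$. This follows by iterating the basic closeness estimate $|\sigma_{i}(y)-y|\lec\ve r_{i}$ from \eqref{e:sky-y} (valid on the $\Sigma_{i}$'s by the construction \eqref{e:sigmak}): writing $z=\sigma_{k}(z_{k-1})$ with $z_{k-1}\in\Sigma_{k-1}$ and iterating down,
\[
|z-z_{j}|\leq\sum_{i=j+1}^{k}C\ve r_{i}\lec\ve r_{j+1}.
\]
Applying property (7) of \Theorem{DT} to $\Sigma_{j}$ at $z_{j}$ produces a $d$-plane $P\ni z_{j}$ and a $C\ve$-Lipschitz graph $\Gamma$ with $\Sigma_{j}\cap B(z_{j},19r_{j})=\Gamma\cap B(z_{j},19r_{j})$; since $r_{j}\gec t$ we have $B(z,t)\subset B(z_{j},2t)\subset B(z_{j},19r_{j})$ for $\ve$ small.

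It remains to transfer this flatness from $\Sigma_{j}$ back up to $\Sigma_{k}$ in both directions. For $w\in\Sigma_{k}\cap B(z,t)$, the same iteration supplies $w_{j}\in\Sigma_{j}$ with $|w-w_{j}|\lec\ve r_{j+1}\lec\ve t$; since $|w_{j}-z_{j}|\lec t$, $w_{j}\in\Gamma$ and $\dist(w_{j},P)\lec\ve t$, whence $\dist(w,P)\lec\ve t$. For the reverse direction, given $p\in P\cap B(z,t)\subset P\cap B(z_{j},2t)$, form $q_{j}:=p+A(p)\in\Sigma_{j}$, which lies within $C\ve t$ of $p$; then push forward by setting $q:=\sigma_{k}\circ\cdots\circ\sigma_{j+1}(q_{j})\in\Sigma_{k}$ and use $|q-q_{j}|\leq\sum_{i=j+1}^{k}C\ve r_{i}\lec\ve r_{j+1}\lec\ve t$ to conclude $|q-p|\lec\ve t$. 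Combining, $d_{z,t}(\Sigma_{k},P)\lec\ve$. The large-scale case $t>18$ with $j=0$, $P=P_{0}$ is similar, using $|\sigma_{k}\circ\cdots\circ\sigma_{1}(x)-x|\lec\ve$ for all $x$ (cumulative from \eqref{e:sky-y}) to see that $\Sigma_{k}$ is within $C\ve$ Hausdorff distance of $P_{0}$ in both directions, which gives $d_{z,t}(\Sigma_{k},P_{0})\lec\ve/t\lec\ve$.

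The main technical nuisance will be justifying the two-sided chasing of points between $\Sigma_{j}$ and $\Sigma_{k}$: the estimate \eqref{e:sky-y} as written applies on $\Sigma_{k}$, and one must check that the same control holds on $\Sigma_{j}$ (for $j<k$) to iterate, which is what lets us carry the graph representation on $\Sigma_{j}$ over to a Reifenberg plane for $\Sigma_{k}$ without accumulating uncontrolled error.
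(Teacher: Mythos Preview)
Your approach is correct and takes a genuinely different route for the large-scale case $t>18r_k$. The paper does not step back to an intermediate surface $\Sigma_j$; instead it jumps all the way to the limit surface $\Sigma$, invokes its Reifenberg flatness directly (this is property~(9) of \Theorem{DT}, already a black box), and then transfers the resulting plane back to $\Sigma_k$ using \eqref{e:ytosigma} together with the invertibility of the $\sigma_i$. Your argument, by contrast, picks the intermediate level $j$ with $r_j\sim t$ and uses the graph description \eqref{e:ygraph} of $\Sigma_j$ there, then shuttles points between $\Sigma_j$ and $\Sigma_k$ via the cumulative displacement $\sum_{i>j}\ve r_i\lec\ve r_{j+1}$. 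Both approaches ultimately rest on the same telescoping of \eqref{e:sky-y}; yours is more self-contained in that it avoids invoking property~(9), while the paper's is slightly shorter because it does not need to select $j$.

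A few minor corrections: your reference to ``property~(7)'' should be property~(8), i.e.\ \eqref{e:ygraph}, since you want a graph centered at an arbitrary point of $\Sigma_j$, not at some $x_{j,k}$. Also, the chain $B(z,t)\subset B(z_j,2t)\subset B(z_j,19r_j)$ fails as written (you could have $2t$ up to $36r_j$), but you only need $B(z,t)\subset B(z_j,19r_j)$, which does hold since $|z-z_j|\lec\ve r_{j+1}$ and $t\le 18r_j$. Finally, your worry about \eqref{e:sky-y} being stated only for $y\in\Sigma_k$ is well-founded as a matter of reading, but the estimate holds at every level: from the explicit formula \eqref{e:sigmak} one has $\sigma_i(y)-y=\sum_j\theta_{j,i}(y)(\pi_{j,i}(y)-y)$, and for $y\in\Sigma_{i-1}$ each $\dist(y,P_{j,i})\lec\ve r_i$ by \eqref{e:49r}, so the iteration goes through.
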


\begin{proof}
Let $x\in \Sigma_{k}$ and $r>0$. If $r\leq r_{k}$, then $\vartheta_{\Sigma_{k}}(x,r)\lec \ve $ by \eqn{ygraph}, so assume $r >r_{k}$.By \eqn{ytosigma}, there is $x'\in \Sigma$ with $|x-x'|\lec \ve r_{k}$.  By \Theorem{DT}, $\Sigma$ is $C\ve$-Reifenberg flat for some $C>0$, and so there is a $d$-plane $P$ passing through $x'$ so that $d_{x',2r}(\Sigma,P')<C\ve$. Let  $P$ be the plane parallel to $P'$ but containing $x$. Then it is not hard to show that, for $\ve>0$ small, $d_{x',2r}(\Sigma,P)\lec \ve$ as well. 

Let $y\in B(x,r)\cap \Sigma_{k}$. Again, there is $y'\in \Sigma$ so that $|y-y'|\lec \ve r_{k}$. For $\ve>0$ small enough, $y'\in B(x',2r)\cap \Sigma$, and so $\dist(y',P')\lec \ve r_{k}$. Since $P$ is $P'$ translated by no more than a constant times $\ve r_{k}$, we also have that $\dist(y',P)\lec \ve r_{k}$. Thus,
\begin{equation}\label{e:yPerk}
\dist(y,P)\leq |y-y'|+\dist(y',P) \lec \ve r_{k}
\end{equation}
and this holds for all $y\in B(x,r)\cap \Sigma_{k}$.

Now let $z\in B(x,r)\cap P$ and $z'=\pi_{P'}(z)$. Then $|z-z'|\lec \ve r_{k}$, and so for $\ve>0$ small enough, $z'\in B(x',2r)\cap P'$. Thus, there is $z''\in \Sigma$ with $|z'-z''|\lec \ve r_{k}$. Recall that $g$ is a homeomorphism, which thus forces every $\sigma_{j}$ to be a homeomorphism (since $g$ is the bi-H\"older composition of them all). Hence $g_{k}$ has an inverse, and by \eqn{sky-y}, there is $z'''\in \Sigma_{k}$ with $|z'''-z''|\lec \ve r_{k}$. Combining these estimates, we have 
\[
\dist(z,\Sigma_{k})
\leq |z-z'''|
\leq |z-z'|+|z'-z''|+|z''-z'''|
\lec \ve r_{k}\]
and this holds for all $z\in B(x,r)\cap P$. Combining this with \eqn{yPerk} gives $\vartheta_{\Sigma_{k}}(B(x,r))\lec \ve$, which proves the lemma.
\end{proof}

The following lemma will allow us to localize by finding a Reifenberg flat surface $\Sigma$ that agrees with a Reifenberg flat surface $E$ in a ball $B$ but whose surface measure is controlled by the measure of $E$ in $B$.

\begin{lemma}
\label{l:rtrim}
Let $E\subseteq \bR^{n}$ be $\ve$-Reifenberg flat containing $0$. Then there is a $C\ve$-Reifenberg flat surface $\Sigma$ so that
\begin{enumerate}
\item $E\cap \cnj{B(0,1)}\subseteq \Sigma\cap \cnj{B(0,1)}$,
\item $\Sigma\backslash B(0,10)=P_{0}\backslash B(0,10)$ for some $d$-plane $P_{0}$,
\item for all $A>0$, 
\begin{equation}\label{e:rtrim}
\cH^{d}(\Sigma\cap B(0,A))\lec A^{d} \cH^{d}(E\cap {B(0,1)}). 
\end{equation}
\end{enumerate}
\end{lemma}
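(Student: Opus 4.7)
The plan is to invoke the David--Toro parametrization theorem (Theorem \ref{t:DT}) with a system of centers and planes designed to track $E$ inside $B(0,1)$ but revert to a fixed $d$-plane $P_0$ outside a slightly larger ball. By Reifenberg flatness of $E$, first select a $d$-plane $P_0$ through the origin with $d_{0,R_0}(E,P_0)\leq \ve$ for some sufficiently large $R_0$. For the base level, choose $\{x_{j0}\}_{j\in J_0}$ to be a maximal $1$-separated net on $P_0\cap\overline{B(0,R_0)}$ and set $P_{j0}=P_0$. For $k\geq 1$, inductively choose $\{x_{jk}\}_{j\in J_k}\subset V_{k-1}^2$ to be maximal $r_k$-separated, drawing points from $E$ inside $B(0,2)$, from $P_0$ outside $B(0,3)$, and freely in the annulus, while keeping all of them strictly inside $B(0,9)$. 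Assign to each $x_{jk}$ a plane $P_{jk}$: an optimal approximating plane of $E$ at scale $49 r_k$ when $x_{jk}\in E$, and $P_{jk}=P_0$ when $x_{jk}\in P_0$.

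Next, I would verify the compatibility condition $\ve_k(x_{jk})<C\ve$ of \eqref{e:vek<ve}, which splits into three cases. Between two $E$-type centers, the planes are $C\ve$-close by Reifenberg flatness of $E$ (using an approximate triangle inequality of the form \eqref{e:dxrtrngl}). Between two $P_0$-type centers, the planes coincide. Between mixed pairs---possible only in the transition annulus where $r_k\sim 1$---closeness follows because $E$ and $P_0$ are Hausdorff $\ve$-close at unit scale in $B(0,R_0)$ by the choice of $P_0$. After possibly shrinking $\ve$, Theorem \ref{t:DT} applies and produces a bi-H\"older homeomorphism $g:\bR^n\to\bR^n$ and a $C\ve$-Reifenberg flat surface $\Sigma=g(\bR^d)$.

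Conclusion (1) is then immediate from \eqref{eqahj0} together with the fact that the nets $\{x_{jk}\}$ densify $E$ inside $B(0,1)$, so $E\cap\overline{B(0,1)}\subseteq E_\infty\subseteq\Sigma$. For conclusion (2), since all $x_{jk}$ with $k\geq 1$ lie inside $B(0,9)$, every $\sigma_k$ is the identity on $\bR^n\setminus B(0,10)$ by \eqref{e:v10c}, while the displacement bound $|g(z)-z|\lesssim \ve$ keeps $g(P_0\cap B(0,9))\subset B(0,10)$; these two facts together give $\Sigma\cap(\bR^n\setminus B(0,10))=P_0\cap(\bR^n\setminus B(0,10))$. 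For conclusion (3), the local graph description of $\Sigma$ at every scale (properties (7)--(8) of Theorem \ref{t:DT}) yields the uniform upper bound $\cH^d(\Sigma\cap B(0,A))\lesssim A^d$ for all $A>0$, while lower content regularity of Reifenberg flat sets (a standard consequence of the definition, via projection onto an approximating plane) gives $\cH^d(E\cap B(0,1))\geq \cH^d_\infty(E\cap B(0,1))\gtrsim 1$; combining the two produces $\cH^d(\Sigma\cap B(0,A))\lesssim A^d\lesssim A^d\,\cH^d(E\cap B(0,1))$.

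The main delicate point will be arranging the transition between $E$-based and $P_0$-based centers so that \eqref{e:vek<ve} holds uniformly: approximating planes of $E$ at scales $r_k\sim 1$ near $\partial B(0,R_0)$ must agree with $P_0$ up to $C\ve$---which follows from Reifenberg flatness and the choice of $P_0$---and the nets must be chosen so that $r_k$-separation is never violated where the two populations meet. Everything else (the identity on $\bR^n\setminus B(0,10)$, the H\"older-control on $\Sigma$, and the content bound) is a routine reading of the properties already collected in Theorem \ref{t:DT}.
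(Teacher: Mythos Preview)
Your argument for conclusion (3) has a genuine gap. You claim that properties (7)--(8) of Theorem~\ref{t:DT} yield the upper bound $\cH^d(\Sigma\cap B(0,A))\lesssim A^d$, but those properties describe the \emph{approximating} surfaces $\Sigma_k$ as local $C\ve$-Lipschitz graphs at scale $r_k$, not the limit surface $\Sigma$. Reifenberg flat sets are \emph{not} upper Ahlfors regular in general: a von Koch--type snowflake with small opening angle is $\ve$-Reifenberg flat yet has infinite $\cH^1$ measure in every ball. In your construction the nets track $E$ at every scale inside $B(0,2)$, so $\Sigma\supseteq E\cap\overline{B(0,1)}$ and the bound $\cH^d(\Sigma\cap B(0,1))\lesssim 1$ simply fails for such $E$. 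Even when $\cH^d(E\cap B(0,1))<\infty$, nothing you have written controls $\cH^d(\Sigma\cap B(0,1))$ by a universal constant: the bi-H\"older parametrization from Theorem~\ref{t:DT} does not preserve $\cH^d$ measure, and the local graph pictures at successive scales may rotate and accumulate area without bound.

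The paper's proof handles (3) by a different construction together with a covering argument. The nets are taken only in $E_k=\{x\in E:\dist(x,E\setminus B(0,1))\geq r_k\}$, so that at scale $r_k$ the surface is ``active'' only well inside $B(0,1)$. Consequently, at any $x\in\Sigma\setminus(E\cap\overline{B(0,1)})$ the construction has stopped at some finite level $k(x)$, and $\Sigma\cap B(x,r_{k(x)})$ genuinely \emph{is} a $C\ve$-Lipschitz graph (via \eqref{e:ygraph} and \eqref{e:v10c}). A Besicovitch covering by such balls, combined with the lower regularity \eqref{e:sigmalr} applied to nearby pieces of $E\cap B(0,1)$, gives $\cH^d(\Sigma\cap B(0,10)\setminus E_\infty)\lesssim\cH^d(E\cap B(0,1))$; adding $\cH^d(E_\infty)$ yields \eqref{e:rtrim}. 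This is the substantive step, and your shortcut through ``$\lesssim A^d$ and $\gtrsim 1$'' cannot replace it. Note also that because your nets track $E$ throughout $B(0,2)$, even a correct covering argument along these lines would only bound $\cH^d(\Sigma)$ by $\cH^d(E\cap B(0,2))$, which for a non-upper-regular $E$ need not be comparable to $\cH^d(E\cap B(0,1))$.
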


\begin{proof}
Let 
\[
E_{k}=\{x\in E:\dist(x,E\backslash B(0,1))\geq r_{k}\}\]
and $\{x_{jk}\}_{J_{k}}$ be maximally separated $r_{k}$-nets in $E_{k}$ so that $\{x_{j0}\}_{j\in J_{0}}=\{0\}$. Then, with the notation of \Theorem{DT}, 
\[
\{x_{j,k+1}\}_{J_{k+1}}\subseteq E\cap \cnj{B(0,1)}\subseteq V_{k}^{2} \;\; \mbox{ for all }k\geq 0.\]
In this way, $E_{\infty}= E\cap \cnj{B(0,1)}$. If we let $P_{j,k}$ be the plane that infimizes $\vartheta_{E}(B(x_{jk},10^{6}r_{k}))$ and $P_{0}$ the plane that infimizes $\vartheta_{E}(B(0,10^{6}r_{k}))$, then for $\ve>0$ small enough, we may apply \Theorem{DT} to obtain a $C\ve$-Reifenberg flat surface $\Sigma$.  

Let $x\in \Sigma\cap B(0,10)\backslash E_{\infty}$ and let $k(x)$ be the maximal $k$ for which $x\in V_{k-1}^{11}$. Then $x\not\in V_{k(x)}^{11}$, and so if we set $r(x)=r_{k(x)}$, we have $B(x,r(x))\subseteq V_{k(x)}^{10}$. Thus,
\[
\Sigma_{k}\cap B(x,r(x))= \Sigma\cap B(x,r(x)).\]
By \eqn{ygraph}, $\Sigma\cap B(x,r(x))$ is a $C\ve$-Lipschitz  graph, so this and the above equation imply 
\begin{equation}
\label{e:sbxrxgraph}
\cH^{d}(\Sigma\cap B(x,r(x)))\lec r^{d}.
\end{equation}
By the Besicovitch covering theorem, we can find $x_{i}\in \Sigma\cap B(0,10)\backslash E_{\infty}$ so that $B_{i}:=B(x_{i},r(x_{i}))$ cover $\Sigma\cap B(0,10)\backslash E_{\infty}$ with bounded overlap. Since $x_{i}\in V_{k(x_{i})-1}^{11}$, we may find $x_{i}'\in \{x_{j,k-1}\}_{j\in J_{k-1}}$ so that 
\[
x_{i}\in B(x_{i}',10r_{k(x_{i})-1})=  B(x_{i}',r_{k(x_{i})}) .\]
Since $x_{i}' \in  \{x_{j,k-1}\}_{j\in J_{k-1}}\subseteq E_{k-1}$, we know
\begin{equation}\label{e:biinball}
 B(x_{i}',r_{k(x_{i})-1})\cap E\subseteq B(0,1).
 \end{equation}
  By \eqn{sigmalr}, we have 
\begin{equation}\label{e:rxid<E}
r(x_{i})^{d} \lec \cH^{d}(B(x_{i}',r_{k(x_{i})-1})\cap E_{\infty}).
\end{equation}
For $j\in J_{k}$, let 
\[
A_{j}=\{B_{i}: x_{i}'=x_{jk}\}.\] 
Since the balls in $A_{j}$ have bounded overlap, the same radius, and distance eat most $r_{k}$ from $x_{jk}$, we know $\# A_{j}\lec_{n} 1$. Thus,
\begin{align*}
\cH^{d}(\Sigma\cap B(0,10)\backslash E_{\infty})
& \leq \sum_{i} \cH^{d}(B_{i}\cap \Sigma)
\stackrel{\eqn{sbxrxgraph}}{\lec} \sum_{i} r(x_{i})^{d}\\
& \stackrel{\eqn{rxid<E}}{\lec} \sum_{i}\cH^{d}(B(x_{i}',r_{k(x_{i})-1})\cap E)\\
& \leq \sum_{k\geq 0}\sum_{j\in J_{k}} \#A_{j} \cH^{d}(B(x_{jk},r_{k-1})\cap E)\\
& \lec \sum_{k\geq 0}\sum_{j\in J_{k}} \cH^{d}(B(x_{jk},r_{k-1})\cap E)\\
& \stackrel{\eqn{biinball}}{\leq} \cH^{d}(E\cap B(0,1))
\end{align*}
Recalling that $E_{\infty}=E\cap \cnj{B(0,1)}$,
\begin{align*}
\cH^{d}(\Sigma\cap B(0,10))
& \leq \cH^{d}(\Sigma\cap B(0,10)\backslash E_{\infty})+\cH^{d}(E_{\infty})\\
& \lec \cH^{d}(E\cap \cnj{B(0,1)}).
\end{align*}
Now note that $V_{k}^{10}\subseteq V_{0}^{10}$ for all $k$, and so by \eqn{sigmak}, 
\begin{align*}
\cH^{d}(\Sigma\cap B(0,A)\backslash B(0,10))
& =\cH^{d}(P_{0}\cap B(0,A)\backslash B(0,10))\\
& \lec A^{d} \stackrel{\eqn{sigmalr}}{\lec} A^{d} \cH^{d}(E\cap B(0,1)).
\end{align*}
Combining these estimates completes the proof.

\end{proof}

\subsection{Generalized dyadic cubes}

We recall the construction of cubes on a metric space, originally due to by David and Christ (\cite{Dav88}, \cite{Christ-T(b)}), but the current formulation is from Hyt\"onen and Martikainen \cite{HM12}. A metric space $X$ is {\it doubling} if there is $N$ so that any ball can be covered by at most $N$ balls of half the radius. In practice, the metric space $X$ in the theorem will be a subset of Euclidean space and thus doubling.

{
 \begin{definition}
We say that a set $X$ is {\it $\delta$-separated} or a {$\delta$-net} if for all $x,y\in X$ we have $|x-y|\geq \delta$. 
\end{definition}
}

 \begin{theorem}
Let $X$ be a doubling metric space. Let $X_{k}$ be a nested sequence of maximal $\rho^{k}$-nets for $X$ where $\rho<1/1000$ and let $c_{0}=1/500$. For each $n\in\bZ$ there is a collection $\cD_{k}$ of ``cubes,'' which are Borel subsets of $X$ such that the following hold.
\begin{enumerate}
\item For every integer $k$, $X=\bigcup_{Q\in \cD_{k}}Q$.
\item If $Q,Q'\in \cD=\bigcup \cD_{k}$ and $Q\cap Q'\neq\emptyset$, then $Q\subseteq Q'$ or $Q'\subseteq Q$.
\item For $Q\in \cD$, let $k(Q)$ be the unique integer so that $Q\in \cD_{k}$ and set $\ell(Q)=5\rho^{k(Q)}$. Then there is $\zeta_{Q}\in X_{k}$ so that
\begin{equation}\label{e:containment}
B_{X}(\zeta_{Q},c_{0}\ell(Q) )\subseteq Q\subseteq B_{X}(\zeta_{Q},\ell(Q))
\end{equation}
and
\[ X_{k}=\{\zeta_{Q}: Q\in \cD_{k}\}.\]
\end{enumerate}
\label{t:Christ}
\end{theorem}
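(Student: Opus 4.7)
The plan is to follow the classical David--Christ construction \cite{Dav88,Christ-T(b)} as streamlined by Hyt\"onen--Martikainen \cite{HM12}: build a parent map between consecutive nets and read off the cubes as fibres of a consistent assignment of points to a chain of ancestors.

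The main ingredients are as follows. Fix a well-ordering $\prec$ of $X$ and define $\pi_{k}:X_{k+1}\to X_{k}$ by sending $\zeta$ to the $\prec$-minimal point of $X_{k}$ realizing $d(\zeta,X_{k})$. Maximality of $X_{k}$ as a $\rho^{k}$-net gives $d(\zeta,\pi_{k}(\zeta))<\rho^{k}$, and nesting $X_{k}\subseteq X_{k+1}$ gives $\pi_{k}|_{X_{k}}=\mathrm{id}$; iterating produces $\pi_{k}^{m}:=\pi_{k}\circ\pi_{k+1}\circ\cdots\circ\pi_{m-1}:X_{m}\to X_{k}$ for $m\geq k$. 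For each $x\in X$ and each integer $m$ let $\eta_{m}(x)\in X_{m}$ be the $\prec$-minimal nearest-neighbour of $x$ in $X_{m}$, so that the Voronoi cells $V_{\eta}^{m}=\eta_{m}^{-1}(\eta)$ are Borel and $d(x,\eta_{m}(x))\leq\rho^{m}$. Set
\[
Q_{\zeta}^{k}:=\{x\in X:\pi_{k}^{m}(\eta_{m}(x))=\zeta\text{ for all sufficiently large }m\},
\]
$\cD_{k}:=\{Q_{\zeta}^{k}:\zeta\in X_{k}\}$, $\zeta_{Q_{\zeta}^{k}}:=\zeta$, and $\ell(Q_{\zeta}^{k}):=5\rho^{k}$.

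Properties (1) and (2) follow from the tower identity $\pi_{k}^{m}=\pi_{k}^{k'}\circ\pi_{k'}^{m}$ together with an eventual-stabilization argument: for each $x$, the sequence $\pi_{k}^{m}(\eta_{m}(x))$ takes values in the finite set $X_{k}\cap\overline{B(x,\rho^{k}/(1-\rho))}$ (finite by doubling), and consistent $\prec$-tie-breaking forces eventual constancy, the standard technical verification carried out in \cite{HM12}; since the ancestry $\pi_{k}^{k'}$ carries the eventual value at level $k'$ to the eventual value at level $k$, cubes at level $k'$ nest inside cubes at level $k$ whenever $k\leq k'$. The outer containment in (3) is a telescoping estimate: for $x\in Q_{\zeta}^{k}$ and any sufficiently large $m$,
\[
d(x,\zeta)\leq d(x,\eta_{m}(x))+\sum_{j=k}^{m-1}d(\pi_{j+1}^{m}(\eta_{m}(x)),\pi_{j}^{m}(\eta_{m}(x)))\leq\rho^{m}+\sum_{j=k}^{m-1}\rho^{j}<\frac{\rho^{k}}{1-\rho}<5\rho^{k}.
\]

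The delicate inner containment $B(\zeta,c_{0}\ell(Q))\subseteq Q_{\zeta}^{k}$ is the main obstacle, and it is here that the constraints $\rho<1/1000$ and $c_{0}=1/500$ get pinned down. For $x$ with $d(x,\zeta)<c_{0}\ell(Q)=\rho^{k}/100$ and any sufficiently large $m$, write $\eta=\eta_{m}(x)$ and telescope only down to level $k+1$:
\[
d(\pi_{k+1}^{m}(\eta),\zeta)\leq\rho^{m}+\frac{\rho^{k}}{100}+\sum_{j=k+1}^{m-1}\rho^{j}<\frac{\rho^{k+1}}{1-\rho}+\frac{\rho^{k}}{100}<\frac{\rho^{k}}{50},
\]
since $\rho<1/1000$ gives $\rho^{k+1}/(1-\rho)<\rho^{k}/999$. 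Every competitor $\zeta'\in X_{k}\setminus\{\zeta\}$ satisfies $d(\zeta',\zeta)\geq\rho^{k}$, hence $d(\pi_{k+1}^{m}(\eta),\zeta')\geq 49\rho^{k}/50>\rho^{k}/50\geq d(\pi_{k+1}^{m}(\eta),\zeta)$, so $\pi_{k}(\pi_{k+1}^{m}(\eta))=\zeta$ without appeal to tie-breaking. Thus $\pi_{k}^{m}(\eta_{m}(x))=\zeta$ for all sufficiently large $m$ and $x\in Q_{\zeta}^{k}$, completing the construction.
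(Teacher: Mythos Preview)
The paper does not prove this theorem at all: it is stated as a known result, attributed to David and Christ with the present formulation taken from Hyt\"onen--Martikainen \cite{HM12}, and then used as a black box. Your proposal therefore supplies a proof where the paper simply cites the literature, and your sketch follows the Hyt\"onen--Martikainen parent-map construction faithfully. Your explicit numerical verification of the inner containment---telescoping only to level $k+1$ and using $\rho<1/1000$ to force $\rho^{k+1}/(1-\rho)+\rho^{k}/100<\rho^{k}/50$ so that $\zeta$ beats every competitor in $X_{k}$ strictly---is correct and is exactly the point at which the specific constants $\rho<1/1000$, $c_{0}=1/500$ enter.

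The one step you yourself flag and defer to \cite{HM12} is the eventual stabilization of $m\mapsto\pi_{k}^{m}(\eta_{m}(x))$. You are right to flag it: finiteness of the range does not by itself rule out oscillation, and the argument in \cite{HM12} uses a careful monotonicity coming from the $\prec$-consistent tie-breaking (or, in some presentations, a direct inductive definition of the cubes that sidesteps the issue). Since the paper under review treats the whole theorem as a citation, your level of detail already exceeds what the paper provides; but if you want the argument to be genuinely self-contained, that stabilization step is the one place requiring an additional paragraph.
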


{
From now on we will let $\cD$ denote the cubes from \Theorem{Christ} for $E$ and will write
\[
B_{Q}=B(\zeta_{Q},\ell(Q)).\]}

Fix $\rho=10^{-4}$, which we want to be a power of $10$ for Section \ref{s:TheoremII}.

\begin{lemma}
Let $Q,R\in \cD$.
\begin{equation}\label{e:cqincr}
\mbox{If } Q\subseteq R,\mbox{ then }CB_{Q}\subseteq CB_{R} \mbox{ for all } C>1000/999. 
\end{equation}
\end{lemma}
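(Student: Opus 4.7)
The plan is to unwind the definitions and reduce the claim to a one-line triangle-inequality estimate controlled by the scale ratio $\rho$.

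First, I would observe that the hypothesis $Q\subseteq R$ forces $\ell(Q)\leq \ell(R)$, and in fact either $Q=R$ or $k(Q)>k(R)$. In the latter case, since $k(Q)-k(R)\geq 1$, the ratio of sidelengths satisfies $\ell(Q)/\ell(R)\leq \rho=10^{-4}$. The equal case $Q=R$ is immediate, so it suffices to handle $Q\subsetneq R$.

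Next, because the inclusion $B_X(\zeta_Q,c_0\ell(Q))\subseteq Q$ from \eqref{e:containment} yields $\zeta_Q\in Q$, the nesting $Q\subseteq R\subseteq B_X(\zeta_R,\ell(R))$ implies $|\zeta_Q-\zeta_R|\leq \ell(R)$. Then for any $x\in CB_Q$, the triangle inequality gives
\[
|x-\zeta_R|\;\leq\; |x-\zeta_Q|+|\zeta_Q-\zeta_R|\;\leq\; C\ell(Q)+\ell(R).
\]
We want $|x-\zeta_R|\leq C\ell(R)$, so it is enough to verify $C\ell(Q)\leq (C-1)\ell(R)$, i.e.\ $\ell(Q)/\ell(R)\leq 1-1/C$.

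The final step is a numerical check: since $\ell(Q)/\ell(R)\leq \rho=10^{-4}$, it suffices that $1-1/C\geq 10^{-4}$, equivalently $C\geq 10000/9999$. The hypothesis $C>1000/999=1.001001\ldots$ is strictly stronger than $C>10000/9999=1.00010001\ldots$, so the required bound holds. There is no real obstacle here; the only subtlety is making sure the constant $1000/999$ actually dominates $1/(1-\rho)$, which it does comfortably for $\rho=10^{-4}$.
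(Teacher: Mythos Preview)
Your proof is correct and follows essentially the same argument as the paper: both reduce to the triangle inequality $|x-\zeta_R|\leq C\ell(Q)+\ell(R)$ together with the observation that $\ell(Q)\leq\rho\,\ell(R)$, and then check that $C\rho+1<C$ (equivalently $C>1/(1-\rho)$) holds whenever $C>1000/999$. The only cosmetic difference is that you plug in the specific value $\rho=10^{-4}$ while the paper phrases the numerical check in terms of the Christ-cube constraint $\rho<1/1000$; either way the threshold $1000/999$ comfortably dominates $1/(1-\rho)$.
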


\begin{proof}
We can assume $Q\neq R$, so $\ell(Q)<\ell(R)$. In particular, $\ell(Q)\leq \rho \ell(R)$. Thus, for $x\in CB_{Q}$, so long as $C\rho+1<C$ (which happens if $C\geq \frac{1}{1-\rho}>1000/999$ by our choice of $\rho$ in\Theorem{Christ}), we have
\[
|x-x_{R}|\leq |x-x_{Q}|+|x_{Q}-x_{R}|<C\ell(Q)+\ell(R)\leq (C\rho+1)\ell(R)\leq C\ell(R).\]
\end{proof}

Let $j\geq 0$. For $Q\in \cD_{k+j}$, we will denote by $Q^{(k)}$ the (unique) cube in $\cD_{j}$ containing $Q$.
For $R\in  \cD_{k+j}$ we denote by $\Child_k(R)$ the collection of cubes $Q\in \cD_j$ such that $Q\subset R$.
We will call  $Q^{(1)}$ the  the {\it parent} of $Q$. 
If $Q'\in \Child_{1}(Q^{(1)})$ and $Q'\neq Q$, then  we will call $Q'$ a {\it sibling} of $Q$.

\subsection{Preliminaries with $\beta_{E}^{d,p}$}
\label{s:betalemmas}

Again, we not that the Appendix (Section \ref{s:appendix}) contains preliminaries regarding the 
Choquet integral with respect to Hausdorff content, which is what is used for the definition of $\beta$.

For a ball $B$ centered on $E$, we will denote by $P_{B}^{p}$ the $d$-plane for which
\[
\beta_{E}^{d,p}(B,P_{B}^{p})=\beta_{E}^{d,p}(B).\]

\begin{lemma}\label{l:betabetainf}
Assume $0<p<\infty$, $E\subseteq \bR^{n}$, and $B$ is centered on $E$. Then
\begin{equation}
\beta_{E}^{d,p}(B)\leq \frac{2^{d/p}}{p^{1/p}}\beta_{E,\infty}^{d}(B).
\end{equation}
In particular, for $p\geq 1$, we have 
\begin{equation}\label{e:betabetainf}
\beta_{E}^{d,p}(B)\leq 2^{d}\beta_{E,\infty}^{d}(B).
\end{equation}
\end{lemma}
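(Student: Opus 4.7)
The proof will be a direct computation using the definition of the Choquet-style $\beta$-number. The idea is that the sup norm bound $\beta_{E,\infty}^d(B)$ restricts the support of the distribution function $t \mapsto \cH^d_\infty(\{x \in B \cap E : \dist(x,L) > t r_B\})$, and on that support we have a trivial uniform bound.

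First I would pick, for a given $\delta > 0$, a $d$-plane $L$ nearly optimal for $\beta_{E,\infty}^d(B)$, so that $\sup_{y \in E \cap B} \dist(y,L) \leq (\beta_{E,\infty}^d(B) + \delta)\, r_B$. Write $\beta_\infty := \beta_{E,\infty}^d(B) + \delta$. Then for all $t > \beta_\infty$ the set $\{x \in B \cap E : \dist(x,L) > t r_B\}$ is empty, so the integral defining $\beta_E^{d,p}(B,L)^p$ is truncated at $\beta_\infty$:
\[
\beta_E^{d,p}(B,L)^p \;=\; \frac{1}{r_B^d}\int_0^{\beta_\infty} \cH^d_\infty\bigl(\{x \in B \cap E : \dist(x,L) > t r_B\}\bigr)\, t^{p-1}\, dt.
\]

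Next I would bound the integrand pointwise by the trivial estimate $\cH^d_\infty(A) \leq (\diam A)^d$ applied to $A = B$, giving $\cH^d_\infty(B \cap E) \leq (\diam B)^d = 2^d r_B^d$. Plugging this in and computing the remaining one-dimensional integral yields
\[
\beta_E^{d,p}(B,L)^p \;\leq\; 2^d \int_0^{\beta_\infty} t^{p-1}\, dt \;=\; \frac{2^d}{p}\,\beta_\infty^{\,p}.
\]
Taking the $p$-th root, passing to the infimum over $L$, and letting $\delta \downarrow 0$ produces the first inequality. The second inequality \eqref{e:betabetainf} then follows for $p \geq 1$ from the elementary observations $2^{d/p} \leq 2^d$ (since $d \geq 0$) and $p^{1/p} \geq 1$.

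There is no real obstacle here; the only small point to be careful about is that the distribution function is measurable in $t$ so the Choquet integral is well-defined, which is standard since level sets of $\cH^d_\infty$ are monotone in $t$. No lemma beyond $\cH^d_\infty(B) \leq (\diam B)^d$ is needed.
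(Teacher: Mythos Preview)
Your proof is correct and follows the same approach as the paper: truncate the Choquet integral at the $\beta_{E,\infty}^d(B)$ level and bound the distribution function by $\cH^d_\infty(E\cap B)\le (2r_B)^d$. The only cosmetic difference is that the paper works directly with an optimal plane $P$ while you use a $\delta$-approximation and pass to the limit, which is arguably more careful but not substantively different.
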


\begin{proof} 
Let $P$ be 
 the minimal $d$-plane for $\beta_{E,\infty}^{d}(B)$. 
Since for any set $A$ we have $\cH^{d}_{\infty}(A)\leq (\diam A)^{d}$, we know $\cH^{d}_{\infty}(E\cap B)\leq (2r_{B})^{d}$, and so
\begin{align*}
\beta_{E}^{d,p}(B)^{p} 
& \leq \frac{1}{r_{B}^{d}}\int_{0}^{\infty}\cH^{d}_{\infty}\{x\in B\cap E: \dist(x,P)>tr_{B}\}t^{p-1}dt\\
& \leq  \frac{1}{r_{B}^{d}}\int_{0}^{\beta_{E,\infty}^{d}(B)}\cH^{d}_{\infty}\{x\in B\cap E: \dist(x,P)>tr_{B}\}t^{p-1}dt \\
& \leq \frac{\cH^{d}_{\infty}(B\cap E) }{r_{B}^{d}} \int_{0}^{\beta_{E,\infty}^{d}(B)}t^{p-1}dt
\leq \frac{2^{d}}{p} \beta_{E,\infty}^{d}(B)^{p}.
\end{align*}
\end{proof}

\begin{lemma}
Assume $E\subseteq \bR^{n}$ and there is $B$ centered on $E$ so that for all $B'\subseteq B$ centered on $E$ we have $\cH^{d}_{\infty}(E\cap B')\geq c r_{B'}^{d}$. Then
\begin{equation}\label{e:betainfbeta}
\beta_{E,\infty}^{d}\ps{\frac{1}{2}B}\leq  2c^{-\frac{1}{d+1}}\beta_{E}^{d,1}(B)^{\frac{1}{d+1}}
\end{equation}
\label{l:betainfbeta}
\end{lemma}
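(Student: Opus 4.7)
The strategy is to argue contrapositively: if some $x \in E \cap \tfrac{1}{2}B$ lies far from the $\beta_E^{d,1}$-minimizing plane $L := P_B^1$, then the lower content regularity hypothesis produces a whole ball's worth of $\cH^d_\infty$-mass around $x$ whose points are all comparably far from $L$, which in turn forces the Choquet integral defining $\beta_E^{d,1}(B,L)$ to be large.

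Concretely, set $s := \beta_{E,\infty}^d(\tfrac{1}{2}B)$ and $L := P_B^1$. Since $L$ is one admissible plane in the infimum defining $\beta_{E,\infty}^d(\tfrac{1}{2}B)$, and since $\diam(\tfrac{1}{2}B) = r_B$, there is some $x \in E \cap \tfrac{1}{2}B$ with $\alpha := \dist(x,L) \geq sr_B/2 - \eta$ for any preassigned $\eta > 0$. One also notes the easy a priori bound $\beta_{E,\infty}^d(\tfrac{1}{2}B) \leq 1$ obtained by testing against any plane through the center of $B$, so in particular we can arrange $\alpha \leq r_B/2$. Fixing any $r \leq \min\{\alpha, r_B/2\}$, we have $B(x,r) \subseteq B$ (because $x \in \tfrac{1}{2}B$) and every $y \in B(x,r)$ satisfies $\dist(y,L) \geq \alpha - r$.

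Now invoke lower content regularity on $B(x,r) \subseteq B$: $\cH^d_\infty(B(x,r) \cap E) \geq c r^d$. Consequently, for every $t \in (0, (\alpha-r)/r_B)$ the super-level set $\{y \in B \cap E : \dist(y,L) > t r_B\}$ contains $B(x,r) \cap E$, so has content at least $c r^d$. Substituting this lower bound into the Choquet integral gives
\[
\beta_E^{d,1}(B) \;=\; \beta_E^{d,1}(B,L) \;\geq\; \frac{1}{r_B^d}\int_0^{(\alpha-r)/r_B} c r^d\, dt \;=\; \frac{c r^d(\alpha - r)}{r_B^{d+1}}.
\]
Optimizing in $r$ (the clean choice $r = \alpha/2$ already suffices) and using $\alpha \geq sr_B/2 - \eta$, then letting $\eta \to 0$, produces a bound of the form $s \leq C_d\, c^{-1/(d+1)}\, \beta_E^{d,1}(B)^{1/(d+1)}$ for an absolute constant $C_d$.

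The structural obstacles are minor bookkeeping rather than deep: ensuring $B(x,r) \subseteq B$ (handled by the combination $x \in \tfrac{1}{2}B$ and $r \leq r_B/2$), and controlling the a priori size of $\alpha$ (handled by the trivial bound $\beta_{E,\infty}^d \leq 1$, which also lets one dispense with a separate case where $\alpha > r_B/2$). To match the explicit constant $2$ written in the statement one should take the sharper optimization $r = d\alpha/(d+1)$, which maximizes $r^d(\alpha - r)$ on $(0,\alpha)$; this refinement affects only the multiplicative constant and not the proof architecture.
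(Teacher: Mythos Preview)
Your proposal is correct and follows essentially the same approach as the paper: pick a point $x\in E\cap \tfrac12 B$ (nearly) realizing the supremum distance to $L=P_B^1$, apply lower content regularity to a small ball around $x$ to get mass in the super-level sets, and integrate. The paper simply fixes $r=\tau/2$ where $\tau=\dist(x,L)$ rather than optimizing over $r$, and assumes $E$ closed to take the actual farthest point instead of your $\eta$-approximation; the one place your write-up is slightly loose is the claim ``we can arrange $\alpha\le r_B/2$'' (you cannot literally choose $\alpha$, only $x$), but as you note this is just the trivial separate case $\alpha>r_B/2$, handled by taking $r=r_B/4$ and using $s\le 1$.
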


\begin{proof}
We can assume $E$ is closed and that $r_{B}=1$. Let $P=P_{B}^{1}$ and let $y\in \cnj{\frac{1}{2}B\cap E}$ be farthest from $P$. Set $\tau =\dist(y,P)$ and note
\[
B(y,\tau/2)\subseteq \{x\in B\cap E: \dist(x,P)>t\}\mbox{ for }t\leq \tau/2.\]
Thus,
\begin{align*}
1
& \leq \frac{1}{c(\tau/2)^{d}}\cH^{d}_{\infty}(B(y,\tau/2)) \\
&\leq \frac{2^{d}}{c\tau^{d}} \avint_{0}^{\tau/2}\cH^{d}_{\infty}(x\in B\cap E: \dist(x,P)>t\}dt
\leq \frac{2^{d+1}}{c\tau^{d+1}} \beta_{E}^{d,1}(B) 
\end{align*}
Hence,
\[
\beta_{E,\infty}^{d}\ps{\frac{1}{2}B}
\leq 2\tau
\leq 2c^{-\frac{1}{d+1}}\beta_{E}^{d,1}(B)^{\frac{1}{d+1}}.\]

\end{proof}

 {
\begin{lemma}
Let $1\leq p < \infty$, $E\subseteq \bR^{n}$ be a closed set and $B$ a ball centered on $E$ so that $\cH^{d}_{\infty}(E\cap B)>0$. Then 
 \begin{equation}\label{e:bjensens}
 \beta_{E}^{d,1}(B)\lec_{n} \beta_{E}^{d,p}(B) 
 \end{equation}
\end{lemma}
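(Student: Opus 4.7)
My plan is to recognize the statement as an instance of Jensen's inequality for Choquet integrals, namely Lemma \ref{l:jensens}, once the $\beta$-quantities are rewritten as Choquet integrals of a single bounded continuous function. The case $p=1$ is trivial, so I will assume $p>1$. Fix $L=P_{B}^{p}$, so that $\beta_{E}^{d,1}(B)\leq \beta_{E}^{d,1}(B,L)$ and it suffices to compare $\beta_{E}^{d,1}(B,L)$ with $\beta_{E}^{d,p}(B)=\beta_{E}^{d,p}(B,L)$. Introduce the truncated continuous function
\[
g(x):=\min\ps{\frac{\dist(x,L)}{r_{B}},\,1}.
\]
Since $g\leq 1$, for $t\in[0,1)$ we have $\{x\in E\cap B:\dist(x,L)>tr_{B}\}=\{g>t\}$, while $\{g>t\}=\emptyset$ for $t\geq 1$. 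Thus, by the paper's convention for the $p$-Choquet integral,
\[
\beta_{E}^{d,p}(B,L)^{p}=\frac{1}{r_{B}^{d}}\int_{E\cap B}g^{p}\,d\cH^{d}_{\infty},\qquad \beta_{E}^{d,1}(B,L)=\frac{1}{r_{B}^{d}}\int_{E\cap B}g\,d\cH^{d}_{\infty}.
\]

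Next, I apply Lemma \ref{l:jensens} to $g$ on the compact set $E\cap B$; the hypothesis $\cH^{d}(E\cap B)>0$ is satisfied since $\cH^{d}_{\infty}(E\cap B)>0$ (these vanish together). Substituting the identities above into the conclusion of that lemma and rearranging gives
\[
\beta_{E}^{d,1}(B,L)\lec_{n}\ps{\frac{\cH^{d}_{\infty}(E\cap B)}{r_{B}^{d}}}^{1-1/p}\beta_{E}^{d,p}(B,L).
\]

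Finally, the trivial bound $\cH^{d}_{\infty}(E\cap B)\leq(\diam B)^{d}=(2r_{B})^{d}$ controls the prefactor by $2^{d(1-1/p)}\leq 2^{d}$, a constant depending only on $n$. Combined with $\beta_{E}^{d,1}(B)\leq\beta_{E}^{d,1}(B,L)$ and $\beta_{E}^{d,p}(B,L)=\beta_{E}^{d,p}(B)$, this yields the claim. There is no serious obstacle: the proof is essentially bookkeeping once Jensen's inequality for the Choquet integral (the nontrivial content of which is Lemma \ref{l:jensens}) is at hand; the only conceptual step is recognizing the truncation $g$ as the right function to interpolate between the two $\beta$-quantities.
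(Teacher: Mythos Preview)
Your proof is correct and follows the same core idea as the paper: fix the optimal plane $L=P_B^p$ and apply the Choquet--Jensen inequality (\Lemma{jensens}), then absorb the prefactor using $\cH^d_\infty(E\cap B)\leq (2r_B)^d$. Your execution is in fact cleaner than the paper's. By introducing the truncated function $g=\min(\dist(\cdot,L)/r_B,1)$ you get exact identities $\int_{E\cap B} g^p\,d\cH^d_\infty=r_B^d\,\beta_E^{d,p}(B,L)^p$, so no separate estimate is needed at the end; and by applying \Lemma{jensens} directly on $E\cap B$ you bypass the paper's exhaustion by compact sets $E_j=E\cap\overline{(1-j^{-1})B}$ and the Frostman-measure argument used there to justify passing to the limit.

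The only caveat is your assertion that $E\cap B$ is compact: this requires $B$ to be a closed ball. The paper does adopt closed balls when it first introduces $\beta_{\mu,p}^d$, so this is a reasonable reading, but it is not explicit in the statement of the present lemma. The paper's limiting argument through the $E_j$ would cover the open-ball case as well, which may explain why the authors took that longer route.
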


\begin{proof}
We can assume without loss of generality that  $r_{B}=1$. Let $P=P_{B}^{p}$ and $f(x)=\dist(x,P)$. Let 
\[
E_{j}=E\cap \cnj{(1-j^{-1})B}.\]
Then $E$ is compact, and so we may apply \Lemma{jensens} to these sets. By Frostmann's Lemma \cite[Theorem 8.8]{Mattila}, for each $t>0$ there is a measure $\mu_t$ with
\[\supp \mu_{t}\subseteq \{x\in E\cap B: f(x)>t\}\] 
so that 
\[
\mu_{t}(B(x,r))\leq r^{d} \mbox{ for all $x\in \bR^{n}$ and $r>0$}\] 
and 
\[
\mu_{t}(\{x\in E: f(x)>t\})\gtrsim_n \cH^{d}_\infty(\{x\in E: f(x)>t\}).\]
For each $t>0$,

\begin{multline*}
\lim_{j}  \cH^{d}_{\infty} (\{x\in E_{j}\cap B: f(x)>t\})
 \geq \liminf_{j} \mu_{t} (\{x\in E_{j}: f(x)>t\})\\
=\mu_{t} (\{x\in E: f(x)>t\})  
\gtrsim_n \cH^{d}_\infty(\{x\in E: f(x)>t\})
\end{multline*}
where the first limit converges because it is monotone (since the $E_{j}$ are nested). Hence, by the Monotone Convergence Theorem,
\begin{align*}
\beta_{E}^{d,1}(B) & \leq \int_{E\cap B} f d\cH^{d}_{\infty}
= \int_{0}^{\infty} \cH^{d}_{\infty}(\{x\in E: f(x)>t\})dt\\
& \lec_n \lim_{j}  \int_{0}^{\infty} \cH^{d}_{\infty}(\{x\in E_{j}: f(x)>t\})dt \\
& = \lim_{j} \int_{E_{j}} f d\cH^{d}_{\infty}\\
& \stackrel{\eqn{jensens}}{\lec_n} \lim_{j} \cH^{d}_{\infty}(E_{j})^{1-\frac{1}{p}} \ps{ \int_{E_{j}} f^{p} d\cH^{d}_{\infty}}^{\frac{1}{p}}\\
& \leq \cH^{d}_{\infty}(E\cap B)^{1-\frac{1}{p}} \ps{ \int_{E\cap B} f^{p} d\cH^{d}_{\infty}}^{\frac{1}{p}}\\
& \lec_{d} 1 \cdot \beta_{E}^{d,p}(B).
\end{align*}
and this implies \eqn{bjensens}.
\end{proof}
}

\begin{lemma}
Let $1\leq p < \infty$ and let $E\subseteq \bR^{n}$. Then for balls $B'\subseteq B$ centered on $E$,
\begin{equation}
\beta_{E}^{d,p}(B')\leq \ps{\frac{r_{B}}{r_{B'}}}^{d+p}\beta_{E}^{d,p}(B).
\end{equation}
\end{lemma}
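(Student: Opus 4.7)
The plan is a direct computation using the minimizing plane for the larger ball in the definition of $\beta$ on the smaller ball, together with a change of variables to rescale the threshold. Let $L = P_{B}^{p}$ be the plane achieving $\beta_{E}^{d,p}(B) = \beta_{E}^{d,p}(B,L)$. I will bound $\beta_{E}^{d,p}(B')^{p} \leq \beta_{E}^{d,p}(B',L)^{p}$ using the same plane $L$ as a test competitor.

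Starting from the definition,
\[
\beta_{E}^{d,p}(B',L)^{p} = \frac{1}{r_{B'}^{d}}\int_{0}^{1}\cH^{d}_{\infty}(\{x\in B'\cap E:\dist(x,L)>t\,r_{B'}\})\,t^{p-1}\,dt,
\]
substitute $s = t\,r_{B'}/r_{B}$, so $t = s\,r_{B}/r_{B'}$ and $dt = (r_{B}/r_{B'})\,ds$, giving an integral over $s\in[0,\,r_{B'}/r_{B}]$ whose integrand involves the threshold $s\,r_{B}$ and the factor $(r_{B}/r_{B'})^{p}$. Since $B'\subseteq B$ we have $B'\cap E \subseteq B\cap E$, so the Hausdorff content only grows when replacing $B'$ by $B$ in the defining set, and since $r_{B'}/r_{B}\leq 1$ we may enlarge the range of integration to $[0,1]$. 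This yields
\[
\beta_{E}^{d,p}(B',L)^{p} \leq \left(\frac{r_{B}}{r_{B'}}\right)^{p}\cdot\frac{1}{r_{B'}^{d}}\int_{0}^{1}\cH^{d}_{\infty}(\{x\in B\cap E:\dist(x,L)>s\,r_{B}\})\,s^{p-1}\,ds.
\]
Multiplying and dividing by $r_{B}^{d}$ to recognize $\beta_{E}^{d,p}(B,L)^{p}$ on the right-hand side gives
\[
\beta_{E}^{d,p}(B',L)^{p} \leq \left(\frac{r_{B}}{r_{B'}}\right)^{d+p}\beta_{E}^{d,p}(B,L)^{p}.
\]

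Taking $p$-th roots yields in fact the stronger inequality $\beta_{E}^{d,p}(B') \leq (r_{B}/r_{B'})^{(d+p)/p}\,\beta_{E}^{d,p}(B)$; the claimed bound then follows since $r_{B}/r_{B'}\geq 1$ and $(d+p)/p \leq d+p$ whenever $p\geq 1$. There is no real obstacle here --- the only subtlety is bookkeeping the change of variables correctly and noticing that shrinking the ball hurts in two ways: the ambient $r_{B'}^{-d}$ normalization inflates by $(r_{B}/r_{B'})^{d}$, and rescaling the distance threshold by $r_{B'}/r_{B}$ (which is smaller than $1$) supplies the extra $(r_{B}/r_{B'})^{p}$ factor. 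The inclusion $B'\cap E\subseteq B\cap E$ is what lets one estimate with the same test plane.
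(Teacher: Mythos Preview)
Your proof is correct and follows essentially the same approach as the paper: use the optimal plane $L=P_{B}^{p}$ for the larger ball as a competitor for $B'$, perform the change of variables $s=t\,r_{B'}/r_{B}$, and invoke monotonicity of $\cH^{d}_{\infty}$ under the inclusion $B'\cap E\subseteq B\cap E$. Your handling of the integration limits (keeping the integral on $[0,1]$ and enlarging $[0,r_{B'}/r_{B}]$ to $[0,1]$) is arguably cleaner than the paper's passage through $\int_{0}^{\infty}$, and you correctly observe that the argument in fact yields the sharper exponent $(d+p)/p$.
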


\begin{proof}
Let $P=P_{B}^{p}$. Using a change of variables, we get
\begin{align*}
\beta_{E}^{d,p}(B')^{p}r_{B'}^{d}
& \leq \int_{0}^{\infty} \cH^{d}_{\infty}\{x\in B'\cap E:\dist(x,P)>tr_{B'}\}t^{p-1}dt\\
& = \ps{\frac{r_{B}}{r_{B'}}}^{p}\int_{0}^{\infty}  \cH^{d}_{\infty}\{x\in B'\cap E:\dist(x,P)>tr_{B}\}t^{p-1}dt\\
& \leq  \ps{\frac{r_{B}}{r_{B'}}}^{p} \int_{0}^{\infty}  \cH^{d}_{\infty}\{x\in B\cap E:\dist(x,P)>tr_{B}\}t^{p-1}dt\\
& \leq \ps{\frac{r_{B}}{r_{B'}}}^{p}r_{B}^{d} \beta_{E}^{d,p}(B)^{p}.
\end{align*}
\end{proof}

For two planes $P,P'$ containing the origin, we define
\[\angle(P,P')=d_{B(0,1)}(P,P').\]
For general affine planes $P,P'$, let $x\in P$ and $y\in P'$. We set 
\[\angle(P,P')=\angle(P-x,P'-y).\]

If $P_{1}$ and $P_{2}$ are both $d$-planes containing $x$, we clearly have
\begin{equation}\label{e:dxardxr}
d_{x,ar}(P_{1},P_{2})=d_{x,r}(P_{1},P_{2}) \mbox{ for all }a,r>0
\end{equation}
and it is also not hard to show that if $P_{1},P_{2}$, and $P_{3}$ are all $d$-planes containing $x$,
\begin{equation}
\label{e:ptrngl}
d_{x,r}(P_{1},P_{3})
\leq d_{x,r}(P_{1},P_{2})+d_{x,r}(P_{2},P_{3}).
\end{equation}
Indeed, if $y\in P_{1}\cap B(x,r)$, $y'=\pi_{P_{2}}(y)$, and $y''=\pi_{P_{3}}(y')$, then $y,y',y''\in B(x,r)$ and
\begin{align*}
r\dist(y,P_{3})
& \leq |y-y''|\leq |y-y'|+|y'-y''|
= \dist(y,P_{2})+\dist(y',P_{3})\\
& \leq rd_{x,r}(P_{1},P_{2})+rd_{x,r}(P_{2},P_{3})
\end{align*}
and supremizing over all $y\in P_{1}\cap B(x,r)$ gives \eqn{ptrngl}.

\begin{lemma}
Suppose $d_{z,r}(V_{1},V_{2})<\ve$ are $d$-planes. Then
\begin{equation}\label{e:vvperp}
|\pi_{V_{1}^{\perp}}(x-y)|\leq \ve |x-y| \;\;\; \mbox{ for }x,y\in V_{2}.
\end{equation}
\end{lemma}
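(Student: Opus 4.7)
The plan is to exploit the linearity of the orthogonal projection $\pi_{V_{1}^{\perp}}$ together with the distance bound that follows directly from $d_{z,r}(V_{1},V_{2})<\ve$. First I would observe that $\pi_{V_{1}^{\perp}}$ is a linear map and that, for any $z_{0}\in V_{1}$, one has $|\pi_{V_{1}^{\perp}}(p-z_{0})|=\dist(p,V_{1})$ for every $p\in\bR^{n}$; in particular, for $x,y\in V_{2}$,
\[
\pi_{V_{1}^{\perp}}(x-y)= (x-\pi_{V_{1}}(x)) - (y-\pi_{V_{1}}(y)),
\]
so whenever $x,y\in V_{2}\cap B(z,r)$ the hypothesis gives $|\pi_{V_{1}^{\perp}}(x-y)|\leq \dist(x,V_{1})+\dist(y,V_{1})\leq 2\ve r$.

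Second, since $\pi_{V_{1}^{\perp}}$ restricted to the linear direction space $V_{2}-V_{2}$ is linear, the ratio $|\pi_{V_{1}^{\perp}}(v)|/|v|$ depends only on the direction of $v\in V_{2}-V_{2}$, so it suffices to establish $|\pi_{V_{1}^{\perp}}(u)|\leq \ve$ for one unit vector $u$ in each direction of $V_{2}-V_{2}$. The idea is to realize $u$ as $(y-x)/|y-x|$ for a pair $x,y\in V_{2}\cap\cnj{B(z,r)}$ whose separation $|y-x|$ is as large as possible, since then the previous display yields $|\pi_{V_{1}^{\perp}}(u)|\leq 2\ve r/|y-x|$.

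Concretely, let $z'$ denote the point of $V_{2}$ closest to $z$ and set $x=z'-tu$, $y=z'+tu$ for the largest $t>0$ keeping both points in $\cnj{B(z,r)}$. A quick calculation using $z'-z\perp V_{2}-V_{2}$ yields $|y-x|=2\sqrt{r^{2}-\dist(z,V_{2})^{2}}$. In the case of interest, when $z\in V_{2}$ (which is the implicit geometric context for the lemma's application), one has $\dist(z,V_{2})=0$ and so $|y-x|=2r$, giving the sharp bound $|\pi_{V_{1}^{\perp}}(u)|\leq \ve$ exactly; more generally one may extract $\dist(z,V_{2})\lec\ve r$ from the hypothesis and absorb the resulting factor $(1-\ve^{2})^{-1/2}$ into $\ve$ at small $\ve$. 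The main obstacle is purely bookkeeping: choosing the two points to lie in $B(z,r)$ simultaneously so that both distance bounds apply, and translating between the affine plane $V_{1}$ and its linear direction $V_{1}-V_{1}$. No deeper geometric input is required beyond this and the linearity of $\pi_{V_{1}^{\perp}}$.
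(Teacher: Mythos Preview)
Your argument is correct. The paper takes a somewhat shorter route: after translating so that $y=0\in V_{2}$, it observes directly that $|\pi_{V_{1}^{\perp}}(x)|=\dist(x,V_{1}')$ where $V_{1}'$ is the translate of $V_{1}$ through the origin, and then uses the scale-invariance \eqn{dxardxr} to get $\dist(x,V_{1}')\leq |x|\,d_{B(0,|x|)}(V_{2},V_{1}')=|x|\,\angle(V_{1},V_{2})<\ve|x|$. Thus the paper never selects a maximally separated pair in $V_{2}\cap B(z,r)$ or invokes the triangle inequality on two separate distances to $V_{1}$; it reduces immediately to the angle. Your approach is a valid alternative that makes the role of the ball $B(z,r)$ more explicit, at the cost of an extra step. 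Both arguments ultimately rest on the same point---that the hypothesis delivers $\angle(V_{1},V_{2})\leq\ve$---and you are right to flag that this is clean precisely when $z\in V_{2}$, which matches how the lemma is actually applied later (where in fact the angle bound is established directly).
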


\begin{proof}
Let $x,y\in V_{2}$. By subtracting $y$ from $V_{1}$ and $V_{2}$, we may assume without loss of generality that $y=0$, so $V_{2}$ contains the origin. Recall that $V_{1}^{\perp}$ is the $(n-d)$-plane orthogonal to $V_{1}$ and containing the origin. Let $V_{1}'$ be the translate of $V_{1}$ that contains the origin. Then
\begin{align*}
|\pi_{V_{1}^{\perp}}(x)|
& =|x-\pi_{V_{1}'}(x)|
\leq\dist(x,V_{1}')\leq |x| d_{B(0,|x|)}(V_{2},V_{1}') \\
& \stackrel{\eqn{dxardxr}}{=}|x| d_{B(0,1)}(V_{2},V_{1}')=\angle(V_{1},V_{2}) <\ve|x|.
\end{align*}

\end{proof}

\begin{lemma}\label{l:betabeta'}
Suppose $E\subseteq \bR^{n}$ and there is $B$ centered on $E$ so that for all $B'\subseteq B$ centered on $E$ we have $\cH^{d}|_{\infty}(B')\geq c r_{B'}^{d}$. Let $P$ and $P'$ be two $d$-planes. Then
\begin{equation}
d_{B'}(P,P')\lec_{d,c} \ps{\frac{r_{B}}{r_{B'}}}^{d+1}\beta_{E}^{d,1}(B,P)+\beta_{E}^{d,1}(B',P')
\end{equation}
and in particular
\begin{equation}
\angle(P,P')\lec_{d,c} \ps{\frac{r_{B}}{r_{B'}}}^{d+1}\beta_{E}^{d,1}(B,P)+\beta_{E}^{d,1}(B',P').
\end{equation}
In particular, it follows that
\begin{equation}
d_{B'}(P_{B'}^{1},P_{B}^{1})\lec_{d,c} \ps{\frac{r_{B}}{r_{B'}}}^{d+1}\beta_{E}^{d,1}(B)
\end{equation}
and 
\begin{equation}
\angle(P_{B'}^{1},P_{B}^{1})\lec_{d,c} \ps{\frac{r_{B}}{r_{B'}}}^{d+1}\beta_{E}^{d,1}(B).
\end{equation}
\end{lemma}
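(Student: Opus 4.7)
The plan is to use a Chebyshev-type estimate on the definition of $\beta_{E}^{d,1}$ together with the preceding scaling lemma to show that the set of points in $E\cap B'$ that are simultaneously close to \emph{both} $P$ and $P'$ has $\cH^{d}_{\infty}$-content $\gtrsim r_{B'}^{d}$, then extract a well-spread $d$-simplex from this set, and finally use affine coordinates to transfer closeness at the simplex vertices to closeness of the two planes throughout $B'$.

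\textbf{Step 1 (Scaling and Chebyshev).} The scaling lemma immediately preceding the statement works for an arbitrary plane (not only the minimizer), so
\[
\beta_{E}^{d,1}(B',P)\leq \ps{\frac{r_{B}}{r_{B'}}}^{d+1}\beta_{E}^{d,1}(B,P).
\]
Set $\tau:=\ps{\frac{r_{B}}{r_{B'}}}^{d+1}\beta_{E}^{d,1}(B,P)+\beta_{E}^{d,1}(B',P')$; then $\beta_{E}^{d,1}(B',P),\beta_{E}^{d,1}(B',P')\leq \tau$. For any plane $L$ and any $\lambda>0$, since $t\mapsto\cH^{d}_{\infty}(\{x\in B'\cap E:\dist(x,L)>tr_{B'}\})$ is monotone nonincreasing, a Chebyshev-style bound from the Choquet definition gives
\[
\cH^{d}_{\infty}\ck{x\in B'\cap E:\dist(x,L)>\lambda r_{B'}} \leq \frac{\beta_{E}^{d,1}(B',L)\,r_{B'}^{d}}{\lambda}.
\]

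\textbf{Step 2 (A well-spread simplex in the common ``good'' set).} We may assume $\tau$ is smaller than a dimensional constant, since otherwise the target inequality is trivial ($d_{B'}\leq 2$). Choose $\lambda=K\tau/c$ with $K=K(d,c)$ large, and apply the Chebyshev bound with $L=P$ and $L=P'$. Combined with lower content regularity $\cH^{d}_{\infty}(B'\cap E)\geq cr_{B'}^{d}$, the set
\[
A=\ck{y\in B'\cap E:\dist(y,P)\leq\lambda r_{B'}\text{ and }\dist(y,P')\leq\lambda r_{B'}}
\]
satisfies $\cH^{d}_{\infty}(A)\geq \tfrac{c}{2}r_{B'}^{d}$. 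I then produce $y_{0},\dots,y_{d}\in A$ with $\dist(y_{k+1},V_{k})\gtrsim_{d,c}r_{B'}$ for $V_{k}:=\mathrm{aff}(y_{0},\dots,y_{k})$, by iteration: extend $V_{k}$ to any $(d-1)$-plane $W_{k}$; since a $\rho r_{B'}$-tube around $W_{k}$ in $B'$ can be covered by $\lec \rho^{-(d-1)}$ balls of radius $\rho r_{B'}$ and thus has $\cH^{d}_{\infty}$-content $\lec \rho r_{B'}^{d}$, choosing $\rho$ a small multiple of $c$ forces the existence of $y_{k+1}\in A$ with $\dist(y_{k+1},W_{k})\gtrsim_{d,c}r_{B'}$. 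The resulting simplex has $d$-volume $\gtrsim_{d,c}r_{B'}^{d}$, and since $\lambda\ll r_{B'}$, the projected vertices $y_{i}^{P}:=\pi_{P}(y_{i})\in P$ (respectively $y_{i}^{P'}\in P'$) form a simplex of the same order of magnitude.

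\textbf{Step 3 (Affine coordinates and corollaries).} For any $z\in P\cap B'$, write $z=\sum a_{i}y_{i}^{P}$ with $\sum a_{i}=1$. Cramer's rule applied to the non-degenerate projected simplex gives $|a_{i}|\lec_{d,c}1$, hence
\[
\dist(z,P')\leq \sum|a_{i}|\ps{\dist(y_{i}^{P},y_{i})+\dist(y_{i},P')}\lec_{d,c}\lambda r_{B'}.
\]
The symmetric bound for $z'\in P'\cap B'$ yields $d_{B'}(P,P')\lec_{d,c}\lambda\lec_{d,c}\tau$, which is the desired inequality. The angle bound $\angle(P,P')\lec d_{B'}(P,P')$ follows by noting that the simplex construction places points of both $P$ and $P'$ inside a slight enlargement of $B'$, so that directions in $P$ can be transferred to $P'$ via the projections $y_{i}\mapsto y_{i}^{P'}$. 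The two ``in particular'' consequences come from specializing $P=P_{B}^{1}$ and $P'=P_{B'}^{1}$ and observing that by optimality and Step 1, $\beta_{E}^{d,1}(B',P_{B'}^{1})\leq \beta_{E}^{d,1}(B',P_{B}^{1})\leq \ps{r_{B}/r_{B'}}^{d+1}\beta_{E}^{d,1}(B)$.

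The principal obstacle is Step 2: extracting a ``thick'' $d$-simplex from a set with only $\cH^{d}_{\infty}$-content (not a measure) requires the tube-covering estimate for $(d-1)$-planes, and one must verify that the standard Chebyshev/Markov type manipulations remain valid in the Choquet setting recalled in Section \ref{subsec:meas-and-cont}.
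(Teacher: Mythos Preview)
Your argument is correct and follows essentially the same strategy as the paper: a Chebyshev estimate on the Choquet integral to find points of $E\cap B'$ that are simultaneously close to $P$ and $P'$, extraction of a well-spread $d$-simplex from that set, and then a linear-algebra transfer to bound $d_{B'}(P,P')$. The only real difference is the order of operations. The paper first fixes a well-spread simplex $\{x_0,\dots,x_d\}\subseteq \tfrac12 B'\cap E$ (using the lower-content hypothesis exactly as in your tube argument, but citing \cite[Section 5]{DS}), and then runs the Chebyshev bound \emph{locally} in each small ball $B_i=B(x_i,r_{B'}/100)$ to find $y_i\in B_i$ close to both planes; the $y_i$ are then automatically well spread. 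You instead run Chebyshev once on all of $B'$ to get a large-content ``good'' set $A$, and then extract the simplex from $A$. Both orderings work; the paper's version avoids invoking subadditivity of $\cH^d_\infty$ for the union of two ``bad'' sets, while yours avoids introducing the auxiliary balls $B_i$. For Step~3 the paper simply quotes \Lemma{ATlemma} (from \cite{AT15}), which is exactly the Cramer's-rule computation you wrote out.
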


The proof of Lemma \ref{l:betabeta'}  will use the following lemma.

\begin{lemma}\cite[Lemma 6.4]{AT15} Suppose $P_{1}$ and $P_{2}$ are $d$-planes in $\bR^{n}$ and $X=\{x_{0},...,x_{d}\}$ are points so that
\begin{enumerate}
\item[(a)] $\eta\in (0,1)$ where 
\[
\eta=\eta(X)=\min\{\dist(x_{i},\spn X\backslash\{x_{i}\})/\diam X\in (0,1)\] 
and
\item[(b)] $\dist(x_{i},P_{j})<\ve\,\diam X$ for $i=0,...,d$ and $j=1,2$, where $\ve<\eta d^{-1}/2$.
\end{enumerate}
Then
\begin{equation}
\dist(y,P_{1}) \leq \ve\ps{\frac{2d}{\eta}\dist(y,X)+\diam X} \;\; \mbox{ for all }y\in P_{2}.
\end{equation}
\label{l:ATlemma}
\end{lemma}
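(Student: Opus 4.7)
The plan is to transport each $y \in P_2$ to a nearby point $y^{(1)} \in P_1$ by using $X$ as an affine scaffold. By hypothesis (b), for each $i \in \{0,\ldots,d\}$ and $j \in \{1,2\}$ I pick $x_i^{(j)} \in P_j$ with $|x_i - x_i^{(j)}| < \ve\diam X$. Hypothesis (a) says $X$ is ``$\eta$-well-spread,'' and since $\ve < \eta/(2d)$ (so in particular $\ve < \eta/2$), the perturbed collections $X^{(j)} := \{x_i^{(j)}\}_{i=0}^d$ inherit affine independence with
\[
h_i^{(j)} := \dist\bigl(x_i^{(j)},\, \mathrm{aff}(X^{(j)} \setminus \{x_i^{(j)}\})\bigr) \geq (\eta - 2\ve)\diam X \geq \tfrac{\eta}{2}\diam X.
\]
Since $P_2$ has dimension $d$ and $X^{(2)} \subset P_2$ consists of $d+1$ affinely independent points, $X^{(2)}$ is an affine basis of $P_2$; the analogous statement holds for $X^{(1)}$ in $\mathrm{aff}(X^{(1)})$.

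Next, given $y \in P_2$, I would pick $k$ with $|y - x_k| = \dist(y, X)$ and express $y$ barycentrically based at $x_k^{(2)}$:
\[
y = x_k^{(2)} + \sum_{i \neq k} \mu_i\,(x_i^{(2)} - x_k^{(2)}).
\]
Using the \emph{same} coefficients $\mu_i$ but with the $P_1$-side scaffold, I define
\[
y^{(1)} := x_k^{(1)} + \sum_{i \neq k} \mu_i\,(x_i^{(1)} - x_k^{(1)}) \in P_1,
\]
so $\dist(y, P_1) \leq |y - y^{(1)}|$. Regrouping,
\[
y - y^{(1)} = (x_k^{(2)} - x_k^{(1)}) + \sum_{i \neq k} \mu_i\bigl[(x_i^{(2)} - x_i^{(1)}) - (x_k^{(2)} - x_k^{(1)})\bigr],
\]
whose norm is $\leq 2\ve\diam X + 4\ve\diam X \sum_{i \neq k} |\mu_i|$ by $|x_i^{(j)} - x_i| < \ve\diam X$ and two triangle inequalities.

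Finally, the standard bound for affine coordinates with respect to a simplex basis gives $|\mu_i| \leq |y - x_k^{(2)}|/h_i^{(2)} \leq 2|y - x_k^{(2)}|/(\eta\diam X)$, and by the choice of $k$, $|y - x_k^{(2)}| \leq \dist(y, X) + \ve\diam X$. Summing over $i \neq k$ yields $\sum_{i \neq k} |\mu_i| \leq \frac{2d}{\eta\diam X}\dist(y, X) + \frac{2d\ve}{\eta}$. Plugging back in and using $\ve < \eta/(2d)$ to absorb the quadratic-in-$\ve$ remainder $\frac{8\ve^2 d}{\eta}\diam X$ into an $O(\ve\diam X)$ term produces a bound of the shape $\dist(y,P_1) \leq \ve\bigl(Cd\eta^{-1}\dist(y,X) + C\diam X\bigr)$. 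The main obstacle is not the geometric content but recovering the precise constants $2d/\eta$ and $1$ stated in the lemma rather than the inflated constants from the crude triangle-inequality bookkeeping above; this can be tightened, for example, by choosing the pivot $x_k^{(j)}$ via an orthogonal projection to $P_2$ instead of a nearest-neighbor in $X$, and by estimating $|x_i^{(2)} - x_i^{(1)}|$ through $x_i$ rather than via the differences $(x_i^{(2)} - x_k^{(2)}) - (x_i^{(1)} - x_k^{(1)})$. No new geometric idea beyond the affine-scaffolding is needed.
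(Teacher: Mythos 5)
The paper does not prove this statement; it is imported verbatim from \cite[Lemma 6.4]{AT15} and used as a black box (e.g.\ in the proof of Lemma \ref{l:betabeta'}). Your affine-scaffolding/barycentric-transport strategy is certainly the natural one and is in the spirit of what one would write for such a statement, so the overall framework is not the problem. The problem is that several of your intermediate quantitative assertions are left unjustified and in fact are not correct as written, and the final estimate you obtain is not the one the lemma claims.

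The central unsupported step is the inequality $h_i^{(j)} \geq (\eta - 2\ve)\diam X$, which you state as if it were routine. If you try to prove it in the obvious way, take $q' = \sum_{l\neq i}\alpha_l x_l^{(j)}$ with $\sum\alpha_l = 1$, set $q = \sum_{l\neq i}\alpha_l x_l$, and write $x_i^{(j)} - q' = (x_i - q) + (x_i^{(j)}-x_i) - \sum_{l\neq i}\alpha_l(x_l^{(j)}-x_l)$; the last term is bounded by $\ve\diam X\cdot \sum_{l\neq i}|\alpha_l|$, and $\sum|\alpha_l|$ need not equal $1$ because the minimizing affine combination can have negative coefficients. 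Balancing against the lower bound $|x_i - q|\geq \eta\diam X\cdot\max(1,\max_l|\alpha_l|)$ and using $\sum_{l\neq i}|\alpha_l|\leq d\max_l|\alpha_l|$, what one actually gets is $h_i^{(j)} \geq (\eta - (d+1)\ve)\diam X$, not $(\eta - 2\ve)\diam X$. Moreover, even granting $(\eta - 2\ve)\diam X$, your next inequality $(\eta - 2\ve)\diam X \geq \tfrac{\eta}{2}\diam X$ requires $\ve \leq \eta/4$, which does not follow from the hypothesis $\ve < \eta/(2d)$ when $d = 1$ (where in fact $\eta = 1$ automatically), so the chain breaks in the lowest-dimensional case. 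Finally, you concede that your bookkeeping delivers a bound of the form $\dist(y,P_1)\leq C\ve\bigl(d\eta^{-1}\dist(y,X) + \diam X\bigr)$ for an unspecified $C>1$ rather than the precise coefficients $2d/\eta$ and $1$ in the statement, and the "can be tightened by choosing the pivot differently" remark is exactly the part that would need to be carried out. In short: right idea, but the thickness-perturbation bound is asserted rather than proved (and is off), the $d=1$ case of a subsequent step fails, and the stated constants are not recovered. For the way Lemma \ref{l:ATlemma} is actually used in this paper — only to get a $\lesssim_{d,c}$ conclusion in Lemma \ref{l:betabeta'} — a version with inflated constants would suffice, but it does not constitute a proof of the lemma as stated.
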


\begin{proof}[Proof of Lemma \ref{l:betabeta'}]

Let $X=\{x_{0},...,x_{d}\}$ be vectors in $\frac{1}{2}B'\cap E$ such that 
\[ \dist(x_{i+1},\spn \{x_{0},...,x_{i}\})\gec r_{B'} \mbox{ for }i=1,...,d.\] 
These can be found by induction using the fact that $\cH^{d}_{\infty}(E\cap B')\geq c r_{B'}^{d}$ (see for example \cite[Section 5]{DS}). Then $\eta(X)\sim 1$. Let $B_{i}=B(x_{i},r_{B'}/100)$ and for $t>0$, set
\[
E_{t,i}=\{x\in E\cap B_{i}: \dist(x,P')>tr_{B'}\mbox{ or } \dist(x,P)>tr_{B'}\}.\]
Let $T>0$.
Suppose $E_{t,i}=B_{i}\cap E$ for all $t \leq T$. Observe that 
\[
\cH^{d}_{\infty}(B_{i}\cap E)\geq cr_{B_{i}}^{d}=\frac{c}{100^{d}}r_{B'}^{d}.\]
Hence,
\begin{align*}
T
& \leq \cH^{d}_{\infty}(B_{i}\cap E)^{-1} \int_{0}^{T} \cH^{d}_{\infty}(E_{t})dt
\lec_{c} 
 r_{B_{i}}^{-d}\int_{0}^{T} \cH^{d}_{\infty}(E_{t})dt \\
& \lec r_{B'}^{-d} \int_{0}^{T}\cH^{d}_{\infty}\{x\in E\cap B_{i}: \dist(x,P')>tr_{B'}/2\}dt\\
& \qquad  + r_{B'}^{-d} \int_{0}^{T}\cH^{d}_{\infty}\{x\in E\cap B_{i}: \dist(x,P)>tr_{B'}/2\}dt\\
& \leq \frac{2}{r_{B'}^{d}}\int_{0}^{\infty}\cH^{d}_{\infty}\{x\in E\cap B': \dist(x,P')>tr_{B'}\}dt\\
& \qquad  +\frac{2r_{B}^{d+1}}{r_{B'}^{d+1}}  \frac{1}{r_{B}^{d}}\int_{0}^{\infty}\cH^{d}_{\infty}\{x\in E\cap B: \dist(x,P)>tr_{B}\}dt\\
& \lec \beta_{E}^{d,1}(B',P')+\ps{\frac{r_{B}}{r_{B'}}}^{d+1}\beta_{E}^{d,1}(B,P)=:\lambda.
\end{align*}
Thus, there is $c'=c'(d,c)$ so that $T\leq c' \lambda$. Hence, for $t=2c'\lambda$, $(B_{i}\cap E)\backslash E_{t,i}\neq \emptyset$, and so there are points $y_{i}\in B_{i}\backslash E_{c't,i}$. Since $y_{i}\in B_{i}$ and $\eta(X)\sim 1$, it is not hard to show that $\eta(\{y_{0},...,y_{d}\})\sim 1$ as well. By the definition of $E_{t,i}$, the lemma follows from  \Lemma{ATlemma}. 
\end{proof}

\begin{lemma}
Let $M>1$, $E$ be a Borel set so that $\cH^{d}_{\infty}(E\cap B)\geq cr_{B}^{d}$ for all balls $B$ centered on $E$ with $0<r<\diam E$. Let $\cD$ be the cubes from \Theorem{Christ} for $E$ (or $E\cap B(0,1)$), and $Q_{0}\in \cD$. For $Q\in\cD$, let $P_{Q}=P_{MB_{Q}}^{1}$.
Suppose that for all balls $B\subseteq B_{Q_{0}}$ centered on $E$ that $\cH^{d}_{\infty}(B\cap E)\geq cr^{d}$. Let $Q,R\in \cD$, $Q,R\subseteq Q_{0}$, and suppose that for all cubes $S\subseteq Q_{0}$ so that $S$ contains either $Q$ or $R$ that 
$\beta_{E}^{d,1}(MB_{Q})<\ve$. 
Then for $\Lambda>0$, if $\dist(Q,R)\leq \Lambda\max\{\ell(Q),\ell(R)\}\leq \Lambda^{2}\min\{\ell(Q),\ell(R)\}$, then
\begin{equation}
\angle(P_{Q},P_{R})\lec_{M,\Lambda}\ve.
\label{e:PQPR}
\end{equation}
\label{l:PQPR}
\end{lemma}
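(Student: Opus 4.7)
The plan is to locate a common dyadic ancestor cube $S \subseteq Q_0$ with $Q \subseteq S$, $\ell(S) \lesssim_{M,\Lambda} \ell(Q)$, and $MB_Q \cup MB_R \subseteq MB_S$; the estimate \eqref{e:PQPR} will then follow by applying \Lemma{betabeta'} to the pairs $(MB_S, MB_Q)$ and $(MB_S, MB_R)$ and combining via the triangle inequality \eqref{e:ptrngl} for the angle.

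Without loss of generality assume $\ell(Q) \geq \ell(R)$, so that $\ell(R) \in [\ell(Q)/\Lambda, \ell(Q)]$ and
\[
|\zeta_Q - \zeta_R| \leq \diam Q + \dist(Q,R) + \diam R \leq (\Lambda + 4)\ell(Q).
\]
Climbing the chain of ancestors $Q = Q^{(0)} \subsetneq Q^{(1)} \subsetneq \cdots$, we have $|\zeta_Q - \zeta_{Q^{(k)}}| < \ell(Q^{(k)}) = \rho^{-k}\ell(Q)$, so for any $x \in MB_Q \cup MB_R$,
\[
|x - \zeta_{Q^{(k)}}| \leq M\ell(Q) + (\Lambda + 4)\ell(Q) + \ell(Q^{(k)}).
\]
Hence $MB_Q \cup MB_R \subseteq MB_{Q^{(k)}}$ as soon as $\ell(Q^{(k)}) \geq (M + \Lambda + 4)\ell(Q)/(M-1)$. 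Let $k^*$ be the smallest such $k$; then $\ell(Q^{(k^*)}) \leq \rho^{-1}(M + \Lambda + 4)\ell(Q)/(M-1) \lesssim_{M,\Lambda} \ell(Q)$.

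Set $S := Q^{(k^*)}$ if $Q^{(k^*)} \subseteq Q_0$. Otherwise $Q_0 \subsetneq Q^{(k^*)}$, forcing $\ell(Q_0) \leq \rho\, \ell(Q^{(k^*)}) \lesssim_{M,\Lambda} \ell(Q)$, and we set $S := Q_0$. The scale hypothesis with $\ell(Q) \geq \ell(R)$ implies $Q, R \subsetneq Q_0$ in this alternative case (the only degenerate possibility being $Q = R = Q_0$, in which \eqref{e:PQPR} is trivial), so $\ell(Q), \ell(R) \leq \rho\,\ell(Q_0)$ and \eqref{e:cqincr} yields $MB_Q \cup MB_R \subseteq MB_{Q_0} = MB_S$. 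In both cases $S$ is an ancestor of $Q$ inside $Q_0$ with $\ell(S) \lesssim_{M,\Lambda} \ell(Q) \sim_{M,\Lambda} \ell(R)$, and by hypothesis $\beta_{E}^{d,1}(MB_S) < \ve$.

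The global lower content regularity of $E$ supplies the hypothesis of \Lemma{betabeta'}; applying it with $B = MB_S$ and $B' = MB_Q$ gives
\[
\angle(P_Q, P^1_{MB_S}) \lesssim_{d,c} \ps{\frac{\ell(S)}{\ell(Q)}}^{d+1} \beta_{E}^{d,1}(MB_S) \lesssim_{M,\Lambda} \ve,
\]
and analogously with $B' = MB_R$ we obtain $\angle(P_R, P^1_{MB_S}) \lesssim_{M,\Lambda} \ve$. Adding these via \eqref{e:ptrngl} yields \eqref{e:PQPR}. The main delicate point is ensuring $S \subseteq Q_0$; fortunately, if the natural ancestor $Q^{(k^*)}$ escapes $Q_0$, then $Q_0$ itself is only a bounded (in $M, \Lambda$) factor larger than $Q$, so it can play the role of the common ancestor.
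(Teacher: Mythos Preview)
Your argument is correct and follows essentially the same route as the paper: locate an ancestor cube whose $M$-ball contains both $MB_Q$ and $MB_R$, then apply \Lemma{betabeta'} twice and combine via \eqref{e:ptrngl}. Your execution is in fact a bit more streamlined than the paper's, which passes through several intermediate planes ($P_{3B_Q}$, $P_{3B_{Q'}}$, $P_{3B_{R'}}$, $P_{6B_{Q'}}$) rather than a single $P^1_{MB_S}$; the only minor sloppiness is in the boundary case, where $Q=Q_0$ with $R\subsetneq Q_0$ is also possible (not just $Q=R=Q_0$), but then $MB_R\subseteq MB_{Q_0}$ still follows from \eqref{e:cqincr} and your argument goes through unchanged.
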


\begin{proof}
Suppose $\ell(Q)\leq \ell(R)$. Let $Q'$ be the smallest ancestor of $Q$ so that $3B_{R}\subseteq 3B_{Q'}$ (note that since both $Q$ and $R$ are contained in $Q_0$ and hence $3B_{R}\subseteq 3B_{Q_0}$ by \eqn{cqincr}, this is well defined). Then 
\begin{equation}\label{e:RQ'dQR+Q}
\ell(R)\sim \ell(Q')\sim \dist(Q,R)+\ell(Q)+\ell(R) \lec \Lambda^{2}\ell(Q)
\end{equation}
and so by 
\Lemma{betabeta'},
\[\angle(P_{Q},P_{3B_{Q'}})
\stackrel{\eqn{ptrngl}}{\leq} \angle(P_{Q},P_{3B_{Q}})+\angle(P_{3B_{Q}},P_{3B_{Q'}})
\lec_{\Lambda,M} \ve.\]
Let $R'$ be the largest ancestor of $R$ so that $3B_{R'}\subseteq 6B_{Q'}$. Again, 
\[
 \ell(R')\sim \ell(Q') \sim \ell(R)\]
and so $\ell(R')\lec \ell(Q)$ by \eqn{RQ'dQR+Q}. Hence, applying \Lemma{betabeta'}
\begin{multline*}
\angle(P_{R},P_{3B_{Q'}})
\stackrel{\eqn{ptrngl}}{\leq} \angle(P_{R},P_{3B_{R}})+\angle(P_{3B_{R}},P_{3B_{R'}})+\angle(P_{3B_{R'}},P_{6B_{Q'}}) \\
+\angle(P_{6B_{Q'}},P_{3B_{Q'}})
\lec_{\Lambda,M} \ve.
\end{multline*}
Combining these two chains of inequalities proves the lemma.
\end{proof}

\begin{lemma}
\label{l:bigproj}
Suppose $E$ is $\ve$-Reifenberg flat, $0<\ve<\ve_{0}$, $B$ is a ball centered on $E$, and $P$ is some $d$-plane. For $\ve_{0}$ small enough, if $\beta_{E,\infty}^{d}(B,P)<1/100$, then
\begin{equation}
\pi_{P}(B\cap E)\supseteq B(\pi_{P}(x_{B}),r_{B}/2).
\label{e:bigproj}
\end{equation}
\end{lemma}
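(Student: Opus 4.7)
The plan is to rescale so that $B = B(0,1)$ with $x_B = 0 \in E$, use $\ve$-Reifenberg flatness to produce an approximating $d$-plane $L$ through the origin, and then run a topological degree argument comparing $\pi_P \circ g$ (where $g$ is the Reifenberg parametrization) to the linear projection $\pi_P|_L$.

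First, I would establish that the angle $\angle(L, P)$ is small. By Definition \ref{d:dxref} pick $L$ through $0$ with $d_{0,1}(E, L) < \ve$. The hypothesis $\beta_{E,\infty}^d(B, P) < 1/100$ says every point of $E \cap B$ lies within $1/100$ of $P$, while Reifenberg flatness says every $y \in L \cap B(0, 1-\ve)$ lies within $\ve$ of some point of $E \cap B$. Combining, $\dist(y, P) < 1/100 + \ve$ for every $y \in L \cap B(0, 1/2)$. Applying this to $d$ affinely independent points of $L \cap B(0, 1/2)$ whose mutual spans are uniformly nondegenerate then gives $\angle(L, P) \leq C_n(1/100 + \ve)$, so for $\ve_0 = \ve_0(n)$ small enough we may arrange $\angle(L, P) \leq 1/20$.

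Next, I would invoke \Theorem{reif} (rotating coordinates so that $L$ coincides with the $\bR^d$ appearing there) to obtain a bijection $g \colon \bR^n \to \bR^n$ with $E \cap B(0,1) = g(L) \cap B(0,1)$; from item (4) of \Theorem{DT}, which extends \Theorem{reif}, we additionally have $|g(z) - z| \lec \ve$ for all $z \in \bR^n$. Define the continuous map
\[
F \colon L \cap \overline{B(0, 3/4)} \to P, \qquad F(y) = \pi_P(g(y)).
\]
For $\ve_0$ small enough, $|g(y)| \leq 3/4 + C\ve < 1$, so $g(y) \in g(L) \cap B = E \cap B$ and $F(y) \in \pi_P(E \cap B)$. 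Moreover $|F(y) - \pi_P(y)| \leq |g(y) - y| \lec \ve$, so $F$ is a uniformly small perturbation of the affine map $\pi_P|_L$.

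Finally, I would close by a Brouwer degree argument. Since $\angle(L, P) \leq 1/20$, the linear projection $\pi_P|_L \colon L \to P$ has smallest singular value at least $\sqrt{1 - (1/20)^2} > 0.99$, hence $|\pi_P(y) - \pi_P(0)| > 0.74$ for every $y \in L \cap \partial B(0, 3/4)$. Combined with $|F(y) - \pi_P(y)| \lec \ve$, the straight-line homotopy $F_t(y) = t F(y) + (1-t) \pi_P(y)$ stays outside $\overline{B(\pi_P(0), 1/2)}$ on $L \cap \partial B(0, 3/4)$ for every $t \in [0,1]$, provided $\ve_0$ is small. Since $\pi_P|_L$ is an affine bijection onto $P$ whose image contains $B(\pi_P(0), 1/2)$, its Brouwer degree equals $\pm 1$ at every point of that ball, and by homotopy invariance so does the degree of $F$. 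Therefore $B(\pi_P(0), 1/2) \subseteq F(L \cap \overline{B(0, 3/4)}) \subseteq \pi_P(E \cap B)$, which is \eqref{e:bigproj}. The main obstacle is justifying the degree/homotopy step for the continuous but non-smooth map $F$; once the uniform bound $|F(y) - \pi_P(y)| \lec \ve$ is established, classical Brouwer degree (or equivalently invariance of domain) delivers the surjectivity cleanly.
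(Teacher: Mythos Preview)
Your proof is correct and follows essentially the same approach as the paper: both use the Reifenberg parametrization $g$ with $|g(y)-y|\lec\ve$, compose with a projection, and run a topological degree/homotopy argument on the $\tfrac{3}{4}$-ball to conclude surjectivity onto the half-ball. The only cosmetic difference is that the paper first rotates so that $P$ is a translate of $\bR^{d}$ and compares $\pi_{\bR^{d}}\circ g$ to the identity (phrasing the topology via contractibility), whereas you project directly onto $P$ and compare $\pi_{P}\circ g$ to the linear map $\pi_{P}|_{L}$ (phrasing the topology via Brouwer degree); your version arguably separates the roles of $\ve$ and the $\beta$-bound a bit more cleanly.
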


\begin{proof}
Without loss of generality, we will assume $B=\bB$ and $P=\bR^{d}+\pi_{P}(0)$. Then
\[
\beta_{E,\infty}^{d}(\bB,\bR^{d})
\leq 2\beta_{E,\infty}^{d}(\bB,P)<\frac{1}{50}.\]

Let $g:\bR^{n}\toitself$ be the map from \Theorem{DT} with $P_{0}=\bR^{d}$, $\pi$ the orthogonal projection onto $\bR^{d}$, and $h=\pi\circ g$. Then $E\cap \bB=g(\bR^{d})\cap \bB$. For $x\in \frac{3}{4}\bB$, $|g(x)-x|\leq C\ve$ for some $C=C(d)$ by \Theorem{DT} (4), and so $g(x)\in (\frac{3}{4}+C\ve)\bB\cap E \subseteq \bB\cap E$. Thus, for $x\in \bR^{d}\cap \frac{3}{4} \bB$, $\dist(g(x),\bR^{d})\leq \beta_{E,\infty}^{d}(\bB,\bR^{d})$. Combining these estimates, we get that for $x\in \frac{3}{4}\bB_{d}$ and $\ve_{0}>0$ small enough,
\[
|h(x)-x|\leq |\pi(g(x))-g(x)|+|g(x)-x|\leq \frac{1}{50}+C\ve<\frac{1}{10}.\]
If there is $x_{0}\in \frac{1}{2}\bB_{d}\backslash h(\frac{3}{4}\bB_{d})$, then it is not hard to show that $h|_{\frac{3}{4}\bB_{d}}$ is contractible in $\bR^{d}\cap \frac{3}{4} \bB \backslash \{x_{0}\}$ (since it is homotopic to a map of $\frac{3}{4}\bB_{d}$ into $\frac{3}{4}\bS^{d-1}$ and maps of the disk into the sphere are always contractible). Thus $h|_{\frac{3}{4}\bS^{d-1}}$ is contractible, but $h(x,t)=(t h(x)+(1-t)x)/|th(x)+(1-t)x|$ for $x\in \frac{3}{4}\bS_{d}$ is homotopic in $\bR^{d}\cap \frac{3}{4} \bB\backslash\{x_{0}\}$ to the identity map on $\frac{3}{4}\bS^{d}$, which is not contractible in $\bR^{d}\cap \frac{3}{4} \bB\backslash\{x_{0}\}$, and we get a contradiction. We have thus shown $\frac{1}{2}\bB_{d}\subseteq h(\frac{3}{4}\bB_{d})\subseteq \pi(E\cap \bB)$. It then follows that 
\begin{equation}
\pi_{P}(E\cap \bB)=\pi_{P}(\pi_{\bR^{d}}(E\cap \bB))\supseteq \pi_{P}(\tfrac{1}{2}\bB_{d})=B(\pi_{P}(0),1/2)\cap P.
\end{equation}
\end{proof}

\begin{lemma}
Suppose $E$ is $\ve$-Reifenberg flat, $0<\ve<\ve_{0}$, $B$ is a ball centered on $E$, and $P$ is some $d$-plane. There is $\beta_{0}>0$ small so that if $\beta_{E,\infty}^{d}(B,P)<\beta_{0}$, then
\begin{multline}\label{e:thetabeta}
2^{-d}\beta(\tfrac{1}{2}B,P)\leq \beta_{E,\infty}^{d}(\tfrac{1}{2}B,P)\leq d_{\frac{1}{2}B}(E,P) \leq 2\beta_{E,\infty}^{d}(B,P) \\
\leq \frac{4}{(1-C\ve)\omega_{d}}  \beta_{E}^{d,1}(2B,P)^{\frac{1}{d+1}}.\end{multline}
In particular, 
\begin{multline}
2^{-d}\beta(\tfrac{1}{2}B)\leq \beta_{E,\infty}^{d}(\tfrac{1}{2}B)\leq \vartheta_{E}^{d}\ps{\tfrac{1}{2}B}\leq 2\beta_{E,\infty}^{d}(B) \\
\leq \frac{4}{(1-C\ve)\omega_{d}}  \beta_{E}^{d,1}(2B)^{\frac{1}{d+1}}.
\end{multline}
\end{lemma}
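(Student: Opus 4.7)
The plan is to prove the four displayed inequalities from left to right, each via a short appeal to a lemma already in hand, and then deduce the unindexed ``in particular'' version by infimizing over $P$ on both ends.

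For the first inequality $2^{-d}\beta_{E}^{d,1}(\tfrac{1}{2}B,P)\leq \beta_{E,\infty}^{d}(\tfrac{1}{2}B,P)$, I would apply the proof of \Lemma{betabetainf} with $p=1$ to the ball $\tfrac{1}{2}B$ and the specific plane $P$; nothing in that argument required $P$ to be optimal. The second inequality $\beta_{E,\infty}^{d}(\tfrac{1}{2}B,P)\leq d_{\frac{1}{2}B}(E,P)$ is immediate from the definitions of $\beta_{E,\infty}^{d}$ and $d_{\frac{1}{2}B}$, since the latter is the maximum of two $L^{\infty}$-type Hausdorff distances, one of which coincides with $r_{B}^{-1}\sup_{y\in E\cap \frac{1}{2}B}\dist(y,P)$.

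The third inequality $d_{\frac{1}{2}B}(E,P)\leq 2\beta_{E,\infty}^{d}(B,P)$ is the only step that uses genuine structure. Choose $\beta_{0}<1/100$ so that \Lemma{bigproj} applies to $(B,P)$. The $E$-side is easy: $\sup_{y\in E\cap \frac{1}{2}B}\dist(y,P)\leq \sup_{y\in E\cap B}\dist(y,P)=r_{B}\,\beta_{E,\infty}^{d}(B,P)$, and dividing by $r_{\frac{1}{2}B}=r_{B}/2$ yields the factor $2$. For the $P$-side, given $x\in P\cap \tfrac{1}{2}B$, use the key observation that $\pi_{P}$ is a contraction and $x\in P$, so
\[
|x-\pi_{P}(x_{B})|=|\pi_{P}(x)-\pi_{P}(x_{B})|\leq |x-x_{B}|\leq r_{B}/2,
\]
which places $x$ inside $B(\pi_{P}(x_{B}),r_{B}/2)\cap P\subseteq \pi_{P}(B\cap E)$ by \Lemma{bigproj}. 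Pick $e\in B\cap E$ with $\pi_{P}(e)=x$; then $|x-e|=\dist(e,P)\leq r_{B}\,\beta_{E,\infty}^{d}(B,P)$, as required.

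For the fourth inequality I would invoke \Lemma{betainfbeta} with $B$ replaced by $2B$, so that $\tfrac{1}{2}(2B)=B$, applied to the specific plane $P$ (again the proof of that lemma does not require $P$ to be optimal). The lower-content constant is supplied by Reifenberg flatness: combining \Theorem{reif} with item (13) of \Theorem{DT}, every ball $B'$ centered on the $\ve$-Reifenberg flat set $E$ and contained in a small enough neighborhood satisfies $\cH^{d}_{\infty}(E\cap B')\geq (1-C\ve)\omega_{d}\,r_{B'}^{d}$, giving $c=(1-C\ve)\omega_{d}$. The unindexed ``in particular'' chain then follows immediately by taking the infimum over $d$-planes $P$ on both the first and the last quantity and noting that $\vartheta_{E}^{d}(\tfrac{1}{2}B)=\inf_{P}d_{\frac{1}{2}B}(E,P)$.

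The main obstacle, if any, is verifying that \Lemma{bigproj} can be applied with the specific plane $P$ appearing in the hypothesis (rather than some optimal one) --- this is why $\beta_{0}$ must be chosen small enough to guarantee $\beta_{E,\infty}^{d}(B,P)<1/100$; everything else is a routine chaining of the preceding lemmas, with constants tracked through the $(d+1)$-th root from \Lemma{betainfbeta}.
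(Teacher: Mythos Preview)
Your proposal is correct and follows essentially the same approach as the paper. Both establish the key third inequality $d_{\frac{1}{2}B}(E,P)\leq 2\beta_{E,\infty}^{d}(B,P)$ via \Lemma{bigproj}, and derive the remaining inequalities from Lemmas~\ref{l:betabetainf}, \ref{l:betainfbeta}, and the definitions; you have simply made explicit a few details the paper leaves implicit (that the proofs of those two lemmas work for a fixed plane $P$ rather than the optimal one, and the contraction step $|x-\pi_{P}(x_{B})|\leq r_{B}/2$ that places $P\cap\tfrac{1}{2}B$ inside $B(\pi_{P}(x_{B}),r_{B}/2)\cap P$).
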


\begin{proof}
By the previous lemma, \eqn{bigproj} holds. In particular, any $x\in B(\pi_{P}(0),r_{B}/2)\cap P$ is the image of a point $y\in B\cap E$ under $\pi_{P}$, and thus $\dist(x,E)\leq \beta_{E,\infty}^{d}(B,P)$. By definition, any $x\in E\cap \frac{1}{2} \bB$ is at most $\beta_{E,\infty}^{d}(B,P)$ from $P$. We have thus shown that $d_{\frac{1}{2}\bB}(E,P)\leq 2\beta_{E,\infty}^{d}(B,P)$. The other inequalities in \eqn{thetabeta} follow from Lemmas \ref{l:betabetainf}, \ref{l:betainfbeta}, and the definitions.
\end{proof}

The following lemma says that the $\beta_{E}^{d,p}$-numbers for two sets are approximately the same, with error depending on the average distance between one and the other, where ''average" is taken with respect to a Choquet integral.
\preRef{
\begin{lemma}\label{l:2-15}
Let $E_{1},E_{2}\subseteq \bR^{n}$. Suppose $B^{1}$ is a ball centered on $E_{1}$ and $B^{2}$ is a ball of same radius but centered in $E_{2}$ so that $B_{1}\subseteq 2B_{2}$.  
Suppose that for $i=1,2$ and all balls $B\subseteq 2B^{i}$ centered on $E_{i}$ we have 
$\cH^{d}|_{\infty}(B\cap E_i)\geq c r_{B}^{d}$ 
for some $c>0$. Then
\begin{equation}\label{e:pushbeta}
\beta_{E_{1}}^{d,p}(B^{1},P)
\lec_{c,p} \beta_{E_{2}}^{d,p}(x,2B^{2},P)+\frac{1}{r_{B^{1}}^{d}}\int_{E_{1}\cap 2B^{1}} \frac{\dist(y,E_{2})}{r_{B^{1}}}d\cH^{d}_{\infty}(y).\end{equation}
\end{lemma}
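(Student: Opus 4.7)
The plan is to apply a pointwise triangle inequality and decompose the Choquet super-level sets accordingly. For each $y\in E_1\cap B^1$, choose $\pi(y)\in E_2$ with $|y-\pi(y)|\leq 2\dist(y,E_2)$; by the triangle inequality,
\[
\dist(y,P)\leq 2\dist(y,E_2)+\dist(\pi(y),P).
\]
For each $t\in(0,1)$ the super-level set $A_t:=\{y\in E_1\cap B^1:\dist(y,P)>tr_{B^1}\}$ therefore splits as $A_t\subseteq A_t^{\mathrm{I}}\cup A_t^{\mathrm{II}}$ with $A_t^{\mathrm{I}}:=\{y\in A_t:\dist(y,E_2)>tr_{B^1}/4\}$ and $A_t^{\mathrm{II}}:=\{y\in A_t:\dist(\pi(y),P)>tr_{B^1}/2\}$, and the Choquet integral defining $r_{B^1}^d\beta_{E_1}^{d,p}(B^1,P)^p$ splits into the two corresponding pieces.

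For $A_t^{\mathrm{I}}$, after the change of variable $s=tr_{B^1}/4$, the identity $\int_0^\infty\cH^d_\infty(\{f>s\})ps^{p-1}ds=\int f^p\,d\cH^d_\infty$ shows that the contribution of this piece to $r_{B^1}^d\beta_{E_1}^{d,p}(B^1,P)^p$ is $\lesssim_p r_{B^1}^{-p}\int_{E_1\cap B^1}\dist(y,E_2)^p\,d\cH^d_\infty$. Since $B^1\subseteq 2B^2$ and $B^2$ is centered on $E_2$, we have $\dist(y,E_2)\leq 2r_{B^1}$ on $B^1$, so the elementary inequality $\dist(y,E_2)^p\leq(2r_{B^1})^{p-1}\dist(y,E_2)$ (valid for $p\geq 1$) reduces this further to a constant times $r_{B^1}^{-1}\int_{E_1\cap 2B^1}\dist(y,E_2)\,d\cH^d_\infty$, which after normalising by $r_{B^1}^d$ produces the integral term on the right-hand side of \eqref{e:pushbeta}.

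For $A_t^{\mathrm{II}}$, note that if $y\notin A_t^{\mathrm{I}}$ then $|y-\pi(y)|\leq tr_{B^1}/2$, so $\pi(y)\in S_\tau:=\{z\in E_2\cap 2B^2:\dist(z,P)>\tau\}$ with $\tau=tr_{B^1}/2$ (after absorbing a mild enlargement of $2B^2$ into constants), and $A_t^{\mathrm{II}}\subseteq\bigcup_{z\in S_\tau}B(z,\tau)$. I would apply a $5r$-Vitali covering lemma to select disjoint balls $\{B(z_i,\tau/10)\}_{i\in I}$ with $z_i\in S_\tau$ whose enlargements $\{B(z_i,\tau/2)\}$ still cover $S_\tau$, yielding $\cH^d_\infty(A_t^{\mathrm{II}})\leq C|I|\tau^d$. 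Lower content regularity of $E_2$ gives $\tau^d\lesssim_c\cH^d_\infty(E_2\cap B(z_i,\tau/10))$, and combining Frostman measures $\mu_i$ on each $E_2\cap B(z_i,\tau/10)$ into $\nu=\sum_i\mu_i$ — verifying, via the disjointness of the small balls and a volume count in $\bR^n$, that $\nu(B(x,r))\lesssim r^d$ on all relevant scales $r\lesssim r_{B^1}$ — lets Frostman's inverse inequality produce $|I|\tau^d\lesssim_{c,n}\cH^d_\infty(S_{\tau/3})$. Integrating against $t^{p-1}dt$ then gives a piece-II contribution of $\lesssim_p r_{B^1}^d\beta_{E_2}^{d,p}(2B^2,P)^p$.

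Combining the two pieces and taking $p$-th roots via the subadditivity $(a+b)^{1/p}\leq a^{1/p}+b^{1/p}$ for $p\geq 1$ yields the claimed estimate. The main obstacle is the Frostman-measure step for $A_t^{\mathrm{II}}$: one must verify carefully that $\nu$ satisfies the $d$-Frostman property at all scales up to $\sim r_{B^1}$, so that it converts the sum of Hausdorff contents of the disjoint small balls into a single content bound on $S_{\tau/3}\subseteq E_2$. This is precisely where the assumption that $E_2$ is lower content regular on $2B^2$ is used in an essential way.
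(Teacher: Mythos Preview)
Your pointwise split $A_t\subseteq A_t^{\mathrm I}\cup A_t^{\mathrm{II}}$ and the treatment of $A_t^{\mathrm I}$ are fine, but the Frostman step for $A_t^{\mathrm{II}}$ has a genuine gap. The claim that $\nu=\sum_i\mu_i$ satisfies $\nu(B(x,r))\lesssim r^d$ at all scales $r\lesssim r_{B^1}$ cannot come from a volume count in $\bR^n$: the $z_i$ are merely $\tau/5$-separated points of $\bR^n$, so such a count gives at most $\lesssim(r/\tau)^n$ centres in a ball of radius $r$, hence $\nu(B(x,r))\lesssim r^n\tau^{d-n}$, which is far larger than $r^d$ when $r\gg\tau$. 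Upgrading this to a $d$-dimensional count would require \emph{upper} Ahlfors regularity of $E_2$, which is not assumed. In fact your intermediate inequality $|I|\tau^d\lesssim_{c,n}\cH^d_\infty(S_{\tau/3})$ is false in general: let $E_2$ be the union of $N$ parallel $d$-planes inside $2B^2$, each at distance $>2\tau$ from $P$ and mutually separated by more than $\tau$. This $E_2$ is lower content $d$-regular with a constant independent of $N$, yet one can pack $|I|\sim N(r_{B^1}/\tau)^d$ disjoint balls $B(z_i,\tau/10)$ into $S_\tau$, while $\cH^d_\infty(S_{\tau/3})\leq(4r_{B^1})^d$. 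So $|I|\tau^d\sim Nr_{B^1}^d$ and the inequality fails for large $N$. The earlier step $\cH^d_\infty(A_t^{\mathrm{II}})\leq C|I|\tau^d$ has already discarded too much structure to be recoverable.

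What one actually needs is the direct bound $\cH^d_\infty(A_t^{\mathrm{II}}\setminus A_t^{\mathrm I})\lesssim\cH^d_\infty(S_{\tau'})$ for a suitable $\tau'\sim\tau$, and this is precisely the content of the paper's intermediary Lemma~\ref{l:tech-integrating-sum}. Its proof does not assemble a global Frostman measure; instead it takes a near-optimal cover $\cI$ of the target set in $F_2$ and \emph{modifies} it into a cover of the source set: for each ball $B_j$ either some $C\in\cI$ meeting $\alpha'B_j'$ has radius $\gtrsim r_{B_j}$, in which case a bounded dilate of $C$ swallows $B_j$; or every such $C$ is small and hence trapped in $B_j$, in which case lower content regularity of $F_2$ forces $\sum_{C\in\cI_j}r_C^d\gtrsim r_{B_j}^d$, and the disjointness of the $B_j$ prevents double-counting across $j$. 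This two-case cover-modification, not a global $d$-Frostman bound, is what makes the lower regularity hypothesis do the work. (The paper's full proof of the lemma is organised differently --- it discretises $E_1$ via Besicovitch and shuttles Choquet integrals back and forth using Lemma~\ref{l:tech-integrating-sum} three times --- but your level-set decomposition would also succeed once this step is repaired.)
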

}

\postRef{\begin{lemma}\label{l:2-15}
Let $1\leq p < \infty$ and $E_{1},E_{2}\subseteq \bR^{n}$. Suppose $B^{1}$ is a ball centered on $E_{1}$ and $B^{2}$ is a ball of same radius but centered in $E_{2}$ so that $B^{1}\subseteq 2B^{2}$.  
Suppose that for $i=1,2$ and all balls $B\subseteq 2B^{i}$ centered on $E_{i}$ we have 
$\cH^{d}|_{\infty}(B\cap E_i)\geq c r_{B}^{d}$ 
for some $c>0$. Then
\begin{equation}\label{e:pushbeta}
\beta_{E_{1}}^{d,p}(B^{1},P)
\lec_{c,p} \beta_{E_{2}}^{d,p}(2B^{2},P)+\ps{\frac{1}{r_{B^{1}}^{d}}\int_{E_{1}\cap 2B^{1}} \ps{\frac{\dist(y,E_{2})}{r_{B^{1}}}}^{p}d\cH^{d}_{\infty}(y)}^{\frac{1}{p}}\end{equation}
\end{lemma}}

We will first need an intermediary lemma.
\begin{lemma}\label{l:tech-integrating-sum}
Let $F_{1},F_{2}\subseteq \bR^{n}$. Suppose $B_{0}$ is a ball centered on $F_{2}$ and for all balls $B\subseteq 2B_{0}$ centered on $F_{2}$ 
we have 
$\cH^{d}|_{\infty}(B\cap F_2)\geq c r_{B}^{d}$ 
for some $c>0$. Finally, let $\alpha\in (0,1)$, $\{B_{j}\}_{j\in \cX}$ be a collection of disjoint balls with centers $z_{j}\in F_{1}$ such that $\dist(z_{j},F_{2})< \alpha r_{B_{j}} $, $f$ a nonnegative function on $\{z_{j}\}$, $z_{j}'\in F_{2}\cap B(z_{j},\alpha r_{B_{j}})$, $B_{j}'=B(z_{j}',r_{B_{j}})$. Then 
\begin{multline}\label{e:1<4}
\int_{F_{1}} \sum_{j\in \cX} \one_{B_{j}}(y)f(z_{j}) d\cH^{d}_{\infty}(y)\\
\lec_{\alpha} \int_{F_{2}} \sum_{j\in \cX} \one_{(1-\alpha)B_{j}'}(y)f(z_{j}) d\cH^{d}_{\infty}(y).\end{multline}
\end{lemma}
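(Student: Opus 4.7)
The plan is to prove the lemma by the Choquet layer-cake formula combined with a covering-and-thickening argument at each level set. First, since the $B_j$ are pairwise disjoint and $|z_j - z_j'| < \alpha r_{B_j}$, a short triangle-inequality computation gives $|z_j' - z_k'| \geq (1-\alpha)(r_{B_j} + r_{B_k})$, so the balls $(1-\alpha)B_j'$ are pairwise disjoint as well. Thus both integrands in \eqn{1<4} are simple functions supported on pairwise disjoint sets, and by layer-cake it suffices to prove, for every $S \subseteq \cX$,
\[
\cH^d_\infty\!\ps{\bigcup_{j\in S} B_j \cap F_1} \lec_{\alpha,c,n} \cH^d_\infty\!\ps{\bigcup_{j\in S} (1-\alpha) B_j' \cap F_2},
\]
and then integrate over the level sets $S = \{j : f(z_j) > t\}$. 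Since $B_j \subseteq (1+\alpha)B_j'$, the task reduces to comparing $\cH^d_\infty$ of a union of the slightly larger balls $(1+\alpha)B_j'$ with $\cH^d_\infty$ of the union of $(1-\alpha)B_j' \cap F_2$ over the same indices.

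Given any near-optimal cover $\{A_i\}$ of $\bigcup_{j\in S}(1-\alpha)B_j' \cap F_2$ with $\delta_i := \diam A_i$ and $\cI(j) := \{i : A_i \cap (1-\alpha)B_j' \cap F_2 \neq \emptyset\}$, I split $j \in S$ into \emph{Class I} (some $i \in \cI(j)$ has $\delta_i \geq (1-\alpha) r_{B_j}$) and \emph{Class II} (the rest). For Class I, I thicken each $A_i$ to $A_i^* := \{x : \dist(x, A_i) \leq 2\delta_i/(1-\alpha)\}$, which has $\diam A_i^* \lec_\alpha \delta_i$. The inclusion $A_i \in \cI(j)$ forces $\dist(A_i, z_j') \leq (1-\alpha)r_{B_j}$, so every point of $(1+\alpha)B_j'$ lies within $2r_{B_j} \leq 2\delta_i/(1-\alpha)$ of $A_i$ whenever $A_i$ is big-enough for $j$; hence $A_i^* \supseteq (1+\alpha)B_j'$. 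Choosing for each Class I $j$ such a witnessing $A_i$, $\bigcup_i A_i^*$ covers the Class I balls with $\sum_i (\diam A_i^*)^d \lec_\alpha \sum_i \delta_i^d$. For Class II, I include $(1+\alpha)B_j'$ directly in the cover; the lower content regularity of $F_2$ applied to $(1-\alpha)B_j'$ gives $c(1-\alpha)^d r_{B_j}^d \leq \cH^d_\infty((1-\alpha)B_j' \cap F_2) \leq \sum_{i \in \cI(j)}\delta_i^d$.

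The main obstacle is the geometric counting input needed to sum Class II: every $i$ lies in $\cI(j)$ for at most $C(n)$ Class II indices. Indeed, such $j$ satisfy $(1-\alpha)r_{B_j} > \delta_i$, so the relevant balls $\{(1-\alpha)B_j'\}$ are pairwise disjoint of radius strictly exceeding $\delta_i$ and each meets the set $A_i$ of diameter $\delta_i$; a standard angular-packing argument (the centers $z_j'$ lie within $\delta_i + (1-\alpha)r_{B_j} < 2(1-\alpha)r_{B_j}$ of any fixed $y_0 \in A_i$, and pairwise disjointness forces the unit vectors from $y_0$ to be separated by an angle bounded below by a dimension-dependent constant) caps their number by $C(n)$. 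With this, $\sum_{j \in \mathrm{II}} r_{B_j}^d \lec_{\alpha,c,n} \sum_i \delta_i^d$. Combining the Class I and Class II estimates and letting $\sum_i \delta_i^d$ approach $\cH^d_\infty(\bigcup_{j\in S}(1-\alpha)B_j' \cap F_2)$ yields the level-set inequality, and integrating over $t$ completes the proof.
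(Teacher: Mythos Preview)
Your argument is correct and follows essentially the same route as the paper: both reduce via the layer-cake formula to a level-set inequality, take a near-optimal cover of the right-hand set, and split the indices $j$ according to whether some covering element is large relative to $r_{B_j}$. The only difference is in the ``small'' case: the paper uses the tighter threshold $r_B<\tfrac{\alpha}{2}r_{B_j}$ so that each small covering ball lies inside a single $B_j$ and the families $\cI_j$ are outright disjoint, whereas you allow the larger threshold $\delta_i<(1-\alpha)r_{B_j}$ and instead bound the overlap by your angular-packing argument---which is valid, since writing $\rho_j=(1-\alpha)r_{B_j}$ and $a_j=|z_j'-y_0|$ one has $\rho_1+\rho_2>\max(a_1,a_2)$ (from $\rho_j>\delta_i$ and $a_j\le\rho_j+\delta_i$), whence $(\rho_1+\rho_2)^2-(a_1-a_2)^2>a_1a_2$ and $\cos\theta<\tfrac12$, so the angle is in fact bounded below by the absolute constant $\pi/3$.
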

The proof of  Lemma \ref{l:tech-integrating-sum} may be found in the Appendix (Section \ref{s:appendix}).

\begin{proof}[Proof of Lemma \ref{l:2-15}]
Without loss of generality, we may assume $B^{1}=\bB$, the unit ball, so $r_{B^{1}}=1$. Let $\delta>0$ be small and $\delta(z)=\dist(z,E_{2})+\delta$. We may assume without loss of generality that $\delta(z)<1/8$ for all $z\in E_{1}\cap \bB$, for otherwise the inequalities are trivial by lower regularity. By the Besicovitch covering theorem, there are collections of points $\cX_{1},...,\cX_{N}\subseteq E_{1}\cap \bB$ with $N=N(n)$ so that if 
\[
B_{z}=B(z,2\delta(z)),\] 
then the balls $\{B_{z}:z\in \cX_{k}\}$ are pairwise disjoint and
\begin{equation}\label{e:e1overlap}
\one_{E_{1}\cap \bB}\leq \sum_{k=1}^{N} \sum_{z\in \cX_{k}} \one_{B_{z}} \lec_{n} 1.
\end{equation}

Note that since $\delta(z)<1/8$, $B_{z}\subseteq 2\bB$ for all $z\in E_{1}\cap \bB$. Since $d(\cdot,P)$ is $1$-Lipschitz, we have that for $y\in B_{z}$,

\[
\dist(y,P)^{p}\leq (\dist(z,P)+2\delta(z))^{p}\lec_{p} \dist(z,P)^{p}+\delta(z)^{p}\]
\preRef{$d(\cdot,P)^{p}$ is $L$-Lipschitz on $2\bB$ with $L$ depending on $p$.}  Hence,
\begin{align}\label{e:be1}
\beta_{E_{1}}^{d,p}(\bB,P)^{p}
& = \int_{E_{1}\cap \bB} \dist(y,P)^{p}d\cH^{d}_{\infty}(y) \notag \\
& \stackrel{\eqn{intineq} \atop \eqn{e1overlap}}{\lec} 
\sum_{k=1}^{N} \int_{E_{1}} \sum_{z\in \cX_{k}} \one_{B_{z}}(y)\dist(y,P)^{p}d\cH^{d}_{\infty}(y) \notag \\
& \stackrel{\eqn{intineq}}{\lec}  
\sum_{k=1}^{N} \int_{E_{1}} \sum_{z\in \cX_{k}} \one_{B_{z}}(y)\dist(z,P)^{p}d\cH^{d}_{\infty}(y) \notag \\
& \qquad +   \sum_{k=1}^{N} \int_{E_{1}} \sum_{z\in \cX_{k}} \one_{B_{z}}(y)\delta(z)^{p}d\cH^{d}_{\infty}(y) \notag \\
& = I_{1}+I_{2}.
\end{align}
By \Lemma{tech-integrating-sum} with $F_{1}=F_{2}=E_{1}$, $f(z)=\delta(z)^{p}$ (since this is also Lipschitz), and $\alpha=\frac{7}{8}$, we have that 
\[
I_{2}
\lec  \sum_{k=1}^{N} \int_{E_{1}} \sum_{z\in \cX_{k}} \one_{\frac{1}{8}B_{z}}(y)\delta(z)^{p}d\cH^{d}_{\infty}(y) .\]
Since $\delta$ is $1$-Lipschitz, we have for each 
$z$ and $y\in E_{1}\cap \frac{1}{8}B_{z}$ where $\frac{1}{8}B_{z}= B(z,\delta(z)/4)$,
\[
\delta(z)\leq \delta(y) + |z-y|< \delta(y) + \frac{\delta(z)}{4}.\]
Hence $\delta(z)<\frac{4}{3} \delta(y)$, and so for $1\leq k\leq N$,
\[
\int_{E_{1}} \sum_{z\in \cX_{k}} \one_{\frac{1}{8}B_{z}}(y)\delta(z)^{p}d\cH^{d}_{\infty}(y) 
\leq \frac{4}{3}
\int_{E_{1}} \sum_{z\in \cX_{k}} \one_{\frac{1}{8}B_{z}}(y)\delta(y)^{p}d\cH^{d}_{\infty}(y) .
\]
Thus, since the $B_{z}$ are disjoint for $z\in \cX_{k}$, and since $N$ depends only on $n$,
\begin{multline}\label{e:pushI2}
I_{2} 
\lec \sum_{k=1}^{N}  \int_{E_{1}} \sum_{z\in \cX_{k}} \one_{\frac{1}{8}B_{z}}(y)\delta(y)^{p}d\cH^{d}_{\infty}(y)
\leq \sum_{k=1}^{N}  \int_{E_{1}\cap 2B^{1} }\delta(y)^{p}d\cH^{d}_{\infty}(y)\\
\lec_{n}\int_{E_{1}\cap 2B^{1} }\delta(y)^{p}d\cH^{d}_{\infty}(y).
\end{multline}

To bound $I_{1}$, again by \Lemma{tech-integrating-sum} but with $F_{1}=E_{1}$, $F_{2}=E_{2}$, 
{$f=\dist(\cdot,P)^{p}$, and $\alpha=3/4$, and because $\dist(\cdot,P)^{p}$ is Lipschitz in $2\bB$}, we have
\begin{align*}
I_{1}
& \lec \sum_{k=1}^{N} \int_{E_{2}} \sum_{z\in \cX_{k}} \one_{\frac{1}{4} B_{z}'}(y)\dist(z,P)^{p}d\cH^{d}_{\infty}(y)\\
& \stackrel{\eqn{intineq}}{\lec} 
\sum_{k=1}^{N} \int_{E_{2}} \sum_{z\in \cX_{k}} \one_{\frac{1}{4} B_{z}'}(y)\dist(y,P)^{p}d\cH^{d}_{\infty}(y)\\
& \qquad +\sum_{k=1}^{N} \int_{E_{2}} \sum_{z\in \cX_{k}} \one_{\frac{1}{4} B_{z}'}(y)\delta(z)^{p}d\cH^{d}_{\infty}(y)\\
& = I_{11}+I_{12}
\end{align*}
Note that 
\[
\frac{1}{4}B_{z}'= B(z',\delta(z)/2)\subseteq B(z,4\delta(z)/2)= B_{z}\]
and so the $\{\frac{1}{4}B_{z}'\}_{z\in \cX_{k}}$ are disjoint. Hence,
\begin{align*}
I_{11} 
& \leq \sum_{k=1}^{N} \int_{E_{2}\cap 2B_{2}} \dist(z,P)^{p}d\cH^{d}_{\infty}(y).
\end{align*}
Next, by \Lemma{tech-integrating-sum} with $F_{1}=E_{2}$, $F_{2}=E_{1}$, $\{\frac{1}{4}B_{z}'\}_{z\in \cX_{k}}$  as our collection of balls with centers $z'\in F_{1}$, $f(z')=\delta(z)^{p}$, and $\alpha=\frac{1}{2}$,
\[
I_{12}
\lec \sum_{k=1}^{N} \int_{E_{1}} \sum_{z\in \cX_{k}} \one_{\frac{1}{8}B_{z}}(y)\delta(z)^{p}d\cH^{d}_{\infty}(y)\stackrel{\eqn{pushI2}}{\lec} \int_{E_{1}\cap 2B_{1} }\delta(y)^{p}d\cH^{d}_{\infty}(y).
\]
Combining our estimates for $I_{1}$, $I_{2}$, $I_{12}$, and $I_{22}$, we obtain 
{
\[
\beta_{E_{1}}^{d,p}(B^{1},P)^{p}
\lec_{c,p} \beta_{E_{2}}^{d,p}(2B^{2},P)^{p}+\int_{E_{1}\cap 2B^{1}} \delta(y)^{p} d\cH^{d}_{\infty}(y)
\]
Now using the fact that $(a+b)^{\frac{1}{p}}\leq 2^{\frac{1}{p}}\max\{a,b\}^{\frac{1}{p}}\leq 2^{\frac{1}{p}}(a^{\frac{1}{p}}+b^{\frac{1}{p}})$, we are done. 
}

\end{proof}

It is not hard to show that \Theorem{dorronsoro} can be rewritten in the following way. For a cube $I$ in $\bR^{d}$ and $f:\bR^{d}\rightarrow \bR^{n-d}$ Lipschitz, we set
\begin{equation}\label{e:omdyadic}
\Omega_{f,p}(I)^{p} = \inf_{A} \avint_{I}\av{\frac{f-A}{r}}^{p}
\end{equation}
Then
\begin{equation}
\label{e:dorronsorocubes}
\sum_{I\subseteq \bR^{d}}\Omega_{f,p}(3I)^{2}\cH^d(I)\lec_{p,n} ||\grad f||_{2}^{2}
\end{equation}
where the sum is taken over all dyadic cubes $I$ in $\bR^{d}$.

\begin{lemma}
\label{l:b-to-om}
Let $f:\bR^{d}\rightarrow \bR^{n-d}$ be an $L$-Lipschitz function , $\Gamma=\{(x,f(x)):x\in \bR^{d}\}$ be its graph, $x\in \Gamma$, $r>0$, and let $I$ be a cube in $\bR^{d}$ that contains $\pi_{\bR^{d}}(B(x,r))$. Then
\begin{equation}\label{e:b-to-om}
\beta_{\Gamma}^{d,p}(x,r) \lec_{d,L,p} \ps{\frac{\cH^d(I)}{r^{d}}}^{1/p} \Omega_{f,p}(I).
\end{equation}
\end{lemma}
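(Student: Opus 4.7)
The plan is to take the affine map $A\colon \bR^{d}\to\bR^{n-d}$ attaining (or nearly attaining) the infimum in $\Omega_{f,p}(I)$, let $L=\{(z,A(z)):z\in\bR^{d}\}$ be its graph (a $d$-plane in $\bR^{n}$), and use $L$ as the competitor in the definition of $\beta_{\Gamma}^{d,p}(x,r)$. The proof then compares $\dist(y,L)$ for $y=(z,f(z))\in \Gamma$ to the pointwise vertical deviation $|f(z)-A(z)|$, uses the Lipschitz graph structure to pass from Hausdorff content on $\Gamma$ to Lebesgue measure on $I$, and invokes the layer-cake formula to pull out an $L^{p}$ norm.

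More concretely, since $(z,A(z))\in L$, we have the trivial bound $\dist((z,f(z)),L)\leq |f(z)-A(z)|$. Because $\Gamma$ is an $L$-Lipschitz graph, the map $z\mapsto (z,f(z))$ is $\sqrt{1+L^{2}}$-Lipschitz, so $\cH^{d}_{\infty}\leq \cH^{d}$ together with the hypothesis $\pi_{\bR^{d}}(B(x,r))\subseteq I$ gives, for every $t>0$,
\[
\cH^{d}_{\infty}\bigl(\{y\in \Gamma\cap B(x,r):\dist(y,L)>tr\}\bigr)\lec_{d,L}\cH^{d}\bigl(\{z\in I:|f(z)-A(z)|>tr\}\bigr).
\]
Plugging this into the definition of $\beta_{\Gamma}^{d,p}(x,r)$, changing variables $s=tr$, and applying the layer-cake identity $\int_{0}^{\infty}\cH^{d}(\{|g|>s\})s^{p-1}\,ds=p^{-1}\|g\|_{L^{p}}^{p}$ to $g=f-A$ yields
\[
r^{d}\beta_{\Gamma}^{d,p}(x,r)^{p}\lec_{d,L,p}\frac{1}{r^{p}}\int_{I}|f(z)-A(z)|^{p}\,dz.
\]

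Finally, the definition of $\Omega_{f,p}(I)$ with the optimizer $A$ gives
\[
\int_{I}|f-A|^{p}\,dz \sim \cH^{d}(I)\cdot r^{p}\cdot \Omega_{f,p}(I)^{p},
\]
where the $r$ in \eqref{e:omdyadic} is understood at the scale of $I$; the hypothesis $I\supseteq \pi_{\bR^{d}}(B(x,r))$ forces $\ell(I)\gec r$, and the lemma is invoked in the natural regime where this normalizing scale is comparable to $r$, so the exponents match cleanly. Division by $r^{d}$ then yields \eqref{e:b-to-om}.

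No step presents a real obstacle; the only mildly delicate point is the passage from $\cH^{d}_{\infty}$ on $\Gamma$ to $\cH^{d}$ on the base cube $I$, and this is immediate once one exploits the Lipschitz graph representation. Everything else is a layer-cake computation plus tracking the normalizations built into $\beta^{d,p}_{\Gamma}$ and $\Omega_{f,p}$.
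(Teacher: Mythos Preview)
Your proposal is correct and follows essentially the same route as the paper: take the optimal affine map $A$ for $\Omega_{f,p}(I)$, use its graph as the competitor plane, bound $\dist((z,f(z)),L)\le |f(z)-A(z)|$, pass from $\cH^{d}_{\infty}$ on $\Gamma$ to Lebesgue measure on $I$ via the Lipschitz parametrization $z\mapsto(z,f(z))$, and finish with layer-cake. The paper phrases the content-to-measure step slightly differently (it notes that $\Gamma$ is Ahlfors $d$-regular, so $\cH^{d}_{\infty}\sim_{L}\cH^{d}$ on subsets of $\Gamma$, and then does the area-formula change of variables), but this is the same mechanism you describe; your one-sided bound $\cH^{d}_{\infty}\le\cH^{d}$ already suffices. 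Your caveat about the normalizing $r$ in \eqref{e:omdyadic} is fair---the paper's notation leaves that $r$ ambiguous, and indeed in the only application (Section~\ref{s:beta-omega}) the cube $I=I_{Q}$ is chosen with $\ell(I_{Q})\sim \ell(Q)\sim r$, so the scales are comparable exactly as you assume.
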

\begin{proof}
Let $A$ be the plane that infimizes \eqn{omdyadic}. Since $f$ is $L$-Lipschitz, we have that $\Gamma$ is $C$-Ahlfors regular with $C$ depending on $d$ and $L$. In particular, $\cH^{d}(E)\sim_{L} \cH^{d}_{\infty}(E)$ for all $E\subseteq \Gamma$. Set $F(x)=(x,f(x))$, which is $2L$-Lipschitz. 
\begin{align*}
\beta_{\Gamma,p}^{d}(x,r)^{p}
& \leq r^{-d}\int \cH^{d}_{\infty}(\{y\in \Gamma\cap B(x,r): \dist(y,A(\bR^{d}))>tr\})t^{p-1}dt\\
& \sim_{L} r^{-d}\int \cH^{d}(\{y\in \Gamma\cap B(x,r): \dist(y,A(\bR^{d}))>tr\})t^{p-1}dt\\
& = r^{-d}\int_{B(x,r)\cap \Gamma} \ps{\frac{\dist(y,A(\bR^{d}))}{r}}^{p}d\cH^{d}|_{\Gamma}(y) \\
& = r^{-d}\int_{\pi_{\bR^{d}}(B(x,r))} \ps{\frac{\dist(F(z),A(\bR^{d}))}{r}}^{p}J_{F}(z)dz\\
& \lec_{L} r^{-d}\int_{I} \ps{\frac{|F(z) - A(z)|}{r}}^{p}dz\\
& \leq \frac{\cH^d(I)}{r^{d}} \Omega_{f,p}(I)^{p}
\end{align*}
\end{proof}

\section{The theorems we actually prove
}\label{s:state-flat-case}

{

We first reformulate Theorem \ref{t:thmii-dyadic} with an equivalent version that uses balls and nets rather than dyadic cubes. This is slightly more technical to state, but will be more natural to prove. First, we define $\largetheta{E}^{d,A,\ve}$ in terms of a net (as opposed to $\largetheta{E}^{d,\Delta,\ve}$ from definition \ref{d:def-theta-delta}).

\begin{definition}
Let $E\subseteq \bR^{n}$ be a Borel set, $0<\rho<1/1000$. For $k\in \bZ$, let $X_{k}$ be a sequence of maximal $\rho^{-k}$-separated sets of points for $E$ and $\cX=\{B(x,\rho^{-k}):k\in \bZ,\; x\in X_{k}\}$.
For $\ve,A,r>0$ and $x\in E$, we define
\begin{multline*}
\largetheta{E}^{d,A,\ve}(x,r):=\sum \{r_{B}^{d}:B\in \cX,\;\; x_{B}\in B(x,r), \\  0<r_{B}<r, \mbox{ and }
\vartheta_{E}(AB)\geq \ve\}.
\end{multline*}

\end{definition}

\begin{theorem}
Let $1\leq d<n$, $C_{0}>1$, and $A>\max\{C_{0},10^{5}\}$. 
Let $1\leq p<p(d)$. 
Let $E\subseteq \bR^{n}$ be a closed set containing 0.
Suppose that $E$ is $(c,d)$-lower content regular in $B(0,1)$. 
There is $\ve_{0}=\ve_{0}(n,A,p,c)>0$ such that the following holds. 
Let  $0<\ve<\ve_{0}$. 
For integer $k\geq 0$, let $X_{k}\subset X_{k+1}$ be a maximal $2^{-k}$-separated set of points in $E\cap B(0,1)$.
Suppose further that for each $k$ we have  $X_{k}\subset X_{k+1}$. Let
 $\cX_{k}=\{B(x,2^{-k}):x\in X_{k}\}$. Then
\begin{multline}
1+\sum_{k\geq 0}\sum_{B \in \cX_{k}} \beta_{E}^{d,p}(C_{0}B)^{2}r_{B}^{d}\\
\leq C(A,C_{0},n,\ve,p,c) 
\ps{\cH^{d}(E\cap B(0,1)) + \largetheta{E}^{d,A,\ve}(0,1)}.
\label{e:beta<hd2}
\end{multline}
\label{t:thmii}
\end{theorem}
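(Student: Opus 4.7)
The plan is to reduce to the Reifenberg flat case via a stopping-time argument on the multiresolution $\bigcup_k \cX_k$. For each top ball we descend through successively smaller balls and stop either (i) when a descendant ball $B'$ satisfies $\vartheta_E(AB')\geq \ve$, in which case $r_{B'}^d$ is already absorbed into $\largetheta{E}^{d,A,\ve}(0,1)$, or (ii) when the best $\beta_{E}^{d,1}$-minimizing plane at $B'$ has tilted or translated too far from the plane at the top, which triggers a new stopping region. Standard Carleson-packing arguments (combined with \Lemma{PQPR}) show the tops of these regions pack with constant depending only on $\ve$ and $n$, and within each region $E$ is $C\ve$-Reifenberg flat.

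Fix a stopping region $S$ with top ball $B_0$ and apply the David--Toro machinery \Theorem{DT} with $x_{j,k}$ the net points in $S$ and $P_{j,k}$ the plane infimizing $\beta_E^{d,1}(C_0 B_{j,k})$. The compatibility condition \eqref{e:vek<ve} follows from \Lemma{betabeta'} and the flatness hypothesis inside $S$. This produces a bijection $g:\bR^n\toitself$ and a $C\ve$-Reifenberg flat surface $\Sigma_S=g(\bR^d)$ which, by \Theorem{DT}(7), is locally at each scale $r_k$ a $C\ve$-Lipschitz graph $f_{j,k}$ over $P_{j,k}$.

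The core of the proof is to bound $\sum_{B\in\cX_k,\,B\subseteq B_0}\beta_E^{d,p}(C_0B)^2 r_B^d$ by $\cH^d(E\cap 2B_0)$. I would use \Lemma{2-15} to compare $\beta_E^{d,p}(C_0B)$ with $\beta_{\Sigma_S}^{d,p}(2C_0B)$ at the cost of an error term of the form $r_B^{-d-1}\int_{E\cap 2C_0 B}\dist(y,\Sigma_S)\,d\cH^d_\infty$. Since $\Sigma_S$ is locally a small Lipschitz graph, pulling back via \Lemma{b-to-om} lets me apply the dyadic Dorronsoro estimate \eqref{e:dorronsorocubes} to the graph maps $f_{j,k}$, yielding
\[
\sum_{B\subseteq B_0} \beta_{\Sigma_S}^{d,p}(C_0B)^2 r_B^d \lec_p \|\grad f\|_2^2 \lec \cH^d(\Sigma_S\cap 2B_0).
\]
A localized variant of \Lemma{rtrim} adapted to the stopping region then gives $\cH^d(\Sigma_S\cap 2B_0)\lec \cH^d(E\cap 2B_0)+r_{B_0}^d$. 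Summing over all stopping regions and using the Carleson packing from step one produces \eqref{e:beta<hd2}, with the extra ``$1+$'' absorbing the top-level contribution.

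The main obstacle I anticipate is handling the error integral from \Lemma{2-15}. Its telescoped sum must be re-absorbed using the fact that $\dist(y,\Sigma_S)$ at scale $r_k$ is controlled, via \eqref{e:ytosigma} and \Lemma{betabeta'}, by the $\beta$-numbers at comparable scales, forcing a careful bootstrap rather than a direct estimate. A secondary difficulty is that $g$ is only bi-H\"older, so the Dorronsoro bound has to be applied on the intermediate surfaces $\Sigma_k$ (which are graphs by \Theorem{DT}(7)) and then transferred to $\Sigma_S$; the restriction $p<p(d)$ from \Theorem{dorronsoro} carries through cleanly only because \Lemma{b-to-om} passes from the graph's $L^p$-oscillation to the Choquet-based $\beta_E^{d,p}$, which is where the lower content regularity hypothesis on $E$ is crucially used.
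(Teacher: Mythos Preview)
Your architecture---stopping time, David--Toro surfaces, \Lemma{2-15} to transfer $\beta_E$ to $\beta_{\Sigma_S}$, Dorronsoro on graph pieces---matches the paper's. But two steps that you treat as routine are in fact the main content of the proof.

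First, the assertion that ``standard Carleson-packing arguments (combined with \Lemma{PQPR}) show the tops of these regions pack'' is the crux, not a formality. \Lemma{PQPR} only compares angles of nearby planes; it says nothing about packing. In the paper the plane-tilt stopping is handled only in the Reifenberg-flat case (\Theorem{flat-case}), and the packing of the stopped cubes is \Proposition{asum}, whose proof occupies Sections~\ref{s:approxsurf}--\ref{s:tel-sum}. The idea is to build a sequence of intermediate surfaces $\Sigma^N$ (one per ``layer'' of stopping regions), construct explicit bi-Lipschitz maps $F_N:\Sigma^N\to\Sigma^{N+1}$ (\Lemma{FN}), and control $\sum_Q\ell(Q)^d$ by the telescoping sum $\sum_N\big(\cH^d(\Sigma^{N+1})-\cH^d(\Sigma^N)\big)$, which collapses to $\cH^d(\Sigma)$. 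This requires the extended stopping regions $S_Q'$ with the adjacent-minimal-cubes-have-comparable-size property (\Lemma{QSQS'}), without which the layers do not organize into a controllable sequence. None of this is a standard corona-type packing; the usual David--Semmes arguments rely on Ahlfors upper regularity, which you do not have.

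Second, your proposed bound $\cH^d(\Sigma_S\cap 2B_0)\lec \cH^d(E\cap 2B_0)+r_{B_0}^d$ via ``a localized variant of \Lemma{rtrim}'' does not go through: \Lemma{rtrim} requires $E$ itself to be $\ve$-Reifenberg flat, which fails in general. The paper instead proves \Lemma{sizesigmaq}, which bounds $\cH^d(C_1B_Q\cap\Sigma^Q)$ by $\sum_{R\in m(S(Q))}\ell(R)^d$ plus $\cH^d(\{x\in\overline Q:d_Q(x)=0\})$. The first piece is absorbed into $\largetheta{E}^{d,A,\ve}$ via \eqn{summinc}; the second is a genuine subset of $E$, and the key point (\Lemma{zeroboundary}, using lower content regularity) is that these zero-sets are $\cH^d$-a.e.\ disjoint across stopping regions, so their total contributes at most $\cH^d(E\cap B(0,1))$.

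The paper also separates the argument cleanly: first prove \Theorem{flat-case} with plane-tilt stopping and telescoping; then in Section~\ref{s:TheoremII} stop \emph{only} on $\vartheta_E(AB_Q)\geq\ve$, build $\Sigma^S$, and invoke \Theorem{flat-case} as a black box on $\Sigma^S$. Merging the two stopping conditions as you do obscures that the plane-tilt packing is what demands the new idea.
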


Similarly, Theorem \ref{t:thmiii-dyadic} has a version for nets as follows.
\begin{theorem}
Let $1\leq d<n$.
Let $1\leq p\leq \infty$ and $E\subseteq \bR^{n}$ be such that $0\in E$. 
Suppose that $E$ is $(c,d)$-lower content regular in $B(0,1)$. 
Let $X_{k}$ be a nested sequence of maximal $2^{-k}$-separated points in $E$ and $\cX_{k}=\{B(x,2^{-k}):x\in X_{k}\}$. 
Let $A>1$ and $\ve>0$ be given as well.
Then for $C_0$ sufficiently large (depending only on $n$ and $A$),
\begin{multline}\label{e:thmiii-upper-bound}
 \cH^{d}(E\cap B(0,1)) +
      \largetheta{E}^{d,A,\ve}(0,1)\\
 \leq C(A,n,c, \ve)\ps{1+\sum_{k\geq 0}\sum_{B \in \cX_{k}\atop x_{B}\in B(0,1)} \beta_{E}^{d,p}(C_0B)^{2}r_{B}^{d}}.
 \end{multline}
{
Furthermore, if the right hand side of \eqref{e:thmiii-upper-bound} is finite, then $E$ is $d$-rectifiable
}

\label{t:thmiii}
\end{theorem}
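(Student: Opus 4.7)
My proof will decouple the two quantities on the left-hand side of \eqref{e:thmiii-upper-bound}: first bound $\largetheta{E}^{d,A,\ve}(0,1)$ by the $\beta$-sum by elementary means, then use a stopping-time argument combined with David--Toro (Theorem \ref{t:DT}) to bound $\cH^{d}(E\cap B(0,1))$.

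\textbf{Controlling $\largetheta{E}^{d,A,\ve}$.} The bound on $\largetheta{E}^{d,A,\ve}$ is essentially automatic from the inequalities relating $\vartheta_{E}^{d}$ and $\beta_{E}^{d,1}$. If $B\in\cX_{k}$ with $x_{B}\in B(0,1)$ satisfies $\vartheta_{E}^{d}(AB)\geq\ve$, then applying the chain of inequalities $\vartheta_{E}^{d}(\tfrac12 B')\lec \beta_{E}^{d,1}(2B')^{1/(d+1)}$ (valid under lower content regularity, using $E$ closed and $\ve$ small) with $B'=\tfrac12 AB$ gives $\beta_{E}^{d,1}(AB)^{2}\gec_{c,d}\ve^{2(d+1)}$. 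Since $\beta_{E}^{d,1}\lec \beta_{E}^{d,p}$ by \eqref{e:bjensens} and since $\beta$-numbers are almost monotone under moderate ball dilation, choosing $C_{0}$ large enough relative to $A$ we obtain
\[
\largetheta{E}^{d,A,\ve}(0,1)\leq \sum_{\text{non-flat }B} r_{B}^{d} \lec_{c,d,\ve,A} \sum_{k\geq 0}\sum_{B\in\cX_{k},\, x_{B}\in B(0,1)} \beta_{E}^{d,p}(C_{0}B)^{2}r_{B}^{d}.
\]

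\textbf{Stopping time and David--Toro.} To bound $\cH^{d}(E\cap B(0,1))$, I would perform a stopping-time decomposition in generations of trees of ``flat'' balls. Starting at $B(0,1)$, a tree $T$ consists of those $B\in\cX_{k}$ for which every ancestor up to the root of $T$ satisfies $\vartheta_{E}^{d}(AB)<\ve$; we stop as soon as the flatness test fails, and the stopping balls become roots of descendant trees. Within a tree $T$ with root $B_{T}$, the planes $P_{B}:=P_{C_{0}B}^{p}$ at nearby scales are nearly aligned by \Lemma{PQPR}: the hypothesis $\beta_{E}^{d,1}(C_{0}B)<\ve$ (which follows from $\vartheta_{E}^{d}(AB)<\ve$ and lower content regularity via \Lemma{betabeta'}) bounds their angles by $O(\ve)$. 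This verifies the compatibility condition \eqref{e:vek<ve} needed to apply \Theorem{DT} to the net of centers of $T$, producing a Reifenberg-flat surface $\Sigma^{T}\supseteq E\cap\cnj{B_{T}}$ modulo the descendant roots.

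\textbf{Measure control.} Each $\Sigma^{T}$ is locally a $C\ve$-Lipschitz $C^{2}$-graph, hence by \eqref{e:49graph} and the area formula $\cH^{d}(\Sigma^{T}\cap B_{T})\lec r_{B_{T}}^{d}$, and via a Besicovitch/Vitali argument on the roots we get the telescoping bound
\[
\cH^{d}(E\cap B(0,1))\leq \sum_{T} \cH^{d}(\Sigma^{T}\cap B_{T}) \lec \sum_{T} r_{B_{T}}^{d}.
\]
Every non-root tree top is a sibling (at fixed dilation) of a non-flat ball, hence contributes $\lec r_{B_{T}}^{d}$ to $\largetheta{E}^{d,A,\ve}(0,1)$, which in turn is controlled by the $\beta$-sum by the first paragraph; the root contributes the ``$1$.'' Rectifiability of $E$ when the RHS is finite follows because $E\cap B(0,1)$ is covered $\cH^{d}$-a.e. by the countable family of $\Sigma^{T}$'s, each bi-H\"older homeomorphic to $\bR^{d}$ and locally a $C^{2}$-graph, hence $d$-rectifiable; the tail set of points lying in infinitely-nested stopping-time depth is $\cH^{d}$-null because at such points the $\beta$-sum along the Carleson-type chain diverges, contradicting finiteness.

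\textbf{Main obstacle.} The technical bottleneck is the interface between trees: the local bound $\cH^{d}(\Sigma^{T}\cap B_{T})\lec r_{B_{T}}^{d}$ only helps if the tops $B_{T}$ do not over-cover $E$. One needs a careful Besicovitch selection of the stopping balls so that a given generation of roots has bounded overlap inside their parent non-flat balls. A secondary subtlety is invoking the variant Theorem \ref{DTtheorem2.5} to make $\Sigma^{T}$ match $E$ (not merely contain it) sufficiently well that pieces of $E$ inside descendant trees are not double counted; this is where the distinction between the two David--Toro results becomes important and where the finer properties \eqref{e:sigmalr}, \eqref{e:Tsig}, and \eqref{e:Tlip} of $\Sigma_{k}$ enter.
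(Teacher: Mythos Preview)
Your proposal has two genuine gaps, both stemming from the same misconception: you treat $\vartheta_{E}^{d}$ and $\beta_{E}^{d,1}$ as essentially interchangeable, but they are not.

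\textbf{The $\largetheta$ bound fails.} Your claim that $\vartheta_{E}^{d}(AB)\geq\ve$ forces $\beta_{E}^{d,1}(AB)\gtrsim\ve^{d+1}$ is false. Take $E$ to be a $d$-plane with a ball removed: then $\beta_{E}^{d,1}\equiv 0$ (the set lies in a plane) but $\vartheta_{E}^{d}$ is of order $1$ near the hole. The inequality you cite, \eqref{e:thetabeta}, requires $E$ to be Reifenberg flat a priori, which is not assumed here. The quantity $\vartheta_{E}^{d}$ is two-sided: it can be large either because $\beta_{E,\infty}^{d}$ is large or because the one-sided quantity $\eta_{E}(B)=\inf_{L}\sup_{x\in L\cap B}\dist(x,E)/r_{B}$ is large. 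The paper (Section~\ref{s:pf-of-thmiii-part2}) handles these separately: the $\beta_{\infty}$-large cubes are treated as you suggest, but the $\eta$-large cubes require comparing $E$ to the bi-Lipschitz surfaces $\Sigma_{S}$ built in Part~1 and then invoking the bilateral weak geometric lemma for uniformly rectifiable sets (Lemma~\ref{l:large-theta-sigma}, from \cite{DS93}).

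\textbf{The $\cH^{d}$ bound fails.} Stopping on $\vartheta_{E}^{d}(AB)<\ve$ gives Reifenberg-flat data, and \Theorem{DT} then yields only a bi-H\"older surface $\Sigma^{T}$. Your assertion $\cH^{d}(\Sigma^{T}\cap B_{T})\lec r_{B_{T}}^{d}$ is false: Reifenberg-flat surfaces can have infinite $\cH^{d}$ in a ball (this is exactly the Fang--Jones obstruction noted in the introduction). The local $C\ve$-Lipschitz graph structure in \eqref{e:49graph} holds only at scale $r_{k}$, not at the scale of $B_{T}$. The paper instead stops when the \emph{running $\beta$-sum} $\sum_{R\subset T\subseteq Q}\beta_{E}^{d,1}(MB_{T})^{2}$ exceeds $\ve^{2}$; this is precisely the hypothesis of Theorem~\ref{DTtheorem2.5}, yielding \emph{bi-Lipschitz} surfaces with $\cH^{d}(\Sigma_{S})\lec\ell(Q(S))^{d}$. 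The stopped cubes then satisfy $\sum\ell(R)^{d}\lec\sum\beta^{2}\ell^{d}$ directly (Lemma~\ref{l:sumS_Jlarge}), with no appeal to $\largetheta$.
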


It is not hard to show that \Theorem{thmii} implies Theorem \ref{t:thmii-dyadic}.  To do this, and for what follows, we will need to relate the two versions of $\largetheta{}$. 
Indeed
\[ \largetheta{E}^{d,\Delta,\ve}(B(0,1)) \lec_{A,n,\ve}  1+\largetheta{E}^{d,A,\ve}(0,1),\]
and
\[ \largetheta{E}^{d,A,\ve}(B(0,1)) \lec_{A,n,\ve}  \largetheta{E}^{d,\Delta,\ve}(0,1).\]
The sketch of the argument for the first inequality is that cubes of diameter less than one are contained in balls from some $\cX_{k}$, so we can use monotonicity of $\vartheta_{E}$; the cubes of diameter larger than one have geometrically decaying $\vartheta$, and so only $\sim\log(\epsilon)$ generations need to be counted. The second inequality follows in a similar manner (but note that we have cubes of all scales). Moreover, it is not hard to show that $\beta_{E}^{d,p}(Q)\lec \beta_{E}^{d,p}(B)$ if $Q$ is a cube and $B\supseteq Q$ is a ball of comparable size. Thus, we have 

\[
\sum_{Q\in \Delta \atop \ell(Q)\leq 1,  Q\cap E\cap B(0,1)\neq\emptyset} \beta_{E}^{d,p}(C_{0}Q)^{2}\diam(Q)^{d}
\lec 1+ \sum_{k\geq 0}\sum_{B \in \cX_{k}} \beta_{E}^{d,p}(C_{0}B)^{2}r_{B}^{d}.\]
Again, this follows since the cubes of sidelength much less than one are contained in a ball $C_{0}B$ from the sum on the right with comparable size and the number of cubes associated to each ball can be chosen to be uniformly bounded. 
This proves the above inequality, and thus  we can reduce from the dyadic version Theorem \ref{t:thmii-dyadic} to the ball version, i.e.  Theorem \ref{t:thmii}. A similar argument shows Theorem \ref{t:thmiii} implies Theorem \ref{t:thmiii-dyadic}. \postRef{The main thing to note is that any ball $B$ has a dyadic cube $Q$ so that $B\subseteq C_{0}Q$ and $r_{B}\sim_{C_{0},n}\diam(Q)$. 

}

\begin{remark}
We will from now on focus on showing Theorems \ref{t:thmii} and \ref{t:thmiii}, as they will imply our main results.
\end{remark}

\begin{remark}
One open question is whether one can generalize these techniques to give a version of our main results that hold for Hilbert spaces. We believe this is possible after suitably proving some of the results from \cite{DT12} for this setting. Such a result would take on this form of statement, using nets and balls instead of dyadic cubes.
\end{remark}

Theorem \ref{t:flat-case} just below  is a version  of Theorem \ref{t:thmii} specialized to Reifenberg flat surfaces. Theorem \ref{t:flat-case} will be used in the proof of Theorem \ref{t:thmii}.

\begin{theorem}\label{t:flat-case}
{
Let $1\leq d<n$, $C_{0}>1$, $1\leq p<\frac{2d}{d-2}$ if $d>2$ and $1\leq p<\infty$ if $d\leq 2$ . There is $\ve_{0}=\ve_{0}(n,C_{0},p)>0$ such that the following holds. Let $0<\ve<\ve_{0}$, $\Sigma$ be a $(\ve,d)$-Reifenberg flat surface so that for some $d$-plane $P_{0}$, 
\begin{equation}\label{e:S-B10}
\Sigma\backslash V_{0}^{1}=\Sigma\backslash B(0,1)=P_{0}\backslash B(0,1).
\end{equation}
Let $\cX_{k}$ be a nested sequence of maximally separated $\rho^{k}$-nets in $\Sigma$ 
so that $0\in \cX_{0}$. For $C_{0}>0$,
\[
\sum_{k\geq 0}\sum_{x\in \cX_{k}}\beta_{\Sigma}^{d,p}(B(x,C_{0}\rho^{k}))\rho^{kd}\lec_{p,C_{0},n,\ve} \cH^{d}(\Sigma\cap B(0,1)).\]}
\end{theorem}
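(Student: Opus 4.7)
The plan is to combine David--Toro's parametrization (Theorem \ref{t:DT}) with Dorronsoro's theorem in its dyadic form \eqref{e:dorronsorocubes}, patching together local Lipschitz-graph representations of $\Sigma$ via a stopping-time (coronization) argument. Since $\Sigma$ is $C\ve$-Reifenberg flat, items (7)--(8) of Theorem \ref{t:DT} give a local Lipschitz-graph structure at every point $y \in \Sigma$ and every scale $r$: there is a $d$-plane $P(y,r)$ and a $C\ve$-Lipschitz function $A_{y,r}: P(y,r) \to P(y,r)^{\perp}$ whose graph coincides with $\Sigma$ in a neighborhood of $y$ of size comparable to $r$.

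Next I would perform a stopping-time decomposition of the family $\{B(x, C_{0}\rho^{k}) : x \in \cX_{k},\ k \geq 0\}$. Starting from $B(0,1)$ with top plane $P_{0}$, one descends in scale and stops a ball once its local best plane has rotated by more than a small fixed angle $\alpha$ relative to the plane at the top of the current stopping region. Inside a stopping region $S$ (rooted at a top ball $B_{S}$ with plane $P_{S}$), all local best planes lie within angle $\alpha$ of $P_{S}$, so the local graph property globalizes: there is a single $C\alpha$-Lipschitz function $f_{S}: P_{S} \cap \pi_{P_{S}}(B_{S}) \to P_{S}^{\perp}$ whose graph agrees with $\Sigma \cap B_{S}$. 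Applying Dorronsoro to $f_{S}$ yields
\[
\sum_{I \subset \pi_{P_{S}}(B_{S})} \Omega_{f_{S},p}(3I)^{2} \cH^{d}(I) \lec \|\grad f_{S}\|_{2}^{2} \lec \alpha^{2} r_{B_{S}}^{d},
\]
and Lemma \ref{l:b-to-om} lets one estimate each $\beta_{\Sigma}^{d,p}(B(x,C_{0}\rho^{k}))^{2} \rho^{kd}$ by $\Omega_{f_{S},p}(3I_{x})^{2}\cH^{d}(I_{x})$ for a dyadic cube $I_{x} \subset P_{S}$ of size $\sim \rho^{k}$ containing $\pi_{P_{S}}(B(x,C_{0}\rho^{k}))$, with bounded-overlap (depending on $C_{0}$ and $n$) in the map $x \mapsto I_{x}$.

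It remains to sum over stopping regions and prove the packing estimate $\sum_{S} r_{B_{S}}^{d} \lec \cH^{d}(\Sigma \cap B(0,1))$; combined with the lower bound $\cH^{d}(\Sigma \cap B(0,1)) \gec 1$ from \eqref{e:sigmalr}, this finishes the proof since $\alpha$ is a constant depending only on $n$. The main obstacle is exactly this packing: each stopping event certifies an $\alpha$-sized rotation of the local best plane, and one must show that the accumulated angular changes can be charged to $\cH^{d}$-measure. This requires a delicate Carleson-type bookkeeping argument combining the $O(\ve)$ increments between consecutive best planes (of the type furnished by Lemma \ref{l:betabeta'} and Lemma \ref{l:PQPR}), the lower content regularity of $\Sigma$ coming from \eqref{e:sigmalr}, and careful handling of how adjacent stopping regions overlap across boundaries. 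This combinatorial core---where the constraint $p < p(d)$ from Dorronsoro is genuinely used---explains why the paper devotes Sections \ref{s:stoppingtime}--\ref{s:TheoremII} to the full proof.
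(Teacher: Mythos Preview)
Your high-level architecture (stopping time on plane rotation, Dorronsoro on each region, then a Carleson packing of the top cubes) matches the paper's, and you correctly identify the packing $\sum_{S} r_{B_S}^d \lec \cH^d(\Sigma\cap B(0,1))$ as the crux. However, two steps in your sketch are genuine gaps, and they are precisely where the paper's machinery lives.

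First, the claim that inside a stopping region $S$ ``there is a single $C\alpha$-Lipschitz function $f_S$ whose graph agrees with $\Sigma\cap B_S$'' is not justified and in general false. The stopping condition controls the best-fit planes only for cubes $Q\in S$, i.e.\ only down to the scale of the minimal cubes of $S$; below that scale the planes of $\Sigma$ may tilt by much more than $\alpha$, so $\Sigma\cap B_S$ need not be a Lipschitz graph over $P_S$. The paper avoids this by building, for each level $N$ of the layered stopping time, an \emph{approximating} surface $\Sigma^N$ via the David--Toro machine applied only to the nets in $\Up(N)$; this $\Sigma^N$ is frozen at the minimal-cube scale and \emph{is} a $C\alpha$-Lipschitz graph (Lemma~\ref{l:graph}). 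One then transfers $\beta_\Sigma$ to $\beta_{\Sigma^N}$ via Lemma~\ref{l:2-15}, with an error term controlled by $\dist(\cdot,\Sigma^N)$ and summed using \eqref{e:dxsigman}. Dorronsoro is applied to $\Sigma^N$, not to $\Sigma$.

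Second, your proposed mechanism for the packing (``Carleson-type bookkeeping with $O(\ve)$ plane increments and lower regularity'') does not capture what the paper actually does, and I do not see how to make such an argument work: a cube stopping because its plane rotated by $\alpha$ does not by itself produce an $\alpha$-sized contribution to $\cH^d$. The paper's mechanism is a telescoping area argument (Section~\ref{s:tel-sum}): one constructs a $(1+C\alpha)$-bi-Lipschitz map $F_N:\Sigma^N\to\Sigma^{N+1}$ (Section~\ref{s:layers}), and shows that for each $Q\in\Stop(N)$ the local area increment $\cH^d(F_N(C_1B_Q\cap\Sigma^N))-\cH^d(C_1B_Q\cap\Sigma^N)$ dominates $\sqrt{\ve}\,\ell(Q)^d$ except on a set controlled by Lemma~\ref{l:asum2}. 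Summing over $N$ telescopes to $\cH^d(\Sigma\cap B(0,1))$. For this to work one also needs adjacent minimal cubes at level $N$ to have comparable sizes, which forces the extended stopping regions $S_Q'\supseteq S_Q$ of Lemma~\ref{l:QSQS'}. None of these ingredients appear in your sketch.
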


\section{Sketch of the proof of Theorems \ref{t:flat-case} and Theorem \ref{t:thmii}}\label{s:description-of-proof}

This section  will contain a loose description of the proofs of Theorems \ref{t:flat-case} and \ref{t:thmii}. 
The proof of Theorem \ref{t:flat-case} is carried out in Sections \ref{s:stoppingtime}--\ref{s:beta-omega}. 
Theorem \ref{t:thmii} is then proven in Section \ref{s:TheoremII}  by building a stopping time argument on top of Theorem \ref{t:flat-case}.
We  assume below that the reader is fluent with the language and results of the previous sections.
We expect that some readers may not gain much from this section, but hope that others will appreciate it.

The fundamental tool one would like to use to upper bound a sum of $\beta_{E}^{d,p}$-numbers as in the left-hand-side of Theorem
 \ref{t:flat-case}  is Dorronsoro's theorem (see Theorem \ref{t:dorronsoro}).  That requires having a graph.  
Thus, we would like to describe the surface $\Sigma$ 
 as the limit of a progression of graphs 
in which, locally, each successive surface is a graph over the previous surface, 
in the same spirit as the progression of curves that leads to the von Koch snowflake.  Indeed, Theorem \ref{t:DT} supplies us with $\Sigma$ as a limit of (constructed) surfaces $\Sigma_k$ which are locally $C^2$ graphs (with control on what `locally' means) but these are not  enough for the accounting we need to do. Instead we need to stop the construction of the $\Sigma_k$'s in different places, and let it keep running in others. 
We'll be more specific below:

Consider a family of Christ-like `cubes' for $\Sigma$  as assured by Theorem \ref{t:Christ}. 
To each cube $Q$, associate an affine  $d$-plane $P_Q$ which well-approximates $\Sigma$ inside a ball $B_Q$, where $B_Q\supset Q$ and is  not too big.
Now we generate layers of cubes as follows. Our zeroth layer are the cubes of sidelengh one. Now, run a  stopping time on each cube $Q$ from this family to construct a new layer by adding smaller and smaller cubes $R$ in $Q$ and stop if  $P_R$ changes its angle too much from the initial plane $P_Q$. Let $S_{Q}$ be the resulting stopping time region (i.e. those cubes for which you didn't stop). We will restart on the stopped cubes to get more   stopping time regions. Our first layer is the union of all $S_{Q}$ over all stopped cubes $Q$ from the zeroth layer. Now look at the minimal cubes of this layer and run the same stopping-time process to create a new collection of stopping times that form the $2$nd layer, and so on.

{%
These layers are {\em almost} good enough to work with, but not quite and we will now describe why. 
Take 
a layer and apply the construction of David and Toro (Theorem \ref{t:DT}) to it, in the sense that you use the centers of the cubes $Q$ and the planes $P_Q$ (really, you first have to shift them to go through the centers). The resulting surface is in fact a graph around each maximal cube in the layer. Thus  you can use Dorronsoro's theorem on each of these pieces to bound the $\beta_{E}^{d,p}$-numbers by a portion of $E$ around that maximal cube. However, the sizes of these maximal cubes can vary wildly, and so they may not have bounded overlap. This is a problem, which is why we said the layers are {\em almost} good enough. 
To solve this, when constructing our layers, we add an extra step that extends them so that adjacent minimal cubes have comparable sizes (Lemma \ref{l:QSQS'}) and then we repeat our construction on the minimal cubes of these extended layers and so on.  Note that they will still give rise to graphs over the planes corresponding to the maximal cubes in each layer (Lemma \ref{l:four-ten}).
%
}

Section \ref{s:stoppingtime} describes these regions and their properties.  
Section \ref{s:approxsurf} describes building a progression of surfaces $\Sigma^N$ corresponding to a progression of stopping times 
($N$ here corresponds to how many generations from the root our extended stopping time is) and records some properties.  
Each Surface there is its own application of Theorem \ref{t:DT}.

In Section \ref{s:layers} we will introduce a bi-Lipschitz map $F_N:\Sigma^N\to \Sigma^{N+1}$, whose image has (locally) a graph structure (see Figure \ref{f:fn}). 
Then, in Section \ref{s:tel-sum}, we show that the sum of the area of the cubes where we stopped 
{%
in our construction of the $N$th layer,} 
is bounded by the total area of the limit surface 
$\Sigma=\lim \Sigma^N$. Here the area of $\Sigma$ is really thought of as the limit of a telescoping sum, where,  the $N$th summand  bounds (with controllable errors) the area of the stopped cubes in $N$th layer.  (Note that this is really not a precise statement, since some summands in the telescoping sum could even be negative!)
Finally, in Section \ref{s:beta-omega}, we reduce upper bound  for $\beta_{E}^{d,p}$-numbers for a single stopping time to the upper bound  given by Dorronsoro's theorem Theorem \ref{t:dorronsoro}, which is bounded by the area of the root cube.  These are exactly the cubes whose total area was bounded in Section \ref{s:tel-sum}.  
This will complete the proof of Theorem \ref{t:flat-case}. 

Theorem \ref{t:thmii} follows in a similar fashion, except we also need to stop every time we cease to be Reifenberg flat. The onerous details of this are carried out in 
Section \ref{s:TheoremII}.

\section{The stopping-time}\label{s:stoppingtime}
Sections \ref{s:stoppingtime}, \ref{s:approxsurf}, \ref{s:layers}, \ref{s:tel-sum}, and \ref{s:beta-omega} will together give Theorem \ref{t:flat-case}.

\smallskip




Note that \eqn{sigmalr} means we can use the lemmas from Section \ref{s:betalemmas}. 
Fix $M>1$.
Let $\cD$ be the cubes for $\Sigma$ from \Theorem{Christ} using our maximal nets $\cX_{k}$ and set $P_{Q}'$ be such that 
\[
\beta_{\Sigma,\infty}^{d}(P_{Q}',MB_{Q})= \beta_{\Sigma,\infty}^{d}(MB_{Q})<\ve\]
and set $P_{Q} \ni x_{Q}$ be the plane parallel to $P_{Q}'$, and so
\begin{equation}\label{e:pqdef}
\beta_{\Sigma,\infty}^{d}(P_{Q},MB_{Q})\leq 2 \beta_{\Sigma,\infty}^{d}(MB_{Q})<2\ve 
\end{equation}
We can assume there is $Q_{0}\in \cD_{0}$ with center $0$. Without loss of generality, we may assume $P_{Q_{0}}=P_{0}$


\begin{definition}
(\cite[I.3.2]{of-and-on}) A {\it stopping-time region} $S\subseteq \cD$ is  a collection of cubes such that the following hold.
\begin{enumerate}
\item All cubes $Q\in S$ are contained in a maximal cube $Q(S)\in S$.
\item $S$ is {\it coherent}, meaning that if     $Q\in S$ and  $Q\subseteq R \subseteq Q(S)$ then $R\in S$.
\item For all $Q\in S$, each of its siblings of $Q$ are also in $S$.
\end{enumerate}
\label{d:st}
\end{definition}

Let $\alpha>\ve>0$. 
\begin{remark}\label{r:params-1}
We will adjust the value of $\ve$ (the Riefenberg flatness  parameter) as we go along, but it shall be much smaller than $\alpha$ (more precisely, $\epsilon\ll\alpha^4$), and will also depend on a parameter $\tau$. See Remark \ref{r:alpha} below as well as Section \ref{s:tel-sum}. The constant $\alpha$ may be fixed in the proof of Lemma \ref{l:asum2}.

\end{remark}

Let $Q\in \cD$ and let $S_{Q}$ be the stopping time region constructed by adding cubes $R$ to $S$ if both of the following are satisfied.
\begin{enumerate}
\item If $T\in \cD$ and $R\subseteq T\subseteq Q$ then $T\in S_{Q}$.
\item We have $\angle(P_{T},P_{Q})<\alpha$ for any sibling $T$ of $R$ (including $R$ itself).
\end{enumerate}
In this way, $Q(S_{Q})=Q$. \\

\begin{remark}
A property which $S_Q$ does not enjoy is that if two minimal cubes are `near' each other, then they have `similar' sizes.
Because of this, we will define `extensions' of these regions, $S'_Q$ later on (see \eqref{d:S-prime-def} and its preceding Lemma \ref{l:QSQS'}, as well as Lemma \ref{l:four-ten}).  The reason we care about this is that we will have a sequence of surfaces $\Sigma_N$, where $\Sigma_N$ comes from cubes stopped $N$ times.  Had we not had `near by cubes have similar sizes', we could not not control well enough the relationship between  $\Sigma_N$ and $\Sigma_{N+1}$. This happens in Section \ref{s:approxsurf}.
\end{remark}

For a collection of cubes $\cC$, we define a distance function
\[
d_{\cC}(x)=\inf \{\ell(Q)+\dist(x,Q): Q\in \cC\}\]
and for $Q\in \cD$, set
\[d_{\cC}(Q)=\inf_{x\in Q} d_{\cC}(x) =\inf \{\ell(R)+\dist(Q,R): R\in \cC\} \]

Let $m(S)$ be the set of minimal cubes of $S$, i.e. those $Q\in S$ for which there are no cubes $R\in S$ properly contained in $Q$, and define
\begin{equation}\label{e:z-def}
 z(S)=Q(S)\backslash \bigcup\{Q:Q\in m(S)\}.
\end{equation}

%

We will define a sequence of collections of cubes $\Layer(N)$, $\Up(N)$, $\Stop(N)$, 
and a sequence of collections of stopping-times $\cF^{N}$, 
$N=0,1,2,...$ as follows.

{First, set 
\[
\Stop(-1)=\cD_{0}.\]
Let $\tau\in(0,1)$ be small. 

\begin{remark}\label{r:tau1}
The constant $\tau$ will be upper bounded in Lemmas \ref{l:3.4},  \ref{l:QSQS'}, and \ref{l:graph}, see Remark \ref{r:tau2} below.
\end{remark}
%

Suppose we have defined $\Stop(N-1)$ for some integer $N\geq 0$. Let 
$$
\Layer(N)=\bigcup \{S_{Q} : Q\in \Stop(N-1)\}.
$$
Clearly 
\begin{equation}
\label{e:layer=stop}
\bigcup\{Q\in \Layer(N)\}=\bigcup\{Q\in \Stop(N-1)\,.
\end{equation}
Let $\Stop(N)$ be the set of maximal cubes $Q$ which have a sibling $Q'$ (possibly themselves) such that
 $\ell(Q')<\tau d_{\Layer(N)}(Q')$, i.e. (recalling that $ \Child_{(1)}Q^{(1)}$ is the collection of children of $Q$'s parent)
{\begin{multline}\label{e:sib}
\Stop(N)=
  \{Q\in \cD : Q \mbox{ max. such that there is } Q'\in \Child_{(1)}Q^{(1)} \mbox{ with }\\
   \ell(Q')<\tau d_{\Layer(N)}(Q')\}. 
\end{multline}}
See Corollary \ref{c:whitney-cor} below, which may elucidate why we use the terminology ``stop".  

{Set $\Up(-1)=\emptyset$ and
\begin{multline}
\Up(N)=
\Up(N-1)\cup \\
\{Q\in \cD : 
Q\supseteq R \mbox{ for some } R\in \Layer(N)\cup \Stop(N) \}
\end{multline}}
\begin{lemma}\label{l:up-alt-def}
\begin{equation}\label{e:up-alt-def}
\Up(N)=
\{Q\in \cD:\ Q\not\subset R \mbox{ for any } R\in \Stop(N)\} 
\end{equation}
\end{lemma}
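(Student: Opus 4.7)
My plan is to establish the equality by double inclusion, exploiting the inductive definition of $\Up(N)$ together with the maximality condition defining $\Stop(N)$. The first move is to unroll the recursion to obtain the non-inductive description
\[
\Up(N) = \{Q \in \cD : \exists\, n \leq N,\ \exists\, R \in \Layer(n) \cup \Stop(n),\ Q \supseteq R\},
\]
which realises $\Up(N)$ as the dyadic upward closure of $F(N) := \bigcup_{n \leq N}(\Layer(n) \cup \Stop(n))$.

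For the inclusion $\Up(N) \subseteq \{Q : Q \not\subsetneq R\ \text{for any}\ R \in \Stop(N)\}$, I would argue by contradiction. Assume $Q \in \Up(N)$ with witness $R$, and suppose $Q \subsetneq S$ for some $S \in \Stop(N)$; then $S \supsetneq R$, and I would split into cases: (a) when $R \in \Stop(N)$, the existence of a strict ancestor $S \in \Stop(N)$ contradicts the maximality in the definition of $\Stop(N)$; (b) when $R \in \Stop(n)$ with $n < N$, the inclusion $\Stop(n) \subseteq \Layer(n+1)$ reduces us to the layer case; (c) when $R \in \Layer(n)$ with $R \in S_{Q_0}$ and $Q_0 \in \Stop(n-1)$, dyadic nesting forces $S \subseteq Q_0$, coherence of $S_{Q_0}$ then places $S \in S_{Q_0} \subseteq \Layer(n)$, and the sibling-closure axiom (condition (3) of Definition~\ref{d:st}) puts every sibling $S'$ of $S$ in $\Layer(n)$, whence $d_{\Layer(N)}(S') \leq \ell(S')$ via the auxiliary claim below, contradicting the smallness defining $S \in \Stop(N)$.

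For the reverse inclusion, I would assume $Q$ is not strictly contained in any cube of $\Stop(N)$ and walk up the dyadic ancestors of $Q$ until reaching the level-zero ancestor, which lies in $\cD_0 \subseteq \Stop(-1) \subseteq \Layer(0) \subseteq F(N)$. Let $T$ be the smallest ancestor of $Q$ (possibly $Q$ itself) in $F(N)$. If $T = Q$ then $Q \in \Up(N)$ directly. Otherwise, the child $T_0$ of $T$ on the path down to $Q$ lies outside $F(N)$, and a Whitney-type argument applies: as one passes to descendants of $T_0$ the ratio $\ell/d_{\Layer(N)}$ becomes arbitrarily small, so the smallness condition is eventually met, and passing to the maximal ancestor with the sibling-smallness property yields a cube of $\Stop(N)$ strictly containing $Q$, contradicting the hypothesis.

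The main obstacle is the auxiliary claim that $\Layer(n) \cap \Stop(N) = \emptyset$ for all $n \leq N$, which is what powers case (c) above. The cleanest way to prove it is by induction on $N$: one shows that a purported $S \in \Layer(n) \cap \Stop(N)$ must have a sibling $S' \in \Layer(n)$ by the sibling-closure axiom, and then using $\Stop(m) \subseteq \Layer(m+1)$ together with the inductive structure of the layers one locates a cube of $\Layer(N)$ close enough to $S'$ to bound $d_{\Layer(N)}(S')$ by a constant multiple of $\ell(S')$, killing the smallness inequality. Once this sub-lemma is in hand, the remainder is straightforward dyadic bookkeeping.
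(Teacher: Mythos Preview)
Your approach differs substantially from the paper's and carries a real gap at the auxiliary claim.

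The paper's argument works entirely at level $N$ and rests on two short observations. First, any $R\in\Stop(N)$ contains no cube of $\Layer(N)$: the sibling $R'$ with $\ell(R')<\tau d_{\Layer(N)}(R')$ would otherwise satisfy $d_{\Layer(N)}(R')\leq \ell(T)+\dist(T,R')\lesssim \rho^{-1}\ell(R')$ for $T\in\Layer(N)$, $T\subseteq R$, contradicting the smallness for $\tau<\tau_0$. Since every $\Stop(N-1)$ cube lies in $\Layer(N)$, this plus induction on $N$ handles the forward inclusion. Second, if $Q$ is disjoint from $\bigcup\Stop(N)$ and $x\in c_0B_Q$, then no ancestor of any cube containing $x$ satisfies the sibling-smallness, hence $\ell(Q')\geq \tau d_{\Layer(N)}(Q')$ for all descendants $Q'\ni x$, forcing $d_{\Layer(N)}(x)=0$; one then finds an arbitrarily small $\Layer(N)$ cube inside $c_0B_Q\subseteq Q$. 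There is no need to track $\Layer(n)$ for $n<N$.

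Your route instead hinges on showing $\Layer(n)\cap\Stop(N)=\emptyset$ for all $n\leq N$, and for $n<N$ this is not the one-liner you suggest. To bound $d_{\Layer(N)}(S')$ for $S'\in\Layer(n)$ you must descend through the layers $n,n+1,\ldots,N$: from a point in a $\Layer(m)$ cube pass to the $\Stop(m)$ cube containing it (via Lemma~\ref{l:3.4}), which lies in $\Layer(m+1)$, and iterate. Your sketch ``one locates a cube of $\Layer(N)$ close enough'' does not address what happens when $d_{\Layer(m)}$ vanishes at an intermediate stage, in which case the point may fail to lie in any $\Stop(m)$ cube and the descent stalls. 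A similar issue appears in your reverse inclusion: the assertion that $\ell/d_{\Layer(N)}\to 0$ along the chain from $T_0$ down to $Q$ presumes $d_{\Layer(N)}$ stays bounded away from zero, which need not hold; when it vanishes you are forced back to the paper's second observation anyway. Finally, in case~(c) you assert ``dyadic nesting forces $S\subseteq Q_0$'' but the alternative $Q_0\subsetneq S$ must be handled (it reduces to case~(b) with a smaller $n$, but this needs to be said).

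In short, the descent you need is doable but is considerably more work than the paper's direct argument, and your outline does not yet control the places where $d_{\Layer(m)}$ may vanish.
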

\begin{proof}
Denote by $U$ the right hand side of \eqref{e:up-alt-def}.
First suppose $Q\in \Up(N)$.
If we have $Q\not\in U$, then there is an $R$ such that  $Q\subsetneq R\in \Stop(N)$.
By \eqref{e:sib}, we then have that $R$ cannot contain a $\Layer(N)$ cube, and so the same holds for $Q$. Thus, $Q$ cannot contain a cube from either $\Layer(N)$ or $\Stop(N)$, which implies $Q\not\in \Up(N)$. 
 
If $Q\in U$ then either $Q\supset T\in \Stop(N)$ (in which case $Q\in \Up(N)$) or 
$Q$ is disjoint from $\cup \Stop(N)$.  Hence, if $x\in c_{0} B_{Q}\cap \Sigma$, then $\tau d_{\Layer(N)}(x)\leq \ell(Q')$ for all cubes $Q'$ containing $x$, so in particular, $d_{\Layer(N)}(x)=0$, so we may find $R\in \Layer(N)$ so that $R\subseteq c_{0}B_{Q}\cap \Sigma\subseteq Q$, and thus $Q\in \Up(N)$ as well.
\end{proof}

\begin{lemma}
Let $\cC\subseteq \cD$ and $Q,Q'\in \cD$. 
Then
\begin{equation}\label{e:dtriangle}
d_{\cC}(Q)\leq 2\ell(Q)+\dist(Q,Q')+2\ell(Q')+d_{\cC}(Q').
\end{equation}
\end{lemma}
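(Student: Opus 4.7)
The proof rests on the observation that $d_\cC$ is $1$-Lipschitz on $\bR^n$ when viewed as a function of a point. Indeed, for any $R \in \cC$ and any $x, y \in \bR^n$, the triangle inequality gives
\[
\ell(R) + \dist(x,R) \leq \ell(R) + |x-y| + \dist(y,R),
\]
and taking the infimum over $R \in \cC$ yields $d_\cC(x) \leq d_\cC(y) + |x-y|$.

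Now fix $Q, Q' \in \cD$ and any $y \in Q'$. Taking the infimum of the Lipschitz inequality over $x \in Q$ gives
\[
d_\cC(Q) = \inf_{x \in Q} d_\cC(x) \leq d_\cC(y) + \inf_{x \in Q}|x-y| = d_\cC(y) + \dist(y,Q).
\]
For any $\delta > 0$, choose $x_0 \in Q$, $y_0 \in Q'$ with $|x_0-y_0| \leq \dist(Q,Q') + \delta$. Since any $y \in Q'$ satisfies $|y-y_0| \leq \diam(Q')$, the triangle inequality yields
\[
\dist(y,Q) \leq |y-y_0| + |y_0-x_0| \leq \diam(Q') + \dist(Q,Q') + \delta.
\]
Using $\diam(Q') \leq 2\ell(Q')$, which follows from the ball inclusion $Q' \subseteq B(\zeta_{Q'},\ell(Q'))$ in \Theorem{Christ}, we obtain
\[
d_\cC(Q) \leq d_\cC(y) + \dist(Q,Q') + 2\ell(Q') + \delta
\]
for every $y \in Q'$. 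Taking the infimum over $y \in Q'$ and sending $\delta \to 0$ gives
\[
d_\cC(Q) \leq d_\cC(Q') + \dist(Q,Q') + 2\ell(Q'),
\]
which is even stronger than the claimed inequality; the extra slack $2\ell(Q)$ on the right-hand side of \eqn{dtriangle} is harmless.

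There is no real obstacle here: the argument is a careful interchange of infima combined with the elementary observation that $d_\cC$ inherits Lipschitz continuity from the pointwise distance function. The only mild subtlety is handling the infimum defining $\dist(Q,Q')$ via an approximate minimizer $(x_0,y_0)$ with a $\delta$-parameter to avoid assuming the infimum is attained.
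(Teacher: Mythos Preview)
Your proof is correct and in fact yields the sharper inequality
\[
d_{\cC}(Q)\leq \dist(Q,Q')+2\ell(Q')+d_{\cC}(Q'),
\]
without the extra $2\ell(Q)$ term. The paper's proof follows the same triangle-inequality-and-infimize strategy, but routes the estimate through two auxiliary points $w\in Q$ and $z\in Q'$ (writing $|x-y|\leq |x-w|+|w-z|+|z-y|$ before infimizing), which is what produces the additional $2\ell(Q)$ on the right. Your organization---first establishing that $d_{\cC}$ is $1$-Lipschitz pointwise, then bounding $\dist(y,Q)$ for $y\in Q'$ directly---is cleaner and avoids this loss. Both arguments are elementary; yours is simply a tighter bookkeeping of the same idea.
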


\begin{proof}
Let $x,w\in Q$, $y,z\in Q'$, and $Q''\in \cC$. Since $Q'\subseteq B_{Q'}$, $\diam Q'\leq 2\ell(Q')$, and so
\begin{align*}
d_{\cC}(x)
& \leq \dist(x,Q'')+\ell(Q'')
\leq |x-y|+\dist(y,Q'')+\ell(Q'')\\
& \leq |x-w|+|w-z|+|y-z|+\dist(y,Q'')+\ell(Q'')\\
& \leq 2\ell(Q)+|w-z|+2\ell(Q')+\dist(y,Q'')+\ell(Q'').\end{align*}
Now we infimize $\dist(y,Q'')+\ell(Q'')$ over all $Q''\in \cC$, then over all $y\in Q'$, and then over all $w\in Q$ and $z\in Q'$ and we get \eqn{dtriangle}. 
\end{proof}

Below, the constant $\rho$  comes from the statement of Theorem \ref{t:Christ}.
\begin{lemma}\label{l:3.4}
Let $\tau_{0}=\frac{1}{4(2+\rho^{-1})}$. For $0<\tau<\tau_{0}$,
\begin{equation}
\frac{\rho}{5}  \tau d_{\Layer(N)}(Q) \leq \ell(Q) \leq 2 \tau d_{\Layer(N)}(Q) \mbox{ for all }Q\in \Stop(N).
\label{e:Q'simd}
\end{equation}
\end{lemma}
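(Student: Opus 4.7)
The plan is to read two pieces of information out of the defining conditions of $\Stop(N)$ and transfer each to $d_{\Layer(N)}(Q)$ via a sharpened form of the quasi-triangle inequality \eqref{e:dtriangle}. First, by definition there is a sibling $Q'\in\Child_{(1)}Q^{(1)}$ (possibly $Q$ itself) with $\ell(Q')<\tau d_{\Layer(N)}(Q')$. Second, since $Q$ is maximal in $\Stop(N)$, its parent $Q^{(1)}$ fails the defining property; in particular $Q^{(1)}$, as a child of $Q^{(2)}$, must itself satisfy $\ell(Q^{(1)})\ge\tau d_{\Layer(N)}(Q^{(1)})$. Both keys rely on the same observation: whenever two cubes $T_1\subseteq T_2$ have $T_1\subseteq T_2$, one can choose coincident endpoints in the proof of \eqref{e:dtriangle}, and the ``$2\ell(T_1)+2\ell(T_2)$'' slack in \eqref{e:dtriangle} collapses to $\diam T_2\le 2\ell(T_2)$.

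For the upper bound $\ell(Q)\le 2\tau d_{\Layer(N)}(Q)$, I use that $Q$ and $Q'$ both lie in their common parent $Q^{(1)}\subseteq B(\zeta_{Q^{(1)}},\rho^{-1}\ell(Q))$, so $\diam(Q\cup Q')\le 2\rho^{-1}\ell(Q)$. The sharpened triangle inequality then gives $d_{\Layer(N)}(Q')\le 2\rho^{-1}\ell(Q)+d_{\Layer(N)}(Q)$. Plugging this into $\ell(Q)=\ell(Q')<\tau d_{\Layer(N)}(Q')$ and rearranging yields
\[
(1-2\tau\rho^{-1})\,\ell(Q)<\tau\,d_{\Layer(N)}(Q).
\]
For $\tau\le\tau_0=1/(4(2+\rho^{-1}))$ one has $2\tau\rho^{-1}\le 1/2$, whence $\ell(Q)\le 2\tau d_{\Layer(N)}(Q)$.

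For the lower bound $\rho\tau d_{\Layer(N)}(Q)\le\ell(Q)$, rewrite the maximality estimate as $d_{\Layer(N)}(Q^{(1)})\le\ell(Q^{(1)})/\tau=\rho^{-1}\ell(Q)/\tau$. Applying the same sharpened triangle inequality to $Q\subseteq Q^{(1)}$ gives
\[
d_{\Layer(N)}(Q)\le 2\ell(Q^{(1)})+d_{\Layer(N)}(Q^{(1)})\le\rho^{-1}\ell(Q)\bigl(2+\tfrac{1}{\tau}\bigr).
\]
Multiplying by $\rho\tau$ produces $\rho\tau d_{\Layer(N)}(Q)\le(1+2\tau)\,\ell(Q)$, which for $\tau\le\tau_0$ gives the claim (with the negligible slack $1+2\tau_0\sim 1$ absorbed into the choice of $\tau_0$).

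There is no deep obstacle; the work is bookkeeping. The only subtle point is that the unrefined \eqref{e:dtriangle} would introduce extra $2\ell(Q)$-type terms that would prevent the precise constants $\rho$ and $2$ from appearing. Exploiting the parent-containment $\dist(Q,Q^{(1)})=0$ (and the analogous $\dist(Q,Q')=0$ when one takes coincident representatives in $Q^{(1)}$) tightens both estimates exactly enough that the specific form $\tau_0=1/(4(2+\rho^{-1}))$ simultaneously kills the $2\tau\rho^{-1}$ error in the upper bound and keeps the $1+2\tau$ factor in the lower bound below the threshold.
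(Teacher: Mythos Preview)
Your upper bound argument is correct and parallels the paper's: both transfer the sibling condition $\ell(Q')<\tau d_{\Layer(N)}(Q')$ to $Q$ via \eqref{e:dtriangle}. You use a sharpened version exploiting $Q,Q'\subseteq Q^{(1)}$ to get the single error term $2\rho^{-1}\ell(Q)$, while the paper uses \eqref{e:dtriangle} as stated and obtains the slightly larger error $2(2+\rho^{-1})\ell(Q)$; both are killed by the choice of $\tau_0$.

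For the lower bound there is a genuine discrepancy. You correctly observe that maximality gives $\ell(Q^{(1)})\geq\tau d_{\Layer(N)}(Q^{(1)})$ and then pass to $d_{\Layer(N)}(Q)$ via your sharpened triangle inequality, obtaining $\rho\tau d_{\Layer(N)}(Q)\leq(1+2\tau)\ell(Q)$ rather than the stated $\rho\tau d_{\Layer(N)}(Q)\leq\ell(Q)$. Your remark about ``absorbing'' the $1+2\tau$ into $\tau_0$ is not quite right: $\tau_0$ is fixed and the constant $\rho$ in the statement is explicit. That said, the paper's own argument here is actually erroneous: it writes $d_{\Layer(N)}(Q^{(1)})\geq d_{\Layer(N)}(Q)$, but since $Q\subseteq Q^{(1)}$ one has $\dist(Q^{(1)},R)\leq\dist(Q,R)$ for every $R$, so the inequality goes the other way. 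Thus neither argument yields the sharp constant $\rho$; yours is the correct repair. Since every subsequent use of \eqref{e:Q'simd} in the paper (Corollary~\ref{c:whitney-cor}, Lemma~\ref{l:QSQS'}, Lemma~\ref{l:412}) only requires $\ell(Q)\sim_\tau d_{\Layer(N)}(Q)$, the factor $1+2\tau<1+2\tau_0$ is harmless, and your proof suffices for all applications.
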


\begin{proof}
Indeed, if $Q'$ is the sibling of $Q$ (or $Q$ itself) satisfying \eqn{sib}, then
\[
\dist(Q,Q')\leq \diam Q^{(1)} \leq \diam B_{Q^{(1)}}= \frac{2}{\rho}\ell(Q)\] 
and so
\begin{align*}
\ell(Q)
&  =\ell(Q')< \tau d_{\Layer(N)}(Q')\\
& \stackrel{\eqn{dtriangle}}{\leq} \tau\big(2\ell(Q')+\dist(Q,Q')+2\ell(Q)+d_{\Layer(N)}(Q)\big)\\
& \leq \tau \big(2\ell(Q)+2\rho^{-1}\ell(Q)+2\ell(Q)+d_{\Layer(N)}(Q)\big)\\
& =2\tau( 2+\rho^{-1})\ell(Q)+\tau d_{\Layer(N)}(Q)< \frac{\ell(Q)}{2} + \tau d_{\Layer(N)}(Q)
\end{align*}
since $\tau<\tau_{0}$, and so we may regroup the terms and get $\ell(Q)<2 \tau d_{\Layer(N)}(Q)$. By the maximality of $Q$,
\[
\tau d_{\Layer(N)}(Q^{(1)})\leq \ell(Q^{(1)})
\]
Recalling $\diam Q^{(1)}=2\ell(Q^{(1)})= 2\rho^{-1}\ell(Q)$,
\begin{align*}
\tau d_{\Layer(N)}(Q)
& \stackrel{\eqn{dtriangle}}{\leq}\tau( 2\ell(Q'')+2\ell(Q)+\dist(Q,Q''))+\ell(Q^{(1)})\\
& \leq \tau(4\ell(Q)+\diam Q^{(1)})+\ell(Q^{(1)}) \\
&  \tau(4+3\rho^{-1})\ell(Q)<5\rho^{-1}\ell(Q)
\end{align*}
since $\rho<1/1000$.

%
\end{proof}

\begin{corollary}\label{c:whitney-cor}
Let $\tau$ be as in Lemma \ref{l:3.4}.
Let 
\[
Z=\{x:d_{\Layer(N)}(x)=0\}.\] 
Then 
\[\bigcup\{Q\in \Stop(N-1)\}\setminus Z = \bigcup\{Q\in \Stop(N)\}.\] 
\end{corollary}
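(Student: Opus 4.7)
The plan is to unpack the corollary into two separate claims and handle each using Lemma \ref{l:3.4} and the $1$-Lipschitz property of $d_{\Layer(N)}$. The second assertion $d_{\Layer(N)}(Q)\sim \tau\ell(Q)$ (read with $\rho$-dependent constants, equivalent to $d_{\Layer(N)}(Q)\sim \ell(Q)/\tau$) is an immediate rewriting of the double bound $\rho\tau\, d_{\Layer(N)}(Q)\leq \ell(Q)\leq 2\tau\, d_{\Layer(N)}(Q)$ already proved in Lemma \ref{l:3.4}. So essentially nothing needs to be said for this piece.

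For the forward inclusion $\bigcup\{Q\in \Stop(N-1)\}\setminus Z\subseteq \bigcup\{Q\in \Stop(N)\}$, I would fix $x\in \bigcup\Stop(N-1)$ with $\delta:=d_{\Layer(N)}(x)>0$ and exploit the fact that the function $d_{\Layer(N)}$ is $1$-Lipschitz, a direct consequence of the triangle inequality applied to the infimum defining it. For any cube $Q_0\in\cD_k$ containing $x$ with $k$ large enough so that $\ell(Q_0)<\tau\delta/4$, the Lipschitz estimate forces $d_{\Layer(N)}(Q_0)\geq \delta/2$, and therefore $\ell(Q_0)<\tau\, d_{\Layer(N)}(Q_0)$. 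Hence $Q_0$ itself witnesses the condition \eqref{e:sib} with $Q'=Q_0$, so $Q_0$ belongs to the family $\cA$ whose maximal members form $\Stop(N)$. A maximal ancestor of $Q_0$ in $\cA$ exists (very large ancestors fail the condition since $\ell$ grows while $d_{\Layer(N)}$ stays bounded by nearby $\Layer(N)$ cubes), and by maximality this ancestor lies in $\Stop(N)$ and contains $x$.

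For the reverse inclusion $\bigcup\{Q\in \Stop(N)\}\subseteq \bigcup\{Q\in \Stop(N-1)\}\setminus Z$, take $Q\in \Stop(N)$. Disjointness from $Z$ is immediate from Lemma \ref{l:3.4}: $d_{\Layer(N)}(Q)\geq \ell(Q)/(2\tau)>0$, and then for every $y\in Q$, $d_{\Layer(N)}(y)\geq d_{\Layer(N)}(Q)>0$, so $y\notin Z$. For the containment $Q\subseteq \bigcup\Stop(N-1)$, I would argue by induction on $N$, invoking the identity $\bigcup\Layer(N)=\bigcup\Stop(N-1)$ from \eqref{e:layer=stop}. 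The base case $N=0$ is trivial since $\bigcup\Stop(-1)=\bigcup\cD_0$ exhausts the ambient set. For $N\geq 1$, the maximality of $Q$ in $\cA$ forces the parent $Q^{(1)}$ to satisfy $\ell(Q^{(1)})\geq \tau\, d_{\Layer(N)}(Q^{(1)})$, which produces $R\in\Layer(N)$ with $\ell(R)+\dist(Q^{(1)},R)\leq \ell(Q^{(1)})/\tau$; since $R\in\Layer(N)$ means $R\subseteq R'$ for some $R'\in\Stop(N-1)$, chasing scales places $Q\subseteq R'\subseteq \bigcup\Stop(N-1)$.

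The main obstacle is making the last step of the reverse inclusion fully rigorous, i.e.\ verifying that the $\Stop(N-1)$-cube $R'$ produced by the argument actually contains $Q$. This requires a careful bookkeeping between $\ell(Q^{(1)})$, $\ell(R)$, $\ell(R')$, and $\dist(Q^{(1)},R)$, and exploiting the restriction $\tau<\tau_0=1/(4(2+\rho^{-1}))$ built into Lemma \ref{l:3.4} together with the nested dyadic structure of the Christ cubes. The other ingredients — the Lipschitz property of $d_{\Layer(N)}$, the existence of small cubes in $\cA$ through any point with $d_{\Layer(N)}>0$, and the existence of a maximal ancestor in $\cA$ — are routine checks given the setup.
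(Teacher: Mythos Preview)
Your approach mirrors the paper's closely. Both defer the comparison $d_{\Layer(N)}(Q)\sim\ell(Q)/\tau$ to Lemma~\ref{l:3.4}; both argue the forward inclusion by showing that sufficiently small cubes through $x$ satisfy the stopping condition and then passing to a maximal ancestor; and for the reverse inclusion both establish $Q\cap Z=\emptyset$ from Lemma~\ref{l:3.4} and then need $Q\subseteq R$ for some $R\in\Stop(N-1)$. Your forward argument via the $1$-Lipschitz property of $d_{\Layer(N)}$ is in fact cleaner than the paper's, whose displayed chain $0<d_{\Layer(N)}(x)\leq\ell(Q_{N,k})$ appears to be garbled.

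You are right that the reverse containment $Q\subseteq\bigcup\Stop(N-1)$ is the delicate point, and you should know the paper does not argue it either: it simply asserts ``Moreover, $Q\subseteq R$ for some $R\in\Stop(N-1)$.'' Your proposed resolution via a near-minimizing $R\in\Layer(N)$ close to $Q^{(1)}$ does not close the gap as written: the bound $\ell(R)+\dist(Q^{(1)},R)\lesssim\ell(Q^{(1)})/\tau$ allows $R$ and its top cube $R'\in\Stop(N-1)$ to lie entirely inside a sibling of some $Q^{(j)}$ rather than in an ancestor of $Q$, and then dyadic nesting yields no containment. What Lemma~\ref{l:3.4} \emph{does} give easily is that $Q$ cannot contain or equal any $\Layer(N)$-cube (else $d_{\Layer(N)}(Q)\leq\ell(Q)$, contradicting $\ell(Q)\leq 2\tau d_{\Layer(N)}(Q)$), so if $Q$ meets $\bigcup\Layer(N)=\bigcup\Stop(N-1)$ at all it must be contained in a $\Stop(N-1)$-cube; only the residual case $Q\cap\bigcup\Stop(N-1)=\emptyset$ remains, and that is the piece neither your proposal nor the paper's proof supplies.
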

\begin{proof}
By Lemma \ref{l:3.4}, it remains to see that 
$\cup\Stop(N-1)\setminus Z = \cup \Stop(N)$.
Let $x\in \bigcup\{Q\in \Stop(N-1)\}$ be such that $d_{\Layer(N)}(x)>0$.
By \eqn{layer=stop} and because $x\not\in Z$, we have a minimal (with respect to containment) $Q_{N}$ such that 
$x\in Q_{N}\in \Layer(N)$. 
Let $Q_{N,k}$ be such that $x\in Q_{N,k}\in \Child_k(Q_N)$.  If $Q_{N,k}$ is also in $\Layer(N)$, then
\[
0<d_{\Layer(N)}(x) \leq \ell(Q_{N,k})= \rho^k\ell(Q_{N})\]
which is a contradiction for $k$ large. Thus,  there is a $k$ such that $x\in Q_{N,k}\in \Stop(N)$. 

Conversely, let $x\in Q\in \Stop(N)$. Then by the previous lemma
\[
\tau d_{\Layer(N)}(x)\geq \tau d_{\Layer(N)}(Q)\gec  \ell(Q)\]
and so $x\not\in Z$. Moreover, $Q\subseteq R$ for some $R\in \Stop(N-1)$, and this finishes the proof.
\end{proof}

\begin{corollary}\label{c:up-positive-then-stop}
Suppose $d_{\Up(N)}(x)>0$.  Then there is a $Q\ni x$ such that $Q\in \Stop(N)$. 
\end{corollary}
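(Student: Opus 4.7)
The plan is to exploit the alternative characterization of $\Up(N)$ given in Lemma \ref{l:up-alt-def}: a cube $Q$ belongs to $\Up(N)$ if and only if $Q$ is not strictly contained in any cube of $\Stop(N)$. Since we want to produce a $\Stop(N)$-cube containing $x$, I would track a descending chain of cubes containing $x$ and argue that at some finite scale the chain must enter a $\Stop(N)$-cube, for otherwise $d_{\Up(N)}(x)$ would vanish.

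Concretely, working under the implicit assumption that $x\in\Sigma$ (so that by \Theorem{Christ} there is a cube in each $\cD_k$ containing $x$), I would select, for each $k\geq 0$, a cube $Q_k\in\cD_k$ with $x\in Q_k$. If every $Q_k$ lay in $\Up(N)$, the definition of $d_{\Up(N)}$ would give
$$d_{\Up(N)}(x)\leq \inf_{k\geq 0}\bigl(\ell(Q_k)+\dist(x,Q_k)\bigr)=\inf_{k\geq 0}\ell(Q_k)=\inf_{k\geq 0}5\rho^{k}=0,$$
contradicting the hypothesis $d_{\Up(N)}(x)>0$.

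Consequently there must exist some $k_0$ with $Q_{k_0}\notin\Up(N)$. Applying the characterization from Lemma \ref{l:up-alt-def}, I obtain a cube $R\in\Stop(N)$ with $Q_{k_0}\subsetneq R$; since $x\in Q_{k_0}\subset R$, this $R$ is the desired cube.

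The argument is essentially an immediate consequence of Lemma \ref{l:up-alt-def} combined with the geometric decay of $\ell(Q_k)$, so there is no substantive obstacle. The only point requiring mild care is that $d_{\Up(N)}$ is defined via $\ell(Q)+\dist(x,Q)$ and not merely $\dist(x,Q)$: one must shrink $\ell(Q_k)$ itself to drive the infimum to zero, which is precisely why the chain $\{Q_k\}_{k\geq 0}$ of shrinking cubes containing $x$ (rather than nearby cubes) is the right object to consider.
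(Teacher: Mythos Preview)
Your proof is correct and arguably more direct than the paper's. The paper argues inductively: from $d_{\Up(N)}(x)>0$ it deduces $d_{\Layer(n)}(x)>0$ for every $n\le N$ (since $\Layer(n)\subseteq\Up(n)\subseteq\Up(N)$ gives $d_{\Layer(n)}(x)\ge d_{\Up(n)}(x)\ge d_{\Up(N)}(x)$), and then applies the first part of Corollary~\ref{c:whitney-cor} repeatedly, starting from $x\in\bigcup\Stop(-1)=\bigcup\cD_0$, to conclude $x\in\bigcup\Stop(n)$ for each $n$ up to $N$. Your argument bypasses the induction entirely by invoking Lemma~\ref{l:up-alt-def} directly: once some $Q_{k_0}\ni x$ fails to lie in $\Up(N)$, that lemma immediately furnishes a $\Stop(N)$-cube strictly containing $Q_{k_0}$. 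The paper's route has the mild advantage of reusing machinery already in place (Corollary~\ref{c:whitney-cor}), while yours is self-contained and shorter. Both implicitly require $x\in\Sigma$ so that the chain $Q_k\ni x$ exists; you note this explicitly, which is appropriate.
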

\begin{proof}
We have for all $n\leq N$ that 
\[
d_{\Layer(N)}(x)\geq d_{\Up(n)}(x)\geq  d_{\Up(N)}(x)>0.\] 
The result now follows  by the first part of  Corollary \ref{c:whitney-cor}.
\end{proof}

\begin{lemma}
\label{l:QSQS'}
Let $C_{1}>1$, $\tau_{1}(C_{1})=\min\{\tau_{0},(16+8C_{1})^{-1}\}$, and assume $0<\tau<\tau_{1}(C_{1})$.
If $\ve<\alpha$, $Q,Q'\in\Stop(N)$ and $C_{1}B_{Q}\cap C_{1}B_{Q'}\neq\emptyset$,  then 
$\ell(Q)\sim \ell(Q')$  (with constants independant of $C_{1}$), and $\angle(P_{Q},P_{Q'})\lec_{C_{1}} \ve$  .
\end{lemma}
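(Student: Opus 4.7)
The plan is to first establish $\ell(Q)\sim\ell(Q')$ by exploiting the Whitney-type relation $\ell(\cdot)\sim\tau d_{\Layer(N)}(\cdot)$ on $\Stop(N)$ from Lemma \ref{l:3.4}, and then deduce the angle bound by comparing both $P_Q$ and $P_{Q'}$ to the best-approximating plane of a common larger ball via Lemma \ref{l:betabeta'}.

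For the size comparison, I would assume without loss of generality that $\ell(Q)\leq \ell(Q')$. The hypothesis $C_1 B_Q\cap C_1 B_{Q'}\neq\emptyset$ gives $\dist(Q,Q')\leq C_1(\ell(Q)+\ell(Q'))\leq 2C_1\ell(Q')$. Applying the triangle-type inequality \eqref{e:dtriangle} with $\cC=\Layer(N)$ yields
\[
d_{\Layer(N)}(Q')\leq 2\ell(Q')+\dist(Q,Q')+2\ell(Q)+d_{\Layer(N)}(Q)\leq (4+2C_1)\ell(Q')+d_{\Layer(N)}(Q).
\]
Since $Q'\in\Stop(N)$, Lemma \ref{l:3.4} provides $d_{\Layer(N)}(Q')\geq \ell(Q')/(2\tau)$, so rearranging gives
\[
d_{\Layer(N)}(Q)\geq \ell(Q')\left(\tfrac{1}{2\tau}-(4+2C_1)\right).
\]
The choice $\tau<\tau_1(C_1)=(16+8C_1)^{-1}$ forces $4+2C_1\leq 1/(4\tau)$, so the parenthesis is at least $1/(2\tau)-1/(4\tau)=1/(4\tau)$, whence $d_{\Layer(N)}(Q)\geq \ell(Q')/(4\tau)$. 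Feeding this into the other half of Lemma \ref{l:3.4}, $\ell(Q)\geq\rho\tau\,d_{\Layer(N)}(Q)$, yields $\ell(Q)\geq(\rho/4)\ell(Q')$. Together with $\ell(Q)\leq\ell(Q')$, this is $\ell(Q)\sim\ell(Q')$ with constants independent of $C_1$.

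For the angle estimate, I would pick a ball $B$ centered on $\Sigma$ containing both $MB_Q$ and $MB_{Q'}$ with $r_B\sim_{C_1}\ell(Q)\sim\ell(Q')$. By \eqref{e:pqdef} and Lemma \ref{l:betabetainf}, $\beta_\Sigma^{d,1}(MB_Q,P_Q)\lec \beta_{\Sigma,\infty}^d(MB_Q,P_Q)\lec\ve$, and similarly for $MB_{Q'}$ and for $B$ itself (Reifenberg flatness of $\Sigma$); recall also that $\Sigma$ is lower content regular by \eqref{e:sigmalr}. Applying Lemma \ref{l:betabeta'} twice---once with $B$ as the large ball and $MB_Q$ as the small one, once with $MB_{Q'}$---bounds $\angle(P_Q,P_B^1)$ and $\angle(P_{Q'},P_B^1)$ each by $\lec_{C_1}\ve$. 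The triangle inequality for $\angle(\cdot,\cdot)$ then yields $\angle(P_Q,P_{Q'})\lec_{C_1}\ve$.

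The only real obstacle is the bookkeeping in the absorption step of the second paragraph: one must check that $(16+8C_1)^{-1}$ is precisely the threshold making $1/(2\tau)-(4+2C_1)\geq 1/(4\tau)$ hold, so that the resulting constant in $\ell(Q)\gec\ell(Q')$ depends neither on $\tau$ nor on $C_1$. Once this is observed, the rest is a direct application of tools already set up in Sections \ref{s:preliminaries}--\ref{s:stoppingtime}.
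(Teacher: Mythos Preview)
Your proof is correct and follows essentially the same approach as the paper. The size comparison is identical up to swapping the roles of $Q$ and $Q'$ (the paper assumes $\ell(Q')\leq\ell(Q)$ and bounds $\ell(Q)\leq(4/\rho)\ell(Q')$, you assume the reverse and obtain the reciprocal bound); for the angle estimate, the paper simply invokes \Lemma{PQPR}, which packages the comparison via a common larger ball that you carry out by hand using \Lemma{betabeta'}.
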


\begin{proof}
Assume $\ell(Q')\leq \ell(Q)$, we will first show $\ell(Q)\lec \ell(Q')$. Since $C_{1}B_{Q}\cap C_{1}B_{Q'}\neq\emptyset$, we know 
\[
\dist(Q,Q')\leq C_{1}\ell(Q)+C_{1}\ell(Q')\leq 2C_{1}\ell(Q).\]
Thus, since $Q,Q'\in \Stop(N)$, 
\begin{align*}
\ell(Q)\stackrel{\eqn{Q'simd}}{\leq} 2\tau d_{\Layer(N)}(Q)
& \stackrel{\eqn{dtriangle}}{\leq} 2\tau d_{\Layer(N)}(Q') \\
& \qquad +2\tau(2\ell(Q)+\dist(Q,Q')+2\ell(Q'))\\
& \stackrel{\eqn{Q'simd}}{\leq} \frac{2}{\rho}\ell(Q')+2\tau (2\ell(Q)+2C_{1}\ell(Q)+2\ell(Q))\\
& =  \frac{2}{\rho}\ell(Q')+(8+4C_{1})\tau \ell(Q)< \frac{2}{\rho}\ell(Q')+\frac{\ell(Q)}{2}
\end{align*}
where in the last inequality we used  $\tau<\frac{1}{16+8C_{1}}$. This then gives
 $\ell(Q')\leq \ell(Q)\leq \frac{4}{\rho} \ell(Q')$. The lemma now follows from \Lemma{PQPR} and the fact that $\Sigma$ is lower regular.
\end{proof}

\preRef{For $Q\in {\Stop(N)}$, let\\ }

\postRef{For $Q\in {\Stop(N-1)}$, let}

\begin{equation}\label{d:S-prime-def}
S_{Q}'=\{R\in \Up(N):R\subseteq Q\}\supseteq S_{Q}. 
\end{equation}
Observe that this is again a stopping-time region by construction (this is why we defined $\Stop(N)$ using siblings). 

\begin{lemma}\label{l:four-ten}
For $0<\tau<\tau_{1}(3)$ and  $Q\in \Stop(N-1)$
\begin{equation}
\angle(P_{R},P_{Q})\lec
\alpha\mbox{ for all }R\in S_{Q}'.
\label{e:PQPQS} 
\end{equation}
\end{lemma}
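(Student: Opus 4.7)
The strategy is to split into two cases according to whether $R$ already lies in $S_Q$. If $R\in S_Q$, then condition (2) in the definition of $S_Q$ (applied to the sibling $T=R$ of itself) directly yields $\angle(P_R,P_Q)<\alpha$, which is even stronger than the claimed $\lec\alpha$, so the easy case is done.

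The substantive case is $R\in S_Q'\setminus S_Q$. Since $S_Q$ is coherent and contains $Q=Q(S_Q)\supseteq R$, there is a unique minimal cube $R^*\in m(S_Q)$ with $R\subsetneq R^*$, and the easy case applied to $R^*$ gives $\angle(P_{R^*},P_Q)<\alpha$. By the triangle inequality \eqref{e:ptrngl}, it is enough to show $\angle(P_R,P_{R^*})\lec\alpha$. My plan is to obtain this from \Lemma{PQPR} applied to the nested pair $R\subsetneq R^*$ with dilation $M$: the $\beta^{d,1}<\ve$ hypothesis of \Lemma{PQPR} follows from the $\ve$-Reifenberg flatness of $\Sigma$ via \Lemma{betabetainf}, and the distance condition $\dist(R,R^*)=0$ is trivial since $R\subseteq R^*$. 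Hence the only remaining ingredient is a size comparability $\ell(R)\gec_{\tau}\ell(R^*)$ that supplies the parameter $\Lambda=\Lambda(\tau)$ required by \Lemma{PQPR}.

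To produce this lower bound, I use that $R\in\Up(N)$ means, by \Lemma{up-alt-def}, that no ancestor of $R$ lies in $\Stop(N)$. Reading off the ``maximal Whitney'' definition of $\Stop(N)$ in \eqref{e:sib}, this forces $\ell(R)\geq\tau d_{\Layer(N)}(R)$ (otherwise either $R$ itself or an ancestor strictly inside $R^*$ would have to be in $\Stop(N)$, a contradiction). It therefore suffices to show $d_{\Layer(N)}(R)\gec\ell(R^*)$. This rests on two structural observations about $\Layer(N)=\bigcup_{Q''\in\Stop(N-1)}S_{Q''}$: (a) no cube of $\Layer(N)$ can sit strictly inside $R^*$, because such a cube would have to lie in $S_{Q''}$ for some $Q''\in\Stop(N-1)$ meeting $Q$, and disjointness of the $\Stop(N-1)$ family then forces $Q''=Q$, contradicting the minimality of $R^*\in m(S_Q)$; (b) the nearest $\Layer(N)$-cubes just outside $R^*$ are either cubes of $S_Q$ itself (and so, via condition (3) of the stopping-time definition and minimality of $R^*$, of scale $\gec \ell(R^*)$), or they lie in $S_{Q'}$ for some $Q'\in\Stop(N-1)$ adjacent to $Q$, in which case the previously proved \Lemma{QSQS'} at generation $N-1$ with $C_{1}=3$ (whence the hypothesis $\tau<\tau_{1}(3)$) gives $\ell(Q')\sim\ell(Q)$ and hence controls the scale of those neighbouring cubes from below.

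Once $\ell(R)\gec\tau\ell(R^*)$ is in hand, \Lemma{PQPR} yields $\angle(P_R,P_{R^*})\lec_{M,\tau}\ve$, which is $\lec\alpha$ under the standing requirement $\ve\ll\alpha^{4}$ (Remark \ref{r:params-1}). Combined with $\angle(P_{R^*},P_Q)<\alpha$ via \eqref{e:ptrngl}, this completes the argument. The main obstacle is the lower bound $d_{\Layer(N)}(R)\gec\ell(R^*)$: it is not a local property of any single cube but a structural consequence of the layered construction, and its verification is precisely where the previously proved comparability Lemma~\ref{l:QSQS'} at the prior generation with $C_{1}=3$ is needed.
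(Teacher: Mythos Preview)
Your reduction to the case $R\in S_{Q}'\setminus S_{Q}$ and the identification of $R^{*}\in m(S_{Q})$ with $R\subsetneq R^{*}$ is fine, and so is the deduction $\ell(R)\gec \tau\, d_{\Layer(N)}(R)$ from $R\in\Up(N)$. The gap is the step $d_{\Layer(N)}(R)\gec \ell(R^{*})$ in your part (b): this can fail. Minimality of $R^{*}$ only prevents $S_{Q}$-cubes from lying \emph{inside} $R^{*}$; it says nothing about how deep $S_{Q}$ runs inside a sibling $R^{**}$ of $R^{*}$. Condition (3) of Definition~\ref{d:st} forces $R^{**}\in S_{Q}$ but does \emph{not} force $R^{**}\in m(S_{Q})$, so $S_{Q}$ may contain cubes $T\subseteq R^{**}$ with $\ell(T)\ll\ell(R^{*})$ sitting right next to $R$. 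Similarly, for an adjacent $Q'\in\Stop(N-1)$, \Lemma{QSQS'} only gives $\ell(Q')\sim\ell(Q)$; it does not bound from below the scale of cubes \emph{inside} $S_{Q'}$, which can again be tiny and close to $R$. In either situation $d_{\Layer(N)}(R)$ is forced down to the scale of such $T$, and your inequality $\ell(R)\gec\tau\,d_{\Layer(N)}(R)$ then yields no lower bound on $\ell(R)/\ell(R^{*})$. Consequently \Lemma{PQPR} applied to the pair $R\subsetneq R^{*}$ gives nothing useful.

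The paper's argument avoids this by not going up to $R^{*}$ at all. Instead one picks $T\in\Layer(N)$ nearly realizing $d_{\Layer(N)}(R)$, lets $Q'\in\Stop(N-1)$ be the top of the tree containing $T$, and uses \Lemma{QSQS'} (this is where $\tau<\tau_{1}(3)$ enters) to get $\angle(P_{Q},P_{Q'})\lec\ve$. If $\ell(R)$ is already comparable to $\ell(Q')$ one applies \Lemma{PQPR} directly; otherwise one selects an ancestor $Q''\in S_{Q'}$ of $T$ at scale $\sim\tau^{-1}\ell(R)$, so that \Lemma{PQPR} gives $\angle(P_{R},P_{Q''})\lec_{\tau}\ve$, while the stopping condition in $S_{Q'}$ gives $\angle(P_{Q''},P_{Q'})<\alpha$. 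Chaining these three angles yields the result. The essential difference is that the comparison cube must be chosen at the scale of $R$, not at the scale of $R^{*}$.
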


\begin{proof}
Let $R\in S_{Q}'$ and $T\in \Layer(N)$ be such that 
\begin{equation}
\ell(T)+\dist(T,R)\leq 2d_{\Layer(N)}(R).
\label{e:Q''d}
\end{equation}
Then $T\in S_{Q'}$ for some $Q'\in \Stop(N-1)$. Hence,
\begin{align}
\dist(Q,Q')
& \leq \dist(R,T)
\stackrel{\eqn{Q''d}}\leq 2d_{\Layer(N)}(R) \notag \\
& \leq 2(\ell(Q)+\dist(R,Q))=2\ell(Q). \label{e:QSQS'<2d}\end{align}
In particular, $3B_{Q'}\cap 3B_{Q}\neq \emptyset$, and since $Q,Q'\in \Stop(N-1)$, we have by \Lemma{QSQS'} with $C_1=2$ if $\tau<\tau_{1}(3)$ that
\begin{equation}
\label{e:QSQS'2}
\ell(Q) \sim \ell(Q') \mbox{ and }\angle(P_{Q},P_{Q'})\lec_{C_{1}}\ve.
\end{equation}  

We split into two cases.
\begin{enumerate}
\item Suppose first that $\ell(Q')\leq \frac{2}{\rho\tau}\ell(R)$. Then 
\[
\ell(R)\leq \ell(Q)
\stackrel{\eqn{QSQS'2}}{\sim} \ell(Q')\leq \frac{2}{\rho\tau}\ell(R),\] 
and so $\ell(R)\sim_{\tau} \ell(Q')$. Moreover, 
\[
\dist(Q,Q') \stackrel{\eqn{QSQS'<2d}}{\leq} 2\ell(Q) \stackrel{\eqn{QSQS'2}}{\lec} \ell(Q'),\]
hence \Lemma{PQPR} implies $\angle(P_{R},P_{Q'})\lec_{\tau} \ve$, and so
\[
\angle(P_{R},P_{Q})\leq \angle(P_{R},P_{Q'})+\angle(P_{Q'},P_{Q})
\stackrel{\eqn{QSQS'2}}{\lec}_{\tau} \ve.\]

\item Now suppose $\ell(Q')>\frac{2}{\rho\tau}\ell(R)$. Note that
\[
\ell(T)
\stackrel{\eqn{Q''d}} \leq 2d_{\Layer(N)}(R)
\stackrel{\eqn{Q'simd}}{\leq} \frac{2}{\rho\tau}\ell(R)< \ell(Q').\] 
Let $Q''\in S_{Q'}$ be the largest parent of $T$ for which 
\[\ell(Q'')\leq \frac{2}{\rho\tau}\ell(R).\]
By the above inequality, this is well defined and $\ell(Q'')\sim \tau^{-1}\ell(R)$. Moreover, 
\[\dist(R,Q'')\leq \dist(R,T)\stackrel{\eqn{Q''d}}{\leq} 2d_{\Layer(N)}(R)\stackrel{\eqn{Q'simd}}{\lec} \tau^{-1}\ell(R).\]
Thus, \Lemma{PQPR} implies
\begin{equation}\label{e:pqpq'}
\angle(P_{R},P_{Q''})\lec_{\tau} \ve.
\end{equation}
Also note that since $Q''\in S_{Q'}$, 
\begin{equation}
\angle(P_{Q''},P_{Q'})\lec \alpha.
\label{e:Q'QS'<a}
\end{equation}
Thus, combining \eqn{QSQS'2}, \eqn{pqpq'}, and \eqn{Q'QS'<a}, we get
\begin{multline} 
\angle(P_{R},P_{Q})
\leq \angle(P_{R},P_{Q''})+\angle(P_{Q''},P_{Q'})+\angle(P_{Q'},P_{Q})
\\ \lec_{\tau} \ve +\alpha+\ve \lec\alpha .
\end{multline}
\end{enumerate}
\end{proof}

\begin{lemma}
\label{l:>a}
Let $0<\tau<\min\{\tau_{0},c_{0}/4\}$. For $\ve>0$ small enough depending on $\tau$ and $\alpha$, the following holds. 
Suppose $Q\in m(S)$ where $S=S_{Q(S)}$, with   
$Q(S)\in \Stop(N-1)$ (so $S\subset \Layer(N)$) .
Then 
then there is $R$ such that
\begin{enumerate}
\item $R\in \Stop(N)$,
\item $R\subseteq Q$,
\item $\ell(R)\sim \tau \ell(Q)$, and
\item $\angle(R,Q(S))\gec_{\tau} \alpha$.
\end{enumerate}
\end{lemma}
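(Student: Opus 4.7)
The proof proceeds in three steps: identify a ``bad'' child of $Q$, locate a cube of $\Stop(N)$ of the correct scale inside this child, and bound the angle between its plane and $P_{Q(S)}$.

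\smallskip

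\noindent\textbf{Step 1 (Bad child).} Since $Q\in m(S)$, no proper descendant of $Q$ lies in $S$; in particular, no child of $Q$ is in $S$. The angular stopping criterion defining $S=S_{Q(S)}$, combined with item (3) of Definition~\ref{d:st} (closure under siblings), forces the existence of at least one child $T\in\Child_1(Q)$ which itself fails the angular test:
\[
\angle(P_T, P_{Q(S)}) \geq \alpha.
\]
(If every child passed, the sibling-closure step of the construction of $S_{Q(S)}$ would place the entire layer $\Child_1(Q)$ in $S$, contradicting $Q\in m(S)$.)

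\smallskip

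\noindent\textbf{Step 2 (Stopping cube at scale $\tau\ell(Q)$).} We claim $d_{\Layer(N)}(\zeta_T)\sim \ell(Q)$. The upper bound is immediate: $Q\in\Layer(N)$ contributes $\ell(Q)+\dist(\zeta_T,Q)=\ell(Q)$. For the lower bound we examine the possibilities for $R''\in\Layer(N)$. (a) If $R''\supseteq Q$ then $\ell(R'')\geq\ell(Q)$. (b) By the minimality of $Q$ in $S$ and the disjointness of distinct members of $\Stop(N-1)$, no proper descendant of $Q$ can lie in $\Layer(N)$. (c) If $R''\subseteq Q''$ for some $Q''\in\Stop(N-1)\setminus\{Q(S)\}$, then $R''\cap Q(S)=\emptyset$, so $\dist(\zeta_T,R'')\geq\dist(\zeta_T,\partial Q(S))$. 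Choosing the bad child $T$ to sit well inside $Q(S)$ (only a bounded proportion of the children of $Q$ can abut $\partial Q(S)$) forces $\dist(\zeta_T,\partial Q(S))\gtrsim\ell(Q)$, yielding the lower bound.

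In particular $d_{\Layer(N)}(\zeta_T)>0$, so $\zeta_T\not\in Z$, and Corollary~\ref{c:whitney-cor} produces $R\in\Stop(N)$ with $\zeta_T\in R$. Since $d_{\Layer(N)}$ is $1$-Lipschitz and $\diam R\leq 2\ell(R)\ll\ell(Q)$, we also get $d_{\Layer(N)}(R)\sim\ell(Q)$; Lemma~\ref{l:3.4} then gives $\ell(R)\sim\tau\ell(Q)$. Provided $\tau<\rho$, we have $\ell(R)<\ell(T)$, so nesting of Christ cubes (Theorem~\ref{t:Christ}(2)) applied at $\zeta_T\in R\cap T$ yields $R\subseteq T\subseteq Q$.

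\smallskip

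\noindent\textbf{Step 3 (Angle lower bound).} Since $R\subseteq T\subseteq Q\subseteq Q(S)$, with $\dist(R,T)=0$ and $\ell(T)/\ell(R)\sim\tau^{-1}\rho$, Lemma~\ref{l:PQPR} (taking $\Lambda\sim\tau^{-1}$ inside the ambient cube $Q(S)$, noting $\beta_E^{d,1}\lec\ve$ by Reifenberg flatness together with Lemmas~\ref{l:betabetainf}--\ref{l:betainfbeta}) yields
\[
\angle(P_R, P_T)\lec_{\tau}\ve.
\]
Taking $\ve$ so small in terms of $\tau$ and $\alpha$ that the right side is at most $\alpha/2$, the triangle inequality \eqref{e:ptrngl} gives
\[
\angle(P_R, P_{Q(S)}) \geq \angle(P_T, P_{Q(S)}) - \angle(P_R, P_T) \geq \alpha - \tfrac{\alpha}{2} = \tfrac{\alpha}{2} \gtrsim_\tau \alpha,
\]
which is the required conclusion.

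\smallskip

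The principal obstacle is the lower bound $d_{\Layer(N)}(\zeta_T)\gtrsim\ell(Q)$ in Step~2, specifically case (c): a priori, a neighbouring region $S_{Q''}$ could place a $\Layer(N)$-cube arbitrarily close to $\partial Q(S)$, and thereby close to $\zeta_T$ if $T$ happens to lie near the boundary of $Q(S)$. The remedy, as sketched, is a careful selection of the bad child $T$; should every bad child lie near $\partial Q(S)$, one instead picks an off-centre point $x\in T$ for which the distance estimate still holds, and the remainder of the argument is unchanged.
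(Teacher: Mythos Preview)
Your Step~2 has a genuine gap that the closing paragraph does not repair. First, the case analysis is incomplete: you omit the case $R''\in S_{Q(S)}$ with $R''\subseteq Q(S)\setminus Q$. Such cubes exist whenever some sibling of $Q$ is \emph{not} minimal in $S$, and they can be arbitrarily small and arbitrarily close to $\partial Q$. Second, even if one folds this into your case~(c), the relevant distance is $\dist(\zeta_T,Q^{c})$, not $\dist(\zeta_T,\partial Q(S))$; all you know is $c_{0}B_{T}\subseteq T\subseteq Q$, which yields only $\dist(\zeta_T,Q^{c})\geq c_{0}\rho\,\ell(Q)$. Under the stated hypothesis $\tau<\min\{\tau_0,c_0/4\}$ one may have $\tau\gg c_{0}\rho$, so after subtracting $\diam R\lesssim\tau\ell(Q)$ your Lipschitz step ``$d_{\Layer(N)}(R)\sim\ell(Q)$'' breaks down. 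The proposed remedy of selecting a well-placed bad child fails because there may be exactly one child $T$ violating the angle condition, and $\zeta_T$ may sit near $\partial Q$; replacing $\zeta_T$ by another point of $T$ does not help, since you must land at the center of a $\Stop(N)$-cube and then repeat the same estimate.

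The paper avoids this entirely by choosing $R$ to be the $\Stop(N)$-cube with center $x_R=\zeta_Q$ (the center of $Q$ itself, not of the bad child). From $\ell(R)\leq 2\tau\,d_{\Layer(N)}(R)\leq 2\tau\ell(Q)$ and $\tau<c_0/4$ one gets $R\subseteq 2\tau B_Q\subseteq\frac{c_0}{2}B_Q$. Every $\Layer(N)$-cube $R''$ with $\ell(R'')<\ell(Q)$ (other than $Q$ itself) must lie in $Q^{c}\subseteq(c_0 B_Q)^{c}$, so $\dist(R,R'')\geq\frac{c_0}{2}\ell(Q)$, giving $d_{\Layer(N)}(R)\geq\frac{c_0}{2}\ell(Q)$ and hence $\ell(R)\sim\tau\ell(Q)$. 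For the angle, one then routes through $Q$ rather than through $T$: Lemma~\ref{l:PQPR} gives $\angle(P_R,P_Q)\lesssim_\tau\ve$ and $\angle(P_Q,P_{Q'})\lesssim\ve$ for the bad child $Q'$, so $\angle(P_R,P_{Q(S)})\geq\angle(P_{Q'},P_{Q(S)})-C\ve\geq\alpha-C\ve\gtrsim\alpha$.
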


\begin{proof}
If $Q\in \cD_{k}$, let $R\in \Stop(N)$ be the cube with the same center as $Q$, so $x_{R}=x_{Q}$. Then $R\in \cD_{k+k_{1}}$ for some $k_{1}\geq 0$. Observe that by Lemma \ref{l:3.4}
\begin{equation}\label{e:R<tQ}
5\rho^{k+k_{1}}=\ell(R)<2\tau d_{\Layer(N)}(R)\leq 2\tau \ell(Q)=10\tau \rho^{k}
\end{equation}
and so
\begin{equation}\label{e:r<2t}
\rho^{k_{1}}<2\tau.\end{equation}
We claim that 
\begin{equation}\label{e:tclaim}
d_{\Layer(N)}(R)\geq \frac{c_{0}}{2}\ell(Q).
\end{equation}
First note that $Q\in \Layer(N)$ and
\[\dist(R,Q)+\ell(Q)=\ell(Q).\]
Thus when considering $d_{\Layer(N)}(R)$ we may look at the quantity
$\dist(R,T)+\ell(T)$ where $T\in \Layer(N)$, and we assume without loss of generality 
$\ell(T)< \ell(Q)$ and 
$\dist(T,R)< \ell(Q)$.  Since $Q\in m(S)$, we must have that $T\subseteq Q^{c}$, hence $T\subseteq c_{0}B_{Q}^{c}$. By \eqn{R<tQ} and assuming $\tau<c_{0}/4$, since $x_{R}=x_{Q}$, 
\[
R\subseteq 2\tau B_{Q}\subseteq \frac{c_{0}}{2} B_{Q},\]
and thus $\dist(T,R)\geq \frac{c_{0}}{2}\ell(Q)$. Infimizing $\dist(R,T)+\ell(T)$ over all such $T$ gives \eqref{e:tclaim}.



Thus,
\[  \frac{1}{\tau \rho} \ell(R) 
\stackrel{\eqn{Q'simd}}{\geq } d_{\Layer(N)}(R)
 \stackrel{\eqn{tclaim}}{\geq} \frac{c_{0}}{2}\ell(Q)\]
and so $\ell(R)\gec \tau \ell(Q)$. This and \eqn{r<2t} imply $\ell(R)\sim \tau \ell(Q)$. 

Let $Q'$ be a child of $Q$ such that $\angle(Q',Q(S))\geq \alpha$ (which exists by minimality of $Q$ in $S$). Then 
$\angle(P_{Q'},P_{Q})\lec \ve$ by \Lemma{PQPR}. By the same  lemma, $\angle(P_{Q},P_{R})\lec \ve$. Thus, for some constant $C>0$ and $\ve>0$ small enough depending on $\alpha$ and $\tau$.
\[
\angle(P_{R},P_{Q(S)})\geq \angle(P_{Q'},P_{Q(S)})-\angle(P_{Q},P_{R})-\angle(P_{Q},P_{Q'})
\geq \alpha-C\ve \gec \alpha.\]
\end{proof}

Recall the definition of $z_\cC$ (for a collection of cubes $\cC$) given by equation \eqref{e:z-def}. Define
\[ \cF_{N}=\{S_{Q}': Q\in \Stop(N-1)\}.\]



%


\begin{lemma}\label{l:5.1}
If $S'_1\in \cF_{N}$ and $S'_2\in \cF_M$ are distinct, then $z(S_1') \cap z(S_2')=\emptyset$.
 \end{lemma}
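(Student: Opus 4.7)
The plan is to reduce the claim to the structural fact that $z(S'_Q) = Q \setminus \bigcup \Stop(N)$ whenever $Q \in \Stop(N-1)$, and then exploit the nesting of the sets $\bigcup \Stop(N)$ that is furnished by Corollary \ref{c:whitney-cor}.

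The first step is to identify the minimal cubes of $S'_Q$ when $Q \in \Stop(N-1)$. Using Lemma \ref{l:up-alt-def}, any $T \in \Stop(N)$ with $T \subseteq Q$ lies in $\Up(N)$ and every proper subcube of $T$ is properly contained in $T$ and hence not in $\Up(N)$; so $T \in m(S'_Q)$. Conversely, if $R \in m(S'_Q)$ but $R \notin \Stop(N)$, then no child $R'$ of $R$ is in $\Up(N)$, which by Lemma \ref{l:up-alt-def} forces $R'$ to be properly contained in some $\Stop(N)$ cube $T$; by the structure of $\cD$ there are no cubes strictly between $R'$ and $R$, so $T \supseteq R$, but then $R$ is properly contained in a $\Stop(N)$ cube, contradicting $R \in S'_Q \subseteq \Up(N)$. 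Thus $m(S'_Q) = \{T \in \Stop(N) : T \subseteq Q\}$.

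Next, I verify that every $\Stop(N)$ cube meeting $Q$ is already contained in $Q$. Since $Q \in \Stop(N-1)$ we have $Q \in S_Q \subseteq \Layer(N)$, so $d_{\Layer(N)}(Q) = 0$; if some $T \in \Stop(N)$ satisfied $T \supsetneq Q$, then $d_{\Layer(N)}(T) = 0$, contradicting $\ell(T) \sim \tau d_{\Layer(N)}(T)$ from \eqref{e:Q'simd}. Together with the identification of $m(S'_Q)$, this yields
\[
z(S'_Q) \;=\; Q \setminus \bigcup_{T \in \Stop(N),\, T \subseteq Q} T \;=\; Q \setminus \bigcup\Stop(N).
\]

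With this in hand, the two cases of the lemma are routine. If $S'_1, S'_2 \in \cF_N$ are distinct with roots $Q_1, Q_2 \in \Stop(N-1)$, then maximality in the definition of $\Stop(N-1)$ forces $Q_1 \cap Q_2 = \emptyset$ (two distinct $\cD$-maximal cubes with the same property cannot be nested), hence $z(S'_1) \cap z(S'_2) \subseteq Q_1 \cap Q_2 = \emptyset$. If instead $S'_1 \in \cF_N$ and $S'_2 \in \cF_M$ with $N < M$, then $Q_2 \in \Stop(M-1)$ and iterating Corollary \ref{c:whitney-cor} $M-1-N \geq 0$ times gives $\bigcup \Stop(M-1) \subseteq \bigcup \Stop(N)$, so $Q_2 \subseteq \bigcup \Stop(N)$, whence $z(S'_2) \subseteq Q_2 \subseteq \bigcup \Stop(N)$; but $z(S'_1) \subseteq Q_1 \setminus \bigcup \Stop(N)$, giving disjointness. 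The only mildly delicate step is the characterisation of $m(S'_Q)$ via Lemma \ref{l:up-alt-def}; once that is in place, the lemma drops out of Corollary \ref{c:whitney-cor}.
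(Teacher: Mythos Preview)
Your argument is essentially correct and takes a genuinely different, more explicit route than the paper, but there is one slip you should fix. You write ``$Q\in\Layer(N)$, so $d_{\Layer(N)}(Q)=0$'' and then ``$d_{\Layer(N)}(T)=0$''. Neither is true: by definition $d_{\Layer(N)}(x)\le \ell(R)+\dist(x,R)$ for $R\in\Layer(N)$, so taking $R=Q$ only gives $d_{\Layer(N)}(T)\le \ell(Q)$, not $0$. The contradiction still goes through: if $T\in\Stop(N)$ and $T\supsetneq Q$ with $Q\in\Stop(N-1)\subseteq\Layer(N)$, then $d_{\Layer(N)}(T)\le \ell(Q)$, and \eqref{e:Q'simd} yields $\ell(T)\le 2\tau\, d_{\Layer(N)}(T)\le 2\tau\,\ell(Q)<\ell(Q)<\ell(T)$, which is absurd. (Alternatively, you could simply quote Corollary~\ref{c:whitney-cor}, which already gives $\bigcup\Stop(N)\subseteq\bigcup\Stop(N-1)$, and combine with disjointness of $\Stop(N-1)$ cubes plus this same size estimate.)

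As for the comparison: the paper's proof is a two-line abstract argument. It observes that distinct extended stopping-time regions can overlap only at their roots, $S_1'\cap S_2'\subseteq\{Q(S_1'),Q(S_2')\}$, whereas any $x\in z(S_i')$ forces \emph{every} small enough cube through $x$ to lie in $S_i'$; these two facts are incompatible. Your approach instead computes $z(S'_Q)=Q\setminus\bigcup\Stop(N)$ explicitly via the identification $m(S'_Q)=\{T\in\Stop(N):T\subseteq Q\}$, and then reads off disjointness from the nesting $\bigcup\Stop(M-1)\subseteq\bigcup\Stop(N)$ of Corollary~\ref{c:whitney-cor}. Your route is longer but has the advantage of making the structure of $z(S'_Q)$ completely transparent, which is conceptually useful later; the paper's route is slicker and uses only the coherence axioms of a stopping-time region.
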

\begin{proof}
{ First note that for $i=1,2$, $S'_i$ are stopping times, and by construction, if they are distinct, then $S_{1}'\cap S_{2}'\subseteq \{Q(S_{1}'),Q(S_{2}')\}$. However, if $x\in z(S_1') \cap z(S_2') $, then every cube $Q$ containing $x$ with $\ell(Q)<\min\{\ell(Q(S_{1}')),\ell(Q(S_{2}'))\}$ is in $S_{1}'\cap S_{2}'$, which is a contradiction.}
%
%
 \end{proof}
%
%

\begin{lemma}\label{l:412}
Let $0<\tau<\tau_{0}$. For $N\geq 0$, 
\begin{equation}\label{e:412a}
d_{\Layer(N)}(x)\sim d_{\Up(N)}(x) \mbox{ for all }x\in \bR^{n}
\end{equation}
and in particular
\begin{equation}\label{e:412b} 
\tau d_{\Up(N)}(Q) \sim  \ell(Q) \mbox{ for all }Q\in \Stop(N).
\end{equation}
 \end{lemma}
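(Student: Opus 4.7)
The inequality $d_{\Up(N)}(x)\leq d_{\Layer(N)}(x)$ is immediate because every $T\in \Layer(N)$ satisfies $T\supseteq T\in \Layer(N)\cup \Stop(N)$, which is precisely the defining condition for $\Up(N)$ in its first definition. Hence $\Layer(N)\subseteq \Up(N)$, and so $d_{\Up(N)}\leq d_{\Layer(N)}$.

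For the reverse inequality, I would fix $x$ and select $R\in \Up(N)$ realizing $d_{\Up(N)}(x)$ up to a factor of two, i.e. $\ell(R)+\dist(x,R)\leq 2d_{\Up(N)}(x)$. The alternative characterization proved in \Lemma{up-alt-def} tells me that $R$ is not strictly contained in any $\Stop(N)$ cube, so either (a) $R$ is disjoint from $\bigcup \Stop(N)$, or (b) $R$ contains some $Q\in \Stop(N)$. In case (a) I would simply reproduce the argument at the end of the proof of \Lemma{up-alt-def}: for each $y\in c_0B_R\cap \Sigma$ and each cube $Q'\ni y$, the fact $Q'\notin \Stop(N)$ forces $\ell(Q')\geq \tau d_{\Layer(N)}(Q')$; letting $\ell(Q')\to 0$ gives $d_{\Layer(N)}(y)=0$, which produces a cube $T\in \Layer(N)$ with $T\subseteq c_0B_R\cap \Sigma\subseteq R$ and $\ell(T)$ as small as desired, and hence $d_{\Layer(N)}(x)\leq \dist(x,R)+\diam R\lesssim d_{\Up(N)}(x)$.

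In case (b), \Lemma{3.4} applied to $Q\in \Stop(N)$ gives $d_{\Layer(N)}(Q)\leq \ell(Q)/(\rho\tau)$, so I may choose $T\in \Layer(N)$ with $\ell(T)+\dist(Q,T)\leq 2d_{\Layer(N)}(Q)\lesssim \ell(Q)/\tau\leq \ell(R)/\tau$. Using $Q\subseteq R$ and the triangle inequality,
\[
d_{\Layer(N)}(x)\leq \ell(T)+\dist(x,T)\leq \bigl(\ell(T)+\dist(Q,T)\bigr)+\bigl(\dist(x,R)+\diam R\bigr)\lesssim_{\tau} \ell(R)+\dist(x,R)\lesssim_{\tau}d_{\Up(N)}(x).
\]
Combining the two cases proves \eqref{e:412a}, with constants that inherit a factor of $\tau^{-1}$ from case (b).

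The ``in particular'' statement \eqref{e:412b} is then an immediate corollary of \eqref{e:412a} combined with \Lemma{3.4}: for $Q\in \Stop(N)$ the latter pins down $d_{\Layer(N)}(Q)$ up to constants depending only on $\rho$, and \eqref{e:412a} transfers the same equivalence to $d_{\Up(N)}(Q)$. The main obstacle in the proof is really just case (b)---case (a) is a verbatim repetition of part of the proof of \Lemma{up-alt-def}, while the genuine content is that \Lemma{3.4} is precisely the input needed to locate a cube in $\Layer(N)$ near the $\Stop(N)$ cube $Q$ swallowed by $R$, with the scale controlled by $\ell(R)$ (up to the factor $\tau^{-1}$).
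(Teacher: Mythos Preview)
Your proof is correct and follows the same overall strategy as the paper: pick a near-optimal cube $R\in\Up(N)$ and split into cases. The organizational differences are minor. Your case split (via \Lemma{up-alt-def}: either $R$ is disjoint from $\bigcup\Stop(N)$ or $R$ contains a $\Stop(N)$ cube) is slightly different from the paper's (either $T$ contains a $\Layer(N)$ cube or $T$ contains a $\Stop(N)$ cube). Your case~(a) is in fact a sub-case of the paper's case~1: once you establish $d_{\Layer(N)}(y)=0$ for $y\in R$, the cube $R$ contains arbitrarily small $\Layer(N)$ cubes, and the paper's one-line estimate applies directly without rerunning the argument from \Lemma{up-alt-def}.

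More interestingly, in the hard case (your (b), the paper's case~2) your use of \Lemma{3.4} to locate a $\Layer(N)$ cube at distance $\lesssim\ell(Q)/\tau$ from the $\Stop(N)$ cube $Q\subseteq R$ is the right move, and the resulting $\tau^{-1}$ factor in the comparison constant is in fact unavoidable. (Example: if $\Layer(N)$ consists only of $Q_0$ and its children, so that $\Stop(N)$ sits at scale $\sim\tau$, then a cube $T\in\Up(N)$ at intermediate scale $\sim\sqrt\tau$ has $d_{\Up(N)}(x_T)\lesssim\sqrt\tau$ while $d_{\Layer(N)}(x_T)\sim\rho$.) The paper's case~2 asserts ``by our assumption, $T\supset Q$'' for a $\Layer(N)$ cube $Q\supseteq R$, and then writes a chain of inequalities with some evident typos; the containment $T\supset Q$ is not justified and fails in the example above (there $T\subsetneq Q$). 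Your argument in case~(b) is therefore not merely an alternative but a clean replacement for the paper's sketch. Since $\tau$ is fixed once and for all (Remark~\ref{r:tau2}), the $\tau$-dependence you flag is harmless downstream.
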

\begin{proof}
Since $\Layer(N)\subseteq \Up(N)$, we have $d_{\Layer(N)}(x)\geq d_{\Up(N)}(x)$, so we just have to verify the opposite inequality. 


Let $T\in \Up(N)$ be such that 
$\ell(T)+\dist(T,Q) < 2 d_{\Up(N)}(x)$. We consider two cases.
\begin{enumerate}
\item If $T\supset R\in \Layer(N)$, then 
\[
\ell(R) + \dist(x,R) \leq \ell(T) + \dist(x,R) 
\leq 3\ell(T)+\dist(x,T)
\leq 6  d_{\Up(N)}(x),\] 
and so
$d_{\Layer(N)}(x) \lesssim d_{\Up(N)}(x)$, and we are done with this case.

\item For the second case, suppose  $T\supset R\in \Stop(N)$. Without loss of generality, we can assume $T\not\in \Layer(N)$. Then there is $Q\in \Layer(N)$ containing $R$, and by our assumption, $T\supset Q$. Hence, just as in the previous case,
\begin{align*}
\ell(R)+\dist(x,R)
& \leq 3(\ell(Q)+\dist(x,Q))
\leq 9 (\ell(T)+\dist(x,T))\\
& \leq 18 d_{\Layer(N)}(x).
\end{align*}

%
\end{enumerate}
The last part of the lemma now follows from \Lemma{3.4}.
%

 \end{proof}

%
%

\section{The sequence of approximating surfaces}\label{s:approxsurf}

For $k\geq 0$ an integer, let $s(k)$ be such that $5\rho^{s(k)}\leq r_{k}< 5\rho^{s(k)-1}$. Set
\[
\Up(N)_{k}=\cD_{s(k)}\cap \Up(N)\]
and let $\cX^{N}_{k}=\{x_{j,k}\}_{j\in J_{k}^{N}}$ be a maximal $r_{k}$-separated set of points for the set
\[\cC_{k}^{N}=\{x_{Q}:Q\in \Up(N)_{k}\}.\]
For $j\in J_{k}^{N}$, let $Q_{j,k}\in \Up(N)_{k}$ be such that $x_{Q_{j,k}}=x_{j,k}$ and let $P_{j,k}=P_{Q_{j,k}}$. Note that in this way $x_{j,k}\in P_{j,k}$ and 
\begin{equation}\label{e:qjkrk}
\ell(Q_{j,k})\leq r_{k} < \rho^{-1} \ell(Q_{j,k}).
\end{equation}

\begin{lemma}\label{l:DTready}
For each $N$, $\{x_{j,k}\}_{j\in J_{k}^{N}}$ satisfies the conditions of \Theorem{DT}.
\end{lemma}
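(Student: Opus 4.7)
The plan is to verify the four hypotheses of Theorem~\ref{t:DT}: (i)~$\{x_{j,0}\}_{j\in J_0^N}\subset P_0$; (ii)~the separation $|x_{i,k}-x_{j,k}|\geq r_k$ for $i\neq j$; (iii)~the nesting $x_{j,k}\in V_{k-1}^{2}$; and (iv)~the plane-compatibility bound $\ve_k(x_{j,k})<\ve$. Condition~(ii) is immediate from the definition of $\cX_k^N$ as a maximal $r_k$-separated net, and condition~(i) follows from \eqn{S-B10}: cube centers outside $B(0,1)$ lie on $P_0$ automatically, while any center inside $B(0,1)$ is within $O(\ve)$ of $P_0$ by Reifenberg flatness and may be replaced by its orthogonal projection onto $P_0$ at the topmost scale with only cosmetic effect on the remaining conditions.

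For condition~(iii), I would first observe that $\Up(N)$ is closed upward under containment in $\cD$, which is an easy induction from its recursive definition in terms of ancestors of cubes in $\Layer(N)\cup\Stop(N)$. Then, writing $x_{j,k}=x_{Q_{j,k}}$ with $Q_{j,k}\in\Up(N)\cap\cD_{s(k)}$, the ancestor $Q'$ of $Q_{j,k}$ in $\cD_{s(k-1)}$ also lies in $\Up(N)_{k-1}$, so $x_{Q'}\in\cC_{k-1}^N$. Maximality of $\cX_{k-1}^N$ yields $x_{i,k-1}\in\cX_{k-1}^N$ with $|x_{Q'}-x_{i,k-1}|\leq r_{k-1}$, and since $x_{Q_{j,k}}\in Q_{j,k}\subseteq Q'\subseteq B(x_{Q'},\ell(Q'))$ with $\ell(Q')\leq r_{k-1}$ by \eqn{qjkrk}, the triangle inequality gives $|x_{j,k}-x_{i,k-1}|\leq 2r_{k-1}$, placing $x_{j,k}$ in $V_{k-1}^{2}$.

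Condition~(iv) is the principal obstacle. I would fix $j,k,i,l$ with $|l-k|\leq 2$ and $x_{j,k}\in 100B_{j,k}\cap 100B_{i,l}$; then $r_k\sim r_l$, $\ell(Q_{j,k})\sim\ell(Q_{i,l})\sim r_l$ by \eqn{qjkrk}, and $\dist(Q_{j,k},Q_{i,l})\lec r_l$. Because $\Sigma$ is $\ve$-Reifenberg flat, $\beta_{\Sigma,\infty}^d(MB_Q)<2\ve$ for every $Q\in\cD$ by \eqn{pqdef}; combining Lemma~\ref{l:betabetainf} with the lower content regularity \eqn{sigmalr} gives $\beta_\Sigma^{d,1}(MB_Q)\lec\ve$ uniformly in $Q$. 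Lemma~\ref{l:PQPR} applied to $Q_{j,k}$ and $Q_{i,l}$, with a common ancestor playing the role of $Q_0$, then delivers $\angle(P_{j,k},P_{i,l})\lec_M\ve$. Taking $M$ large enough that $100B_{j,k}\cup 100B_{i,l}\subset MB_{Q_{j,k}}\cap MB_{Q_{i,l}}$, \eqn{pqdef} also supplies $\dist(x_{j,k},P_{i,l})\lec\ve r_l$. For $y\in P_{j,k}\cap B(x_{j,k},10^4 r_l)$, decomposing $y-x_{j,k}$ inside the linear part of $P_{j,k}$ and applying \eqn{vvperp} to the angle between the two linear parts yields
\[ \dist(y,P_{i,l})\leq \dist(x_{j,k},P_{i,l})+\angle(P_{j,k},P_{i,l})\cdot 10^4 r_l \lec \ve\cdot 10^4 r_l, \]
and the symmetric bound for $y\in P_{i,l}\cap B(x_{j,k},10^4 r_l)$ is identical. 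Dividing by $10^4 r_l$ and shrinking $\ve_0$ to absorb the implicit constants produces $\ve_k(x_{j,k})<\ve$.

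The hard part is exactly the uniform angular control of the best-approximating planes $P_Q$ across adjacent scales needed for (iv); this is the content of Lemma~\ref{l:PQPR}, whose applicability hinges on choosing $M$ large enough so that a single plane $P_Q$ simultaneously controls $\Sigma$ on every relevant neighbouring ball of comparable size.
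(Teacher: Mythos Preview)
Your proposal is correct and follows essentially the same route as the paper. The paper's proof only explicitly treats conditions (iii) and (iv): for (iii) it splits into the two cases $s(k)=s(k-1)$ and $s(k)=s(k-1)+1$ and in the latter takes $Q_{j,k}^{(1)}$, whereas your argument (go to the ancestor in $\cD_{s(k-1)}$ and invoke upward closure of $\Up(N)$) handles both cases uniformly and is cleaner. For (iv) the paper simply writes ``$\ve_k(x_{j,k})\lec\ve$ by \Lemma{PQPR}'', so your explicit angle-to-$d_{x,r}$ conversion via \eqn{pqdef} and \eqn{vvperp} fills in detail the paper omits; note that since the cubes here come from the full dyadic lattice on $\Sigma$ (with all integer scales), any two nearby cubes $Q_{j,k},Q_{i,l}$ do share a common ancestor, and the Reifenberg flatness gives the uniform $\beta_\Sigma^{d,1}(MB_S)<\ve$ along the entire chain, so your invocation of \Lemma{PQPR} is legitimate. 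Your treatment of condition (i) is also something the paper glosses over entirely.
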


\begin{proof}
Let $j\in J_{k}^{N}$. If $s_{k}=s_{k-1}$, then there is $i\in J_{k-1}$ so that $x_{j,k}=x_{Q_{j,k}}\in B_{i,k-1}$ since $\{x_{i,k-1}\}_{i\in J_{k-1}}$ is a maximal net for $\cC_{k-1}^{N}=\cC_{k}^{N}$. Otherwise, if $s_{k}>s_{k-1}$, then there is $i\in k-1$ so that $x_{Q_{j,k}^{(1)}}\in B_{i,k-1}$. Since 
\[
\ell\ps{Q_{j,k}^{(1)}}=5\rho^{s(k)-1}\leq 5\rho^{s(k-1)}\leq r_{k-1},\]
we have 
\begin{align*}
x_{j,k} & \in Q_{j,k}\subseteq Q_{j,k}^{(1)}
\subseteq B\ps{x_{Q_{j,k}^{(1)}},\ell\ps{Q_{j,k}^{(1)}}}
\subseteq B\ps{x_{i,k-1},r_{k-1}+\ell\ps{Q_{j,k}^{(1)}}}\\
& \subseteq B(x_{i,k-1},2r_{k-1})
 = 2B_{i,k-1}.
\end{align*}

Furthermore, $\ve_{k}(x_{j,k})\lec \ve$ by \Lemma{PQPR}, so for $\ve>0$ small enough, the lemma follows.

\end{proof}

Let $P_{0}=P_{Q_{0}}$ and let $\sigma_{k}^{N}$, $\Sigma_{k}^{N}$, $\Sigma^{N}$, and $\sigma^{N}$ be the functions and surfaces obtained from \Theorem{DT} with the nets $\cX^{N}_{k}$. In this way, $\Sigma_{0}^{N}=P_{0}$ for all $N$.

\begin{lemma}\label{l:up-zero-intersection}
If $d_{\Up(N)}(x)=0$ then $x\in \Sigma^N\cap \Sigma$
 \end{lemma}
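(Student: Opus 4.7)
The plan is to prove the two containments separately, using the hypothesis $d_{\Up(N)}(x)=0$ to produce, for every scale, a cube of $\Up(N)$ of arbitrarily small side length lying arbitrarily close to $x$.

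For the inclusion $x\in\Sigma$: recall that by Theorem~\ref{t:Christ} the cubes in $\cD$ are Borel subsets of $\Sigma$, so in particular every $Q\in\Up(N)\subseteq\cD$ satisfies $Q\subseteq\Sigma$. The assumption $d_{\Up(N)}(x)=0$ gives cubes $Q_i\in\Up(N)$ with $\ell(Q_i)+\dist(x,Q_i)\to 0$, hence points $y_i\in Q_i\subseteq\Sigma$ with $y_i\to x$. Since $\Sigma$ is closed (it is Reifenberg flat), $x\in\Sigma$.

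For the inclusion $x\in\Sigma^N$: by Theorem~\ref{t:DT}(1) it suffices to show $x\in E_\infty^N:=\bigcap_{K\geq 1}\overline{\bigcup_{k\geq K}\{x_{j,k}\}_{j\in J_k^N}}$. Fix $k\geq 0$. By hypothesis we can choose $Q\in\Up(N)$ with $\ell(Q)+\dist(x,Q)<r_{k+1}$. A key observation I would record first is that $\Up(N)$ is closed under taking ancestors: by Lemma~\ref{l:up-alt-def}, if $Q\in\Up(N)$ then $Q$ is not strictly contained in any cube of $\Stop(N)$, and the same is then true of every ancestor of $Q$. Thus I may pass to the ancestor $Q'$ of $Q$ at dyadic level $s(k)$, so that $Q'\in\Up(N)_k=\cD_{s(k)}\cap\Up(N)$ and $x_{Q'}\in\cC_k^N$; a direct estimate using \eqref{e:qjkrk} and $\ell(Q')=5\rho^{s(k)}\leq r_k$ then yields $|x-x_{Q'}|\lesssim r_k$. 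Since $\cX_k^N$ is a maximal $r_k$-net in $\cC_k^N$, some $x_{j,k}\in\cX_k^N$ satisfies $|x_{Q'}-x_{j,k}|\leq r_k$, whence $|x-x_{j,k}|\lesssim r_k$.

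Consequently, for every $K\geq 1$ one has $\dist(x,\bigcup_{k\geq K}\{x_{j,k}\}_{j\in J_k^N})\leq \inf_{k\geq K} C r_k=0$ since $r_k=10^{-k}\to 0$. Therefore $x\in E_\infty^N\subseteq\Sigma^N$, completing the proof. The only subtlety is the ancestor step above; once that is in hand the rest is just a controlled net/approximation argument, so I do not expect any real obstacles.
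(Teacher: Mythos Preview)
Your proof is correct and follows essentially the same route as the paper's own proof. The paper's argument is very terse (three sentences): it gets $x\in\Sigma$ from $\dist(x,\Sigma)\leq\dist(x,Q_i)\to 0$, and for $x\in\Sigma^N$ it simply writes ``from the definition of $\cX_k^N$ as a net, and Theorem~\ref{t:DT}, equation~\eqref{eqahj0}, we have $x\in\Sigma^N$.'' You have unpacked exactly this net argument, including the observation that $\Up(N)$ is upward closed (which the paper leaves implicit; it also follows immediately from the recursive definition of $\Up(N)$ without invoking Lemma~\ref{l:up-alt-def}).
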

\begin{proof}
Suppose $d_{\Up(N)}(x)=0$. This guarantees a sequence of cubes $Q_i\in \Up(N)$ such that $\ell(Q_i) + \dist(x,Q_i)\to 0$.
From the definition of $\cX^{N}_{k}$ as a net,
and  Theorem \ref{t:DT}, equation \eqref{eqahj0}, we have $x\in \Sigma^N$.
It remains to see that $x\in\Sigma$.
To this end, note that $\dist(x,\Sigma)\leq \dist(x,Q_{i})\downarrow 0$, and so $x\in \Sigma$. 

 \end{proof}


For $x\in \Sigma^{N}$, we define $k_{N}(x)$ to be the maximal integer such that $x\in V_{k-1}^{11}$.

\begin{lemma}\label{l:lemma-5-4}
For $x\in \Sigma^{N}$, let $k=k_{N}(x)$ . Then we have 
\begin{equation}\label{e:dx<dT}
\dist(x,\Sigma)\lec   \ve r_{k} \sim \ve d_{\Up(N)}(x)
\end{equation}
and
\begin{equation}
B(x,r_{k})\cap \Sigma^{N}=B(x,r_{k})\cap \Sigma^{N}_{k}.
\label{e:bsnbsnk}
\end{equation}
\label{l:dxsigma}
\end{lemma}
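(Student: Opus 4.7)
The plan splits the lemma into the equality $B(x,r_k)\cap\Sigma^N=B(x,r_k)\cap\Sigma^N_k$, which is essentially a statement about where the maps $\sigma^N_m$ act trivially, and the bounds $\dist(x,\Sigma)\lec\ve r_k\sim\ve d_{\Up(N)}(x)$. The combinatorial backbone of both will be the claim that $B(x,r_k)\cap V^{10}_m=\emptyset$ for every $m\geq k$. By maximality of $k=k_N(x)$ we have $x\notin V^{11}_l$ for all $l\geq k$; so if $y\in B(x,r_k)\cap 10B_{j,m}$ for some $m\geq k$ and $j\in J^N_m$, then chaining property \eqref{V2} (each $x_{j,l}\in V^{2}_{l-1}$) from scale $m$ down to scale $k$ produces a net point $x_{l',k}$ with $|x-x_{l',k}|<r_k+10r_m+2\sum_{l=k}^{m-1}r_l<11r_k$, contradicting $x\notin V^{11}_k$. (The geometric series is bounded by $\tfrac{20}{9}r_k$, and $10r_m\leq 10r_k$ improves strictly when $m>k$.)

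Given this fact, property \eqref{e:v10c} yields $\sigma^N_m|_{B(x,r_k)}=\id$ for every $m\geq k$. I claim $B(x,r_k)\cap\Sigma^N_m=B(x,r_k)\cap\Sigma^N_{m-1}$ for $m>k$. The inclusion $\supseteq$ is immediate since $\sigma^N_m$ fixes $B(x,r_k)$ and $\Sigma^N_m=\sigma^N_m(\Sigma^N_{m-1})$. For $\subseteq$, if $y\in B(x,r_k)\cap \Sigma^N_m$ with $y=\sigma^N_m(z)$, $z\in\Sigma^N_{m-1}$, the displacement estimate \eqref{e:sky-y} gives $|y-z|\lec\ve r_m\leq\ve r_k$, so $z\in B(x,(1+C\ve)r_k)$; the combinatorial claim extends verbatim to this enlarged ball for $\ve$ small, forcing $z\notin V^{10}_m$ and hence $z=y\in\Sigma^N_{m-1}$. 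Iterating gives $B(x,r_k)\cap\Sigma^N_m=B(x,r_k)\cap\Sigma^N_k$ for all $m\geq k$. To pass to the limit $\Sigma^N=g^N(P_0)$, for $y\in B(x,r_k)\cap\Sigma^N$ write $y=\lim y_m$ with $y_m=\sigma^N_m\circ\cdots\circ\sigma^N_1(z)$; telescoping \eqref{e:sky-y} gives $|y_m-y|\lec\ve r_m$, so $y_m\in B(x,(1+C\ve)r_k)$ for all $m\geq k$, and the same enlarged emptiness forces $y_{m+1}=y_m$ for $m\geq k$. Thus $y=y_k\in\Sigma^N_k$, and the reverse inclusion is proved analogously.

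Since $\sigma^N_k$ is a bijection fixing $B(x,r_k)$, the equality just proved also gives $x\in\Sigma^N_{k-1}$. Pick $i\in J^N_{k-1}$ with $|x-x_{i,k-1}|<11 r_{k-1}$, putting $x$ inside the cylinder $D(x_{i,k-1},P_{i,k-1},49 r_{k-1})$; then \eqref{e:49graph}--\eqref{e:49r}, together with the bounds $|A_{i,k-1}(x_{i,k-1})|\lec\ve r_{k-1}$ and $|DA_{i,k-1}|\lec\ve$, yield $\dist(x,P_{i,k-1})\lec\ve r_{k-1}$. By \eqref{e:pqdef} and Reifenberg flatness of $\Sigma$, Lemma~\ref{l:bigproj} applies to show that the projection $\pi_{P_{i,k-1}}(x)$ lies within $\lec\ve r_{k-1}$ of $\Sigma$, giving $\dist(x,\Sigma)\lec\ve r_{k-1}\sim\ve r_k$. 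Finally, for $r_k\sim d_{\Up(N)}(x)$: the upper bound is immediate from $Q_{i,k-1}\in\Up(N)_{k-1}$, $\ell(Q_{i,k-1})\lec r_{k-1}$ (see \eqref{e:qjkrk}), and $|x-x_{Q_{i,k-1}}|<11r_{k-1}$; the lower bound is a contradiction argument paralleling the first paragraph---a cube $Q\in\Up(N)$ with $\ell(Q)+\dist(x,Q)$ too small would, after chaining through \eqref{V2} from the scale of $Q$ down to scale $k$, supply a net point $x_{l,k}\in\cX^N_k$ with $|x-x_{l,k}|<11r_k$, again contradicting $x\notin V^{11}_k$.

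The main obstacle I anticipate is the bookkeeping in the second paragraph. Since the $\sigma^N_m$ are bijections that can a priori move points into $B(x,r_k)$ from outside, one cannot simply assert that ``identity on a ball'' propagates through their composition; the enlargement to $B(x,(1+C\ve)r_k)$ and the uniform-in-$m$ chaining through \eqref{V2} are both needed to reach the limit $\Sigma^N=g^N(P_0)$. Everything else is a careful accounting of geometric series of the form $\sum_{l\geq k}r_l$, tamed by the standing smallness of $\ve$.
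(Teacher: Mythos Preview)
Your proposal is correct and follows essentially the same route as the paper: establish $B(x,r_k)\cap V_m^{10}=\emptyset$ for $m\geq k$ by chaining through \eqref{V2}, deduce that the $\sigma_m^N$ act trivially there, and then use the graph structure at scale $k-1$ together with the definition of $P_{i,k-1}$ to land near $\Sigma$; the upper and lower bounds for $d_{\Up(N)}(x)$ are handled the same way in both.

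One remark on the obstacle you flag: the enlarged-ball bookkeeping in your second paragraph is not actually needed. Each $\sigma_m^N$ is a bijection of $\bR^n$ (it is bi-H\"older by \Theorem{DT}), and a bijection that fixes $B(x,r_k)$ pointwise must preserve both $B(x,r_k)$ and its complement setwise; hence $\Sigma_m^N\cap B(x,r_k)=\sigma_m^N(\Sigma_{m-1}^N)\cap B(x,r_k)=\sigma_m^N(\Sigma_{m-1}^N\cap B(x,r_k))=\Sigma_{m-1}^N\cap B(x,r_k)$ immediately, with no need to control preimages via displacement. The paper exploits this implicitly (and in fact proves the distance estimate first, working with a nearby preimage $x'\in\Sigma_{k-1}^N$ rather than showing $x\in\Sigma_{k-1}^N$), but your more explicit version is fine.
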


\begin{proof}
We first prove the left-hand-side inequality \eqn{dx<dT}. There is nothing to show if $x\in \Sigma$, so assume $x\not\in \Sigma$. If $x\not\in B(0,1)$, then $x\in P_{Q_{0}}$ by \eqn{v10c} and so $x\in \Sigma$ by \eqn{S-B10}, so we may assume $x\in B(0,1)$. Hence, there is a maximal $k\geq 1$ for which $x\in V_{k-1}^{11}$. Let $x'\in \Sigma_{k-1}^{N}$ be such that $x=\lim_{K\rightarrow\infty}\sigma_{k+K}^{N}\circ\cdots\circ \sigma_{k-1}^{N}(x')$. Then by \eqn{ytosigma},
\begin{equation}
\label{e:x'-x<erk}
|x'-x|\lec \ve r_{k-1}\lec \ve r_{k}.
\end{equation}
Hence, for $\ve$ small enough, $x'\in V_{k-1}^{12}$. 
Thus, there is $j\in J_{k-1}$ so that $x'\in 12B_{j,k-1}$. By \eqn{49r}, $\dist(x',P_{j,k-1})\lec\ve r_{k}$ and $\pi_{j,k-1}(x')\in 13B_{j,k-1}$. By our choice of $P_{j,k-1}$, $\dist(\pi_{j,k-1}(x'),\Sigma)\lec \ve r_{k-1}$. Combining these inequalities, we get that $\dist(x,\Sigma)\lec \ve r_{k}$. 
It now remains to show the right-hand-side inequality \eqn{dx<dT}, i.e. that $ r_{k}\sim d_{\Up(N)}(x)$. First, since $x'\in 12B_{j,k-1}$,
\begin{align*}
d_{\Up(N)}(x) & \leq \ell(Q_{j,k-1})+\dist(x,Q_{j,k-1})\\
& \stackrel{\eqn{qjkrk}}{\lec} r_{k-1}+|x-x'|+\dist(x',Q_{j,k-1})
\\
& \stackrel{\eqn{x'-x<erk}}{\lec} r_{k-1}+\ve r_{k-1}+|x'-x_{j,k-1}|
\stackrel{x'\in 12B_{j,k-1}}{\lec} r_{k-1}\lec r_{k}.
\end{align*}
Next, let $Q\in \Up(N)$ be such that $d_{\Up(N)}(x)\sim \ell(Q)+\dist(x,Q)$. Suppose $d_{\Up(N)}(x)\sim r_{\ell}$ for some $\ell\geq k$. Since $|x-x_{Q}|\lec d_{\Up(N)(x)}$ we know $|x-x_{Q}|\leq  Cr_{\ell}$ for some universal constant $C$, and if
 $\ell - k\gtrsim\log(C)$, $|x-x_{Q}|<r_{k}$. Then $x_{Q}\in B_{i\ell}$ for some $i\in J_{\ell}^N$ (since $\cX^{N}_{\ell}$ is a maximal net), so in particular, $x_{Q}\in V_{\ell}^{1}$. Thus, $x_{Q}\in V_{k}^{2}$, and hence $x\in V_{k}^{11}$, contradicting our choice of $k$. Thus, $|\ell-k|$ is bounded by a universal constant, implying $d_{\Up(N)}(x)\gec r_{k}$, we have \eqn{dx<dT}.

To get \eqn{bsnbsnk}, notice that since $x\not\in V_{k}^{11}$ and by the maximality of $k$, $B(x,r_{k})\subseteq (V_{\ell}^{10})^{c}$ for all $\ell\geq k$, and so $\sigma_{\ell}^{N}$ is the identity on $B(x,r_{k})$ for all $\ell\geq k$. This and \eqn{ygraph} imply \eqn{bsnbsnk}.
\end{proof}

\begin{lemma}\label{l:qxn}
For $\ve>0$ small enough (depending on $\tau$) and $x\in \Sigma^{N}\backslash \Sigma$, there is $Q_{x}^{N}\in \Stop(N)$ for which $x\in 2B_{Q_{x}^{N}}$ and $\ell(Q_{x}^{N})\sim d_{\Up(N)}(x)$. If $x\in c_{0}B_{Q}$ for some $Q\in \Stop(N)$ (where $c_{0}$ is as in \Theorem{Christ}), we may set $Q_{x}^{N}=Q$.
\end{lemma}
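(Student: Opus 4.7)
The plan is to split into two cases according to whether $x$ already sits in some $c_{0}B_{Q}$ with $Q\in \Stop(N)$, handling the first case directly and reducing the second to it via the proximity estimate from \Lemma{dxsigma}.

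In the first case we take $Q_{x}^{N}=Q$, as the statement permits. The containment $x\in 2B_{Q}$ is immediate from $c_{0}B_{Q}\subseteq Q\subseteq B_{Q}$ in \Theorem{Christ}. Since $Q\in \Up(N)$ by construction, the upper bound $d_{\Up(N)}(x)\leq \ell(Q)+\dist(x,Q)\lec \ell(Q)$ is trivial, while $x\in c_{0}B_{Q}\subseteq Q$ combined with \Lemma{412} gives $d_{\Up(N)}(x)\geq d_{\Up(N)}(Q)\gec \tau\,\ell(Q)$. Together these yield $\ell(Q)\sim d_{\Up(N)}(x)$, with constants depending on $\tau$.

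In the second case $x\in \Sigma^{N}\setminus \Sigma$ lies in no $c_{0}B_{Q}$ with $Q\in \Stop(N)$. By the contrapositive of \Lemma{up-zero-intersection}, $d_{\Up(N)}(x)>0$. Setting $k=k_{N}(x)$, \Lemma{dxsigma} gives the comparison $r_{k}\sim d_{\Up(N)}(x)$ and furnishes $y\in \Sigma$ with $|x-y|\lec \ve\, r_{k}$. Choosing $\ve$ small enough to swallow the hidden constant, we obtain $|x-y|\leq \tfrac{1}{2}d_{\Up(N)}(x)$, hence $d_{\Up(N)}(y)\geq d_{\Up(N)}(x)-|x-y|>0$. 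Since $y\in \Sigma$, \Corollary{up-positive-then-stop} produces $Q\in \Stop(N)$ with $Q\ni y$, and we declare $Q_{x}^{N}:=Q$.

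It remains to verify the two quantitative conclusions. From $y\in Q$ and \Lemma{412} we have $\tau\,\ell(Q)\lec d_{\Up(N)}(y)\leq \ell(Q)$, and since $d_{\Up(N)}(y)\sim d_{\Up(N)}(x)$ we conclude $\ell(Q)\sim d_{\Up(N)}(x)$ (with $\tau$-dependent constants). For the ball containment,
\[
|x-x_{Q}|\leq |x-y|+|y-x_{Q}|\lec \ve\, d_{\Up(N)}(x)+\ell(Q)\lec (\ve\tau^{-1}+1)\ell(Q),
\]
which is strictly less than $2\ell(Q)$ once $\ve$ is chosen sufficiently small relative to $\tau$, so $x\in 2B_{Q}$. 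The only delicate point is precisely this $\tau$-dependent smallness requirement on $\ve$; beyond it, the proof is an assembly of \Lemma{dxsigma}, \Lemma{up-zero-intersection}, \Corollary{up-positive-then-stop}, and \Lemma{412}.
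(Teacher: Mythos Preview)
Your proof is correct and follows essentially the same route as the paper's: both arguments split on whether $x\in c_{0}B_{Q}$ for some $Q\in\Stop(N)$, invoke \Lemma{dxsigma} to find a nearby point in $\Sigma$, use \Lemma{up-zero-intersection} and \Corollary{up-positive-then-stop} to land that point in a cube of $\Stop(N)$, and close with \Lemma{412}. Your write-up is in fact slightly more explicit than the paper's about verifying $\ell(Q)\sim d_{\Up(N)}(x)$ in the second case; note that in your final displayed estimate the bound $d_{\Up(N)}(x)\leq 2\,d_{\Up(N)}(y)\leq 2\ell(Q)$ actually removes the $\tau^{-1}$ factor, so the smallness requirement on $\ve$ there is absolute rather than $\tau$-dependent.
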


\begin{proof}
As in the statement,  if $x\in c_{0}B_{Q}$ for some $Q\in \Stop(N)$, we just set $Q_{x}^{N}=Q$, and it is easy to check
$\ell(Q_{x}^{N})\sim d_{\Up(N)}(x)$. 
Otherwise, let $x'\in \Sigma$ be such that 
\begin{equation}\label{e:x-x'<dL}
|x-x'|=\dist(x,\Sigma) \stackrel{\eqn{dx<dT}}{\lec} \ve d_{\Up(N)}(x).
\end{equation}
Note that if $d_{\Up(N)}(x)=0$, then $x\in \Sigma$ by Lemma \ref{l:up-zero-intersection}, which contradicts our choice of $x$. Thus, $d_{\Up(N)}(x)>0$.  Since $d_{\Up(N)}$ is $1$-Lipschitz, \eqn{x-x'<dL} implies 
\begin{equation}\label{e:dcompar}
\frac{1}{2}d_{\Up(N)}(x)< d_{\Up(N)}(x')
< 2d_{\Up(N)}(x)
\end{equation}
if $\ve>0$ is small enough. Thus, by Corollary \ref{c:up-positive-then-stop} $x'\in Q$ for some $Q\in \Stop(N)$, and since
\[
|x-x'| 
 \stackrel{ \eqn{x-x'<dL}}{\lec} \ve d_{\Up(N)}(x)
\stackrel{\eqn{dcompar}}{\lec}\ve d_{\Up(N)}(x')
 \stackrel{\eqn{up-alt-def}}{\sim} \ve d_{\Up(N)}(Q)
  \stackrel{\eqn{412b}}{\sim} \frac{\ve}{\tau}  \ell(Q),\]
 we have that $x\in 2B_{Q}$ if $\ve>0$ small (depending on $\tau$). Set $Q_{x}^{N}=Q$. Then $\ell(Q_{x}^{N})\sim \tau d_{\Up(N)}(Q_{x}^{N})$ by \eqref{e:412b}.
\end{proof}

{
\begin{lemma}
Let $M'=M+11$. Then for all $k\geq 0$, 
\begin{equation}\label{e:sknp0}
\Sigma_{k}^{N}\backslash B(0,1+M'r_{k})=P_{0}\backslash B(0,1+M'r_{k}).
\end{equation}
In particular, for all $N\geq 0$,
\begin{equation}\label{e:sn01}
\Sigma^{N}\backslash B(0,1)=P_{0}\backslash B(0,1)
\end{equation}
\end{lemma}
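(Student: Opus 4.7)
My plan is to prove \eqn{sknp0} by induction on $k$; the ``in particular'' statement \eqn{sn01} will then follow by letting $k \to \infty$, since $r_k \downarrow 0$ and $\Sigma^N = \lim_k \Sigma_k^N$ in a natural sense. The base case $k=0$ is immediate since $\Sigma_0^N = P_0$.

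The heart of the inductive step will be the following local fact: if $y \in P_0$ and $|y| > 1 + (M+10) r_k$, then $\sigma_k^N(y) = y$. Using the convex-combination formula \eqn{sigmak} and the partition-of-unity identity $\psi_k + \sum_j \theta_{j,k} \equiv 1$, it suffices to show $\pi_{j,k}(y) = y$ for every $j \in J_k^N$ with $\theta_{j,k}(y) > 0$, i.e., every $j$ with $y \in 10 B_{j,k}$. For such $j$ the support condition yields $|x_{j,k}| \geq |y| - 10 r_k > 1 + M r_k \geq 1 + M\ell(Q_{j,k})$ by \eqn{qjkrk}, so $MB_{Q_{j,k}}$ is disjoint from $B(0,1)$, and the hypothesis \eqn{S-B10} forces $\Sigma \cap M B_{Q_{j,k}} \subseteq P_0$. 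Since $x_{j,k} \in \Sigma$ with $|x_{j,k}| > 1$ places $x_{j,k}$ on $P_0$, and since $\Sigma$ is $d$-lower-regular by \eqn{sigmalr}, the unique best-fitting $d$-plane through $x_{j,k}$ is $P_0$ itself; hence $P_{j,k} = P_0$ and $\pi_{j,k}(y) = y$.

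Because $M' = M + 11 > M + 10$, this claim implies that $\sigma_k^N$ fixes $P_0 \setminus B(0, 1 + M' r_k)$ pointwise. Combining with the bijectivity of $\sigma_k^N$, the identity \eqn{v10c} (namely $\sigma_k^N = \mathrm{id}$ off $V_k^{10}$), and the inductive hypothesis for $k-1$ should yield \eqn{sknp0} at level $k$. The main obstacle I foresee is the transitional annulus $B(0, 1 + M' r_{k-1}) \setminus B(0, 1 + M' r_k)$, on which the inductive hypothesis is silent; I plan to handle it by iterating the key claim across the tower of $\sigma_j^N$'s for $j \leq k$ and using the displacement bound \eqn{sky-y}, $|\sigma_j(y) - y| \lec \ve r_j$, to confirm that no point of $P_0$ outside $B(0, 1 + M' r_k)$ can be shifted off $P_0$ by any $\sigma_j^N$, and that no point of $\Sigma_k^N$ outside that ball can fail to lie on $P_0$.

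Finally, \eqn{sn01} will follow from \eqn{sknp0} by taking $k \to \infty$: any $x \in P_0 \setminus B(0,1)$ satisfies $|x| > 1 + M' r_k$ for $k$ large, so $x \in \Sigma_k^N$ by \eqn{sknp0} and hence $x \in \Sigma^N$; conversely, an $x \in \Sigma^N$ with $|x| > 1$ can be written as $g(y)$ for some $y \in P_0$ with $|y-x| \lec \ve$ by \Theorem{DT}(4), and applying the key claim to $y$ gives $g(y) = y = x \in P_0$.
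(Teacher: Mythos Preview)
Your key claim is correct and the partition-of-unity observation is clean: since $\psi_k + \sum_{j\in J_k}\theta_{j,k}\equiv 1$, once every relevant $\pi_{j,k}$ fixes $y\in P_0$ you get $\sigma_k^N(y)=y$ without needing $\psi_k(y)=0$. The difficulty is that this claim, by itself, cannot close the induction on $k$, and your proposed fix for the transitional annulus does not work.

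Concretely: suppose $|y|>1+M'r_k$ with $k$ large, so $|y|$ is only slightly larger than $1$. For a small index $j$ (say $j=1$) the threshold in your key claim is $1+(M+10)r_1$, which is far larger than $|y|$; the balls $10B_{i,1}$ containing $y$ may well have centers $x_{i,1}\in B(0,1)$, where $P_{i,1}\neq P_0$, so $\sigma_1^N(y)$ can leave $P_0$. Thus your assertion that ``no point of $P_0$ outside $B(0,1+M'r_k)$ can be shifted off $P_0$ by any $\sigma_j^N$'' is false, and iterating the key claim through the whole tower $\sigma_k^N\circ\cdots\circ\sigma_1^N$ breaks down at the early levels. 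The displacement bound \eqn{sky-y} only tells you points move by $O(\ve r_j)$, not that they stay on $P_0$.

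The paper avoids induction entirely. For the inclusion $\Sigma_k^N\setminus B(0,1+M'r_k)\subseteq P_0$ it proves the stronger fact that $\sigma_k^N=\pi_{P_0}$ (not merely the identity) on all of $\Sigma_{k-1}^N\setminus B(0,1+(M+10)r_k)$; this projects $\Sigma_{k-1}^N$ onto $P_0$ regardless of where $\Sigma_{k-1}^N$ sat, so no information about the annulus is needed. Proving $\sigma_k^N=\pi_{P_0}$ there requires the extra step you bypassed: showing $\psi_k(x)=0$, which the paper obtains by arguing that such $x$ lie in $V_k^2$. The reverse inclusion $P_0\setminus B(0,1+M'r_k)\subseteq\Sigma_k^N$ is then handled by a separate contradiction argument using the $C\ve$-Reifenberg flatness of $\Sigma_k^N$ (\Lemma{skflat}), not by tracking points through the $\sigma_j^N$.
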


\begin{proof}
Recall by \eqn{v10c} that $\sigma_{k}^{N}(x)=x$ for $x\not\in V_{k}^{10}$ and $k\geq 0$. Let $x_{j,k}\in \Sigma\backslash B(0,1+Mr_{k})$. Then $x_{j,k}=x_{Q_{j,k}}$ and 
\[
|x_{Q_{j,k}}|\geq 1+Mr_{k}\stackrel{\eqn{qjkrk}}{\geq} 1 + M\ell(Q_{j,k})\]
which implies $MB_{Q_{j,k}}\cap B(0,1)=\emptyset$. By \eqn{S-B10} we have
\[
MB_{Q_{j,k}}\cap \Sigma= MB_{Q_{j,k}}\cap P_{0}\]
and thus $P_{j,k}=P_{MB_{Q_{j,k}}}=P_{0}$. 
If $x\in \Sigma_{k-1}^{N}\backslash B(0,1+(M+10)r_{k})$, then for each $j\in J_{k}$ with $x\in 10 B_{j,k}$, we must have $x_{j,k}\in \Sigma\backslash B(0,1+Mr_{k})$, and so 
\begin{equation}\label{e:pjk=p0}
\pi_{j,k}=\pi_{P_{0}}.
\end{equation}
Moreover, by \eqn{49r}, if $x\in 10 B_{j,k}$ for some $j\in L_{k}$, then 
\begin{equation}
\label{e:x-pj,kx<verl} |x-\pi_{j,k}(x)|\lec \ve r_{k}
,
\end{equation} 
and so for $\ve>0$ small enough, 
\begin{equation}\label{e:in11b}
\pi_{j,k}(x)\in 11 B_{j,k}.
\end{equation}
 Since $\Sigma$ is $\ve$-Reifenberg flat, there is a plane $P$ passing through $x_{j,k}$ so that $d_{x_{j,k},11r_{k}}(\Sigma,P)<\ve$, thus
\begin{align}\label{e:pjkp}
d_{x_{j,k},11 r_{k}}(P_{j,k},P)
& \stackrel{\eqn{dxardxr}}{=}d_{x_{j,k}, r_{k}/2}(P_{j,k},P)
\stackrel{\eqn{dxrtrngl}}{  \lec} 
d_{x_{j,k}, r_{k}}(P_{j,k},\Sigma)
+ d_{x_{j,k}, r_{k}}(\Sigma,P) \notag \\
& 
\lec \ve + 11  d_{x_{j,k}, 11 r_{k}}(\Sigma,P)
\lec \ve 
\end{align}

Thus,
\begin{align*}
d_{x_{j,k},11 r_{k}}(P_{j,k},\Sigma)
& \leq 
d_{x_{j,k},11 r_{k}}(P_{j,k},P)
+d_{x_{j,k},11 r_{k}}(P,\Sigma) \stackrel{\eqn{pjkp}}{\lec}  \ve 
\end{align*}

This and \eqn{in11b} imply $\dist(\pi_{j,k}(x),\Sigma)\lec \ve r_{k}$, and along with \eqn{x-pj,kx<verl} gives $\dist(x,\Sigma)\lec \ve r_{k}$. For $\ve>0$ small enough, this implies $\dist(x,\Sigma)<r_{k}$ and hence $x\in V_{k}^{2}$. Recall the notation from \Theorem{DT} that $B_{j,k}=B(x_{j,k},r_{j,k}/10)$ for $j\in L_{k}$, and that since $\{x_{j,k}\}_{j\in L_{k}}\subseteq \bR^{n}\backslash V_{k}^{9}$,
\[
\supp \psi_{k}\subseteq \bigcup_{j\in L_{k}} 10 B_{j,k}
=\bigcup_{j\in L_{k}} B(x_{j,k},r_{k})
\subseteq \bR^{n}\backslash V_{k}^{7},\]
and thus $\psi_{k}=0$ on $V_{k}^{2}$; in particular, $\psi_{k}(x)=0$. Hence
\[
\sigma_{k}^{N}(x)
\stackrel{\eqn{sigmak}}{=}
\psi_{k}(x)x+\sum_{j\in J_{k}}\theta_{j,k}(x) \pi_{j,k}(x)
\stackrel{\eqn{pjk=p0}}{=}0+ \pi_{P_{0}}(x)\sum_{j\in J_{k}} \theta_{j,k}(x) 
=\pi_{P_{0}}(x).\]
Thus, $\sigma_{k}^{N}=\pi_{P_{0}}$ on $\Sigma_{k-1}^{N}\backslash B(0,1+(M+10)r_{k}))$. Recall that $\Sigma_{k}^{N}=\sigma_{k}^{N}(\Sigma_{k-1}^{N})$. By \eqn{sky-y}, $|\sigma_{k}^{N}(y)-y|\lec \ve r_{k}$, and so for $\ve>0$ small enough,
\[
\Sigma_{k}^{N}\backslash B(0,1+(M+11)r_{k})
\subseteq \sigma_{k}^{N}(\Sigma_{k-1}^{N} \backslash B(0,1+(M+10)r_{k})
\subseteq P_{0}.\]
Let $M'=M+11$ and, contrary to \eqn{sknp0}, assume that 
\[
P_{0}\backslash (\Sigma_{k}^{N}\cup B(0,1+M'r_{k})\neq \emptyset.\]
Note that $\Sigma_{k}^{N}\backslash \cnj{B(0,1+M'r_{k})}\neq\emptyset$ since $\Sigma_{k}^{N}$ is $C\ve$-Reifenberg flat by \Lemma{skflat}. Thus, it is possible to find $x\in P_{0}$ and $r>0$ so that  
\[
B(x,8r)\cap \cnj{B(0,1+M'r_{k})}=\emptyset, \;\; B(x,r)\cap \Sigma_{k}^{N}=\emptyset,\] 
and so that there is 
\[
y\in \d B(x,r)\cap \Sigma_{k}^{N}\backslash   \cnj{B(0,1+M'r_{k})}. \]
As $\Sigma_{k}^{N}$ is $C\ve$-Reifenberg flat, there is a $d$-plane $P$ passing through $y$ so that $d_{y,4r}(\Sigma_{k}^{N},P)\lec \ve$. By \eqn{sigmalr}, for $\ve>0$ small enough, 
\[
\cH^{d}_{\infty}(\Sigma_{k}^{N}\cap B(y,r))>\frac{\omega_{d}}{2}r^{d}\]
and so we may find points $X=\{x_{0},...,x_{d}\}\subseteq \Sigma_{k}^{N}\cap B(y,r)$ for which $\eta(X)\gec_{d} 1$, where $\eta$ is as in \Lemma{ATlemma}; by this same lemma, we can conclude that $d_{y,4r}(P,P_{0})=d_{y,r}(P,P_{0})\lec \ve$. Hence,
\[
d_{y,2r}(\Sigma_{k},P_{0})
\stackrel{\eqn{dxrtrngl}}{\lec} d_{y,4r}(\Sigma_{k},P)+d_{y,4r}(P_{0},P)\lec \ve .\]
Hence, there is $z\in \Sigma_{k}^{N}$ so that $|z-x|\lec \ve r$, and $z\in B(x,r)$ for $\ve>0$ small enough. Since $\Sigma_{k}^{N}\backslash \cnj{B(0,1+M'r_{k})}\subseteq P_{0}$ and $B(x,r)\subseteq \cnj{B(0,1+M'r_{k})}^{c}$, this means $z\in P_{0}\cap B(x,r)\subseteq (\Sigma_{k}^{N})^{c}$, a contradiction.

Now \eqn{sn01} follows since $\Sigma^{N}$ is the limit of the $\Sigma^{N}_{k}$.

\end{proof}

{
\begin{lemma}
\label{l:wherestop}
If $Q\in \Stop(N)$ for some $N\geq 0$, then $Q\subseteq B(0,1)$. 
\end{lemma}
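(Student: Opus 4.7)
The plan is to induct on $N \geq 0$. For the base case $N = 0$, we argue by contradiction: suppose $Q \in \Stop(0)$ contains a point $y \notin B(0,1)$. By the hypothesis \eqn{S-B10}, $y \in \Sigma \setminus B(0,1) \subseteq P_0$ and $\delta := \dist(y, \cnj{B(0,1)}) > 0$ (a standard approximation handles the boundary case $\delta = 0$). The goal is to show $d_{\Layer(0)}(y) = 0$; combined with $d_{\Layer(0)}(Q) \leq d_{\Layer(0)}(y)$ and the bound $\ell(Q) \leq 2\tau\, d_{\Layer(0)}(Q)$ from \Lemma{3.4}, this would force $\ell(Q) = 0$, a contradiction.

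To produce arbitrarily small $\Layer(0)$-cubes containing $y$, we pick $T \in \cD$ with $y \in T$ and $\ell(T) < \delta/(M+1)$. Then $MB_T \subseteq B(y,\delta) \subseteq B(0,1)^c$, so $\Sigma \cap MB_T \subseteq P_0$ and (since $x_T \in P_0$) $P_T = P_0$. Letting $R \in \cD_0$ be the top-level ancestor of $T$, it remains to verify $T \in S_R$, namely $\angle(P_{T'}, P_R) < \alpha$ for every $T'$ on the chain $T = T^{(0)} \subseteq T^{(1)} \subseteq \cdots \subseteq R$ and for every sibling of such a $T'$. The key input is that $P_0 \cap MB_{T'} \setminus B(0,1) \subseteq \Sigma$ contains $d+1$ well-spread points (good $\eta$-parameter) near $x_{T'}$; since these points lie on $P_0$ and, as points of $\Sigma$, are within $\ve \cdot M\ell(T')$ of the optimizing plane $P_{T'}$ by $\ve$-Reifenberg flatness, \Lemma{ATlemma} yields $\angle(P_{T'}, P_0) \lec \ve$. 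Applied also at $R$, this gives $\angle(P_{T'}, P_R) \lec \ve < \alpha$ when $\ve \ll \alpha$, and hence $T \in S_R \subseteq \Layer(0)$.

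For the inductive step ($N \geq 1$), assume the claim for $N-1$ and take $Q \in \Stop(N)$. By Corollary \ref{c:whitney-cor} we have $\bigcup \Stop(N) \subseteq \bigcup \Stop(N-1)$; combined with the pairwise disjointness of cubes in $\Stop(N-1)$ (they are maximal, hence non-nested, and the Christ grid makes cubes of distinct sizes either nested or disjoint) and with $\ell(Q) \lec \tau\, \ell(T)$ for the minimal cube $T \in \Layer(N)$ containing $Q$ (\Lemma{>a}), where $T \subseteq R$ for some $R \in \Stop(N-1)$, we get $Q \subseteq R$, and the inductive hypothesis gives $Q \subseteq R \subseteq B(0,1)$. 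The main obstacle is thus the base case — specifically, controlling $\angle(P_{T'}, P_0)$ for intermediate ancestors whose ball $MB_{T'}$ straddles $\partial B(0,1)$, where $P_{T'}$ differs from $P_0$ and the required estimate must be extracted from the large flat piece $P_0 \setminus B(0,1) \subseteq \Sigma$ via \Lemma{ATlemma}.
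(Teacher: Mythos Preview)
Your approach is essentially the same as the paper's: both hinge on applying \Lemma{ATlemma} to $d+1$ well-spread points of $P_0\cap MB_{T'}\setminus B(0,1)\subseteq\Sigma$ to get $\angle(P_{T'},P_0)\lec\ve$ for every cube $T'$ (and sibling) whose ball meets $\Sigma\setminus B(0,1)$, forcing $d_{\Layer(0)}(y)=0$ and hence excluding $y$ from any $\Stop(0)$ cube. The paper then handles all $N$ simultaneously via the nesting $\bigcup\Stop(N)\subseteq\bigcup\Stop(0)$ from \Corollary{whitney-cor}, which is precisely what your induction unwinds; two small corrections: the size comparison in your inductive step comes from \Lemma{3.4} (not \Lemma{>a}), and the boundary case $\delta=0$ needs no separate approximation since the half-space estimate $\cH^d_\infty(B(y,r)\cap P_0\setminus B(0,1))\gec r^d$ already holds for $|y|\geq 1$.
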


\begin{proof}
Recall $\cD_0$ as defined in Theorem \ref{t:Christ}.
Let $x\in \Sigma\backslash B(0,1)$, and $Q\subseteq R\in \Stop(-1)=\cD_{0}$ be such that $x\in Q$. Then
\[
\cH^{d}_{\infty}\ps{\frac{M}{2}B_{Q}\cap \Sigma\cap P_{0}\setminus B(0,1)}\gec \ell(Q)^{d}\]
and so we may find points $X=\{x_{0},...,x_{d}\}\in \frac{M}{2}B_{Q}\cap \Sigma\cap P_{0}\setminus B(0,1)$ so that $\eta(X)\gec 1$. By our choice of $P_{Q}$ (see \eqn{pqdef}) and since $X\subseteq P_{0}$, we have by \Lemma{ATlemma} that $\angle(P_{0},P_{Q})\lec \ve$. In particular, this also holds if $Q=R$, and so for $\ve>0$ small enough $\angle(P_{R},P_{Q})<\alpha$. Thus, there are no cubes from $\Stop(0)$ containing $x$, which implies that every $Q$ with $x\in Q\subseteq R\in \cD_{0}$  is in $\Up(0)\subseteq \Up(N)$ for all $N\geq 0$. In particular, $Q\not\in \Stop(N)$ for any $N\geq 0$.
\end{proof}
}

}

%
%
%
%
%

\begin{lemma}
For $x\in \Sigma$, 
\begin{equation}\label{e:dxsigman}
\dist(x,\Sigma^{N})\lec \frac{\ve}{\tau} d_{\Up(N)}(x).
\end{equation}
\label{l:dxsigman}
\end{lemma}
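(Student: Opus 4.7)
The plan is to split into two cases based on $d_{\Up(N)}(x)$. If $d_{\Up(N)}(x) = 0$, then by Lemma \ref{l:up-zero-intersection} we immediately have $x \in \Sigma^N$, so $\dist(x, \Sigma^N) = 0$ and the inequality is trivial. The substantive case is $d_{\Up(N)}(x) > 0$, where we will locate a nearby cube of $\Stop(N)$, use it to identify a point in the approximating surface $\Sigma^N_k$ near $x$ via the David--Toro graph structure, and then push through the remaining $\sigma_\ell^N$ to land in $\Sigma^N$.

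So suppose $d_{\Up(N)}(x) > 0$. By Corollary \ref{c:up-positive-then-stop}, there is $Q \ni x$ with $Q \in \Stop(N)$; Lemma \ref{l:412} then gives $d_{\Up(N)}(x) \sim \tau \ell(Q)$. Choose $k \geq 0$ so that $s(k) = k(Q)$ (so $r_k \sim \ell(Q)$). Since $Q \in \Up(N)_{s(k)}$, the center $x_Q$ lies in $\cC^N_k$, hence by maximality of the net $\cX^N_k \subseteq \cC^N_k$ there is $j \in J^N_k$ with $|x_Q - x_{j,k}| < r_k$. Using $x \in Q \subseteq B(x_Q, \ell(Q))$ and $\ell(Q_{j,k}) = \ell(Q) \sim r_k$, we get
\begin{equation*}
|x - x_{j,k}| \leq |x - x_Q| + |x_Q - x_{j,k}| \leq \ell(Q) + r_k \lec r_k,
\end{equation*}
so for $M$ large enough (depending only on $\rho$), $x \in MB_{Q_{j,k}}$. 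The defining property \eqn{pqdef} of $P_{j,k} = P_{Q_{j,k}}$ then yields $\dist(x, P_{j,k}) \lec \ve r_k$.

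Now set $x' = \pi_{P_{j,k}}(x) \in P_{j,k}$. Since $x_{j,k} \in P_{j,k}$, orthogonal projection is distance-decreasing with respect to $x_{j,k}$, giving $|x' - x_{j,k}| \leq |x - x_{j,k}| \lec r_k$, so in particular $x' \in P_{j,k} \cap 49 B_{j,k}$. By the graph representation \eqn{49graph} of $\Sigma^N_k$ over $P_{j,k}$, together with $|A_{jk}(x_{jk})| \lec \ve r_k$ and $|DA_{jk}| \lec \ve$, we have $|A_{jk}(x')| \lec \ve r_k$, and the point $x'' := x' + A_{jk}(x')$ lies in $\Sigma^N_k \cap D(x_{jk}, P_{jk}, 49r_k)$. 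Combining,
\begin{equation*}
|x - x''| \leq |x - x'| + |x' - x''| \lec \ve r_k + \ve r_k \lec \ve r_k.
\end{equation*}

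Finally, iterating \eqn{sky-y}, for any $K \geq 1$ and $y_K := \sigma^N_{k+K} \circ \cdots \circ \sigma^N_{k+1}(x'') \in \Sigma^N_{k+K}$,
\begin{equation*}
|y_K - x''| \leq \sum_{\ell=k+1}^{k+K} C\ve\, r_\ell \lec \ve r_k,
\end{equation*}
with bounds uniform in $K$. Since $g^N = \lim_K \sigma^N_{k+K} \circ \cdots \circ \sigma^N_1$ exists, $y_K$ converges to a point of $\Sigma^N$ at distance $\lec \ve r_k$ from $x''$. Combining everything,
\begin{equation*}
\dist(x, \Sigma^N) \lec \ve r_k \sim \ve \ell(Q) \sim \frac{\ve}{\tau} d_{\Up(N)}(x),
\end{equation*}
which is the desired bound. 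The only delicate point is verifying that $x'$ lies in the domain $P_{j,k} \cap 49 B_{j,k}$ of the graph function $A_{jk}$; this is exactly where one uses the comparability $r_k \sim \ell(Q) = \ell(Q_{j,k})$ together with the two containment estimates $|x - x_Q| \leq \ell(Q)$ and $|x_Q - x_{j,k}| < r_k$.
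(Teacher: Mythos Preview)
Your proof is correct and follows essentially the same route as the paper: reduce to $d_{\Up(N)}(x)>0$, find $Q\in\Stop(N)$ containing $x$, pick $k$ with $Q\in\Up(N)_k$ and $j\in J_k^N$ with $x_Q\in B_{jk}$, use \eqn{pqdef} to get $\dist(x,P_{jk})\lec\ve r_k$, then land in $\Sigma_k^N$ near $\pi_{jk}(x)$ and push to $\Sigma^N$. The only differences are cosmetic: the paper cites \eqn{49r} and \eqn{ytosigma} directly where you unpack the graph map $A_{jk}$ from \eqn{49graph} and iterate \eqn{sky-y} by hand, and the paper suppresses the verification that $\pi_{jk}(x)\in 49B_{jk}$ which you spell out.
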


\begin{proof}
If $d_{\Up(N)}(x)=0$, then $x\in \Sigma^{N}$ by \Lemma{up-zero-intersection} and there is nothing to show, so assume $d_{\Up(N)}(x)>0$. 
Then, by Corollary \ref{c:up-positive-then-stop},  $x\in Q$ for some $Q\in \Stop(N)$. Let $k$ be such that $Q\in \Up(N)_{k}$, then $x_{Q}\in B_{jk}$ for some $j\in J_{k}$. Hence 
\[
\dist(x,P_{jk})\lec \ve r_{k} \stackrel{\eqn{qjkrk}}{\sim} \ve\ell(Q) \stackrel{\eqn{412b} }{\sim} \frac{\ve}{\tau} d_{\Up(Q)}(x).\]
By \eqn{49r}, there is $y\in \Sigma_{k}^{N}$ so that $|\pi_{jk}(x)-y|\lec \ve r_{k}$, and \eqn{ytosigma} implies $\dist(y,\Sigma^{N})\lec \ve r_{k}$. Combining these estimates gives 
\[
\dist(x,\Sigma^{N})\lec \ve r_{k} \sim  \frac{\ve}{\tau} d_{\Up(Q)}(x).\]
\end{proof}

\begin{lemma}\label{l:dssn}
For $x\in \Sigma^{N}$,
\begin{equation} \label{e:dssn}
\dist(x,\Sigma^{N+1})\lec \frac{\ve}{\tau} d_{\Up(N)}(x).
\end{equation}
\end{lemma}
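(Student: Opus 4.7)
The plan is to reduce the statement to the previously established Lemma \ref{l:dxsigman} by passing through a nearby point of $\Sigma$. The key inputs will be (a) the bound $\dist(x,\Sigma)\lec \ve\, d_{\Up(N)}(x)$ from \eqn{dx<dT}, (b) the monotonicity $\Up(N)\subseteq \Up(N+1)$ which follows directly from the inductive definition of $\Up(N)$, and (c) the fact that $d_{\Up(N+1)}$ is $1$-Lipschitz, being the infimum of the $1$-Lipschitz functions $y\mapsto \ell(R)+\dist(y,R)$.

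First I would dispose of the trivial case $d_{\Up(N)}(x)=0$: Lemma \ref{l:up-zero-intersection} places $x\in\Sigma$, and since $\Up(N)\subseteq \Up(N+1)$ we also have $d_{\Up(N+1)}(x)=0$, so the same lemma yields $x\in\Sigma^{N+1}$ and the inequality is trivial. Assume then $d_{\Up(N)}(x)>0$. Since $\Sigma$ is closed, choose $x'\in \Sigma$ realizing $|x-x'|=\dist(x,\Sigma)$. By \eqn{dx<dT} applied to $x\in\Sigma^N$,
\[
|x-x'|=\dist(x,\Sigma)\lec \ve\, d_{\Up(N)}(x).
\]

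Next I would apply Lemma \ref{l:dxsigman} at level $N+1$ to the point $x'\in\Sigma$:
\[
\dist(x',\Sigma^{N+1})\lec \frac{\ve}{\tau}\, d_{\Up(N+1)}(x').
\]
Using $\Up(N)\subseteq\Up(N+1)$ together with the $1$-Lipschitz property of $d_{\Up(N+1)}$,
\[
d_{\Up(N+1)}(x')\leq d_{\Up(N)}(x')\leq d_{\Up(N)}(x)+|x-x'|\leq (1+C\ve)\, d_{\Up(N)}(x),
\]
which for $\ve$ small is $\lec d_{\Up(N)}(x)$.

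Finally, combining these bounds by the triangle inequality,
\[
\dist(x,\Sigma^{N+1})\leq |x-x'|+\dist(x',\Sigma^{N+1}) \lec \ve\, d_{\Up(N)}(x)+\frac{\ve}{\tau}\, d_{\Up(N)}(x)\lec \frac{\ve}{\tau}\, d_{\Up(N)}(x),
\]
which is exactly \eqn{dssn}. There is essentially no serious obstacle here: the proof is a short reduction to Lemma \ref{l:dxsigman}, and the only conceptual point worth emphasizing is the monotonicity of the sequence $\Up(N)$, which is what allows us to transfer a bound at level $N$ to one at level $N+1$.
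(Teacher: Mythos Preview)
Your proposal is correct and is exactly the expansion of the paper's one-line proof, which simply cites Lemmas \ref{l:dxsigma} and \ref{l:dxsigman}. You have filled in the details carefully (the triangle inequality, the monotonicity $\Up(N)\subseteq\Up(N+1)$, and the $1$-Lipschitz property of $d_{\Up(N)}$), and your handling of the trivial case $d_{\Up(N)}(x)=0$ is a nice touch, though strictly speaking it is subsumed by the main argument once one notes that \eqn{dx<dT} already forces $x\in\Sigma$ in that case.
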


\begin{proof}
This follows from Lemmas \ref{l:dxsigma} and \ref{l:dxsigman}.
\end{proof}

Let $C_{2}>1$. 

\begin{lemma}
Let $0<\tau<\tau_{1}(4)$. There is $M_{0}=M_{0}(C_{2})>0$ and $\ve_{0}=\ve_{0}(C_2)>0$ so that for $M>M_{0}$ and $0<\ve<\ve_{0}$, the following holds. For $N\geq 0$, $x\in \Sigma^{N}$, and $r>0$, there are planes $P_{x,r}^{N}$ that satisfy the following.
\begin{enumerate}
\item For all $x\in \Sigma^{N}$ and $r>0$,
\begin{equation}\label{e:snpxr}
d_{x,r}(\Sigma^{N},P_{x,r}^{N})\lec \ve.
\end{equation}
\item Suppose 
$x\in Q\in  \Stop(N-1)$.
For $0<r<2C_{2}\ell(Q)$, 
\begin{equation}\label{e:pxrpqs}
\angle(P_{x,r}^{N},P_{Q})\lec_{C_{2}}\alpha.
\end{equation}
\item Suppose $x\not\in \Sigma$ (so that $d_{\Up(N)}(x)>0$), and let  $Q_{x}^{N}$ be as in Lemma  \ref{l:qxn}.
For $k=k_{N}(x)$ as in Lemma \ref{l:lemma-5-4} and $0<r\leq r_{k}$,
\begin{equation}\label{e:pxrpq}
\angle(P_{x,r}^{N},P_{Q_{x}^{N}})\lec_{C_{2}}\ve
\end{equation}
\end{enumerate}
and there is a twice-differentiable $C\ve$-Lipschitz graph $\Gamma_{x}^{N}$ over $P_{x,r_{k}}^{N}$ so that
\begin{equation}
B(x,r_{k})\cap \Sigma^{N}=B(x,r_{k})\cap \Sigma_{k}^{N}=B(x,r_{k})\cap \Gamma_{x}^{N}
\label{e:sigmagraph}
\end{equation}
\label{l:sigmagraph}
\end{lemma}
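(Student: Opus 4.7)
The plan is to define $P_{x,r}^{N}$ via a scale-dependent procedure and then verify each property by triangle-inequality comparisons through the plane $P_{j,k}$ of a suitable net point $x_{j,k}$ near $x$. First, fix $x\in\Sigma^{N}$ and $r>0$. If $r\le r_{k}$ where $k=k_{N}(x)$, then by \eqref{e:bsnbsnk} we have $B(x,r_{k})\cap\Sigma^{N}=B(x,r_{k})\cap\Sigma^{N}_{k}$, and \eqref{e:ygraph} of \Theorem{DT} (with $y=x$) supplies an affine $d$-plane $P$ through $x$ together with a $C\ve$-Lipschitz $C^{2}$ graph $\Gamma$ over $P$ with $\Sigma^{N}_{k}\cap B(x,19r_{k})=\Gamma\cap B(x,19r_{k})$; set $P_{x,r}^{N}:=P$, and take $\Gamma_{x}^{N}:=\Gamma$ when $r=r_{k}$. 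For larger $r$, use item (9) of \Theorem{DT} (Reifenberg flatness of $\Sigma^{N}$) to pick $P_{x,r}^{N}$ with $d_{x,r}(\Sigma^{N},P_{x,r}^{N})\lec\ve$. In either case property (1) is immediate: at small scales from the Lipschitz graph bound, and at large scales directly from Reifenberg flatness.

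To establish (2) and (3), I would pass through an intermediary plane $P_{j,k}$. Choose $k$ so that $r_{k}$ is comparable to $r$, and pick $j\in J_{k}^{N}$ with $x_{j,k}$ the closest net point to $x$ (or to its projection to $\Sigma$ via Lemma \ref{l:qxn} when $x\notin\Sigma$), so that $|x-x_{j,k}|\lec r_{k}$. By \eqref{e:49r} we have $d_{x_{j,k},49r_{k}}(\Sigma^{N}_{k},P_{j,k})\lec\ve$; combining with our bound on $d_{x,r}(\Sigma^{N},P_{x,r}^{N})$ and extracting a $(d+1)$-tuple of well-spread points inside $\Sigma^{N}\cap B(x,r)$ via \eqref{e:sigmalr} and \Lemma{ATlemma}, we obtain $\angle(P_{x,r}^{N},P_{j,k})\lec_{C_{2}}\ve$. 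For property (2), with $x\in Q\in\Stop(N-1)$ and $r<2C_{2}\ell(Q)$, the cube $Q_{j,k}\in\Up(N)$ has $\ell(Q_{j,k})\sim r_{k}$ and either lies inside $Q$ (hence in $S_{Q}'$, so \Lemma{four-ten} gives $\angle(P_{Q_{j,k}},P_{Q})\lec\alpha$), or sits in an ancestor of $Q$ of comparable size (so \Lemma{PQPR} gives $\angle(P_{Q_{j,k}},P_{Q})\lec_{C_{2}}\ve$). The triangle inequality then yields $\angle(P_{x,r}^{N},P_{Q})\lec_{C_{2}}\ve+\alpha\lec_{C_{2}}\alpha$ since $\ve\ll\alpha$. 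For property (3), with $x\notin\Sigma$ and $Q_{x}^{N}\in\Stop(N)$ from Lemma \ref{l:qxn}, the cube $Q_{j,k}$ lies at bounded distance from $Q_{x}^{N}$ with $\ell(Q_{j,k})\sim\ell(Q_{x}^{N})$ by Lemmas \ref{l:qxn} and \ref{l:412}; \Lemma{PQPR} gives $\angle(P_{Q_{j,k}},P_{Q_{x}^{N}})\lec\ve$, and the triangle inequality then yields $\angle(P_{x,r}^{N},P_{Q_{x}^{N}})\lec_{C_{2}}\ve$.

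The final graph assertion follows by taking $r=r_{k}$ where $k=k_{N}(x)$: the coincidence $B(x,r_{k})\cap\Sigma^{N}=B(x,r_{k})\cap\Sigma^{N}_{k}$ is \eqref{e:bsnbsnk}, and the $C\ve$-Lipschitz graph over $P_{x,r_{k}}^{N}$ is the $\Gamma_{x}^{N}$ already selected. The principal technical obstacle is the auxiliary angle estimate $\angle(P_{x,r}^{N},P_{j,k})\lec_{C_{2}}\ve$: one must produce a $(d+1)$-tuple of well-spread points inside $\Sigma^{N}\cap B(x,r)$ that simultaneously lie close to \emph{both} $P_{x,r}^{N}$ and $P_{j,k}$, and one must track the dependence of all implicit constants on $C_{2}$ and $M$. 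It is precisely this dependence that forces the hypothesis $M\ge M_{0}(C_{2})$: only for $M$ sufficiently large relative to $C_{2}$ does the defining inequality \eqref{e:pqdef} for $P_{j,k}$ operate at a scale where the triangle-inequality chain closes up without swallowing the small angle $\ve$ into a larger error.
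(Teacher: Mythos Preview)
Your construction of $P_{x,r}^{N}$ and the proofs of item (1) and of \eqref{e:sigmagraph} match the paper exactly: small scales come from \Theorem{DT}(8), large scales from Reifenberg flatness of $\Sigma^{N}$, and \eqref{e:bsnbsnk} gives the graph statement.

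For items (2) and (3), however, you route everything through a net-point plane $P_{j,k}$, whereas the paper takes a different path. For (3) the paper bypasses $P_{j,k}$ entirely and argues directly:
\[
\angle(P_{x,r_{k}}^{N},P_{Q_{x}^{N}})
\lec d_{x,r_{k}}(P_{x,r_{k}}^{N},\Sigma^{N})+d_{x,r_{k}}(\Sigma^{N},\Sigma)+d_{x,r_{k}}(\Sigma,P_{Q_{x}^{N}}),
\]
each term $\lec\ve$ by (respectively) \eqref{e:snpxr}, Lemmas \ref{l:dxsigma} and \ref{l:dxsigman}, and the inclusion $B(x,r_{k})\subseteq MB_{Q_{x}^{N}}$ (valid because $\ell(Q_{x}^{N})\sim r_{k}$, and this is precisely where $M\ge M_{0}(C_{2})$ enters). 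For (2) with $r\ge r_{k}$ the paper takes the closest point $x'\in\Sigma$ (so $x'\in Q_{x}^{N}$) and then the largest \emph{ancestor} $R$ of $Q_{x}^{N}$ with $\ell(R)<r$; since $x'\in Q$ as well, the cube $R$ is automatically nested with $Q$, so either $R\in S_{Q}'$ (giving $\angle(P_{R},P_{Q})\lec\alpha$ by \Lemma{four-ten}) or $R\supseteq Q$ with $\ell(R)\lec_{C_{2}}\ell(Q)$ (giving $\angle(P_{R},P_{Q})\lec_{C_{2}}\ve$ by \Lemma{PQPR}). The case $r<r_{k}$ of (2) is then handled by chaining through (3) and the $r=r_{k}$ case.

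Your approach has a genuine gap in (2): the dichotomy ``$Q_{j,k}$ either lies inside $Q$ or is an ancestor of $Q$'' is not exhaustive. The net point $x_{j,k}$ is merely \emph{close} to $x$, so $Q_{j,k}$ can perfectly well lie in a neighbouring cube $Q'\in\Stop(N-1)$ with $Q'\ne Q$ and be neither contained in $Q$ nor an ancestor of it. To close this you would need an additional step through \Lemma{QSQS'} (comparing $P_{Q}$ and $P_{Q'}$) and then \Lemma{four-ten} inside $S_{Q'}'$, which you have not supplied. The paper's choice of an ancestor of $Q_{x}^{N}$ sidesteps this issue because ancestors of a cube containing $x'\in Q$ are forced to be nested with $Q$ by the tree structure of $\cD$. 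Your route is repairable, but as written the case analysis does not cover all possibilities.
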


\begin{proof}
Let $x\in \Sigma^{N}$ and $k=k_{N}(x)$. For $0<r\leq r_{k}$, let $\Gamma_{x}$ and $P_{x,r}^{N}=P_{x,r_{k}}^{N}$ be the graph and plane given by \Theorem{DT} (8), then \eqn{snpxr} and \eqn{sigmagraph} follow from \eqn{ygraph} and \eqn{bsnbsnk}. For $r>r_{k}$, \Lemma{skflat} implies the existence of a plane  $P_{x,r}^{N}$ satisfying \eqn{snpxr} again. Thus, we just need to verify  \eqn{pxrpqs} and \eqn{pxrpq}.

\begin{itemize}
\item \eqn{pxrpqs} for $r\geq r_{k}$: Suppose $r\geq r_{k}$. Let $x'\in \Sigma$ be the closest point to $x$ in $\Sigma$, so $x'\in Q_{x}^{N}$ by construction (of Lemma  \ref{l:qxn}). Let $R\in \cD$ be the largest parent 
such that $\ell(R)<r$. 
We have that for $M$ large 
that  $B(x',r/2)\subseteq MB_{R}$, and hence
\begin{equation}
\beta_{E,\infty}(B(x',r/2),P_{R})\lec \ve.
\label{e:Bx'r/21}
\end{equation}
Let $z\in B(x',r/2)\cap \Sigma\subseteq B(x,r)$. Then by Lemma \ref{e:dxsigman}
there is $z'\in \Sigma^{N}$ with 
\begin{align*}
|z-z'|
& \lec \ve d_{\Up(N)}(z)
\leq \ve \big(|x-x'|+|x'-z|+d_{\Up(N)}(x)\big)\\
 & \stackrel{\eqn{dx<dT}}{\lec} \ve(\ve r_{k} +r+r_{k}) 
 \lec \ve r.
\end{align*}
In particular, $z\in B(x,r)\cap \Sigma^{N}$ for $\ve$ small enough. By the definition of $P_{x,r}^{N}$, $\dist(z',P_{x,r}^{N})\lec \ve r$, and so $\dist(z,P_{x,r}^{N})\lec \ve r $ as well. Since this holds for all $z\in B(x',r/2)\cap \Sigma$, this implies
\begin{equation}
 \beta_{\Sigma,\infty}^{d}(B(x',r/2),P_{x,r}^{N}) \lec \ve .
\label{e:Bx'r/22}
\end{equation}
Equations \eqn{Bx'r/21} and \eqn{Bx'r/22} along with Lemmas \ref{l:betabeta'} and \ref{l:betabetainf} imply $\angle(P_{R},P_{x,r}^{N})\lec \ve$, \
and since $\angle(P_{R},P_{Q})\lec_{C_{2}} \ve$ (by \eqn{PQPQS} and $r<2C_{2}\ell(Q)$), we have $\angle(P_{x,r}^{N},P_{Q})\lec_{C_{2}} \alpha$.

\item \eqn{pxrpq} ($0<r\leq r_{k}$):  Let $0<r\leq r_{k}$. Since 
\[
\ell(Q_{x}^{N})\sim d_{\Up(N)}(x) \sim r_{k},\] 
we have that for $M>1$ large enough, $ MB_{Q_{x}^{N}}\supseteq B(x,r_{k})$. Note that $\eqn{dx<dT}$ and \eqn{dxsigman} imply that
$d_{x,r_{k}}(\Sigma,\Sigma^{N})
\lec \ve$ 
and so 
\begin{align*}
\angle(P_{x,r}^{N},P_{Q_{x}^{N}})
& =\angle(P_{x,r_{k}}^{N},P_{Q_{x}^{N}})
\stackrel{\eqn{dxardxr}}{=} d_{x,r_{k}/4}(P_{x,r_{k}}^{N},P_{Q_{x}^{N}})\\
& \stackrel{\eqn{dxrtrngl}}{\lec} d_{x,r_{k}}(P_{x,r_{k}}^{N},\Sigma^{N}) +  d_{x,r_{k}}(\Sigma^{N},\Sigma)+ d_{x,r_{k}}(\Sigma,P_{Q_{x}^{N}})\\
& \stackrel{\eqn{snpxr}}{\lec} \ve + \ve + d_{MB_{Q_{x}^{N}}}(\Sigma,P_{Q_{x}^{N}})
\lec \ve 
\end{align*}

\item \eqn{pxrpqs} for $r< r_{k}$: By the two previous cases,
\[
\angle (P_{x,r}^{N},P_{Q})
\leq \angle(P_{x,r}^{N},P_{Q_{x}^{N}})+\angle(P_{Q_{x}^{N}},P_{x,r_{k}}^{N})
+\angle(P_{x,r_{k}}^{N},P_{Q})
\lec_{C_{2}} \alpha.
\]

\end{itemize}

\end{proof}

Recall the definition of $\cF_N$ of extended stoping times before Lemma \ref{l:5.1}. 
Recall also that $D(\cdot, \cdot,\cdot)$ is a cylinder, as in equation \eqref{e:D-is-a-cylinder}. 

\begin{lemma}
Let $M>M_{1}(C_{2}):=\max\{M_{0}(2C_{2}),4C_{2}\}$, $\tau<\tau_{1}(2C_{2})$, $Q\in S\in \cF_{N}$ and $P$ be a $d$-plane such that $d_{B_{Q}}(P,P_{Q})<\theta$. If $\alpha,\theta$, and $\ve$ are small enough (depending on $C_{2}$), then there is a $C(\alpha+\theta)$-Lipschitz map $A_{P,Q}^{N}:P\rightarrow P^{\perp}$ that is zero outside of $P\cap B(\pi(x_{Q}),2C_{2}\ell(Q)))$ such that if $\Gamma_{P,Q}^{N}$ is the graph of $A_{P,Q}^{N}$ along $P$, then
\begin{equation}\label{e:isagraph}
 \Sigma^{N}\cap D(x_{Q},P,C_{2}\ell(Q))=\Gamma_{P,Q}^{N}\cap D(\pi_{P}(x_{Q}),P,C_{2}\ell(Q)).
 \end{equation}
 If $Q=Q(S)$, we will set $A_{S}=A_{P,Q(S)}^{N}$ and $\Gamma_{S}=\Gamma_{P,Q(S)}^{N}$. If $Q=Q_{x}^{N}$ for some $x\in \Sigma^{N}$, then $A_{P,Q}^{N}$ is $C(\ve+\theta)$-Lipschitz.
 \label{l:graph}
 \end{lemma}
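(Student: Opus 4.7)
The plan is to use the projection $\pi_{P}$ to parametrize $\Sigma^{N}$ inside an enlarged cylinder $D(x_{Q},P,2C_{2}\ell(Q))$, produce a function whose graph is the desired $\Gamma_{P,Q}^{N}$ on $D(x_{Q},P,C_{2}\ell(Q))$, and then extend by a smooth radial cutoff so that the extension vanishes outside $P\cap B(\pi_{P}(x_{Q}),2C_{2}\ell(Q))$.

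First I would uniformly control the tangent directions of $\Sigma^{N}$ inside $D(x_{Q},P,2C_{2}\ell(Q))$. For each $y$ in this set, \Lemma{qxn} together with \Lemma{412} provides a cube $Q_{y}^{N}\in \Stop(N)$ with $\ell(Q_{y}^{N})\sim d_{\Up(N)}(y)\lec_{C_{2}}\ell(Q)$, so that $r_{k_{N}(y)}\lec_{C_{2}}\ell(Q)$; in particular $r_{k_{N}(y)}\leq 2C_{2}\ell(Q)$ once $M>M_{1}(C_{2})$ is chosen large enough. Then \Lemma{sigmagraph} gives that $\Sigma^{N}\cap B(y,r_{k_{N}(y)})$ is the graph of a $C\ve$-Lipschitz function over $P_{y,r_{k_{N}(y)}}^{N}$, and \eqref{e:pxrpqs} combined with the hypothesis $d_{B_{Q}}(P,P_{Q})<\theta$ and the triangle inequality \eqref{e:ptrngl} gives
\[
\angle(P_{y,r_{k_{N}(y)}}^{N},P)\lec_{C_{2}}\alpha+\theta.
\]
Hence, provided $\alpha,\theta,\ve$ are small enough depending on $C_{2}$, $\Sigma^{N}\cap B(y,r_{k_{N}(y)}/2)$ is also a graph over $P$ with Lipschitz constant $\lec_{C_{2}}\alpha+\theta$.

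Second, I would show that $\pi_{P}$ restricted to $\Sigma^{N}\cap D(x_{Q},P,2C_{2}\ell(Q))$ is injective and surjects onto a region containing $P\cap B(\pi_{P}(x_{Q}),C_{2}\ell(Q))$. Injectivity follows from the local graph structure of the first step via a chaining argument: two distinct preimages of a point $z\in P$ would produce an arc in $\Sigma^{N}$ whose tangent at every point is within angle $C_{2}(\alpha+\theta)$ of $P$, forcing the $\pi_{P}$-image of that arc to be monotone between the endpoints and contradicting the coincidence of projections. Surjectivity proceeds by a degree/homotopy argument patterned on the proof of \Lemma{bigproj}: the uniform local graph property makes $\pi_{P}$ an open map, the image is closed in $P\cap B(\pi_{P}(x_{Q}),C_{2}\ell(Q))$ by compactness, and the Reifenberg flatness of $\Sigma^{N}$ (Lemma \ref{l:skflat}) together with the tangent-angle bound prevents the image from missing any point of this ball.

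Once the bijection is established, define $A_{P,Q}^{N}(z)$ on $P\cap B(\pi_{P}(x_{Q}),C_{2}\ell(Q))$ to be the unique element of $P^{\perp}$ with $z+A_{P,Q}^{N}(z)\in \Sigma^{N}$; the first step upgrades this to a $C(\alpha+\theta)$-Lipschitz function with sup-norm $\lec_{C_{2}}(\alpha+\theta)\ell(Q)$, and a standard radial cutoff extends it to all of $P$ with support in $P\cap B(\pi_{P}(x_{Q}),2C_{2}\ell(Q))$ while preserving the Lipschitz constant up to a universal factor. The identity \eqref{e:isagraph} is then immediate on the smaller cylinder. The sharper assertion for $Q=Q_{x}^{N}$ comes by substituting \eqref{e:pxrpq} for \eqref{e:pxrpqs} in Step 1, replacing $\alpha$ by $\ve$ in every angle estimate and yielding a $C(\ve+\theta)$-Lipschitz function. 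The main obstacle is Step 2: verifying the global injectivity and surjectivity of $\pi_{P}$ across the full $C_{2}\ell(Q)$-scale cylinder requires combining the uniform openness coming from the local graph property with a topological argument analogous to \Lemma{bigproj}, and simultaneously tracking the smallness thresholds on $\alpha,\theta,\ve$ that each ingredient demands.
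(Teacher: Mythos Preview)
Your overall plan is sound, but it is more circuitous than the paper's, and your Step~2 as written has a genuine gap.

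The paper bypasses both the chaining argument for injectivity and the degree argument for surjectivity by proving a \emph{direct two-point bi-Lipschitz estimate} for $\pi_{P}$ on $\Sigma^{N}\cap 2C_{2}B_{Q}$. The key observation is that \Lemma{sigmagraph} furnishes approximating planes $P_{x,r}^{N}$ for \emph{every} scale $r$, not only for $r\le r_{k_{N}(x)}$. So for arbitrary $x,y\in \Sigma^{N}\cap 2C_{2}B_{Q}$ one takes $r=|x-y|$, uses \eqref{e:snpxr} to get $\dist(x,P_{x,r}^{N})+\dist(y,P_{x,r}^{N})\lec \ve r$, and then bounds $\angle(P_{x,r}^{N},P)$ by $\theta+C\alpha$. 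The angle bound is obtained not from \eqref{e:pxrpqs} directly (which, as you invoke it, would require $y$ to lie in a cube of $\Stop(N-1)$, something not guaranteed for $y\in 2C_{2}B_{Q}$), but by first proving the claim $\angle(P_{Q_{y}^{N}},P_{Q})\lec\alpha$ for all $y\in 2C_{2}B_{Q}\cap\Sigma^{N}$ via \Lemma{QSQS'} and \eqref{e:PQPQS}, and then chaining through $P_{Q_{x}^{N}}$. A short trigonometric computation then gives $|\pi_{P}(x-y)|\geq (1-C(\alpha+\theta)^{2})|x-y|$ immediately, with no arc or monotonicity argument.

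Your chaining step is the weak point: to connect two preimages you produce an arc in $\Sigma^{N}$, but nothing forces that arc to stay inside $D(x_{Q},P,2C_{2}\ell(Q))$, which is the only region where your Step~1 supplies the tangent-angle control. Once the arc exits, the ``monotone projection'' argument breaks down. This is fixable (for instance by using Reifenberg flatness to show the arc stays near $P$ if one enlarges the cylinder), but it is extra work that the paper's two-point estimate makes unnecessary. The sharper $C(\ve+\theta)$ bound for $Q=Q_{x}^{N}$ is obtained exactly as you say: replace the $\alpha$ coming from \eqref{e:qynqa} by the $\ve$ coming from \eqref{e:pxrpq}.
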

 
 \begin{proof}
First, we claim that
 \begin{equation}\label{e:qynqa}
 \angle(P_{Q_{y}^{N}},P_{Q})\lec\alpha \mbox{ for }y\in 2C_{2}B_{Q}\cap \Sigma^{N}.
 \end{equation} 
 Since $Q_{y}^{N}\in \Stop(N)$, there is $S'\in \cF_N$ such that $Q_{y}^{N}\in S'$. Since $y\in 2B_{Q_{y}^{N}}$, we know $y\in 2B_{Q(S')}$ by \eqn{cqincr}. Since we also have $y\in 2C_{2}B_{Q}\subseteq 2C_{2}B_{Q(S)}$, this implies $2C_{2}B_{Q(S)}\cap 2C_{2}B_{Q(S')}\neq \emptyset$. Hence, by \Lemma{QSQS'} with $C_{1}=2C_{2}$,
 \begin{multline*}
 \angle(P_{Q},P_{Q_{y}^{N}})
 \leq \angle(P_{Q},P_{Q(S)})+\angle(P_{Q(S)},P_{Q(S')})+\angle(P_{Q(S')},P_{Q_{y}^{N}})\\
 \stackrel{ \eqn{PQPQS}}{\lec}_{C_{2}}\alpha+\ve+\alpha\lec\alpha.
 \end{multline*}
 This proves the claim.

Let $x,y\in \Sigma^{N}\cap B(x_{Q},C_{2}\ell(Q))$, $r=|x-y|$, and $P_{x,r}^{N}$ be from \Lemma{sigmagraph}. Note that
\begin{equation}\label{e:QxNcase}
\angle(P_{x,r}^{N},P_{Q}) 
\leq \angle(P_{x,r}^{N}, P_{Q_{x}^{N}})
+\angle(P_{Q_{x}^{N}},P_{Q}) 
\stackrel{ \eqn{pxrpq} \atop \eqn{qynqa} }{\lec}_{C_{2}}  \alpha.
\end{equation}

Let $x',y'$ be the projections of $x$ and $y$ into $P_{x,r}^{N}$. Then 
\begin{equation}\label{e:x'xy'y}
|x'-x|+|y'-y|\leq c'\ve r
\end{equation} 
for some universal constant $c'>0$. Moreover,

\[ \angle(P,P_{x,r}^{N})\leq \angle(P,P_{Q})+\angle(P_{Q},P_{x,r}^{N})\stackrel{\eqn{QxNcase}}{<}\theta+c\alpha=:t.\] 

Thus, by a bit of trigonometry, setting $t'=\sqrt{1-(t/2)^{2}}$, for $\ve$ small enough (depending on $M,\alpha$, and $\theta$),
\begin{align*}
|\pi_{P}(x-y)|
& \stackrel{\eqn{x'xy'y}}{\geq}  |\pi_{P}(x'-y')|-2c'\ve r
\geq t'|x'-y'|-2c'\ve |x-y|\\
&  \stackrel{\eqn{x'xy'y}}{\geq} t'|x-y|-(1+t')2c'\ve|x-y|\\
& \geq (t'-(t/2)^{2})|x-y|
\geq (1-t^{2}/2)|x-y|
\end{align*}


Assume $x_{Q}\in P$. For $x\in \pi_{P}(\Sigma^{N}\cap B(x_{Q},C_{2}\ell(Q))$, we now set $A_{P,Q}^{N}(x)=y$ where $y\in \Sigma^{N}\cap B(x_{Q},C_{2}\ell(Q))$ is such that $x=\pi_{P}(y)$. For $x\not\in P\cap B(\pi(x_{Q},2C_{2}\ell(Q)))$, we set $A_{P,Q}^{N}=0$. For $\alpha$ and $\theta$ small enough, the resulting map is $C'(\alpha+\theta)$-Lipschitz where it is defined for some universal constant $C'$, and we can extend to the rest of $P$ so that it is still $C'(\alpha+\theta)$-Lipschitz and 
\[
\Sigma^{N}\cap D(x_{Q},2C_{1}\ell(Q)) = \Gamma_{P,Q}^{N}\cap D(x_{Q},2C_{1}\ell(Q)).\]
If $x_{Q}\not\in P$, then 
\[
\dist(x_{Q},P)\leq r_{B_{Q}}d_{B_{Q}}(P,P_{Q})<\theta \ell(Q)\] 
and so we can apply our previous work to the plane $P-x_{Q}+\pi_{P}(x_{Q})$ in place of $P$, then translating the resulting graph by $x_{Q}-\pi_{P}(x_{Q})$. For $\theta>0$ small enough (depending on $C_{2}$), this gives the result.

If $Q=Q_{x}^{N}$, we run the same proof, only instead of \eqn{QxNcase} we use \eqn{pxrpq} to bound $\angle(P_{x,r}^{N},P_{Q}) \lec\ve$, and then replace each instance of $\alpha$ in the proof with $\ve$.
 \end{proof}

\section{The map between layers}\label{s:layers}

\begin{remark}\label{r:tau2}
We fix $\tau>0$ so that the results of the previous two sections hold, that is, we pick $\tau<\min\{\tau_{0},\tau_{1}(4),\tau_{2}(3C_{2}),c_{0}/4\}$, where $C_{2}$ can be freely fixed to be any constant larger than $4C_0$.  We explicitly fix $C_2$ in  Remark \ref{r:c2} where it is relevant.
\end{remark}

In this section, we will construct a map $F_{N}:\Sigma^{N}\rightarrow \Sigma^{N+1}$. First,  set
\begin{equation}
F_{N}(x) =x \mbox{ for }x\in \Sigma^{N}\cap \Sigma.
\end{equation}
For $x\in \Sigma^{N}\backslash \Sigma$, let $k=k_{N}(x)$, so by \Lemma{dxsigma}, $r_{k}\sim d_{\Up(N)}(x)$ and
\[B(x,r_{k})\cap\Sigma^{N}=B(x,r_{k})\cap \Sigma_{k}^{N}.\]
In particular, $B(x,r_{k})\cap \Sigma^{N}$ is a smooth surface. Let $V_{x}^{N}$ be the $d$-dimensional tangent plane to $\Sigma^{N}$ at $x$, and $W_{x}^{N}=(V_{x}^{N})^{\perp}+x$.

\begin{lemma}
For $x\in \Sigma^{N}\backslash\Sigma$, let $k=k_{N}(x)$, $W$ be a $(n-d)$-plane passing through $x$ with $\angle(W,W_{x}^{N})\leq \theta$. Then for $\theta$ small enough, $W\cap B(x,10r_{k})\cap \Sigma^{N+1}$ contains exactly one point $z$ with $|x-z|\lec \ve d_{\Up(N)}(x) $.
\label{l:W}
\end{lemma}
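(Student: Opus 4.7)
The strategy is to combine a trapping estimate confining $\Sigma^{N+1}$ to a thin neighbourhood of the tangent plane $V_x^N$, a transversality argument that then forces candidate intersection points into a much smaller ball, and a local Lipschitz-graph representation of $\Sigma^{N+1}$ on that ball to give both existence and uniqueness.

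First I would show that every $z\in\Sigma^{N+1}\cap B(x,10r_k)$ satisfies $\dist(z,V_x^N)\lec\ve r_k$. By Lemma~\ref{l:lemma-5-4} applied to $\Sigma^{N+1}$, $\dist(z,\Sigma)\lec \ve r_{k_{N+1}(z)}$; the inclusion $\Up(N+1)\supseteq\Up(N)$ and the $1$-Lipschitz property of $d_{\Up(N)}$ give $r_{k_{N+1}(z)}\sim d_{\Up(N+1)}(z)\leq d_{\Up(N)}(z)\lec r_k$, so $\dist(z,\Sigma)\lec\ve r_k$. Combining this with the graph representation~\eqref{e:sigmagraph} of $\Sigma^N$ over $V_x^N$ in $B(x,r_k)$, and the closeness of $\Sigma$ to $\Sigma^N$ near $x$ from Lemma~\ref{l:lemma-5-4} for $\Sigma^N$, places $\Sigma\cap B(x,20r_k)$ in a $C\ve r_k$-neighbourhood of $V_x^N$, hence the same bound holds for $z$. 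Since $\angle(W,W_x^N)\leq\theta$ and $W_x^N=(V_x^N)^\perp+x$, one can parametrize $W=\{x+Av+v:v\in(V_x^N)^\perp\}$ with $\|A\|\lec\theta$, so any $z\in W$ with $\dist(z,V_x^N)\leq C\ve r_k$ satisfies $|z-x|\leq(1+\|A\|)\dist(z,V_x^N)\lec\ve r_k$. This both yields the size estimate asserted in the lemma and confines every candidate intersection point to the smaller ball $B(x,C\ve r_k)$.

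To finish inside $B(x,C\ve r_k)$, I would use Lemma~\ref{l:dssn} to produce $x'\in\Sigma^{N+1}$ with $|x-x'|\lec(\ve/\tau)r_k$, then apply the $(N+1)$-analogue of Lemma~\ref{l:sigmagraph} at $x'$ to obtain a plane $P'=P_{x',r_{k'}}^{N+1}$ and a $C\ve$-Lipschitz graph $\Gamma$ over $P'$ with $\Sigma^{N+1}\cap B(x',r_{k'})=\Gamma\cap B(x',r_{k'})$. A chain through $P_{Q_x^N}$, using the $(N+1)$-version of \eqref{e:pxrpq} at $x'$, \eqref{e:pxrpq} at $x$, and Lemma~\ref{l:PQPR} to compare the planes of the nearby stopping cubes $Q_x^N$ and $Q_{x'}^{N+1}$ (together with \eqref{e:PQPQS} when $Q_{x'}^{N+1}\subset Q_x^N$), gives $\angle(P',V_x^N)\lec\alpha$. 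Writing $W\cap\Gamma$ in the splitting $V_x^N\oplus(V_x^N)^\perp$ and parametrising both graphs over $(V_x^N)^\perp$ reduces the intersection to a fixed-point equation $v=T(v)$, where $T$ is the composition of the $O(\theta)$-Lipschitz $W$-parametrization with the $O(\alpha+\ve)$-Lipschitz $\Gamma$-parametrization, hence a contraction for $\theta+\alpha+\ve$ sufficiently small; its unique fixed point is the desired $z$.

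The main obstacle is establishing the lower bound $r_{k'}\gec\ve r_k$ needed for the graph ball $B(x',r_{k'})$ to actually contain all the candidate intersection points trapped inside $B(x,C\ve r_k)$. This should follow from the stopping-time structure: any much smaller $\Up(N+1)$-cube near $x'$ must sit strictly inside the stopping cube $Q_x^N$, and the $\alpha$-angle threshold defining the extended stopping-time region $S_{Q_x^N}'$ together with the angle control~\eqref{e:PQPQS} forbids such cubes from appearing at scale $\ve r_k$ provided $\ve$ is taken small compared to $\alpha$ and $\tau$. Once this scale comparability is in hand, the remaining fixed-point/transversality computation is routine.
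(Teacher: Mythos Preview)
Your ``main obstacle'' is genuine, and the fix you propose does not work. You want the graph ball from Lemma~\ref{l:sigmagraph} at $x'\in\Sigma^{N+1}$ to have radius $r_{k'}=r_{k_{N+1}(x')}\gec\ve r_k$, but $r_{k'}\sim d_{\Up(N+1)}(x')$ and there is no lower bound on this quantity in terms of $r_k$: the stopping at level $N{+}1$ inside $Q_x^N$ can run arbitrarily deep (indeed $x'$ could lie in $\Sigma$, in which case $d_{\Up(N+1)}(x')=0$). Your appeal to the angle threshold in $S_{Q_x^N}'$ does not prevent this, since the $\alpha$-constraint only bounds the angle of descendants' planes relative to $P_{Q_x^N}$, not their depth. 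So the local graph you obtain from Lemma~\ref{l:sigmagraph} may be far too small to contain the candidate intersection points.

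The paper avoids this entirely by invoking Lemma~\ref{l:graph} rather than Lemma~\ref{l:sigmagraph}. Since $Q_x^N\in\Stop(N)$ is the top cube of some $S\in\cF_{N+1}$, Lemma~\ref{l:graph} (applied at level $N{+}1$) represents $\Sigma^{N+1}$ as a Lipschitz graph over the full cylinder $D(x_{Q_x^N},V,C_2\ell(Q_x^N))$, whose radius is $\sim r_k$, not $r_{k_{N+1}}$. Moreover, the paper chooses the base plane $V$ to be the $d$-plane perpendicular to $W$ through $x$; since $\angle(V,P_{Q_x^N})\leq\angle(V,V_x^N)+\angle(V_x^N,P_{Q_x^N})\lec\theta+\ve$, the hypothesis of Lemma~\ref{l:graph} is met, and then $W\cap\Sigma^{N+1}$ inside the cylinder is automatically the single point $x+A_{V,Q_x^N}^{N+1}(x)$. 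No trapping-then-fixed-point argument is needed, and the distance estimate $|x-z|\lec\ve d_{\Up(N)}(x)$ follows from a direct chain of $d_{x,r}$-comparisons between $\Sigma^{N+1}$, $\Sigma$, $P_{Q_x^N}$ and $V_x^N$. Your trapping paragraph also has a ball-size mismatch (the graph representation~\eqref{e:sigmagraph} is only valid in $B(x,r_k)$, yet you use it to control $\Sigma$ in $B(x,20r_k)$); this is repairable via Reifenberg flatness of $\Sigma$, but it is another symptom of working with the wrong graph lemma.
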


\begin{proof}
Let $V$ the $d$-plane perpendicular to $W$ passing through $x$ and $\Gamma=\Gamma_{V,Q_{x}^{N}}^{N+1}$. 
Recalling \Lemma{sigmagraph}, as $x\in \Sigma^{N}\backslash \Sigma$, $V_{x}^{N}$ is tangent to $\Gamma_{x}^{N}$ which is a $C\ve$-Lipschitz graph over $P_{x,r_{k}}^{N}$, we have $\angle(V_{x}^{N},P_{x,r_{k}}^{N})\lec \ve$.  Thus, 
\[
\angle(P_{Q_{x}^{N}},V_{x}^{N})
\leq \angle(P_{Q_{x}^{N}},P_{x,r}^{N})
+
\angle(P_{x,r_{k}}^{N},V_{x}^{N}) \stackrel{\eqn{pxrpq}}{\lec} \ve.\]

Moreover, $\angle(V,V_{x}^{N})=\angle(W,W_{x}^{N})<\theta$. By \Lemma{graph}, for $\ve$ and $\theta$ small enough, we have
\[
\Sigma^{N+1}\cap D\ps{x_{Q_{x}^{N}},V,C_{2}\ell(Q_{x}^{N})}=\Gamma_{V,Q_{x}^{N}}^{N+1}\cap D\ps{x_{Q_{x}^{N}},V,C_{2}\ell(Q_{x}^{N})}.\]
In particular, 
\[
\Sigma^{N+1}\cap D\ps{x_{Q_{x}^{N}},V,C_{2}\ell(Q_{x}^{N})}\cap W
=\{A_{V,Q_{x}^{N}}^{N+1}(x)+x\}.\]
Let $z=A_{V,Q_{x}^{N}}^{N+1}(x)+x$. Since $A_{V,Q_{x}^{N}}^{N+1}$ is $C(\alpha+\theta)$-Lipschitz and vanishes outside $V\cap B(\pi_{V}(x_{Q_{x}^{N}}),2C_{2}\ell(Q_{x}^{N}))$ by \Lemma{graph}
\[
|z-x|
=|A_{V,Q_{x}^{N}}(x)| 
\lec (\alpha +\theta)\ell(Q_{x}^{N})
\sim (\alpha +\theta)  d_{\Up(N)}(x)
\sim (\alpha +\theta) r_{k}\]
For $\alpha$ and $\delta$ small, we know that $|z-x|<\ell(Q_{x}^{N})$, and since $x\in 2B_{Q_{x}^{N}}$ by \Lemma{qxn}, we have $z\in 3B_{Q_{x}^{N}}$. Then
\begin{align*}
|z-x|
& \leq \dist(z,V_{x}^{N})
 \leq 3\ell(Q_{x}^{N}) d_{3B_{Q_{x}^{N}}}(\Sigma^{N+1},V_{x}^{N})\\
& \lec  \ell(Q_{x}^{N}) \ps{ d_{6B_{Q_{x}^{N}}}(\Sigma^{N+1},\Sigma) + d_{6B_{Q_{x}^{N}}}(\Sigma, V_{x}^{N})}\\
& \lec  \ell(Q_{x}^{N}) \ps{  d_{6B_{Q_{x}^{N}}}(\Sigma^{N+1},\Sigma)   + d_{12B_{Q_{x}}^{N}}(\Sigma, P_{Q_{x}^{N}}) + d_{12B_{Q_{x}}^{N}}( P_{Q_{x}^{N}},V_{x}^{N})})\\ 
& = \ell(Q_{x}^{N})(I_{1}+I_{2}+I_{3}).
\end{align*}
Frist off, using \eqn{dx<dT} and \eqn{dxsigman} and the fact that $x\in 2B_{Q_{x}}^{N}$ and $Q_{x}^{N}\in \Up(N)\subseteq \Up(N+1)$, we have
\begin{align*}
I_{1}
& \leq \frac{\sup_{y\in 6B_{Q_{x}^{N}}\cap \Sigma^{N+1}} \dist(y,\Sigma)+\sup_{y\in 6B_{Q_{x}}^{N}\cap \Sigma} \dist(y,\Sigma^{N+1})}{6\ell(B_{Q_{x}}^{N})}\\
& \lec  \frac{\sup_{y\in 6B_{Q_{x}}^{N}\cap (\Sigma^{N+1}\cup \Sigma)} \ve d_{\Up(N+1)}(y)}{6\ell(B_{Q_{x}^{N}})}\\
& \leq \ve \frac{d_{\Up(N+1)}(x)+6\ell(B_{Q_{x}^{N}})}{6\ell(B_{Q_{x}^{N}})}
\leq \ve \frac{ 2\ell(Q_{x}^{N})+6\ell(B_{Q_{x}^{N}})}{6\ell(B_{Q_{x}^{N}})}\lec \ve
\end{align*}
Next, by our choice of $P_{Q_{x}^{N}}$,
\[
I_{2}= d_{12B_{Q_{x}}^{N}}(\Sigma, P_{Q_{x}^{N}}) 
\lec d_{MB_{Q_{x}}^{N}}(\Sigma, P_{Q_{x}^{N}}) \lec \ve.\]
Finally, note that by \eqn{dx<dT}, there is $y\in \Sigma$ with $|x-y|\lec \ve d_{\Up(N)}(x)\sim \ve \ell(Q_{x}^{N})$. For $\ve>0$ small, enough, $y\in \Sigma\cap MB_{Q_{x}^{N}}$, and so by our choice of plane $P_{Q_{x}^{N}}$,
\[
\dist(x,P_{Q_{x}^{N}})
\leq |x-y|+\dist(y,P_{Q_{x}^{N}})
\lec \ve \ell(Q_{x}^{N})+ \ve M\ell(Q_{x}^{N})\lec \ve \ell(Q_{x}^{N}).\]
Hence, since $12B_{Q_{x}}^{N}\subseteq B(x,14\ell(Q_{x}^{N}))$,
\begin{align*}
I_{3}
& \leq 
 d_{12B_{Q_{x}^{N}}}( P_{Q_{x}^{N}}, P_{Q_{x}^{N}}-\pi_{P_{Q_{x}^{N}}}(x)  )
 + d_{12B_{Q_{x}}^{N}}(P_{Q_{x}^{N}}-\pi_{P_{Q_{x}^{N}}}(x)+x ,V_{x}^{N}))\\
& \lec \ve + d_{x,14\ell (Q_{x}^{N})}(P_{Q_{x}^{N}}-\pi_{P_{Q_{x}^{N}}}(x)+x,V_{x}^{N}))
=\ve + \angle (P_{Q_{x}^{N}},V_{x}^{N}) \lec \ve.
\end{align*}
Combining our estimate for $I_{1}$, $I_{2}$, and $I_3$ gives $|x-z|\lec \ve \ell(Q_{x}^{N})\sim \ve d_{\Up(N)}(x)$, which finishes the lemma.

\end{proof}

Now we set
\[A_{x}^{N+1}:=A_{V_{x},Q_{x}^{N}}^{N+1} \;\mbox{ and }\; \Gamma_{x}^{N}=\Gamma_{V_{x},Q_{x}}^{N+1}.\]
By the previous lemma, applied to $W=V_{x}^{\perp}$ (and so $\theta=0$), and using the fact that $Q_{x}^{N}\in \Stop(N)$, we know $A_{x}$ is $(1+C\alpha)$-Lipschitz by \Lemma{graph}. We also set
\begin{equation}
F_{N}(x)=A_{x}(x)+x\in \Sigma^{N+1}\mbox{ for }x\in \Sigma^{N}\backslash \Sigma. 
\end{equation}
By \eqn{sn01}, 
\begin{equation}\label{e:fnid}
F_{N}(x)=x \mbox{ for }x\in \Sigma^{N}\backslash B(0,1)=P_{0}\backslash B(0,1)
\end{equation}
and by \Lemma{W},
\begin{equation}\label{e:fnx-x}
|F_{N}(x)-x|\lec \ve d_{\Up(N)}(x).
\end{equation}

\begin{lemma}
$F_{N}:\Sigma^{N}\rightarrow \Sigma^{N+1}$ is $(1+C\alpha)$-bi-Lipschitz. Moreover, (and recalling that $\epsilon\ll\alpha$)
 there is $C>0$ so that
\begin{equation}\label{e:FN}
(1-C\ve)|x-y|\leq |F_{N}(x)-F_{N}(y)|\leq (1+C\alpha)|x-y| \; \mbox{ for all } \; x,y\in\Sigma^{N},
\end{equation}
\label{l:FN}
\end{lemma}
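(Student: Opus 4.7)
The strategy is to split into two regimes depending on how $|x-y|$ compares with the local scales $d_{\Up(N)}(x)$ and $d_{\Up(N)}(y)$. Throughout we use three key facts: $d_{\Up(N)}$ is $1$-Lipschitz, $|F_N(z) - z| \lesssim \ve\, d_{\Up(N)}(z)$ by \eqref{e:fnx-x}, and $F_N$ is the identity on $\Sigma \cap \Sigma^N$.

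\textbf{Coarse regime.} Suppose $|x-y| \geq c_0 \max(d_{\Up(N)}(x), d_{\Up(N)}(y))$ for some small absolute constant $c_0$. Writing $F_N(x) - F_N(y) = (x-y) + (F_N(x)-x) - (F_N(y)-y)$ and applying \eqref{e:fnx-x} to both endpoints yields
\[
\bigl| |F_N(x)-F_N(y)| - |x-y| \bigr| \lesssim \ve\bigl(d_{\Up(N)}(x) + d_{\Up(N)}(y)\bigr) \lesssim \ve\, |x-y|,
\]
supplying both inequalities of \eqref{e:FN} with constant $C\ve$ in this regime.

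\textbf{Fine regime.} Otherwise, assume without loss of generality that $|x-y| \leq c_0\, d_{\Up(N)}(x)$; choosing $c_0$ small, both points lie well inside $B(x, r_k)$ where $k = k_N(x)$, and in particular belong to $\Sigma^N \setminus \Sigma$. By \eqref{e:bsnbsnk} together with Theorem \ref{t:DT}(8), $B(x,r_k)\cap \Sigma^N$ is a $C\ve$-Lipschitz $C^2$ graph over the tangent plane $V_x^N$. Applying \Lemma{graph} with $P=V_x^N$ and $Q = Q_x^N$---and observing, as in the proof of \Lemma{W}, that $\angle(V_x^N, P_{Q_x^N}) \lesssim \ve$, so the parameter $\theta$ in \Lemma{graph} is $O(\ve)$---the map $A:=A_{V_x^N, Q_x^N}^{N+1}$ is $C\ve$-Lipschitz, and its graph $\Gamma$ over $V_x^N$ coincides with $\Sigma^{N+1}$ throughout the cylinder $D(x_{Q_x^N}, V_x^N, C_2\ell(Q_x^N))$. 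Because tangents of the $C\ve$-Lipschitz graph $\Sigma^N$ vary $C\ve$-continuously, $\angle(V_y^N, V_x^N) \lesssim \ve$, and the uniqueness clause of \Lemma{W} forces $F_N(y) \in \Gamma$. Writing $F_N(y) = w_y + A(w_y)$ with $w_y = \pi_{V_x^N}(F_N(y))$, and $F_N(x) = x + A(x)$ (since $x \in V_x^N$), the orthogonal projection bound $|w_y - x| \leq |y - x| + O(\ve\,|y-x|)$ together with the $C\ve$-Lipschitz estimate on $A$ gives
\[
|F_N(x) - F_N(y) - (x - y)| \lesssim \ve\, |x-y|,
\]
yielding both bounds of \eqref{e:FN} with constant $C\ve$ in this regime as well.

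Combining the two regimes produces \eqref{e:FN} with the stronger constant $C\ve$; the stated upper bound $(1+C\alpha)|x-y|$ is merely the weaker consequence, since $\ve \ll \alpha$. The principal technical obstacle is the fine-scale case, where one must certify that $F_N(y)$ lies on the single graph $\Gamma$ associated to $V_x^N$ rather than on the (nearby but distinct) graph over $V_y^N$. This is precisely what the uniqueness in \Lemma{W} provides: the normal line $y + (V_y^N)^\perp$ meets $\Sigma^{N+1}$ in a unique point within $B(y, 10 r_{k_N(y)})$, and that point, sitting inside the cylinder $D$, necessarily belongs to $\Gamma$. Once both $F_N(x)$ and $F_N(y)$ are realized as points on the same $C\ve$-Lipschitz graph, the bi-Lipschitz estimate reduces to a routine computation.
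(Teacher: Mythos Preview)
Your coarse/fine split matches the paper, and the coarse regime is handled correctly. The error is in the fine regime, where you claim that $A:=A_{V_x^N,\,Q_x^N}^{\,N+1}$ is $C\ve$-Lipschitz by the last sentence of \Lemma{graph}. That sentence gives the $C(\ve+\theta)$ bound for $A_{P,Q}^{N}$ only when $Q=Q_{x}^{N}$ for some $x\in\Sigma^{N}$ \emph{with the same superscript $N$}. Here you are graphing $\Sigma^{N+1}$, so you would need $Q=Q_{y}^{N+1}$ for some $y\in\Sigma^{N+1}$, i.e.\ $Q\in\Stop(N+1)$; but $Q_x^N\in\Stop(N)$, so only the general $C(\alpha+\theta)$ bound applies. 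This is not a technicality: $Q_x^N$ is the top cube of an $S'\in\cF_{N+1}$, and by \Lemma{four-ten} the planes $P_R$ for $R\in S'$ can rotate by $\sim\alpha$ from $P_{Q_x^N}$, so the graph of $\Sigma^{N+1}$ over $V_x^N$ genuinely has Lipschitz constant $\sim\alpha$, not $\sim\ve$.

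Consequently your symmetric estimate $|F_N(x)-F_N(y)-(x-y)|\lesssim\ve|x-y|$ fails; you only get $\lesssim\alpha|x-y|$, which yields $(1\pm C\alpha)$ on both sides and does not recover the sharper lower bound $(1-C\ve)|x-y|$ claimed in \eqref{e:FN}. The paper obtains that lower bound by a different, asymmetric mechanism: projecting onto $V_x^N$ is $1$-Lipschitz, and $|\pi_{V_x^N}(F_N(y))-y|\lesssim\ve|x-y|$ because $\Sigma^{N}$ (not $\Sigma^{N+1}$) is a $C\ve$-Lipschitz graph over $V_x^N$ and the tangent planes $V_y^N$ vary $C\ve$-slowly (\eqref{e:Tlip}). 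This gives $|F_N(x)-F_N(y)|\geq|x-z|\geq(1-C\ve)|x-y|$, independently of the Lipschitz constant of $A^{N+1}$. Your argument conflates the two surfaces and so misses the asymmetry that the lemma is recording.
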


\begin{proof}
Let $x,y\in \Sigma^{N}$ and let $k=k_{N}(x)$. If $y\not\in B(x,r_{k}/2)$, then since $d_{\Up}$ is $1$-Lipschitz,
\begin{align}
\big||F_{N}(x) & -F_{N}(y)|-|x-y|\big|
 \leq |F_{N}(x)-x|+|F_{N}(y)-y| \notag \\
& \stackrel{\eqn{fnx-x}}{\lec} \ve( d_{\Up(N)}(x)+  d_{\Up(N)}(y))
 \leq \ve (2d_{\Up(N)}(x)+|x-y|) \notag \\
& \stackrel{\eqn{dx<dT}}{\lec} \ve (r_{k}+|x-y|)
\lec \ve |x-y|
\label{e:FN2}
\end{align}
Thus, it suffices to consider the case that $y\in B(x,r_{k}/2)$. Let $x'=F_{N}(x)$ and $y'=F_{N}(y)$.
See Figure \ref{f:fn}.

\begin{figure}[h]
\includegraphics[width=350pt]{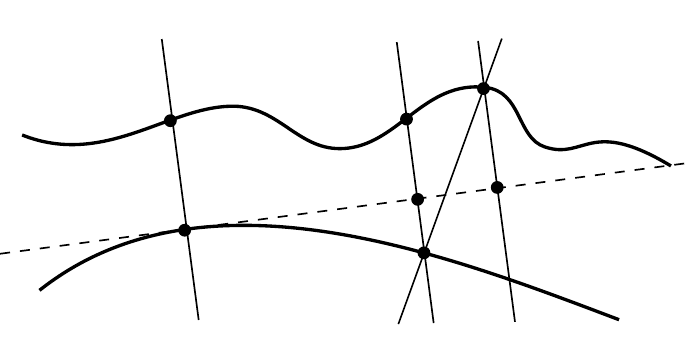}
\begin{picture}(0,0)(350,0)
\put(75,65){$x$}
\put(70,120){$x'$}
\put(0,58){$V_{x}^{N}$}
\put(90,0){$W_{x}^{N}$}
\put(245,140){$y'$}
\put(185,120){$y''$}
\put(215,55){$y$}
\put(237,86){$z$}
\put(195,80){$z'$}
\put(185,0){$W_{y}^{N}$}
\put(220,0){$W$}
\put(275,35){$\Sigma^{N}$}
\put(300,115){$\Sigma^{N+1}$}
\end{picture}
\caption{The diagram for the proof of \Lemma{FN}}
\label{f:fn}
\end{figure}
Let $W=W_{x}^{N}-x+y$. By \Lemma{W}, there is a (unique) $y''\in W\cap \Sigma^{N+1}$ with 
\[
|y-y''|\lec  \ve d_{\Up(N)}(y)
\leq \ve(|x-y|+d_{\Up(N)}(x))\lec \ve r_{k}.\]
Let $z=\pi_{V_{x}^{N}}(y')\in V_{x}^{N}$ and 
\[z'=\pi_{V_{x}^{N}}(y)=\pi_{V_{x}^{N}}(y'')\in W\cap V_{x}^{N}.\]
 Since $y\in B(x,r_{k})$ and by \Lemma{graph}, $B(x,r_{k})\cap\Sigma^{N}$ is contained in the graph of some $C\ve$-Lipschitz function $A$ along $V_{x}^{N}$, we have
\begin{align*} 
|y-z'|
& =|A(z')|
\leq |A(x)|+\ve|x-z'|
=0+\ve|x-z'|\\
& \leq \ve|x-y|+\ve|y-z'|\end{align*}
 and hence for $\ve$ small
 \begin{equation}\label{e:y-z'}
 |y-z'|\lec \ve |x-y|.
 \end{equation}
Let $y'''=\pi_{W}(y')\in W\cap \cnj{B(y,C\ve r_{k})}$. Then

\begin{align}\label{e:z-z'}
 |z-z'|
&  =|\pi_{V_{x}^{N}}(y''-y')|=|\pi_{V_{x}^{N}}(y'''-y')|
 \leq |y'''-y'| \notag \\
 & \leq d_{y,C\ve r_{k}}(W,W_{y}^{N}) C\ve r_{k}
 = \angle(W,W_{y}^{N}) C\ve r_{k}
  =\angle(V_{x}^{N},V_{y}^{N})C\ve r_{k} \notag \\
&  \stackrel{\eqn{Tlip}}{ \lec} \frac{\ve}{r_{k}}|x-y| C\ve r_{k}
 =C\ve^{2}|x-y|.
 \end{align}

 Hence,
 \begin{equation}\label{e:xz-xy}
\av{ |x-z|-|x-y|}
\leq |y-z|
\leq |y-z'|+|z'-z|
\stackrel{\eqn{y-z'} \atop \eqn{z-z'}}{\lec} 
 \ve |x-y|.\end{equation}
Thus, recalling that $A_{x}^{N+1}$ is $(1+C\alpha)$-Lipschitz,
\begin{align*}
|x'-y'|
& =\sqrt{|x-z|^{2}+|A_{x}(x)-A_{x}(z)|^{2}}
 \leq |x-z|+|A_{x}(x)-A_{x}(z)|\\
& \leq (1+C\alpha)|x-z|
  \stackrel{\eqn{xz-xy}}{\leq}
(1+C\alpha)(1+C\ve)|x-y|\\
& \leq (1+C\alpha)|x-y|
\end{align*}
for some perhaps larger value of $C$ in the last inequality and $\ve\ll \alpha$. 
Moreover, since $\pi_{V_{x}^{N}}(x')=x$ and $\pi_{V_{x}^{N}}(y')=z$ and projections are $1$-Lipschitz
\[
|x'-y'| 
\geq |x-z| 
\stackrel{\eqn{xz-xy}}{\geq} (1-C\ve)|x-y|.
\]
Since $F_{N}(x)=x'$ and $F_{N}(y)=y'$, these two inequalities finish the proof of \eqn{FN}.
\end{proof}

\section{The telescoping sum}\label{s:tel-sum}

The proof Theorem \ref{t:flat-case} will be completed in Section \ref{s:tel-sum} and \ref{s:beta-omega}. 
The main objective of this section is to prove the following proposition.

\begin{proposition}\label{p:asum}
\begin{equation}
\sum_{N\geq 0}\sum_{Q\in \Stop(N)}\ell(Q)^{d}
\lec_{d,\ve,\alpha} \cH^{d}(\Sigma\cap B(0,1)).
\label{e:asum}
\end{equation}
\end{proposition}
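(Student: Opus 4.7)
\emph{Approach.} The plan is to telescope along the sequence of approximating surfaces $\Sigma^N$ (Section \ref{s:approxsurf}) using the bi-Lipschitz maps $F_N : \Sigma^N \to \Sigma^{N+1}$ of Lemma \ref{l:FN}. The guiding geometric idea is that each cube $R \in \Stop(N+1)$ corresponds to a region where $\Sigma^{N+1}$ tilts by angle $\gec \alpha$ relative to $\Sigma^N$, and a Pythagoras-style inequality converts this tilt into an area increase of order $\alpha^2 \ell(R)^d$ when passing from $\Sigma^N$ to $\Sigma^{N+1}$. Summing the area increases across all levels $N$ and bounding the total by $\cH^d(\Sigma \cap B(0,1))$ will give the claim.

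\emph{Key Jacobian/area estimate.} By Lemma \ref{l:>a}, each $R \in \Stop(N+1)$ satisfies $\angle(P_R, P_{Q(S)}) \gec \alpha$, where $Q(S) \in \Stop(N)$ is the root of the stopping region containing the parent of $R$. Near $R$, $\Sigma^N$ is well-approximated by $P_{Q(S)}$ and $\Sigma^{N+1}$ by $P_R$ (via Lemmas \ref{l:sigmagraph} and \ref{l:graph}), so the map $F_N(x)=x+A_x^{N+1}(x)$ acts there as the local ``vertical graph'' map between two planes tilted by angle $\gec \alpha$. Its Jacobian is $\sqrt{1+|\nabla A_x^{N+1}|^2} \gec 1 + c\alpha^2$ on a subset $E_R\subset \Sigma^N$ with $\cH^d(E_R) \gec \ell(R)^d$, while at all other points the lower bi-Lipschitz bound of Lemma \ref{l:FN} gives $J_{F_N}\geq 1-C\ve$. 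Because distinct $R \in \Stop(N+1)$ are disjoint cubes (so the associated regions $E_R$ have bounded overlap), the change-of-variables formula combined with \eqref{e:fnid}--\eqref{e:fnx-x} yields
\[
\cH^d(\Sigma^{N+1}\cap B(0,1)) \;\geq\; \cH^d(\Sigma^N\cap B(0,1)) \;+\; c\alpha^2 \sum_{R \in \Stop(N+1)}\ell(R)^d \;-\; \mathrm{err}_N.
\]

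\emph{Summing and taking the limit.} Telescoping over $N\geq 0$ gives
\[
c\alpha^2 \sum_{N \geq 1}\sum_{R \in \Stop(N)}\ell(R)^d \;\lec\; \lim_{N\to\infty}\cH^d(\Sigma^N\cap B(0,1)) \;-\; \cH^d(\Sigma^0\cap B(0,1)) \;+\; \sum_N\mathrm{err}_N.
\]
The surfaces $\Sigma^N$ are uniformly $C\ve$-Reifenberg flat (Lemma \ref{l:skflat}) and converge to $\Sigma$, so the first term on the right is comparable to $\cH^d(\Sigma\cap B(0,1))$. The subtracted term $\cH^d(\Sigma^0\cap B(0,1))=\cH^d(P_0\cap B(0,1))\sim 1$ is at most $C\cH^d(\Sigma\cap B(0,1))$ by lower regularity \eqref{e:sigmalr}, and the $N=0$ contribution $\sum_{R\in\Stop(0)}\ell(R)^d$ is likewise $\lec 1\lec \cH^d(\Sigma\cap B(0,1))$ since $\Stop(0)$ cubes are packed inside the finitely many $\cD_0$ cubes meeting $B(0,1)$.

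\emph{Main obstacle.} The principal technical issue is controlling $\sum_N\mathrm{err}_N$. The na\"\i ve bound from the lower Lipschitz constant $(1-C\ve)$ in Lemma \ref{l:FN} only gives $\mathrm{err}_N \lec \ve \cH^d(\Sigma^N\cap B(0,1))$, which does not sum across infinitely many levels. A refinement is required: since by \eqref{e:fnx-x} and \eqref{e:dx<dT} the discrepancy between $F_N$ and the identity is supported on $\Sigma^N\setminus\Sigma$ and decays with $d_{\Up(N)}$, and since this region is geometrically associated to the cubes in $\Stop(N+1)$ themselves, one can hope to show $\mathrm{err}_N \lec \ve^2\sum_{R\in\Stop(N+1)}\ell(R)^d$. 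This allows the error to be absorbed into the left-hand side when $\ve\ll \alpha^2$ (consistent with $\ve\ll \alpha^4$ of Remark \ref{r:params-1}). Establishing this sharper Jacobian accounting, using the graph structure of $F_N$ together with the tangent-plane Lipschitz estimate \eqref{e:Tlip}, is the most delicate part of the proof.
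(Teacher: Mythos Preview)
Your telescoping idea is essentially the one the paper uses in Lemma~\ref{l:asum3}, but the argument as you have written it has a genuine gap in the ``gain'' step.

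You assert, citing Lemma~\ref{l:>a}, that \emph{each} $R\in\Stop(N+1)$ satisfies $\angle(P_R,P_{Q(S)})\gec\alpha$. That is a misreading of the lemma: it only says that for each minimal cube $Q\in m(S)$ there \emph{exists one} descendant $R\in\Stop(N+1)$ (with $\ell(R)\sim\tau\ell(Q)$) that is tilted by $\gec\alpha$. A generic $R\in\Stop(N+1)$ is merely a Whitney cube for $d_{\Layer(N+1)}$ (see \eqref{e:sib} and \eqref{e:Q'simd}) and carries no lower bound on its tilt; indeed Lemma~\ref{l:four-ten} gives only the upper bound $\angle(P_R,P_{Q(S)})\lec\alpha$. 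Consequently your inequality $J_{F_N}\geq 1+c\alpha^2$ on a set of measure $\gec\ell(R)^d$ for every $R\in\Stop(N+1)$ is unjustified, and the claimed gain term $c\alpha^2\sum_{R\in\Stop(N+1)}\ell(R)^d$ does not follow.

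The paper repairs this with a dichotomy on $Q\in\Stop(N)$ via the quantity $a_Q$ (the excess of $\cH^d(\Sigma^{N+1})$ over a flat disk in the cylinder over $Q$). If $a_Q\geq\sqrt{\ve}\,\ell(Q)^d$ (Lemma~\ref{l:asum3}), then your telescoping idea works verbatim, with the error $\ve\sum_{Q\in\Stop(N)}\ell(Q)^d$ split into the large-$a_Q$ part (absorbed on the left since $\ve\ll\sqrt{\ve}$) and the small-$a_Q$ part (already controlled). If instead $a_Q<\sqrt{\ve}\,\ell(Q)^d$ (Lemma~\ref{l:asum2}), the paper uses Lemma~\ref{l:>a} correctly: for each $R\in m(S_Q)$ the tilted descendant forces $|DA_{Q,P_Q}^{N+1}|\gec\alpha$ on a ball of size $\sim\ell(R)$, and the Dirichlet-energy bound $\int|DA|^2\lec a_Q$ then gives $\sum_{R\in m(S_Q)}\ell(R)^d\lec\alpha^{-2}\sqrt{\ve}\,\ell(Q)^d\ll\ell(Q)^d$ when $\ve\ll\alpha^4$. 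Hence $\cH^d(z(S_Q'))\gec\ell(Q)^d$, and disjointness of the sets $z(S_Q')$ (Lemma~\ref{l:5.1}) sums these to $\cH^d(\Sigma\cap B(0,1))$. This second mechanism --- packing the ``limit'' sets $z(S_Q')$ rather than extracting area from the telescope --- is the idea missing from your outline.

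A secondary point: your proposed error bound $\mathrm{err}_N\lec\ve^2\sum_{R\in\Stop(N+1)}\ell(R)^d$ has the wrong index. The set where $F_N\neq\mathrm{id}$ is governed by $d_{\Up(N)}$, hence by $\Stop(N)$, not $\Stop(N+1)$; the natural estimate is $\mathrm{err}_N\lec\ve\sum_{Q\in\Stop(N)}\ell(Q)^d$, which is exactly what the paper obtains and then feeds back into the dichotomy above.
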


This will follow from the lemmas below. For $Q\in \Stop(N)$, set
\begin{align*}
a_{Q}
& =\cH^{d}(\Sigma^{N+1}\cap D(x_{Q},P_{Q},C_{2}\ell(Q)))-|B(x_{Q},C_{2}\ell(Q))\cap P_{Q}|\\
& =\cH^{d}(\Gamma_{Q}^{N+1}\cap D(x_{Q},P_{Q},C_{2}\ell(Q))-|B(x_{Q},C_{2}\ell(Q))\cap P_{Q}|
\end{align*}

\begin{lemma}\label{l:asum2}
For $0<\ve\ll \alpha^{4}$,
\begin{equation}
\label{e:asum2}
\sum_{N\geq 0}\sum\{\ell(Q)^{d}: Q\in \Stop(N),\  \sqrt{\ve} \ell(Q)^{d}>a_{Q}\}
\lec_{d} \cH^{d}(\Sigma\cap B(0,1)).
\end{equation}
\end{lemma}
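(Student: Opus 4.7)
The strategy is to assign to each ``good'' cube $Q\in\Stop(N)$ (i.e., one with $a_Q<\sqrt{\ve}\ell(Q)^d$) a Borel core set $E_Q\subseteq \Sigma\cap Q$ with $\cH^d(E_Q)\gec \ell(Q)^d$, in such a way that $\{E_Q\}$ forms a pairwise disjoint family across all levels $N$ and all good cubes. Additivity of $\cH^d$ on disjoint Borel sets will then give
\[\sum_{N\geq 0}\sum_{Q\text{ good}}\ell(Q)^d\lec \sum_Q \cH^d(E_Q)\leq \cH^d(\Sigma\cap B(0,1)),\]
which is the desired bound.

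I take $E_Q:=(\Sigma\cap Q)\setminus\bigcup\{R:R\in\Stop(N+1),\ R\subseteq Q\}$. By Corollary \ref{c:whitney-cor}, this equals $(\Sigma\cap Q)\cap Z_{N+1}$, where $Z_{N+1}=\{x:d_{\Layer(N+1)}(x)=0\}$, and hence is Borel. The disjointness is immediate from cube nesting: if $Q\in\Stop(N)$ and $Q'\in\Stop(N')$ are both good with $N<N'$ and $Q'\subseteq Q$, iterating the tree structure places $Q'$ inside some $R\in\Stop(N+1)$ with $R\subseteq Q$, so $Q'\cap E_Q=\emptyset$; if $Q\cap Q'=\emptyset$, then $E_Q\cap E_{Q'}=\emptyset$ trivially.

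The heart of the proof is the bound $\cH^d(E_Q)\gec \ell(Q)^d$. Applying Lemma \ref{l:graph} with $P=P_Q$ (so $\theta=0$), $\Sigma^{N+1}\cap D(x_Q,P_Q,C_2\ell(Q))$ is the graph of a $C\alpha$-Lipschitz map $A_Q:P_Q\to P_Q^{\perp}$, and the area formula together with $\sqrt{1+t^2}-1\geq ct^2$ for bounded $t$ yields
\[a_Q\gec \int_{B(x_Q,C_2\ell(Q))\cap P_Q}|\nabla A_Q|^2\,dm.\]
For each minimal cube $R^*\in m(S_Q)$, Lemma \ref{l:>a} produces $R\in\Stop(N+1)$ with $R\subseteq R^*$, $\ell(R)\sim\tau\ell(R^*)$, and $\angle(P_R,P_Q)\gec \alpha$. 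Since $R\in\Up(N+1)$, Lemmas \ref{l:sigmagraph} and \ref{l:PQPR} show that tangent planes to $\Sigma^{N+1}$ near $x_R$ are $C\ve$-close to $P_R$, so $|\nabla A_Q|\geq \alpha/2$ on $\pi_{P_Q}(c_0 B_R\cap \Sigma^{N+1})$. Because $\Sigma\cap Q$ is locally a $C\ve$-Lipschitz graph over $P_Q$ (by the Reifenberg flatness), $\pi_{P_Q}$ is injective on $\Sigma\cap Q$, forcing these projections (one per $R^*$) to be pairwise disjoint with $d$-area $\gec \ell(R)^d$. Summing yields $a_Q\gec \alpha^2\tau^d\sum_{R^*\in m(S_Q)}\ell(R^*)^d$, and combined with $a_Q<\sqrt{\ve}\ell(Q)^d$ and $\ve\ll\alpha^4$ (with $\tau$ fixed by Remark \ref{r:tau2}), this gives $\sum_{R^*}\ell(R^*)^d\leq \delta\ell(Q)^d$ for some $\delta=C\sqrt{\ve}/(\alpha^2\tau^d)$ that can be made arbitrarily small.

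To finish, each $R\in\Stop(N+1)$ with $R\subseteq Q$ is a subcube of some $R^*\in m(S_Q)$ and there are at most $O(\tau^{-d})$ such subcubes per $R^*$ (each of size $\sim\tau\ell(R^*)$), so $\sum_R\ell(R)^d\lec \sum_{R^*}\ell(R^*)^d\leq \delta\ell(Q)^d$. The upper Ahlfors bound on $\Sigma$ coming from the local $C^2$-Lipschitz-graph structure of Theorem \ref{t:DT} gives $\cH^d(R)\lec \ell(R)^d$, whence $\cH^d(\bigcup_R R)\lec \delta\ell(Q)^d$, while $\cH^d(\Sigma\cap Q)\geq \cH^d_\infty(\Sigma\cap Q)\gec \ell(Q)^d$ follows from \eqref{e:sigmalr}. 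Taking $\ve$ small enough (depending on $\alpha,\tau$) forces $\cH^d(E_Q)\gec \ell(Q)^d$, completing the disjoint-packing argument. The main technical hurdle is establishing the pointwise slope estimate $|\nabla A_Q|\geq \alpha/2$ on $\pi_{P_Q}(c_0 B_R\cap \Sigma^{N+1})$, which requires carefully tracking how the tangent planes of $\Sigma^{N+1}$ align with the $P_R$'s through the David--Toro construction.
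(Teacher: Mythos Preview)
Your overall architecture is exactly the paper's: bound $\sum_{R^*\in m(S_Q)}\ell(R^*)^d$ by $\alpha^{-2}a_Q$ via the slope estimate $|\nabla A_{P_Q,Q}^{N+1}|\gec\alpha$ on the projections of the cubes produced by Lemma \ref{l:>a}, deduce a large ``core'' set inside each good $Q$, and pack these disjointly into $\Sigma\cap B(0,1)$. Your core $E_Q$ contains the paper's $z(S_Q)$, and your disjointness argument is fine.

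The gap is in the last paragraph. You assert that every $R\in\Stop(N+1)$ with $R\subseteq R^*$ has $\ell(R)\sim\tau\ell(R^*)$; this is the conclusion of Lemma \ref{l:>a} \emph{only for the particular $R$ concentric with $R^*$}, because the lower bound $d_{\Layer(N+1)}(R)\gec\ell(R^*)$ uses $R\subseteq\frac{c_0}{2}B_{R^*}$ crucially. A cube $R\in\Stop(N+1)$ sitting near $\partial R^*$ can see much smaller $\Layer(N+1)$ cubes in neighbouring minimal cubes or in neighbouring $S_{Q'}$, so $\ell(R)$ may be far smaller than $\tau\ell(R^*)$, and there can be many more than $O(\tau^{-d})$ of them. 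More seriously, you then invoke an ``upper Ahlfors bound on $\Sigma$'' to pass from $\sum_R\ell(R)^d$ to $\cH^d(\bigcup R)$. Reifenberg flat sets need not be upper $d$-regular (the David--Toro map is only bi-H\"older), so $\cH^d(R)\lec\ell(R)^d$ is not available.

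Both issues dissolve if you drop the detour through $\Stop(N+1)$ entirely and argue with content. Since every $R\in\Stop(N+1)$ contained in $Q$ lies in some $R^*\in m(S_Q)$, one has $E_Q\supseteq Q\setminus\bigcup_{R^*\in m(S_Q)}R^*=z(S_Q)$, and then
\[
\cH^d(E_Q)\geq\cH^d_\infty(z(S_Q))\geq \cH^d_\infty(Q)-\cH^d_\infty\Bigl(\bigcup_{R^*\in m(S_Q)}R^*\Bigr)\gec \ell(Q)^d - C\frac{\sqrt{\ve}}{\alpha^2}\ell(Q)^d\gec\ell(Q)^d,
\]
using only subadditivity of $\cH^d_\infty$, the lower content regularity \eqref{e:sigmalr}, and your (correct) bound $\sum_{R^*}\ell(R^*)^d\lec\alpha^{-2}a_Q<\alpha^{-2}\sqrt{\ve}\,\ell(Q)^d$. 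This is precisely what the paper does with $z(S_Q')$ in place of your $E_Q$.
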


\begin{proof}

Let $Q\in \Stop(N)$, 
and assume $a_{Q}<\sqrt{\ve}\ell(Q)^{d}$. 

Let $R\in m(S_{Q})$. By \Lemma{>a}, there is $R'\in \Stop(N+1)$ with $R'\subseteq R$, $\ell(R)\sim \ell(R')$, and $\angle(P_{R'},P_{Q})\gec \alpha$. By \Lemma{dxsigman}, there is $x\in \Sigma^{N+1}$ so that $|x_{R'}-x|\lec \ve \ell(R')$, so in particular, $x\in c_{0} B_{R'}$ for $\ve$ small enough. By \Lemma{qxn}, $Q_{x}^{N+1}=R'$. 
{ Let $k=k_{N+1}(x)$. By \Lemma{sigmagraph}, $B(x,r_{k})\cap \Sigma^{N+1}=B(x,r_{k})\cap \Gamma_{x}^{N+1}$ for some Lipschitz function along $P_{x,r_{k}}^{N+1}$ where $\angle(P_{x,r_{k}}^{N+1},P_{Q_{x}^{N}})=\angle(P_{x,r_{k}}^{N+1},P_{R'})\lec \ve$. Thus, for $\ve\ll \alpha$,
\[
\angle(P_{x,r_{k}}^{N+1},P_{Q})\geq \angle(P_{R'},P_{Q})-\angle(P_{R'},P_{x,r_{k}}^{N+1})\gec \alpha-\ve \gec \alpha.\]
}
Since $\Gamma_{x}^{N+1}$ is $C\ve$-Lipschitz and $C^{2}$, the tangent plane at any $y\in B(x,r_{k})\cap \Sigma^{N+1}$ to $\Sigma^{N+1}$ has angle $\ve$ from $P_{x,r}^{N+1}$, and thus $\gec \alpha$ from $P_{R'}$ for $\ve$ small. Thus, $|DA_{Q,P_{Q}}^{N+1}(\pi_{Q}(y))|\gec \alpha$ for all $y\in B(x,r_{k})\cap \Sigma^{N+1}$, so in particular, for all $y\in \frac{c_{0}}{2}B_{R'}\cap \Sigma^{N+1}$. For $\alpha$ small enough, we have that $\{\pi_{Q}( \frac{c_{0}}{2}B_{R'}\cap \Sigma^{N+1}):R\in m(S_{Q})\}$ are disjoint sets contained in $2B_{Q}\cap P_{Q}$. If we set 
\[
f_{Q}=x+A_{Q,P_{Q}}^{N+1}(x),\] 
then
\begin{align*}
\sum_{R\in m(S_{Q})} \ell(R)^{d}
& \sim \sum_{R\in m(S)} \av{\pi_{Q}\ps{ \frac{c_{0}}{2}B_{R}\cap \Sigma^{N+1}}}\\
& \lec  \alpha^{-2} \int_{2B_{Q}\cap P_{Q}} |DA_{Q,P_{Q}}^{N+1}|^{2}\\
& \lec \alpha^{-2}\int_{2B_{Q}\cap P_{Q}}(\sqrt{1+|DA_{Q,P_{Q}}^{N+1}|^{2}}-1)\\
& =\alpha^{-2}\int_{2B_{Q}\cap P_{S}} (J_{f_{Q}}-1)\\
& =\alpha^{-2}\Big(\cH^{d}(\Gamma_{Q,P_{Q}}^{N+1}\cap D(x_{Q},P_{Q},C_{2}\ell(Q)))\\
& \qquad -|B(x_{Q},C_{2}\ell(Q))\cap P_{Q}|\Big)\\
& \leq \alpha^{-2}{a_{Q}}
\lec \frac{\sqrt{\ve}}{\alpha^{2}}\ell(Q)^{d}
\end{align*}
Thus, since $S_{Q}\subseteq S_{Q}'$, for $\ve\ll \alpha^{4}$,
(and recalling  that $z(S)$ which was defined in \eqref{e:z-def})
\begin{align*}
\cH^{d}(z(S_{Q}'))
& \geq  \cH^{d}(z(S_{Q}))
 \geq \cH^{d}_{\infty}(Q)-\cH^{d}_{\infty}\ps{\bigcup_{R\in m(S_{Q})}R}\\
& \gec \ell(Q)^{d}- C\frac{\sqrt{\ve}}{\alpha^{2}}\ell(Q)^{d}
\gec \ell(Q)^{d}.\end{align*}
Since the $\{z(S):S\in \cF_{N},\;\; N\geq0 \}$ are mutually disjoint by Lemma \ref{l:5.1} and $z(S(Q))\subseteq Q\subseteq B(0,1)$ for each $Q\in \Stop(N)$ by \Lemma{wherestop}, 
\begin{align*}
\sum_{N\geq 0}\sum_{Q\in \Stop(N)\atop \sqrt{\ve} \ell(Q)^{d}>a_{Q}}\ell(Q)^d
& \lec\sum_{N\geq 0}\sum_{Q\in \Stop(N)\atop \sqrt{\ve} \ell(Q)^{d}>a_{Q}} \cH^{d}(z'(S_{Q}))
\leq \cH^{d}(\Sigma\cap B(0,1))
\end{align*}

\end{proof}

\begin{remark}\label{r:alpha}
We now fix $\alpha>0$ so that the previous lemma holds.
\end{remark}

While $a_Q$ was the difference in area from a flat disk, we will need another quantity, $t_Q$, which is the local difference in area between  $\Sigma^{N+1}$ and $\Sigma^N$. Specifically,
\begin{multline*}
t_{Q}
 = \ps{\cH^{d}(F_{N}(C_{1}B_{Q}\cap \Sigma^{N}))-\cH^{d}(C_{1}B_{Q}\cap \Sigma^{N})}\\
=\int_{C_{1}B_{Q}\cap \Sigma^{N}}(J_{F_{N}}(x)-1)d\cH^{d}(x).
\end{multline*}

\begin{lemma}
For $Q\in \Stop(N)$,
\begin{equation}\label{e:a<t}
a_{Q}\lec t_{Q}+\ve\ell(Q)^{d}.
\end{equation}
\end{lemma}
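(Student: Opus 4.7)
The plan is to use the fact that $F_{N}$ is a bi-Lipschitz bijection from $\Sigma^{N}$ onto $\Sigma^{N+1}$ and to apply the area formula. Set $D=D(x_{Q},P_{Q},C_{2}\ell(Q))$ and define
\[\Omega:=F_{N}^{-1}(\Sigma^{N+1}\cap D)\subseteq \Sigma^{N}.\]
Since for $y\in \Sigma^{N+1}\cap D$ one has $|F_{N}^{-1}(y)-y|\lec \ve\ell(Q)$ by \eqref{e:fnx-x} together with \eqref{e:412b} (using that $d_{\Up(N)}$ is $1$-Lipschitz to relate $d_{\Up(N)}(F_{N}^{-1}(y))$ to $d_{\Up(N)}(y)\lec \ell(Q)$), and since $D\subseteq B(x_{Q},\sqrt{2}C_{2}\ell(Q))$, choosing $C_{1}$ large enough compared to $C_{2}$ (recall $20C_{0}\ll C_{1}$ from the constants list) forces $\Omega\subseteq C_{1}B_{Q}\cap \Sigma^{N}$. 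Using injectivity of $F_{N}$ from \Lemma{FN} together with surjectivity onto $\Sigma^{N+1}$ (which follows from \Lemma{W} applied symmetrically and \eqref{e:fnid}), the area formula yields $\cH^{d}(\Sigma^{N+1}\cap D)=\int_{\Omega}J_{F_{N}}d\cH^{d}$, whence
\[a_{Q}=\int_{\Omega}(J_{F_{N}}-1)d\cH^{d}+\Big(\cH^{d}(\Omega)-|B(x_{Q},C_{2}\ell(Q))\cap P_{Q}|\Big).\]
I would then bound each piece by $t_{Q}+C\ve\ell(Q)^{d}$.

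For the Jacobian term, the lower bi-Lipschitz bound in \eqref{e:FN} gives $J_{F_{N}}\geq (1-C\ve)^{d}\geq 1-C\ve$ pointwise on $\Sigma^{N}$, so $J_{F_{N}}-1\geq -C\ve$. Splitting the integral over $\Omega$ as the integral over $C_{1}B_{Q}\cap\Sigma^{N}$ minus the integral over the complement, and using $\cH^{d}(C_{1}B_{Q}\cap \Sigma^{N})\lec \ell(Q)^{d}$ (which follows from the local Lipschitz graph structure described in the next paragraph), I obtain
\[\int_{\Omega}(J_{F_{N}}-1)d\cH^{d}\leq t_{Q}+C\ve\ell(Q)^{d}.\]

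The essential step is controlling $\cH^{d}(\Omega)-|B(x_{Q},C_{2}\ell(Q))\cap P_{Q}|$. The key claim is that $\Sigma^{N}\cap C_{1}B_{Q}$ is a $C\ve$-Lipschitz graph over $P_{Q}$ (and not merely $C\alpha$-Lipschitz). For $y\in C_{1}B_{Q}\cap \Sigma^{N}$, the stopping cube $Q_{y}^{N}\in\Stop(N)$ produced by \Lemma{qxn} satisfies $y\in 2B_{Q_{y}^{N}}$, hence $C_{1}B_{Q}\cap C_{1}B_{Q_{y}^{N}}\neq\emptyset$; so \Lemma{QSQS'} gives $\ell(Q_{y}^{N})\sim\ell(Q)$ and $\angle(P_{Q_{y}^{N}},P_{Q})\lec_{C_{1}}\ve$. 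Combined with $\angle(P_{y,r_{k_{N}(y)}}^{N},P_{Q_{y}^{N}})\lec \ve$ from \eqref{e:pxrpq}, the tangent plane to $\Sigma^{N}$ at $y$ is $C\ve$-close to $P_{Q}$; standard arguments (local graph structure plus Reifenberg flatness to upgrade to a global graph) then give the claim. Consequently
\[\cH^{d}(\Omega)\leq (1+C\ve^{2})\cH^{d}(\pi_{P_{Q}}(\Omega)).\]

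Finally, I will show that $\pi_{P_{Q}}(\Omega)$ is $C\ve\ell(Q)$-close in Hausdorff distance to $B(x_{Q},C_{2}\ell(Q))\cap P_{Q}$. The forward containment follows from $|\pi_{P_{Q}}(x)-\pi_{P_{Q}}(F_{N}(x))|\leq |x-F_{N}(x)|\lec \ve\ell(Q)$ and $\pi_{P_{Q}}(F_{N}(x))\in B(x_{Q},C_{2}\ell(Q))\cap P_{Q}$ since $F_{N}(x)\in D$; conversely, each $z\in B(x_{Q},(C_{2}-C\ve)\ell(Q))\cap P_{Q}$ is the projection of $z+A_{P_{Q},Q}^{N+1}(z)\in \Sigma^{N+1}\cap D$, whose $F_{N}$-preimage lies in $\Omega$ and projects within $C\ve\ell(Q)$ of $z$. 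The symmetric difference is an annular region of width $C\ve\ell(Q)$, so the $d$-dimensional measures differ by $\lec \ve\ell(Q)^{d}$. Combining the estimates yields $a_{Q}\lec t_{Q}+\ve\ell(Q)^{d}$. The main obstacle is rigorously justifying the $C\ve$ (rather than $C\alpha$) Lipschitz constant for the graph of $\Sigma^{N}$ over $P_{Q}$, since a naive $C\alpha$ bound produces an $\alpha^{2}\ell(Q)^{d}$ error which is far too large under the hypothesis $\ve\ll\alpha^{4}$ governing Lemma \ref{l:asum2}.
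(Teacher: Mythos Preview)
Your approach is correct and uses the same decomposition as the paper: writing $\Omega=F_N^{-1}(\Sigma^{N+1}\cap D)$, both arguments split $a_Q$ into $\int_\Omega(J_{F_N}-1)\,d\cH^d$ plus $\cH^d(\Omega)-|B(x_Q,C_2\ell(Q))\cap P_Q|$. For the second term the paper introduces the composite $f=\pi_{P_Q}\circ F_N$ and proves the pointwise estimate $|Df-I|\lesssim\ve$ via the geometry of Figure~\ref{f:fn}, which yields $|J_{f^{-1}}-1|\lesssim\ve$ in one stroke; you instead separate this into the $C\ve$-Lipschitz graph structure of $\Sigma^N$ over $P_Q$ (giving $\cH^d(\Omega)\leq(1+C\ve^2)\cH^d(\pi_{P_Q}(\Omega))$) and a Hausdorff-distance comparison of $\pi_{P_Q}(\Omega)$ with the disk. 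The paper's packaging is slicker, but the content is the same.

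Both proofs hinge on the point you correctly isolate and in fact justify more explicitly than the paper: since $Q,Q_y^N\in\Stop(N)$ and $C_1B_Q\cap C_1B_{Q_y^N}\neq\emptyset$, \Lemma{QSQS'} gives $\angle(P_{Q_y^N},P_Q)\lesssim_{C_1}\ve$ (not merely $\alpha$), and combined with \eqn{pxrpq} this forces the tangent planes to $\Sigma^N$ throughout $C_1B_Q$ to be $C\ve$-close to $P_Q$. The paper invokes this as ``$\angle(V_x^N,P_Q)\lesssim\ve$'' without comment. Your worry about upgrading the tangent-plane bound to a global $C\ve$-Lipschitz graph is handled by rerunning the trigonometric argument in the proof of \Lemma{graph} with $\ve$ in place of $\alpha$; nothing in that proof uses more than the angle bound for $P_{x,r}^N$ versus $P_Q$. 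Finally, your treatment of the Jacobian term is arguably more careful than the paper's: where the paper writes $\leq t_Q$, one strictly needs $\leq t_Q+C\ve\ell(Q)^d$ as you do, since $J_{F_N}-1$ can be slightly negative on $(C_1B_Q\cap\Sigma^N)\setminus\Omega$.
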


\begin{proof}
For $x\in \Sigma^{N}\cap B(x_{Q},C_{1}\ell(Q))$, let
\[ f(x)=\pi_{P_{Q}}\circ F_{N}(x).\]
Our goal is to show that 
\begin{equation}\label{e:d-1}
|Df(x)-I|\lec \ve 
\end{equation}
since then $|J_{f^{-1}}(x)-1|\lec\ve $ and so
\begin{align*}
a_{Q}
& =\cH^{d}\ps{\Sigma^{N+1}\cap D(x_{Q},P_{Q},C_{2}\ell(Q))}-|B(x_{Q},C_{2}\ell(Q))\cap P_{Q}|\\
& = \cH^{d}\ps{\Sigma^{N+1}\cap D(x_{Q},P_{Q},C_{2}\ell(Q))} \\
& \qquad - |F_{N}^{-1}\ps{\Sigma^{N+1}\cap D(x_{Q},P_{Q},C_{2}\ell(Q))}| \\
& \qquad \qquad + |F_{N}^{-1}\ps{\Sigma^{N+1}\cap D(x_{Q},P_{S},C_{2}\ell(Q))}|\\
&  \qquad \qquad \qquad -|B(x_{Q},C_{2}\ell(Q))\cap P_{Q}|\\
& 
\leq t_{Q} + \int_{B(x_{Q},C_{2}\ell(Q))\cap \Sigma^{N}}(J_{f^{-1}}(x)-1)d\cH^{d}(x)\\
& \lec t_{Q}+\ve \cH^{d}(B(x_{Q},C_{2}\ell(Q))\cap \Sigma^{N})\\
&\stackrel{\eqn{isagraph}}{\leq} t_{Q}+\ve  \cH^{d}(\Gamma_{P_{Q},Q}^{N}\cap D(x_{Q},P_{Q},C_{2}\ell(Q)))\\
& \lec t_{Q}+\ve \ell(Q)^{d}.
\end{align*}
Thus, it remans to show \eqn{d-1}. We will again use Figure \ref{f:fn}. Let $y\in B(x,r_{k})$, $x'=F_{N}(x)$, $y'=F_{N}(y)$, and $z=\pi_{V_{x}^{N}}(y')\in V_{x}^{N}$. Since $x-z=\pi_{V_{x}^{N}}(x'-y')$, $\angle(V_{x}^{N},P_{Q})\lec \ve$, and $A_{x}$ is $C\alpha$-Lipschitz, we have
\begin{align*}
|(f(x)& -f(y))-(x-y)|
 = |\pi_{P_{Q}}(x'-y')-(x-y)|\\
& \leq  |\pi_{P_{Q}}(x'-y')-(x-z)|+|z-y|\\
& \leq |\pi_{P_{Q}}(x'-y')-\pi_{P_{Q}}(\pi_{V_{x}^{N}}(x'-y'))|\\
& \qquad + |\pi_{P_{Q}}(x-z)-(x-z)|+|z-y|\\
& =|\pi_{P_{Q}}(\pi_{W_{x}^{N}}(x'-y'))| + |\pi_{P_{Q}^{\perp}}(x-z)|+|z-y|\\
& \stackrel{\eqn{vvperp}}{\lec} \ve |\pi_{W_{x}^{N}}(x'-y')|+\ve|x-z|+|z-y|\\
& 
\leq \ve |x'-y'|+\ve|x-z|+|z-y|\\
& \stackrel{\eqn{xz-xy}}{\lec}  \ve |F_{N}(x)-F_{N}(y)|+\ve|x-y|
 \stackrel{\eqn{FN2}}{\lec} \ve |x-y|
\end{align*}
which implies \eqn{d-1}.
\end{proof}

\begin{lemma}\label{l:asum3}
\[
\sum_{N\geq 0 }\sum\{\ell(Q)^{d}: Q\in \Stop(N),\  a_{Q}\geq \sqrt{\ve}\ell(Q)^{d}\}  \lec \cH^{d}(B(0,1)\cap \Sigma).\]
\end{lemma}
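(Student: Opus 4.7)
The plan is to use \eqref{e:a<t} to convert the sum on the left into a sum of the $t_Q$'s, control that by a telescoping Hausdorff measure difference, and close the argument by absorption using \Lemma{asum2}. Write $T_3 = \sum_{N\geq 0}\sum_{Q\in \sS_N}\ell(Q)^d$ for the left-hand side, where $\sS_N = \{Q\in \Stop(N) : a_Q\geq \sqrt\ve\,\ell(Q)^d\}$, and $T = \sum_N\sum_{Q\in \Stop(N)}\ell(Q)^d$ for the full sum in \Proposition{asum}.

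First, for $Q\in \sS_N$, the inequality \eqref{e:a<t} together with $a_Q\geq \sqrt\ve\,\ell(Q)^d$ will give $t_Q \gec \sqrt\ve\,\ell(Q)^d$ for $\ve$ small, so $\ell(Q)^d \lec \ve^{-1/2}t_Q$. Writing $t_Q \leq \int_{C_1 B_Q\cap \Sigma^N}(J_{F_N}-1)_+\,d\cH^d$ and using the bounded overlap of $\{C_1 B_Q\}_{Q\in\Stop(N)}$ (an immediate consequence of \Lemma{QSQS'}), I reduce matters to bounding $\sum_N\int_{\Sigma^N}(J_{F_N}-1)_+\,d\cH^d$.

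Next, I will split $(J_{F_N}-1)_+ = (J_{F_N}-1) + (J_{F_N}-1)_-$. By the area formula and \eqref{e:fnid} (so $F_N = \mathrm{id}$ outside $B(0,1)$),
\[
\int_{\Sigma^N}(J_{F_N}-1)\,d\cH^d = \cH^d(\Sigma^{N+1}\cap B(0,2)) - \cH^d(\Sigma^N\cap B(0,2)),
\]
which telescopes in $N$. The partial sum $\cH^d(\Sigma^{M+1}\cap B(0,2))-|P_0\cap B(0,2)|$ will be bounded uniformly in $M$ by invoking \Lemma{skflat} and \Theorem{DT}(7)--(8), which together imply that each $\Sigma^M$ is locally a $C\ve$-Lipschitz graph, hence $\cH^d(\Sigma^M\cap B(0,2))\lec 1$; combining with the lower regularity \eqref{e:sigmalr} for $\Sigma$ itself yields the telescoping sum $\lec \cH^d(\Sigma\cap B(0,1))$. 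For the negative part, \Lemma{FN} gives $(J_{F_N}-1)_-\leq C\ve$ on its support $E_N := \{x\in\Sigma^N : F_N(x)\neq x\}$; by \Lemma{qxn} we have $E_N\subseteq \bigcup_{Q\in\Stop(N)}2B_Q\cap \Sigma^N$, and each $\cH^d(2B_Q\cap \Sigma^N)\lec \ell(Q)^d$ since $\Sigma^N$ is locally a Lipschitz graph over $P_Q$, so bounded overlap yields $\cH^d(E_N)\lec \sum_{Q\in\Stop(N)}\ell(Q)^d$. Assembling these pieces will give
\[
T_3 \lec \ve^{-1/2}\cH^d(\Sigma\cap B(0,1)) + \ve^{1/2}\,T.
\]

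The final absorption uses \Lemma{asum2}: since $T = (T-T_3) + T_3 \lec \cH^d(\Sigma\cap B(0,1)) + T_3$, substituting the previous estimate gives $T \lec \ve^{-1/2}\cH^d(\Sigma\cap B(0,1)) + \ve^{1/2}\,T$, and for $\ve$ small the last term is absorbed into the left, producing $T\lec \cH^d(\Sigma\cap B(0,1))$ (with constant depending on $\ve$), whence $T_3\leq T \lec \cH^d(\Sigma\cap B(0,1))$. The main obstacle will be that $J_{F_N}-1$ is not sign-definite, which forces the split into positive and negative parts, and the natural bound $\cH^d(E_N)\lec \sum_{Q\in\Stop(N)}\ell(Q)^d$ is a priori circular in that it reintroduces the total sum $T$; the circularity is broken only because the reintroduced term carries a factor of $\ve$ and because \Lemma{asum2} already controls the complement $T-T_3$, so both facts together permit the absorption.
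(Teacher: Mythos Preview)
Your overall strategy is correct and matches the paper's: convert $a_Q$ to $t_Q$ via \eqref{e:a<t}, use bounded overlap of $\{C_1B_Q\}_{Q\in\Stop(N)}$ to pass to $\int_{\Sigma^N}(J_{F_N}-1)_+$, split into the signed integral (which telescopes) plus the negative part (which carries an extra $\ve$ and is absorbed using \Lemma{asum2}). The paper does exactly this.

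There is, however, a genuine gap in your bound on the telescoping sum. You claim $\cH^d(\Sigma^M\cap B(0,2))\lec 1$ uniformly in $M$, justified by ``\Lemma{skflat} and \Theorem{DT}(7)--(8) \dots\ imply that each $\Sigma^M$ is locally a $C\ve$-Lipschitz graph.'' This is not what those results say. Parts (7)--(8) of \Theorem{DT} give a $C\ve$-Lipschitz graph description of the \emph{approximations} $\Sigma_k^M$ at scale $r_k$; the limit surface $\Sigma^M$ is only $C\ve$-Reifenberg flat (\Lemma{skflat} is about $\Sigma_k$, not $\Sigma$), and \Lemma{sigmagraph} gives a Lipschitz graph only on balls of radius $r_{k_M(x)}\sim d_{\Up(M)}(x)$, which shrinks to zero on $\Sigma^M\cap\Sigma$. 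A Reifenberg flat surface can have arbitrarily large $\cH^d$-measure in a unit ball---indeed $\Sigma^M\supseteq \Sigma^M\cap\Sigma$, and as $M\to\infty$ this set increases to capture more and more of $\Sigma$, whose measure $\cH^d(\Sigma\cap B(0,1))$ is exactly the quantity you are trying to bound by and may be enormous. So $\cH^d(\Sigma^M\cap B(0,2))\lec 1$ is false in general.

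The paper closes this gap by proving the correct uniform bound $\cH^d(B(0,1)\cap\Sigma^L)\lec \cH^d(B(0,1)\cap\Sigma)$ via a covering argument: one covers $\Sigma^L\setminus\Sigma$ by balls $B(x_j,r_{k_M(x_j)})$ on which $\Sigma^L$ \emph{is} a Lipschitz graph (so $\cH^d(\Sigma^L\cap B_j)\lec r_{k_M(x_j)}^d$), then transfers each $r_{k_M(x_j)}^d$ to a piece of $\Sigma$ using $\dist(x_j,\Sigma)\lec\ve r_{k_M(x_j)}$ and the lower regularity \eqref{e:sigmalr}. This is the missing idea in your argument. A secondary point: your absorption step implicitly assumes $T<\infty$; to make it rigorous you should run the whole argument with $\sum_{N=0}^{L-1}$ and take $L\to\infty$ at the end, as the paper does.
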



\begin{proof}
Note that by \eqn{a<t} that $a_{Q}\geq \sqrt{\ve}\ell(Q)^{d}$ implies  
\begin{align*}
\sqrt{\ve}\ell(Q)^{d} 
& \lec t_{Q}
 = \ps{\cH^{d}(F_{N}(C_{1}B_{Q}\cap \Sigma^{N}))-\cH^{d}(C_{1}B_{Q}\cap \Sigma^{N})}\\
& =\int_{C_{1}B_{Q}\cap \Sigma^{N}}(J_{F_{N}}(x)-1)d\cH^{d}(x).
\end{align*}
By \Lemma{QSQS'}, The balls $C_{1}B_{Q}$ have bounded overlap with constant depending on $n$ and are contined in $C_{1}B_{Q_{0}}$ by \eqn{cqincr}. Also, by \eqn{dxsigman} and \eqn{412b}, $\{C_{1}B_{Q}:Q\in \Stop(N)\}$ form a cover of $\Sigma_{N}$. Finally, recalling \eqn{fnid}, $F_{N}(x)=x$ and hence $J_{F_{N}}(x)=1$ for $x\not\in B(0,1)$. These facts imply
\begin{align*}
\sum_{Q\in \Stop(N)\atop a_{Q}\geq \sqrt{\ve}\ell(Q)^{d}} \sqrt{\ve}\ell(Q)^{d}
& \lec \sum_{Q\in \Stop(N)}
\int_{C_{1}B_{Q}\cap \Sigma^{N}}(J_{F_{N}}(x)-1)d\cH^{d}(x)\\
& \leq \sum_{Q\in \Stop(N)}
\int_{C_{1}B_{Q}\cap \Sigma^{N}}(J_{F_{N}}(x)-1)_{+}d\cH^{d}(x)\\
& 
\lec \int_{\bigcup_{Q\in \Stop(N)}C_{1}B_{Q}\cap \Sigma^{N}}(J_{F_{N}}(x)-1)_{+}d\cH^{d}(x)\\
& = \int_{B(0,1)\cap\Sigma^{N}}(J_{F_{N}}(x)-1)_{+}d\cH^{d}(x)\\
& \leq \int_{B(0,1)\cap \Sigma^{N}}(J_{F_{N}}(x)-1)d\cH^{d}(x) \\
& \qquad + \int_{\bigcup_{Q\in \Stop(N)} C_{1}B_{Q}\cap \Sigma^{N}}(J_{F_{N}}(x)-1)_{-}d\cH^{d}(x)  \\ 
& \leq \int_{B(0,1)\cap \Sigma^{N}}(J_{F_{N}}(x)-1)d\cH^{d}(x) \\
& \qquad + \sum_{Q\in \Stop(N)}\int_{ C_{1}B_{Q}\cap \Sigma^{N}}(J_{F_{N}}(x)-1)_{-} d\cH^{d}(x) 
\end{align*}
Note that by \eqn{FN} that $J_{F_N}\geq 1-C\ve $, so in particular, $(J_{F_{N}}-1)_{-}\lec \ve$. 
Hence, we have
\begin{align*}
\sum_{Q\in \Stop(N)\atop a_{Q}\geq \sqrt{\ve}\ell(Q)^{d}} \sqrt{\ve}\ell(Q)^{d}
& \lec  \int_{ B(0,1)\cap \Sigma^{N+1}}(J_{F_{N}}(x)-1)d\cH^{d}(x) \\
& \qquad + \sum_{Q\in \Stop(N)}\int_{ C_{1}B_{Q}\cap \Sigma^{N}}(J_{F_{N}}(x)-1)_{-}d\cH^{d}(x)  \\
& \lec \cH^{d}(B(0,1)\cap \Sigma^{N+1})- \cH^{d}(B(0,1)\cap \Sigma^{N})\\
& \qquad + \sum_{Q\in \Stop(N)}\ve\ell(Q)^{d}\\
& \leq \cH^{d}(B(0,1)\cap \Sigma^{N+1})- \cH^{d}(B(0,1)\cap \Sigma^{N})\\
& \qquad + \sum_{Q\in \Stop(N) \atop a_{Q}\geq \sqrt{\ve}\ell(Q)^{d}} \ve\ell(Q)^{d}+\sum_{Q\in \Stop(N) \atop a_{Q}< \sqrt{\ve}\ell(Q)^{d}} \ve\ell(Q)^{d}.
\end{align*}

Thus, for $\ve>0$ small enough, and summing over $N$, this implies 
\begin{align*}
\sum_{N\geq 0} \sum_{Q\in \Stop(N)\atop a_{Q}\geq \sqrt{\ve}\ell(Q)^{d}} \sqrt{\ve}\ell(Q)^{d}
& \lesssim \sum_{N\geq 0} \cH^{d}(B(0,1)\cap \Sigma^{N+1})- \cH^{d}(B(0,1)\cap \Sigma^{N})\\
& \qquad + \sum_{N\geq 0}\sum_{Q\in \Stop(N) \atop a_{Q}< \sqrt{\ve}\ell(Q)^{d}} \ve\ell(Q)^{d}
\end{align*}
The second sum is controlled by \eqn{asum2}, so we just need to control the first. To this end, observe that for $L\in \bN$,
\begin{multline*}
\sum_{N=0}^{L-1} \ps{\cH^{d}(F_{N}(B(0,1)\cap \Sigma^{N}))-\cH^{d}(B(0,1)\cap \Sigma^{N})}\\
\lec_{d}\cH^{d}(B(0,1)\cap \Sigma^{L})-\cH^{d}(B(0,1)
\end{multline*}
It now suffices to show
\begin{equation}
\limsup_{L\rightarrow\infty}\cH^{d}(B(0,1)\cap \Sigma^{L})\lec\cH^{d}(B(0,1)\cap \Sigma).
\end{equation}
Let $x\in \Sigma^{L}\backslash \Sigma\cap B(0,1)$. By \Lemma{sigmagraph}, $B(x,r_{k_{N}(x)})\cap \Sigma^{L}$ is contained in a $C\ve$-Lipschitz graph, and thus $\cH^{d}(B(x,r_{k_{N}(x)})\cap \Sigma^{L}) \lec r_{k_{N}(x)}^{d}$. By the Vitali covering theorem, there is a collection of disjoint balls $B_{j}:=B(x_{j},r_{k_{N}(x_{j})}/5)$ so that $5B_{j}$ cover $\Sigma^{N}\backslash \Sigma$. Moreover, as $\dist(x_{j},\Sigma)\lec \ve r_{k_{N}(x_{j})}$, for $\ve$ small enough there is $x_{j}'\in \Sigma$ so that $B(x_{j}',r_{k_{N}(x_{j})}/10)\subseteq B_{j}$. Since $r_{k_{N}(x)}\leq 1$ for all $x\in \Sigma^{N}$, and $x_{j}\in B(0,1)$,
\[B_{j}\subseteq B\ps{0,1+\frac{1}{5}}\subseteq B(0,2)\]
and so
\begin{align}\label{e:sl-s}
\cH^{d}(B(0,1)\cap & \Sigma^{L}\backslash \Sigma)
 \leq \sum \cH^{d}(5B_{j}\cap \Sigma^{L})
\lec \sum r_{k(x_{j})}^{d} \notag \\
& \lec \sum_{j} \cH^{d}(B(x_{j}',r_{k(x_{j})}/10)\cap \Sigma) \notag \\
&  \leq \cH^{d}\ps{\bigcup B_{j} \cap\Sigma}
\leq \cH^{d}(B(0,2)\cap \Sigma)
\end{align}

{
Also,
\begin{align}\label{e:s02s01}
\cH^{d}(\Sigma\cap (B(0,2)\backslash B(0,1)))
& = \cH^{d}(P_{0}\cap (B(0,2)\backslash B(0,1))) \notag \\
& \lec 1 
\stackrel{\eqn{sigmalr}}{\lec} \cH^{d}(B(0,1)\cap \Sigma)
\end{align}
Thus,
\begin{align*}
\cH^{d}(B(0,1)\cap \Sigma^{L})
& \leq \cH^{d}(B(0,1)\cap \Sigma)+ \cH^{d}(B(0,1)\cap \Sigma^{L}\backslash \Sigma)\\
& \stackrel{\eqn{sl-s}}{\lec} \cH^{d}(B(0,2)\cap \Sigma)
\stackrel{\eqn{s02s01}}{\lec}  \cH^{d}(B(0,1)\cap \Sigma).
\end{align*}

}

\end{proof}

\begin{proof}[Proof of Proposition \ref{p:asum}]
The  proposition  now follows from Lemmas \ref{l:asum2} and \ref{l:asum3}.
\end{proof}

\section{Reducing  $\beta$'s to $\Omega$'s and the completion of the proof of Theorem \ref{t:flat-case}}\label{s:beta-omega}

Let $1\leq p <p(d)$, where $p(d)$ is as in \eqref{e:pd}. Recall the definition of $\beta_{\Sigma}^{d,p}$ in Definition \ref{d:beta-h-def} (as well as the content of subsection \ref{subsec:meas-and-cont}).  We will suppress the superscript $d$ from our notation in this section. \postRef{The estimates here are modelled on those in \cite[Chapter 15]{DS}.}

Let $C_{0}$ as in Theorem \ref{t:flat-case}.

Let $S\in \cF^{N}$. For $Q\in S$, let $x_{Q}'\in \Sigma^{N}$ be closest to $x_{Q}$ and $B_{Q}'=B(x_{Q}',\ell(Q))$. Then
\begin{align*}
\sum_{Q\in S}\beta_{\Sigma}^{d,p}(C_{0}B_{Q})^{2}\ell(Q)^{d}
& \stackrel{\eqn{pushbeta}}{\lec} \sum_{Q\in S} \beta_{\Sigma^{N}}^{d,p}(2C_{0}B_{Q}')^{2}\ell(Q)^{d}\\
& \qquad \preRef{+ \sum_{Q\in S} \int_{2C_{0}B_{Q}\cap \Sigma}\frac{\dist(y,\Sigma^{N})}{\ell(Q)} d\cH^{d}_{\infty}(y)
=I_{1}+I_{2}.}\\
& \postRef{ \qquad + \sum_{Q\in S}\ps{ \frac{1}{\ell(Q)^{d}}\int_{2C_{0}B_{Q}\cap \Sigma} \ps{\frac{\dist(y,\Sigma^{N})}{\ell(Q)}}^{p} d\cH^{d}_{\infty}(y)}^{\frac{2}{p}}\ell(Q)^{d}}\\
& \postRef{=I_{1}+I_{2}.}
\end{align*}

We begin by bounding $I_2$.
Note that if $y\in R$ for some $R\in \Stop(N)$, then
\begin{equation}\label{e:d<R}
\dist(y,\Sigma^{N}) \stackrel{\eqref{e:dxsigman}}{\lec} \ve d_{\Up(N)}(y)\leq \ve d_{\Up(N)}(x_{R})+2\ve \ell(R)\lec \ve \ell(R)
\end{equation}
\preRef{this and the fact that the cubes in $\Stop(N)$ are disjoint  and Corollary \ref{c:up-positive-then-stop} imply} \postRef{Moreover, if $y\not\in R$ for any $R\in \Stop(N)$, then $d_{\Up(N)}(y)=0$ by Corollary \ref{c:up-positive-then-stop}, and \eqref{e:dxsigman}, we know $\dist(y,\Sigma^{N})=0$. Combined with the fact that the cubes in $\Stop(N)$ are disjoint,}
\preRef{\begin{equation}\label{e:I21}
I_{2} 
 \lec \sum_{Q\in S}\sum_{R\in \Stop(N) \atop R\cap 2C_{0}B_{Q}\neq\emptyset} \ve \frac{\ell(R)^{d+1}}{\ell(Q)} = \sum_{R\in \Stop(N) \atop R\cap 2C_{0}B_{Q(S)}\neq\emptyset} \sum_{Q\in S: 2C_{0}B_{Q}\cap R\neq\emptyset}  \ve \frac{\ell(R)^{d+1}}{\ell(Q)}  
\end{equation}}\postRef{and by Jensen's inequality (i.e. Lemma \ref{e:jensens}), we know $\beta_{E}^{d,p}\lec \beta_{E}^{d,2}$ for $p< 2$, and so we can assume without loss of generality that $p\geq 2$. Then 
\begin{equation}\label{e:I21}
I_{2} 
\lec  \ve \sum_{Q\in S} \ell(Q)^{d}\ps{\sum_{R\in \Stop(N) \atop R\cap 2C_{0}B_{Q}\neq\emptyset}  \frac{\ell(R)^{d+p}}{\ell(Q)^{d+p}}}^{\frac{2}{p}} 
\leq \ve \sum_{Q\in S} \sum_{R\in \Stop(N) \atop R\cap 2C_{0}B_{Q}\neq\emptyset}  \frac{\ell(R)^{d\frac{2}{p}+2}}{\ell(Q)^{d(\frac{2}{p}-1)+2}} 
\end{equation}
}
Next, for a fixed $R\in \Stop(N)$, if $R\cap 2C_{0}B_{Q}\neq\emptyset$ for some $Q\in S$, then $\dist(x_{Q},R) \leq 2C_{0}\ell(Q)$ and
\begin{equation}\label{e:R<4CQ}
\ell(R)\sim d_{\Up(N)}(R)\leq \ell(Q)+\dist(R,Q)
\leq \ell(Q)+2C_{0}\ell(Q)\leq 3C_{0}\ell(Q)
\end{equation}
hence, for each $k$, 
\begin{equation}
\#\{Q\in \cD_{k}\cap S: 2C_{0}B_{Q}\cap R\neq\emptyset\}\lec_{C_{0},n} 1
\label{e:2C0QR}
\end{equation}
and this is nonzero only if $\ell(Q)\gec_{C_{0}} \ell(R)$. Thus, by summing a geometric series,
\preRef{
\begin{equation}\label{e:I22}
 \sum_{R\in \Stop(N) \atop R\cap 2C_{0}B_{Q(S)}\neq\emptyset} \sum_{Q\in S: 2C_{0}B_{Q}\cap R\neq\emptyset}  \ve \frac{\ell(R)^{d+1}}{\ell(Q)}  \\
\lec \sum_{R\in \Stop(N) \atop R\cap 2C_{0}B_{Q(S)}\neq\emptyset}\ve \ell(R)^{d}
.\end{equation}
}
\postRef{and using the fact that $p<\frac{2d}{d-2}$ implies $d(2/p-1)+2>0$ (and that this holds for all $p\geq 1$ if $d=1,2$),
\begin{equation}\label{e:I22}
I_{2}
\lec 
\ve \sum_{R\in \Stop(N)\atop R\cap 2C_{0}B_{Q(S)}\neq\emptyset} \sum_{Q\in S \atop R\cap 2C_{0}B_{Q}\neq\emptyset}  \frac{\ell(R)^{d\frac{2}{p}+2}}{\ell(Q)^{d(\frac{2}{p}-1)+2}} 
\lec \ve \sum_{R\in \Stop(N)\atop R\cap 2C_{0}B_{Q(S)}\neq\emptyset}  \ell(R)^{d}
.\end{equation}

}

Now, suppose $R\in \Stop(N)$ is such that $R\cap 2C_{0}B_{Q(S)}\neq\emptyset$. Let $x_{R}'\in \Sigma^{N}$ be closest to $x_{R}$, hence 
\[
|x_{R}'-x_{R}|\stackrel{\eqref{e:dxsigman}}{\lec} \ve d_{\Up(N)}(x_{R})\sim \ve \ell(R)\stackrel{\eqn{R<4CQ}}{\lec} C_{0}\ve \ell(Q(S))\]
and so for $\ve$ small enough (in relation to $C_{0}$) and if $C_{2}>2$,
\[
B(x_{R}',\ell(R))\subseteq D(x_{Q(S)},P_{S},C_{2}\ell(Q(S)))\]
this, \Lemma{graph}, and the fact that the balls $\{C_{1}B_{R}:R\in \Stop(N)\}$ have bounded overlap by \Lemma{QSQS'} imply
\begin{align*}
 \sum_{R\in \Stop(N) \atop R\cap 2C_{0}B_{Q(S)}\neq\emptyset}\ve \ell(R)^{d}
&  \lec   \sum_{R\in \Stop(N) \atop R\cap 2C_{0}B_{Q(S)}\neq\emptyset} 
\ve\cH^{d}(B(x_{R}', \ell(R))\cap \Gamma_{S})\\
&  \lec \ve \cH^{d}(\Gamma_{S}\cap D(x_{Q},P_{S},C_{2}\ell(Q(S))))
 \lec \ve \ell(Q(S))^{d}.
 \end{align*}
This combined with \eqn{I21} and \eqn{I22} show that
\begin{equation}\label{e:I2end}
I_{2}\lec \ve \ell(Q(S))^{d}.
\end{equation}
We now turn to bounding     $I_{1}$.
Choose a family of dyadic cubes for $P_{S}$ and let $I_{Q}\ni \pi_{P_S}(x_{Q}) $ be a minimal  dyadic cubes for which  $\ell(I_{Q})\geq 8C_{0}\ell(Q)$. 
Note that
\[
|\pi_{P_S}(x_{Q})-\pi_{P_S}(x_{Q}')|
\leq |x_{Q}-x_{Q}'|\stackrel{\eqn{dxsigman}}{\lec}\ve d_{\Up(N)}(x_Q)\leq \ve \ell(Q)\]
and so for $\ve$ small enough,
\[
\pi_{P_S}(2C_{0}B_{Q}')\subseteq 3I_{Q}.\]

If $4C_{0}<C_{2}$ and $\ve>0$ is small, then $2C_{0} B_{Q}'\subseteq C_{2}B_{Q(S)}$, and so
\begin{align*}
\sum_{Q\in S} \beta_{\Sigma^{N}}^{d,p}(2C_{0}B_{Q}')^{2}\ell(Q)^{d}
& \lec_{d}
\sum_{Q\in S} \beta_{\Sigma^{N}}^{d,p}(2C_{0}B_{Q}')^{2}\ell(Q)^{d}\\
& \stackrel{\eqn{b-to-om}}{\lec_{d}}
\sum_{Q\in S} \Omega_{A_{S}}^{2}(3I_{Q})^{2}\ell(I_{Q})^{d}\\
& =\sum_{I\subseteq P_{S}}  \Omega_{A_{S}}^{2}(3I)^{2}\ell(I)^{d} \# \{Q\in S: I_{Q}=I\}.
\end{align*}

\begin{remark}\label{r:c2}
We now fix $C_{2}=1+4C_{0}$.
\end{remark}

In the second inequality, we used the fact that $A_{S}$ is $(1+C\alpha)$-Lipschitz. We will now show $ \# \{Q\in S: I_{Q}=I\}$ is bounded for each dyadic cube $I$. Suppose $Q,R\in S$ are such that $I_{Q}=I_{R}=I$. 
This implies $\ell(Q)\sim_n\ell(R)$.
Then by \Lemma{graph}, since $\pi_{S}$ is $(1+C\alpha)$-bi-Lipschitz on $D(x_{Q(S)},P_{S},C_{2}\ell(Q))$,
\begin{align*}
|x_{Q}-x_{R}|
& \leq |x_{Q}'-x_{R}'|+\dist(x_Q,\Sigma^{N})+\dist(x_R,\Sigma^{N})\\
& \lec |\pi_{S}(x_{Q}')-\pi_{S}(x_{R}')| +\dist(x_Q,\Sigma^{N})+\dist(x_R,\Sigma^{N})  \\
&  \leq |\pi_{S}(x_{Q})-\pi_{S}(x_{R})| + 2\dist(x_Q,\Sigma^{N})+2\dist(x_R,\Sigma^{N}) \\
& \stackrel{\eqn{dxsigman}}{\lec} \ell(I_{Q})+\ve d_{\Up(N)}(x_Q)+\ve d_{\Up(N)}(x_R)\\
& \lec \ell(I_{Q})+\ve \ell(Q)+\ve \ell(R)
\lec \ell(I).\end{align*}
Moreover, since $c_{0}B_{Q}\cap c_{0}B_{R}=\emptyset$ by \Theorem{Christ}, $|x_{Q}-x_{R}|\gec \ell(Q)\sim \ell(I)$. Thus, $|x_{Q}-x_{R}|\sim \ell(I)$ for any two $Q,R\in S$ for which $I_{Q}=I_{R}=I$, and this implies $ \# \{Q\in S: I_{Q}=I\}\lec_{n} 1$. Thus, by \Theorem{dorronsoro} and \Lemma{graph}

\begin{align*}
\sum_{Q\in S} \beta_{\Sigma^{N}}^{d,p}(2C_{0}B_{Q}')^{2}\ell(Q)^{d} 
& \lec  \sum_{I\subseteq P_{S}} \Omega_{A_{S}}^{2}(3I)^{2}\ell(I)^{d} 
\stackrel{\eqn{dorronsorocubes}}{\lec} ||\grad A_{S}||_{L^{2}(P_{S})}^{2}\\
& \lec \alpha^{2} \ell(Q(S))^{d}.
\end{align*}

Thus, if $R\in \cD_{0}$ and $R\cap B(0,1)\neq\emptyset$,
\begin{align*}
\sum_{Q\subseteq R} \beta_{\Sigma}^{d,p}(C_{0}B_{Q})^{2}\ell(Q)^{d}
& \lec \sum_{N\geq 0}\sum_{S\in \cF^{N}\atop Q(S)\subseteq R} \sum_{Q\in S} \beta_{\Sigma}^{d,p}(C_{0}B_{Q})^{2}\ell(Q)^{d}\\
& \lec \sum_{N\geq 0}\sum_{S\in \cF^{N}\atop Q(S)\subseteq R} (\ve+\alpha^{2}) \ell(Q(S))^{d}\\
& = (\ve+\alpha^{2}) \ps{ \ell(R)^{d}+\sum_{N\geq 0}\sum_{Q\in \Stop(N)} \ell(Q)^{d}}\\
& \stackrel{\eqn{asum}}{\lec} 1+\cH^{d}( \Sigma\cap B(0,1))\lec \cH^{d}( \Sigma\cap B(0,1)).
\end{align*}

If $R\in \cD_{0}$ and $R\cap B(0,1)=\emptyset$, then it is not hard to show that, since $\Sigma\backslash B(0,1)$ is a plane, 
\begin{align*}
\sum_{Q\subseteq R} \beta_{\Sigma}^{d,p}(C_{0}B_{Q})^{2}\ell(Q)^{d} 
& \leq \sum\{\ell(Q)^{d}: Q\subseteq R ,\;\; C_{0}B_{Q}\cap B(0,1)\neq\emptyset\}  \\
& \lec_{C_{0}} 1 \lec \cH^{d}( \Sigma\cap B(0,1)).
\end{align*}
Here we used the fact that, since $\Sigma\backslash B(0,1)$ is a plane, $ \beta_{E}^{d,p}(C_{0}B_{Q})\neq 0$ only when $C_{0}B_{Q}\cap B(0,1)\neq\emptyset$. For the same reason, there are at most boundedly many $R\in \cD_{0}$ for which $R\cap B(0,1)=\emptyset$ and $\sum_{Q\subseteq R} \beta_{E}^{d,p}(C_{0}B_{Q})^{2}\ell(Q)^{d}\neq 0$ (with constant depending on $C_{0}$) . Combining this with the sum in $Q_{0}$ above, we get that 

\begin{align*}
\sum_{k\geq 0} \sum_{x\in \cX_{k}} \beta_{\Sigma}^{d,p}(B(x,C_{0}\rho^{-k}))^{2}\rho^{-kd} 
& \lec_{d}\sum_{R\in \cD_{0}} \sum_{Q\subseteq R}  \beta_{\Sigma}^{d,p}(C_{0}B_{Q})^{2}\ell(Q)^{d}\\
&  \lec_{C_{0}} \cH^{d}( \Sigma\cap B(0,1)).
\end{align*}

This completes the proof of Theorem \ref{t:flat-case}.

%
%

\section{Proof of Theorem \ref{t:thmii}}
\label{s:TheoremII}

We first collect some notation and lemmas, which will be quite similar to that found in previous sections.

Let $E$ satisfy the conditions of Theorem \ref{t:thmii}. {Let $\cD$ be the cubes from \Theorem{Christ} for $E\cap B(0,1)$ with $\rho^{k}$-nets such that $\cX_{0}=\{0\}$. In this way, $\cD_{0}=\{Q_{0}\}$ where 
$Q_{0}=B(0,1)\cap E$ 
and all cubes in $\cD$ are contained in and partition $E\cap B(0,1)$}. Let

\[
\cG=\{Q\in \cD: \vartheta_{E}(AB_{Q})<\ve\}.\]

For a cube $Q\in \cG$, we will construct an approximating surface for $E$ relative to $Q$ called $\Sigma_{Q}$. 

Define a stopping-time region $S(Q)$ by inductively adding cubes $R$ to $S(Q)$ if either $R=Q$  or 
\begin{enumerate}
\item $R^{(1)}\in S(Q)$
\item $\vartheta_{E}(AB_{R'})<\ve$ for each sibling $R'$ of $R$, including $R$ itself.
\end{enumerate}

For $x\in \bR^{n}$, define
\[
d_{Q}(x)=d_{S(Q)}(x) \;\; \mbox{ and }\;\; d_{Q}(R)=\inf_{x\in R}d_{Q}(x).\]
%

Let $C_{1}>10$. We will adjust its value as we go along but we will assume that $10<C_{1}\ll A$.

Again, for $k\geq 0$ an integer, let $s(k)$ be such that $5\rho^{s(k)}\leq r_{k}< 5\rho^{s(k)-1}$. We can assume that $\rho$ is $5$ times a power of $10$ and so there is $k_{Q}$ so that $5\rho^{s(k_{Q})}=r_{k_{Q}}$. Set
\[
S(Q)_{k}=\cD_{s(k)}\cap S(Q)\]
and let $\cX^{Q}_{k}=\{x_{j,k}\}_{j\in J_{k}^{Q}}$ be a maximal $r_{k}$-separated set of points for the set
\[\cC_{k}^{Q}=\{x_{Q}:Q\in S(Q)_{k}\}.\]
For $j\in J_{k}^{Q}$, let $Q_{j,k}\in S(Q)_{k}$ be such that $x_{Q_{j,k}}=x_{j,k}$. In this way, $\cX^{Q}_{k_{Q}}=\{x_{Q}\}$.  For each $R\in \cG$, there is $P_{R}'$ so that
\[
\vartheta_{E}(AB_{R},P_{R}')<\ve.\]
Let $P_{R}$ be the translate of $P_{R}'$ so that $x_{R}\in P_{R}$, then it is not hard to show that, for $\ve>0$ small enough,
\begin{equation}\label{e:9thetaabr}
\vartheta_{E}(AB_{R},P_{R})\lec \ve.
\end{equation}
Let $P_{j,k}=P_{AB_{Q_{j,k}}}$. 

The following has the same proof as \Lemma{DTready}.

\begin{lemma}
For each $Q$ and $A>10^{5}$, $\{x_{j,k}\}_{j\in J_{k}^{Q}}$ satisfies the conditions of \Theorem{DT}.
\end{lemma}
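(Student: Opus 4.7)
The plan is to mirror the proof of \Lemma{DTready}, verifying each of the three hypotheses of \Theorem{DT} in turn: the separation, the containment $x_{ik}\in V_{k-1}^{2}$, and the smallness of $\ve_{k}(x_{jk})$. The starting condition that $\{x_{j,k_{Q}}\}_{j\in J_{k_{Q}}^{Q}}\subset P_{0}$ is trivial because $\cX^{Q}_{k_{Q}}=\{x_{Q}\}$, so one simply takes $P_{0}:=P_{Q}$ (which contains $x_{Q}$ by construction). The separation condition is built into the choice of $\cX_{k}^{Q}$ as a maximal $r_{k}$-separated net, so there is nothing to verify there.

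For the nesting condition, first I would observe that the sets $\cC_{k}^{Q}=\{x_{R}:R\in S(Q)_{k}\}$ are nondecreasing in $k$ (since the ancestor $R^{(1)}$ of any $R\in S(Q)$ of strictly smaller generation also lies in $S(Q)$ by coherence of $S(Q)$). Fix $j\in J_{k}^{Q}$. If $s(k)=s(k-1)$, then $\cC_{k-1}^{Q}=\cC_{k}^{Q}$, and since $\cX^{Q}_{k-1}$ is a maximal $r_{k-1}$-net for $\cC_{k-1}^{Q}$, we find $i\in J_{k-1}^{Q}$ with $|x_{j,k}-x_{i,k-1}|<r_{k-1}$. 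If $s(k)>s(k-1)$, then $\ell(Q_{j,k}^{(1)})=5\rho^{s(k)-1}\leq 5\rho^{s(k-1)}\leq r_{k-1}$, so there exists $i\in J_{k-1}^{Q}$ with $x_{Q_{j,k}^{(1)}}\in B_{i,k-1}$, and
\[
x_{j,k}\in Q_{j,k}^{(1)}\subseteq B(x_{Q_{j,k}^{(1)}},\ell(Q_{j,k}^{(1)}))\subseteq B(x_{i,k-1},2r_{k-1})=2B_{i,k-1},
\]
which is exactly the condition \eqref{V2}.

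The main obstacle, as in \Lemma{DTready}, is controlling $\ve_{k}(x_{j,k})$. Here I would use that every cube $R\in S(Q)$ satisfies $\vartheta_{E}(AB_{R})<\ve$ by the very definition of $S(Q)$, and hence for $A$ large enough (namely $A\geq M$ in the notation of \Lemma{PQPR}) one has $\beta_{E,\infty}^{d}(MB_{R})<\ve$. Since $E$ is $(c,d)$-lower content regular in $B(0,1)$, \Lemma{betabetainf} gives $\beta_{E}^{d,1}(MB_{R})\lec \ve$ for all $R\in S(Q)$. Thus the hypothesis of \Lemma{PQPR} is met for any two nearby cubes in $S(Q)$, yielding $\angle(P_{R},P_{R'})\lec_{M,\Lambda}\ve$ whenever $R,R'\in S(Q)$ lie in a common ancestor with bounded geometry. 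Applying this to the cubes $Q_{j,k}$ and $Q_{i,l}$ with $|l-k|\leq 2$ and $x\in 100B_{j,k}\cap 100B_{i,l}$, taking into account that $10^{4}r_{l}\ll A\ell(Q_{i,l})$ for $A$ large, gives $\ve_{k}(x_{j,k})\lec \ve$.

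Finally, by choosing $\ve>0$ sufficiently small (depending on the absorbed constants from \Lemma{PQPR} and the conversion between $\vartheta$ and $\beta$), the hypothesis \eqref{e:vek<ve} of \Theorem{DT} is satisfied, which completes the proof. The only subtlety is bookkeeping: one has to check that the balls $B_{j,k}$ and $B_{i,l}$ appearing in the definition of $\ve_{k}$ correspond to cubes $Q_{j,k},Q_{i,l}$ whose diameters are comparable and whose distance is at most a fixed multiple of their sizes, so that \Lemma{PQPR} applies with $\Lambda$ depending only on $n$. This follows from $|l-k|\leq 2$ together with \eqref{e:qjkrk}, and once this is in place the argument concludes exactly as in \Lemma{DTready}.
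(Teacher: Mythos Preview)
The proposal is correct and takes essentially the same approach as the paper, which simply says ``the following has the same proof as \Lemma{DTready}.'' Your expansion of that argument---verifying separation, the containment $x_{j,k}\in V_{k-1}^{2}$ via the two cases $s(k)=s(k-1)$ and $s(k)=s(k-1)+1$, and bounding $\ve_{k}(x_{j,k})$ through \Lemma{PQPR}---is exactly what is intended; the only (harmless) slip is calling the $\cC_{k}^{Q}$ ``nondecreasing'' when what you actually use (and what holds) is coherence of $S(Q)$ together with $s(k)-s(k-1)\in\{0,1\}$.
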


Let $P_{Q}$ be defined as before and let $\sigma_{k}^{Q}$, $\Sigma_{k}^{Q}$, $\Sigma^{Q}$, and $\sigma^{Q}$ be the functions and surfaces obtained from \Theorem{DT} with the nets $\cX^{Q}_{k}$. 

The following has a similar proof as \Lemma{up-zero-intersection}.
\begin{lemma}\label{l:9up-zero-intersection}
If $d_{Q}(x)=0$ then $x\in \Sigma^Q\cap \Sigma$
 \end{lemma}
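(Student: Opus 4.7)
The plan is to follow verbatim the argument of \Lemma{up-zero-intersection}, with $S(Q)$ playing the role of $\Up(N)$, the nets $\cX_{k}^{Q}$ playing the role of $\cX_{k}^{N}$, and $E$ (more precisely $E\cap\cnj{B(0,1)}$, which is the analogue of the underlying Reifenberg flat surface $\Sigma$ in the earlier argument) playing the role of $\Sigma$. Specifically, since $d_{Q}(x)=0$, we can extract a sequence of cubes $R_{i}\in S(Q)$ with $\ell(R_{i})+\dist(x,R_{i})\to 0$. In particular, $|x-x_{R_{i}}|\leq \ell(R_{i})+\dist(x,R_{i})\to 0$.

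For the containment $x\in\Sigma^{Q}$: for each $R_{i}$, let $k_{i}$ be the unique integer such that $R_{i}\in S(Q)_{k_{i}}$ (i.e.\ such that $s(k_{i})$ is the generation of $R_{i}$ in $\cD$). Since $\cX_{k_{i}}^{Q}$ is a maximal $r_{k_{i}}$-net for $\cC_{k_{i}}^{Q}=\{x_{R}:R\in S(Q)_{k_{i}}\}\ni x_{R_{i}}$, there is $j_{i}\in J_{k_{i}}^{Q}$ with $|x_{j_{i},k_{i}}-x_{R_{i}}|\leq r_{k_{i}}$. Because $\ell(R_{i})\to 0$ we have $r_{k_{i}}\to 0$ and hence $k_{i}\to\infty$, and so
\[
|x-x_{j_{i},k_{i}}|\leq |x-x_{R_{i}}|+r_{k_{i}}\longrightarrow 0.
\]
By \eqref{eqahj0} applied to the DT construction associated with the nets $\cX_{k}^{Q}$ (which is legitimate thanks to the preceding lemma asserting that these nets satisfy the hypotheses of \Theorem{DT}), we conclude $x\in E_{\infty}\subseteq \Sigma^{Q}$.

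For the containment $x\in\Sigma$ (interpreted as $x\in E$): each $R_{i}\subseteq E\cap\cnj{B(0,1)}$, so $\dist(x,E)\leq \dist(x,R_{i})\to 0$, and since $E$ is closed, $x\in E$. Combining gives $x\in\Sigma^{Q}\cap\Sigma$.

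The only thing that needs any genuine verification is that $k_{i}\to\infty$, which is immediate from the relation $\ell(R_{i})\sim\rho^{s(k_{i})}\sim r_{k_{i}}$, and that the nets $\cX_{k}^{Q}$ satisfy the hypotheses of \Theorem{DT}, which has already been recorded. So there is no real obstacle beyond transcribing the earlier argument in the current notation.
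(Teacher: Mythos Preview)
Your proposal is correct and follows exactly the approach the paper intends: the paper merely says ``similar proof as \Lemma{up-zero-intersection}'', and what you wrote is precisely that transcription, with $S(Q)$ in place of $\Up(N)$, $\cX_{k}^{Q}$ in place of $\cX_{k}^{N}$, and $E$ in place of the Reifenberg surface $\Sigma$. Your observation that ``$\Sigma$'' in the statement should be read as $E$ is right (it is a notational slip in the paper carried over from the earlier section); note also that $k_{i}$ is not literally unique since several values of $k$ can give the same $s(k)$, but any choice works and your limiting argument $k_{i}\to\infty$ goes through unchanged.
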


For $x\in \Sigma^{Q}$, we define $k_{Q}(x)$ to be the maximal integer $k$ such that $x\in V_{k-1}^{11}$.

\begin{lemma}\label{l:9lemma-5-4}
For $x\in \Sigma^{Q}\cap C_{1}B_{Q}$, let $k=k_{Q}(x)$ . Then we have 
\begin{equation}\label{e:9dx<dT}
\dist(x,E)\lec   \ve r_{k} \sim \ve d_{Q}(x)
\end{equation}
and there is a twice-differentiable $C\ve$-Lipschitz graph $\Gamma_{x}^{Q}$ over $P_{x,r_{k}}^{N}$ so that
\begin{equation}
B(x,r_{k})\cap \Sigma^{Q}=B(x,r_{k})\cap \Sigma_{k}^{Q}=B(x,r_{k})\cap \Gamma_{x}^{Q}
\label{e:9bsnbsnk}
\end{equation}
\end{lemma}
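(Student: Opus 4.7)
The plan is to adapt the proof of Lemma~\ref{l:dxsigma} to the new setting. The two statements are really the same fact phrased for the David--Toro parametrization attached to a good cube $Q\in\cG$ rather than to a Reifenberg flat surface; the only substantive differences are that now $\dist(x,E)$ (rather than $\dist(x,\Sigma)$) must be estimated because $E$ is only approximately flat \emph{through the cubes in} $S(Q)$, and that the localization $x\in C_1 B_Q$ must be invoked to ensure the relevant scale $r_k$ produces cubes that actually live in $S(Q)$. Thus the proof mirrors Lemma~\ref{l:dxsigma} almost verbatim, with \eqref{e:9thetaabr} used in place of the global Reifenberg flatness of $\Sigma$.

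Specifically, I would argue as follows. If $x\in E$ there is nothing to prove, so assume $x\notin E$; if moreover $x\notin C_1 B_Q$ we may assume (just as in the proof of Lemma~\ref{l:dxsigma}) that $x$ lies in a region where all $\sigma_k^Q$ are the identity, and the claim is immediate. Otherwise let $k=k_Q(x)\geq 1$, so $x\in V_{k-1}^{11}\setminus V_k^{11}$. Writing $x=\lim_{K\to\infty}\sigma_{k+K}^Q\circ\cdots\circ\sigma_{k-1}^Q(x')$ for some $x'\in\Sigma_{k-1}^Q$, the bound \eqref{e:ytosigma} gives $|x-x'|\lec\ve r_{k-1}\lec\ve r_k$. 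For $\ve$ small this puts $x'\in V_{k-1}^{12}$, so there exists $j\in J_{k-1}^Q$ with $x'\in 12 B_{j,k-1}$. Combining \eqref{e:49r} (which places $\pi_{j,k-1}(x')$ within $\lec\ve r_k$ of $\Sigma_{k-1}^Q$) with \eqref{e:9thetaabr} applied to $Q_{j,k-1}\in S(Q)\subseteq\cG$ yields
\[
\dist(\pi_{j,k-1}(x'),E)\lec \ve\, r_{k-1}.
\]
The triangle inequality then gives $\dist(x,E)\lec \ve r_k$.

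For the equivalence $r_k\sim d_Q(x)$ one copies the two-sided argument from Lemma~\ref{l:dxsigma}. The upper bound comes from the cube $Q_{j,k-1}\in S(Q)_{k-1}$ witnessing $x'\in 12B_{j,k-1}$: using \eqref{e:qjkrk},
\[
d_Q(x)\leq \ell(Q_{j,k-1})+\dist(x,Q_{j,k-1})\lec r_{k-1}+|x-x'|+|x'-x_{j,k-1}|\lec r_k.
\]
The lower bound is by contradiction: if a cube $R\in S(Q)$ witnessed $d_Q(x)\sim r_\ell$ with $\ell-k$ large, then $x_R\in V_\ell^1$, hence $x\in V_k^{11}$, contradicting maximality of $k$. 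Here the hypothesis $x\in C_1 B_Q$ is what guarantees that the witnessing cube lives in $S(Q)$ rather than outside the region on which the parametrization is meaningful.

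Finally, for the graph statement (correcting the evident typos $N\mapsto Q$ in the target surfaces and plane), the maximality of $k$ forces $B(x,r_k)\subseteq(V_\ell^{10})^c$ for every $\ell\geq k$, so by \eqref{e:v10c} each $\sigma_\ell^Q$ fixes $B(x,r_k)$ pointwise. Hence $B(x,r_k)\cap\Sigma^Q=B(x,r_k)\cap\Sigma_k^Q$, and \eqref{e:ygraph} applied to $y=x\in\Sigma_k^Q$ produces the $C\ve$-Lipschitz $C^2$ graph $\Gamma_x^Q$ over the $d$-plane $P_{x,r_k}^Q$ with $B(x,r_k)\cap\Sigma_k^Q=B(x,r_k)\cap\Gamma_x^Q$. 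The only real subtlety is the bookkeeping needed to check that all cubes and nets invoked in the argument actually live in $S(Q)$ (not merely in $\cD$); this is ensured by the stopping-time construction defining $S(Q)$ together with the localization $x\in C_1 B_Q$, and is the only place the proof diverges from the Reifenberg flat case.
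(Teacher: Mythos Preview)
Your argument follows the paper's proof closely for the main case, but your case split at the start is off and this creates a genuine gap. You write ``if moreover $x\notin C_1 B_Q$ we may assume \ldots'', but the hypothesis of the lemma is precisely $x\in\Sigma^Q\cap C_1 B_Q$, so this case is vacuous. The correct dichotomy (which the paper uses) is $x\in 10B_Q$ versus $x\in C_1B_Q\setminus 10B_Q$, and the outer annulus genuinely requires a separate argument.

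The issue is this: all cubes in $S(Q)$ are contained in $Q\subseteq B_Q$, so the nets $\cX_k^Q$ sit inside $B_Q$ and in particular $V_{k_Q}^{11}=11B_Q$. If $x\in C_1B_Q\setminus 11B_Q$ (recall $C_1\gg 10$), then $k_Q(x)=k_Q$ and there is no $j\in J_{k-1}^Q$ to invoke, since the construction only starts at level $k_Q$. Your step ``there exists $j\in J_{k-1}^Q$ with $x'\in 12B_{j,k-1}$'' therefore fails in the outer annulus. The paper handles this case differently: it uses that $\Sigma^Q$ is $C\ve$-Reifenberg flat (so $\Sigma^Q$ is $\ve$-close to some plane $P$ in $C_1B_Q$), that $Q\in\cG$ gives $\vartheta_E(AB_Q)<\ve$ with $A>C_1$, and combines these to get $\dist(x,E)\lec_{C_1}\ve\ell(Q)$; it then observes that in this region $r_k\sim_{C_1}\ell(Q)\sim d_Q(x)$ directly, since $x$ is at distance $\sim_{C_1}\ell(Q)$ from every cube in $S(Q)$. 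Once you add this outer-annulus case, the rest of your argument (which matches the paper's treatment of $x\in 10B_Q$ essentially verbatim) goes through.
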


\begin{proof}
We first prove the left-hand-side inequality \eqn{9dx<dT}. There is nothing to show if $x\in E$, so assume $x\not\in E$. 

Suppose first that $x\not\in 10B_{Q}$. Let $k=k_{Q}(x)$. Note that $\Sigma^{Q}$ is $C\ve$-flat for some constant $C$, thus there is a $d$-plane $P$ for which $d_{C_{1}B_{Q}}(\Sigma^{Q},P)<C\ve$. As $\vartheta_{E}(C_{1}B_{Q})<\ve$, we then have $d_{C_{1}B_{Q}}(P,P_{Q})\lec \ve$, and since $C_{1}>10$, we have that $x\in C_{1}B_{Q}$ and thus 
\[
\dist(x,E)\lec_{C_{1}} \ve \ell(Q).\]
In this case, $r_{k}\sim_{C_{1}} \ell(Q)$, and we are done.

Now suppose $x\in 10 B_{Q}$. 
Again, set $k=k_{Q}(x)$. Let $x'\in \Sigma_{k-1}^{Q}$ be such that $x=\lim_{K\rightarrow\infty}\sigma_{k+K}^{Q}\circ\cdots\circ \sigma_{k-1}^{Q}(x')$. Then by \eqn{ytosigma},
\[|x'-x|\lec \ve r_{k-1}\lec \ve r_{k}.\]
Hence, for $\ve$ small enough, $x'\in V_{k-1}^{12}$. 
Thus, there is $j\in J_{k-1}^{Q}$ so that $x'\in 12B_{j,k-1}$. By \eqn{49r}, $\dist(x',P_{j,k-1})\lec\ve r_{k}$ and $\pi_{j,k-1}(x')\in 13B_{j,k-1}$. By our choice of $P_{j,k-1}$, $\dist(\pi_{j,k-1}(x'),E)\lec \ve r_{k-1}$. Combining these inequalities, we get that $\dist(x,E)\lec \ve r_{k}$. 
It now remains to show the right-hand-side inequality \eqn{9dx<dT}, i.e. that $ r_{k}\sim d_{Q}(x)$. First, 
\begin{align*}
d_{Q}(x) & \leq \ell(Q_{j,k-1})+\dist(x,Q_{j,k-1})\lec r_{k-1}+|x-x'|+\dist(x',Q_{j,k-1})
\\
& \leq r_{k-1}+\ve r_{k-1}+|x'-x_{j,k-1}|\lec r_{k-1}\lec r_{k}.
\end{align*}
Next, let $Q\in S(Q)$ be such that $d_{Q}(x)\sim \ell(Q)+\dist(x,Q)$. Suppose $d_{Q}(x)\sim r_{\ell}$ for some $\ell\geq k$. Since $|x-x_{Q}|\lec d_{Q}$ we know $|x-x_{Q}|\leq  Cr_{\ell}$ for some universal constant $C$, and if
 $\ell - k\gtrsim\log(C)$, $|x-x_{Q}|<r_{k}$. Then $x_{Q}\in B_{i\ell}$ for some $i\in J_{\ell}^Q$ (since $\cX^{Q}_{\ell}$ is a maximal net), so in particular, $x_{Q}\in V_{\ell}^{1}$. Thus, $x_{Q}\in V_{k}^{2}$, and hence $x\in V_{k}^{11}$, contradicting our choice of $k$. Thus, $|\ell-k|$ is bounded by a universal constant, implying $d_{Q}(x)\gec r_{k}$, we have \eqn{9dx<dT}.

To get \eqn{9bsnbsnk}, notice that since $x\not\in V_{k}^{11}$ and by the maximality of $k$, $B(x,r_{k})\subseteq (V_{\ell}^{10})^{c}$ for all $\ell\geq k$, and so $\sigma_{\ell}^{Q}$ is the identity on $B(x,r_{k})$ for all $\ell\geq k$. This and \eqn{ygraph} imply \eqn{9bsnbsnk}.
\end{proof}

%
%

%
%
%
%

\begin{lemma}\label{l:sizesigmaq}
For $Q\in \cG$, 
\begin{equation}
\label{e:sizesigmaq}
\cH^{d}( C_{1}B_{Q}\cap \Sigma^{Q})
 \lec \sum_{R\in m(S(Q))} \ell(R)^{d} + \cH^{d}(\{x\in \cnj{Q}:d_{Q}(x)=0\}).
\end{equation}
\end{lemma}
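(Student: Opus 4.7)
The plan is to split $C_1 B_Q \cap \Sigma^Q$ according to whether $d_Q(x)=0$. Setting $Z := \{x : d_Q(x)=0\}$, the first piece will contribute the second term on the right-hand side of \eqref{e:sizesigmaq}, while the second piece is handled by a Vitali covering tied to the minimal cubes of $S(Q)$.

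For $x \in C_1 B_Q \cap \Sigma^Q$ with $d_Q(x)=0$, there exist $R_n \in S(Q)$ with $\ell(R_n) + \dist(x,R_n) \to 0$; since $S(Q) \subseteq \{R \subseteq Q\}$, we get $x \in \overline{Q}$, and Lemma~\ref{l:9up-zero-intersection} gives $x \in E$. Thus
\[
\cH^d(C_1 B_Q \cap \Sigma^Q \cap Z) \;\leq\; \cH^d(\{x \in \overline{Q} : d_Q(x)=0\}).
\]

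For $x \in C_1 B_Q \cap \Sigma^Q \setminus Z$, Lemma~\ref{l:9lemma-5-4} gives $r_k \sim d_Q(x)$ (with $k=k_Q(x)$) and shows that $B(x,r_k) \cap \Sigma^Q$ is a $C\ve$-Lipschitz graph, so $\cH^d(B(x,r_k)\cap\Sigma^Q) \lec r_k^d$. A Vitali selection produces $\{x_j\}$ with the balls $B(x_j, r_{k_Q(x_j)}/5)$ disjoint and their $5$-dilates covering $C_1 B_Q \cap \Sigma^Q \setminus Z$, yielding
\[
\cH^d(C_1 B_Q \cap \Sigma^Q \setminus Z) \;\lec\; \sum_j r_{k_Q(x_j)}^d.
\]
To each $x_j$ I would associate a minimal cube $R_j \in m(S(Q))$ with $\ell(R_j) \sim r_{k_Q(x_j)}$ as follows. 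Lemma~\ref{l:9lemma-5-4} supplies $y_j \in E$ with $|x_j - y_j| \lec \ve\, r_{k_Q(x_j)}$; the $1$-Lipschitz property of $d_Q$ forces $d_Q(y_j) > 0$, so $y_j \notin Z$. If $y_j \in Q$, a descending chain argument (every point of $E \cap Q$ is either in $Z$ or in some minimal cube of $S(Q)$, analogous to Corollary~\ref{c:whitney-cor}) produces a minimal $R_j \ni y_j$; if $y_j \notin Q$ (possible if $x_j$ lies far from $Q$), I would invoke the Reifenberg flatness of $E$ inside $AB_Q \supseteq C_1 B_Q$ together with lower content regularity to re-select $y_j$ inside $Q$. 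The comparison $\ell(R_j) \sim r_{k_Q(x_j)}$ then follows from $d_Q(y_j) \leq \ell(R_j)$ combined with $|x_j - y_j| \lec \ve\, r_{k_Q(x_j)}$. For each $R \in m(S(Q))$, the disjointness of $B(x_j, r_{k_Q(x_j)}/5)$ and a volume-packing estimate in $\bR^n$ give $\#\{j : R_j = R\} \lec_n 1$, so
\[
\sum_j r_{k_Q(x_j)}^d \;\lec\; \sum_{R \in m(S(Q))} \ell(R)^d.
\]
For the handful of indices $j$ with $r_{k_Q(x_j)} \gec \ell(Q)$ not associated to a proper subcube, I would absorb their contribution using the bound $\ell(Q)^d \lec \cH^d_\infty(Q) \leq \sum_{R \in m(S(Q))} \ell(R)^d + \cH^d(\{x \in \overline{Q} : d_Q(x)=0\})$, which follows since $Q = (Q\cap Z) \sqcup \bigsqcup_{R \in m(S(Q))} R$ and $E$ is lower content regular in $Q$.

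The hard part will be the step ``$y_j \in E \Rightarrow y_j$ lies in a cube of $m(S(Q))$'' when $x_j \in C_1 B_Q \setminus Q$: one has to use the Reifenberg flatness of $E$ at the much larger scale $AB_Q$ together with lower content regularity to either re-select $y_j$ inside $Q$ or to argue that such ``outer'' $x_j$ contribute only at the top scale $\ell(Q)^d$, which is itself absorbed by the right-hand side of \eqref{e:sizesigmaq}.
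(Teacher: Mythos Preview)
Your overall plan matches the paper's: split $C_{1}B_{Q}\cap\Sigma^{Q}$ according to whether $d_{Q}=0$, handle the zero set trivially, and cover the rest by balls of radius $\sim r_{k_{Q}(x)}$ tied to minimal cubes. Where your proposal diverges is precisely the part you flag as hard, and neither fix you propose actually closes the gap.

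Concretely, for $x_{j}\in C_{1}B_{Q}\cap\Sigma^{Q}$ with $d_{Q}(x_{j})>0$ you always have $\dist(x_{j},Q)\lec r_{k_{Q}(x_{j})}$ (some cube of $S(Q)$ is that close), but there is no reason the nearest $E$-point $y_{j}$ lies in $Q$, and if you replace $y_{j}$ by the nearest point of $Q$ you only get $|x_{j}-y_{j}|\lec r_{k_{Q}(x_{j})}$ rather than $|x_{j}-y_{j}|\ll r_{k_{Q}(x_{j})}$. With that weaker proximity the inequality $d_{Q}(y_{j})\gec r_{k_{Q}(x_{j})}$ can fail, so you cannot force the minimal cube through $y_{j}$ to have size $\gec r_{k_{Q}(x_{j})}$, and the multiplicity bound collapses. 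Your alternative of absorbing these points at the top scale also fails: the set $\{x:\dist(x,Q)\geq\delta r_{k_{Q}(x)}\}$ contains points at arbitrarily small scales, not just scale $\ell(Q)$. (A separate issue: even when $y_{j}\in Q$, the relation ``$d_{Q}(y_{j})\leq\ell(R_{j})$'' only yields $\ell(R_{j})\gec r_{k_{Q}(x_{j})}$; it does not give the upper bound on $\ell(R_{j})$ your volume-packing step needs.)

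The paper resolves this by fixing a small $\delta>0$ and splitting the $d_{Q}>0$ part into $E_{1}=\{x:\dist(x,Q)<\delta r_{k_{Q}(x)}\}$ and $E_{2}=\{x:\dist(x,Q)\geq\delta r_{k_{Q}(x)}\}$. Your argument (with $z_{j}$ the closest point of $Q$) is essentially the paper's treatment of $E_{1}$. For $E_{2}$ the paper does \emph{not} try to associate $x_{j}$ to a minimal cube at all. Instead it picks a near-optimal $R_{j}\in S(Q)$ realizing $d_{Q}(x_{j})$, sets $B_{j}'=\tfrac{c_{0}}{2}\rho\,B_{R_{j}}\subseteq Q$, proves these balls have bounded overlap on $\Sigma^{Q}$, and uses the \emph{lower regularity} of $\Sigma^{Q}$ (from \eqref{e:sigmalr}) to get $\sum r_{k_{j}}^{d}\lec\cH^{d}\bigl(\bigcup_{j}B_{j}'\cap\Sigma^{Q}\bigr)$. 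The punchline is that every point of $\bigcup_{j}B_{j}'\cap\Sigma^{Q}$ actually lies in $E_{0}\cup E_{1}$ (since $B_{j}'\subseteq Q$ forces $\dist(\cdot,Q)$ to be $\lec\ve r_{k_{Q}(\cdot)}$ there), so $\cH^{d}(E_{2})$ is controlled by quantities already estimated. This bounce-back via lower regularity of $\Sigma^{Q}$ is the idea your sketch is missing.
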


\begin{proof}
Let $\delta>0$ (to be decided later) and 
\begin{equation}\label{e:e1}
E_{1}=\{x\in \Sigma^{Q}\cap C_{1}B_{Q}: d_{Q}(x)>0 \mbox{ and }\dist(x,Q)<\delta r_{k_{Q}(x)}\}\end{equation}
By Besicovitch's covering theorem and \Lemma{9lemma-5-4}, we may find balls $\{B_{j}=B(x_{j},r_{k_{Q}(x_{j})})\}_{j\in J}$ of bounded overlap with centers in $E_{1}$ so that 
\[
E_{1}\subseteq \bigcup_{j\in J} B_{j}.\]
 Let $k_{j}=k_{Q}(x_{j})$ and $x_{j}'\in E$  be closest to $x_{j}$. Again, it's not hard to show using \eqn{9dx<dT} that
\begin{equation}
\label{e:9compd}
d_{Q}(x_{j})\sim d_{Q}(x_{j}')
\end{equation}

 For $j\in J$, let $z_{j}\in Q$ be a closest point to $x_{j}$ so that
\begin{equation}\label{e:zjxj}
\dist(x_{j},Q)=|x_{j}-z_{j}|\stackrel{\eqn{e1}}{<}\delta r_{k_{j}}.
\end{equation} 
 
  Then for any cube $R\in S(Q)$ containing $z_{j}$, since $x_{j}\in E_{1}$,
\[
\ell(R)+\delta r_{k_{j}}
\stackrel{\eqn{e1}}{>}\ell(R)+\dist(x_{j},Q)
\geq\ell(R)+\dist(x_{j},R)
\geq d_{Q}(x_{j})\stackrel{\eqn{9dx<dT}}{\sim} r_{k_{j}}\]
and so for $\delta$ small, 
\[
\ell(R)\gec r_{k_{j}},\] 
thus there is $Q_{j}\in m(S(Q))$ containing $z_{j}$ with $\ell(Q_{j})\gec r_{k_{j}}$. Moreover,
\[
r_{k_{j}} \stackrel{\eqn{9dx<dT}}{\sim} d_{Q}(x_{j})
\leq |x_{j}-z_{j}|+d_{Q}(z_{j})
\stackrel{\eqn{zjxj}}{<} \delta r_{k_{j}}+\ell(Q_{j})\]
which implies for $\delta>0$ small enough that 
\begin{equation}
\label{e:rkjsimqj}
r_{k_{j}}\sim \ell(Q_{j}). 
\end{equation}
For $R\in m(S(Q))$, consider
\[
\cC_{R}=\{j\in J: Q_{j}=R\}.\]
We claim that 
\begin{equation}\label{e:cr<1}
\# \cC_{R}\lec 1.
\end{equation}
We have already seen that $\ell(Q_{j})\sim r_{k_{j}}$, and thus $r_{k_{j}}\sim \ell(R)$ for all $j\in \cC_{R}$. Moreover,
\[
\dist(x_{j},R)
=\dist(x_{j},Q_{j})
\leq |x_{j}-z_{j}| 
\stackrel{\eqn{zjxj}}{<} \delta r_{k_{j}}\sim \delta \ell(R).\]
Since the balls $\{B_{j}\}_{j\in \cC_{R}}$ have bounded overlap, these facts imply \eqn{cr<1}, which proves the claim.

These facts and \eqn{9bsnbsnk} imply that
\begin{align}\label{e:sumrkj1}
\cH^{d}(E_{1})
& \leq \sum_{j\in J} \cH^{d}(E_{1}\cap B_{j})
\stackrel{\eqn{9bsnbsnk}}{=}\sum_{j\in J} \cH^{d}(\Sigma^{Q}\cap B_{j})
\stackrel{\eqn{9bsnbsnk}}{\lec} \sum_{j\in J}r_{k_{j}}^{d}  \notag \\
& 
 \leq \sum_{j\in J} \ell(Q_{j})^{d}
= \sum_{R\in m(S(Q))}\sum_{j\in \cC_{R}} \ell(R)^{d} 
\stackrel{\eqn{cr<1}}{\lec}  \sum_{R\in m(S(Q))}\ell(R)^{d}.
\end{align}
 
Now set 
\begin{equation}\label{e:e2}
E_{2}=\{x\in \Sigma^{Q}\cap C_{1}\ell(Q): d_{Q}(x)>0 \mbox{ and }\dist(x,Q)\geq \delta r_{k_{Q}(x)}\}.\end{equation}

Again by Besicovitch's theorem, we may find a collection of balls $\{B_{j} = B(x_{j},r_{k_{Q}(x_j)})\}_{j\in I}$ with centers in $E_{2}$ of bounded overlap.  Then there is $Q_{j}\in S(Q)$ so that $\ell(Q_{j})+\dist(x_{j},Q_{j})<2d_{Q}(x_{j})$. Let $R_{j}$ be the maximal ancestor of $Q_{j}$ so that $\ell(R_{j})+\dist(x_{j},R_{j})<2d_{Q}(x_{j})$, then 
\begin{equation}
\label{e:rjxj}
\ell(R_{j})\sim d_{Q}(x_{j})\sim r_{k_{j}} \;\; \mbox{and}\;\; \dist(x_{j},R_{j})<2d_{Q}(x_{j}).
\end{equation}
 Let 
 \[
 B_{j}'=\frac{c_{0}}{2}\rho B_{R_{j}}.\] 
 We claim that the balls $\{B_{j}'\}_{j\in I}$ have bounded overlap on $\Sigma^{Q}$. Let $x\in \Sigma^{Q}$ and 
 \[
 I(x)=\{j\in I:x\in B_{j}'\}.\] 
 Let $x'\in E$ be closest to $x$, so for any $j\in I(x)$,
\begin{align}
|x-x'|
& =\dist(x,E)
\stackrel{\eqn{9dx<dT}}{\lec}  \ve d_{Q}(x)
\leq \ve(\dist(x,R_{j})+\ell(R_{j})) \notag \\
& <\ve(r_{B_{j}'}+\ell(R_{j}))
\lec   \ve \ell(R_{j}).
\label{e:x-x'Rj}
\end{align}
Also, if $x_{j}'\in E$ is closest to $x_{j}$, then 
\begin{equation}\label{e:xj-xj'Rj}
|x_{j}-x_{j}'|
=\dist(x_{j},E)
\stackrel{\eqn{9dx<dT}}{\lec} \ve r_{k_{j}} \stackrel{\eqn{rjxj}}{\sim} \ell(R_{j})
\end{equation}
and so for $\ve>0$ small enough depending on $\delta$.
\begin{equation}\label{e:xj'far}
\dist(x_{j}',Q)
\geq \dist(x_{j},Q)-|x_{j}-x_{j}'|
\stackrel{\eqn{e2} \atop \eqn{xj-xj'Rj}}{\geq} \delta r_{k_{j}}-C\ve r_{k_{j}}\gec \delta r_{k_{j}}.
\end{equation}

Suppose $i,j\in I(x)$. Note that since $x\in B_{j}'\cap \Sigma^{Q}$, 
\[
\dist(x,E)\stackrel{\eqn{9dx<dT}}{\lec} \ve d_{Q}(x)
\lec \ve \ell(R_{j})\]
For $\ve>0$ small enough, the closest point to $x$ in $E$ must be in $R_{j}$, that is, $x'\in R_{j}$. Then as 
\begin{equation}\label{e:bj'inQ}
B_{j}'\cap E\subseteq c_{0} B_{Q}\cap E\subseteq Q
\end{equation}
and $x_{i}'\in E\backslash Q$ by \eqn{xj'far}, we have
\begin{align*}
\frac{c_{0}}{2}\ell(R_{j})
& \leq \dist(x_{i}',R_{j})
< \dist(x_{i}',B_{j}'\cap E)
\leq |x_{i}'-x|\\
& \leq |x_{i}'-x'|+|x'-x|
\stackrel{\eqn{x-x'Rj}\atop x'\in R_{j}}{\lec} \dist(x_{i}',R_{i})+\ell(R_{i})
\stackrel{\eqn{rjxj}}{\lec} \ell(R_{i}).
\end{align*}
Interchanging the roles of $i$ and $j$, we see that $\ell(R_{i})\sim \ell(R_{j})$ for all $i,j\in I(x)$. Let $r(x)=\sup_{j\in I(x)} \ell(R_{j})$. Then for each $j\in I(x)$, $\ell(R_{j})\sim r(x)$ and
\[
\dist(x,R_{j}) 
\leq \dist(B_{j}',R_{j})+2r_{B_{j}'}
\lec 0+\ell(R_{j})\leq r(x)\]
and these facts imply that $\# I(x)\lec 1$, and thus the balls $\{B_{j}'\}_{j\in I}$  have bounded overlap on $\Sigma^{Q}$ and proves the claim.

Thus, since $\Sigma^{Q}$ is lower regular by virtue of being Reifenberg flat (see \eqn{sigmalr}), this bounded overlap implies
\begin{equation}\label{e:sumrkj}
\sum_{j\in I} r_{k_{j}}^{d}
\lec \sum_{j\in I} \ell(R_{j})^{d}
\lec \sum_{j\in I} \cH^{d}(B_{j}'\cap \Sigma^{Q})
\lec \cH^{d}\ps{\bigcup_{j} B_{j}'\cap \Sigma^{Q}}
\end{equation}
Now, recall that if $x\in B_{j}'\cap \Sigma^{Q}$, then $x'\in R_{j}$, and hence in $Q$, thus 
\[
\dist(x,E)=\dist(x,Q)\lec \ve d_{Q}(x)\sim \ve r_{k_{Q}(x)}\]
and so for $\ve>0$ small enough, this implies $\dist(x,E)<\delta r_{k_{Q}(x)}$, and so either $x\in E_{1}$ or $x\in E_{0}$ where
\[
E_{0}=\{x\in C_{1}B_{Q}\cap E:d_{Q}(x)=0\}.\] 
We then have
\begin{equation}\label{e:bj'inE1}
\bigcup_{j} B_{j}'\cap \Sigma^{Q}\subseteq E_{0}\cup E_{1}
\end{equation}
Now we estimate $\cH^{d}(E_{2})$:
\begin{align*}
\cH^{d}(E_{2})
& \leq \sum \cH^{d}(E_{2}\cap B_{j})
=\sum \cH^{d}(\Sigma^{Q}\cap B_{j})
\stackrel{ \eqn{9bsnbsnk}}{\lec} \sum_{j\in I} r_{k_{j}}^{d}\\
& \stackrel{\eqn{sumrkj}}{\lec} \cH^{d}\ps{\bigcup_{j} B_{j}'\cap \Sigma^{Q}}
\stackrel{\eqn{bj'inE1}}{ \leq} \cH^{d}(E_{0}\cup E_{1})\\
& \stackrel{\eqn{sumrkj1}}{\lec} \sum_{R\in m(S(Q))} \ell(R)^{d}+\cH^{d}(E_{0})
\end{align*}

Thus, combining this with \eqn{sumrkj1}, we have
\begin{align*}
\cH^{d}(\{x\in C_{1}Q\cap \Sigma^{Q}: d_{Q}(x)>0\})
& \leq \cH^{d}(E_{0})+\cH^{d}(E_{1})+\cH^{d}(E_{2}) \\
& \lec \cH^{d}(E_{0})+\sum_{R\in m(S(Q))} \ell(R)^{d}.
\end{align*}

 Note that we actually have $E_{0}\subseteq \cnj{Q}$ by definition. This observation and the above inequality finish the lemma.

\end{proof}

\begin{lemma}\label{l:zeroboundary}
For $Q \subseteq Q_{0}$, $\cH^{d}(\d Q)=0$, where 
\[
\d Q=\{x\in  \cnj{Q}: B(x,r)\cap E\backslash Q\neq\emptyset \mbox{ for all }r>0\}.\]
\end{lemma}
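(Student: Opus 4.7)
The plan is to show $\cH^d(\partial Q) = 0$ by a covering argument that exploits the hierarchical Christ cube structure together with the lower content regularity of $E$.

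Since $E$ is closed, $\partial Q \subseteq \overline{Q} \subseteq E$, so every $x \in \partial Q$ lies in some cube of $\cD_{k(Q)}$: either $x \in Q$ (with $x$ a limit of points in sibling cubes $R \neq Q$ at level $k(Q)$) or $x$ already lies in such a sibling with $x \in \overline{Q}$. Either way $x \in \overline{Q} \cap \overline{E \setminus Q}$. For each $k \geq k(Q)$ let $\cB_k := \{S \in \cD_k : \overline{S} \cap \partial Q \neq \emptyset\}$, so $\partial Q \subseteq \bigcup_{S \in \cB_k} \overline{S}$ is a cover by sets of diameter at most $2\rho^k$ and $\cH^d_{2\rho^k}(\partial Q) \leq \# \cB_k \cdot (2\rho^k)^d$. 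It thus suffices to show $\# \cB_k \cdot \rho^{kd} \to 0$ as $k \to \infty$.

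For each $S \in \cB_k$, pick $y_S \in \overline{S} \cap \partial Q$ and then $z_S \in E \setminus Q$ with $|z_S - y_S| \leq \rho^k$; so $z_S$ lies in a sibling cube $S' \in \cD_k$ with $S' \not\subseteq Q$. The $\rho^k$-separation of Christ centers ensures that the balls $\{B(y_S, \rho^k)\}_{S \in \cB_k}$ have bounded overlap, and lower content regularity gives $\cH^d_\infty(E \cap B(y_S, \rho^k)) \gtrsim \rho^{kd}$, so
\[ \sum_{S \in \cB_k} \rho^{kd} \lesssim \cH^d_\infty(N_k), \qquad N_k := \{x \in E : \dist(x, \partial Q) \leq 2\rho^k\}. \]
Since the compact sets $N_k$ decrease in Hausdorff distance to $\partial Q$, Lemma~\ref{l:contentlim} gives $\cH^d_\infty(N_k) \to \cH^d_\infty(\partial Q)$, reducing the problem to showing $\cH^d_\infty(\partial Q) = 0$.

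The main obstacle is precisely this last reduction. I would attempt a Frostman-lemma argument: a positive value of $\cH^d_\infty(\partial Q)$ would yield a measure $\mu$ supported on $\partial Q$ with $\mu(B(x,r)) \leq r^d$ and $\mu(\partial Q) > 0$. At a point $x_0 \in \partial Q$ with positive upper $d$-density for $\mu$, and for a sequence $r_j \to 0$, the ball $B(x_0, r_j)$ carries $\mu$-mass $\gtrsim r_j^d$. The requirement that every such ball be split nontrivially between $Q$ and $E \setminus Q$, together with lower content regularity applied separately on each side, should produce a contradiction by forcing excess $\cH^d_\infty$-mass near $\partial Q$ beyond what is compatible with the global bound $\cH^d_\infty(E \cap B(0,1)) \leq 2^d$. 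This step is where the lower content regularity is used crucially, as flagged in the remark following Theorem~\ref{t:thmii-dyadic}.
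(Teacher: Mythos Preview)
Your covering argument in steps 1--4 establishes that $\cH^{d}(\partial Q)\lesssim \cH^{d}_{\infty}(\partial Q)$, but since $\cH^{d}_{\infty}\leq \cH^{d}$ always holds and $\cH^{d}(A)=0$ is equivalent to $\cH^{d}_{\infty}(A)=0$ for any set $A$, your ``reduction'' to $\cH^{d}_{\infty}(\partial Q)=0$ is not a reduction at all---it is equivalent to the original goal. The Frostman sketch at the end does not close the gap: the assertion that lower content regularity on both sides of $\partial Q$ would ``force excess $\cH^{d}_{\infty}$-mass\ldots beyond what is compatible with the global bound $\cH^{d}_{\infty}(E\cap B(0,1))\leq 2^{d}$'' is not a contradiction, because the contributions from different scales and locations overlap and there is no mechanism here that sums to more than the global content.

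The missing idea is a \emph{porosity/density} argument rather than a covering argument. For each $x\in\partial Q$ and each small scale $r$, pick $y\in B(x,r/4)\cap (E\setminus Q)$ and let $R\in\cD$ be the cube containing $y$ with $\ell(R)\sim r$ and $\ell(R)\leq\ell(Q)$. Since $y\notin Q$ and $\ell(R)\leq\ell(Q)$, we have $R\cap Q=\emptyset$, hence $c_{0}B_{R}\cap E\subseteq R\subseteq E\setminus Q$, and so $\tfrac{c_{0}}{2}B_{R}\cap\overline{Q}=\emptyset$; in particular $\tfrac{c_{0}}{2}B_{R}\cap\partial Q=\emptyset$. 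Lower content regularity then gives
\[
\cH^{d}\bigl(B(x,2\ell(R))\setminus\partial Q\bigr)\geq \cH^{d}\bigl(\tfrac{c_{0}}{2}B_{R}\cap E\bigr)\gtrsim \ell(R)^{d},
\]
so $\limsup_{r\to 0}\cH^{d}(B(x,r)\setminus\partial Q)/r^{d}>0$ for \emph{every} $x\in\partial Q$. Now invoke a density theorem for Hausdorff measure (e.g.\ \cite[Corollary~2.14]{Mattila}, applied after observing that one may assume $\cH^{d}(E\cap B(0,1))<\infty$, else the theorem is vacuous): this limsup must vanish for $\cH^{d}$-a.e.\ $x\in\partial Q$, forcing $\cH^{d}(\partial Q)=0$. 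In short, lower content regularity should be used to exhibit holes in $\partial Q$ at every point and scale, not to bound the efficiency of covers of $\partial Q$.
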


\begin{proof}
{%
Let $x\in \d Q$ and $R$ be any cube containing $x$ with $\ell(R)\leq \ell(Q)$. Then $c_{0} B_{R}\cap Q=\emptyset$. In particular, $\frac{c_{0}}{2} B_{R}\cap \cnj{Q}=\emptyset$. Thus,
\[
\cH^{d}(E\cap B(x,2\ell(R))\backslash \d Q)\geq \cH^{d}\ps{\frac{c_{0}}{2} B_{R}\cap E}\gec \ell(R)^{d}.\]
Since there are such cubes $R$ of arbitrarily small size,
\[
\limsup_{r\rightarrow 0} \frac{\cH^{d}(E\cap B(x,r)\backslash \d Q)}{r^{d}}>0.\]
However, by \cite[Corollary 2.14]{Mattila}, this limit must be zero for $\cH^{d}$-a.e. $x\in \d Q$, hence $\cH^{d}(\d Q)=0$.}
\end{proof}

Let
\[
\cB= \cD\backslash \cG = \{Q\subseteq Q_{0}: \vartheta_{E}^{d}(AB_{Q})\geq \ve \}.\]
Let $Q_{j}$ be a collection of maximal cubes in $\cG$. Note that since $\largetheta{E}^{d,A,\ve}(B(0,1))<\infty$, $Q_{j}$ covers 
almost all of $Q_{0}$ (up to $\cH^d$ measure zero). 
Let $\cF_{0}=\{S(Q_{j})\}$. Assume we have defined $\cF_{N}$ and let $S\in \cF_{N}$. Let $m'(S)$ be the collection of maximal cubes in $\cG$ contained in cubes in $m(S)$. Let
\[
\cF_{N+1}=\bigcup_{S\in \cF_{N}} \{S(Q): Q\in m'(S)\} \;\; \mbox{and} \;\; \cF=\bigcup \cF_{N}.\]

For $S\in \cF$, write $\Sigma^{S}=\Sigma^{Q(S)}$.

\begin{lemma}
\begin{equation}\label{e:summinc}
\sum_{S\in \cF} \sum_{R\in m(S)\cap \cB} \ell(R)^{d}
\lec \largetheta{E}^{d,A,\ve}(0,1)
\end{equation}
and
\begin{equation}\label{e:sigmaq<q+c}
\sum_{S\in \cF} \cH^{d}(C_{1}B_{Q(S)}\cap \Sigma^{S})
\lec \cH^{d}(E\cap B(0,1))+\largetheta{E}^{d,A,\ve}(0,1).
\end{equation}
\end{lemma}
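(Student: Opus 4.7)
\smallskip

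The plan is to establish \eqref{e:summinc} first, then combine it with Lemma~\ref{l:sizesigmaq} and a disjointness analysis of the "limit" sets $Z_S := \{x \in \cnj{Q(S)} : d_{Q(S)}(x) = 0\}$ to deduce \eqref{e:sigmaq<q+c}. The structural backbone of both bounds is an induction, on the generation index $N$, showing that the tops $\{Q(S) : S \in \cF_N\}$ are pairwise disjoint: in $\cF_0$ this holds by maximality of the $Q_j$ in $\cG$, and it persists under $\cF_N \mapsto \cF_{N+1}$ because each $S \in \cF_{N+1}$ is generated from $m'(S')$ for a single $S' \in \cF_N$, and different $S'$'s contribute disjoint $m(S')$'s.

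For \eqref{e:summinc}, I will show that each $R \in \cD \cap \cB$ belongs to $m(S)$ for at most one $S \in \cF$. The same-level case is immediate from disjointness of the $Q(S)$'s. For levels $N_1 < N_2$, the ancestor chain forces $Q(S_2) \subseteq R' \in m(S_1)$ for some unique $R'$, and disjointness in $m(S_1)$ forces $R = R'$; the dichotomy $R = Q(S_2)$ versus $R \supsetneq Q(S_2)$ then both collapse, the first because $R \in \cB$ while $Q(S_2) \in \cG$, the second because $R \in S_2$ forces $R \subseteq Q(S_2)$. Therefore
\[
\sum_{S \in \cF}\sum_{R \in m(S) \cap \cB} \ell(R)^d \;\leq\; \sum_{R \in \cD \cap \cB} \ell(R)^d.
\]
To convert the right-hand side into $\largetheta{E}^{d,A,\ve}(0,1)$, I would attach to each $R \in \cB$ a nearby net ball $B^\ast = B(y, 2^{-k}) \in \cX$ with $2^{-k} \sim \ell(R)$ and $y \in X_k$ near $x_R$, then use the scale-monotonicity $d_{B_1}(E,F)\,r_{B_1} \leq d_{B_2}(E,F)\,r_{B_2}$ for $B_1 \subseteq B_2$ to transfer $\vartheta_E(AB_R) \geq \ve$ to a comparable lower bound for $\vartheta_E(A'B^\ast)$, with $A'$ an enlargement of $A$ and a threshold comparable to $\ve$. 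Since each $B^\ast$ is associated to only $O(1)$ Christ cubes at a given scale, this yields the desired $\largetheta$-bound (with constants absorbed exactly as in the paper's bridge between the dyadic and net formulations of $\largetheta$).

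For \eqref{e:sigmaq<q+c}, summing Lemma~\ref{l:sizesigmaq} over $S \in \cF$ gives
\[
\sum_{S \in \cF} \cH^d(C_1 B_{Q(S)} \cap \Sigma^S) \;\lec\; \sum_S \sum_{R \in m(S)} \ell(R)^d + \sum_S \cH^d(Z_S).
\]
Splitting $m(S)$ into $\cB$- and $\cG$-parts, the $\cB$-piece is handled by \eqref{e:summinc}. For the $\cG$-piece, each $R \in m(S) \cap \cG$ must have a child in $\cB$ (otherwise $S$ would extend past $R$); fixing a choice $\phi(R)$ of such a child yields an injection from these pairs into $\cD \cap \cB$ (since $R$ is determined by $\phi(R)$ as its parent, and, by the same nesting argument as above, $R$ lies in $m(S) \cap \cG$ for at most one $S$). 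Because $\ell(R)^d = \rho^{-d}\ell(\phi(R))^d$, the $\cG$-sum is also $\lec \sum_{P \in \cD \cap \cB}\ell(P)^d \lec \largetheta$.

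Finally, for the $Z_S$ sum I will show pairwise disjointness modulo $\cH^d$-null sets, from which countable additivity gives $\sum_S \cH^d(Z_S) \leq \cH^d(\bigcup_S Z_S) \leq \cH^d(E \cap B(0,1))$ (the containment $Z_S \subseteq E$ using that $E$ is closed and that the defining cubes of $S$ lie in $E$). At equal levels, $Z_{S_1} \cap Z_{S_2} \subseteq \partial Q(S_1) \cup \partial Q(S_2)$, which is $\cH^d$-null by Lemma~\ref{l:zeroboundary}. At unequal levels $N_1 < N_2$, with $Q(S_2) \subseteq R_0 \in m(S_1) \cap \cB$, any $x \in Z_{S_1} \cap Z_{S_2}$ is either on a boundary $\partial R$ for some $R \in m(S_1)$ (null), or witnesses an infinite descending chain of cubes through $x$ in $S_1$; but such a chain forces $R_{k}(x) \subseteq R_0$ for large $k$, contradicting $R_0 \in m(S_1)$ having no $S_1$-descendants. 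Combining all three bounds yields \eqref{e:sigmaq<q+c}. The main obstacle I foresee is the bookkeeping needed to justify the uniqueness-of-$(S,R)$ statements and to rule out the infinite-depth scenario for $Z_{S_1} \cap Z_{S_2}$ unambiguously; both rely on keeping careful track of the tree structure of the $m'(S)$ construction.
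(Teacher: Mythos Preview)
This is the paper's proof: apply Lemma~\ref{l:sizesigmaq}, bound $\sum_{S}\sum_{R\in m(S)}\ell(R)^{d}$ by associating each minimal cube to a $\cB$-child of comparable size (counted $O(1)$ times across all $S$) and then by $\sum_{Q\in\cB}\ell(Q)^{d}\lec\Theta$, and bound $\sum_{S}\cH^{d}(Z_{S})$ by showing distinct $Z_{S}$'s overlap only in $\cH^{d}$-null cube boundaries via Lemma~\ref{l:zeroboundary}. Two small corrections: since every $S\in\cF$ satisfies $S\subseteq\cG$ by construction of $S(Q)$, one has $m(S)\cap\cB=\emptyset$, so your $\cB/\cG$ split is superfluous and \eqref{e:summinc} is vacuous as stated---only your map $\phi$ on the $\cG$-part does work (and there ``at most one $S$'' should be ``$O(1)$ many $S$'', since a minimal cube of $S$ may reappear as the top of a next-generation singleton); and in your unequal-level $Z$-argument drop the spurious ``$\cap\,\cB$'' on $R_{0}$ and note that the cubes $T\in S_{1}$ witnessing $d_{Q(S_{1})}(x)=0$ need not contain $x$---the clean version (and the paper's) is that for $x\in R_{0}^{\circ}$ any such $T$ with $\ell(T)+\dist(x,T)$ small must meet $R_{0}$, hence $T\supseteq R_{0}$ by minimality of $R_{0}$ in $S_{1}$, forcing $\ell(T)\geq\ell(R_{0})$ and contradicting $d_{Q(S_{1})}(x)=0$.
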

\begin{proof}
Write

\[
d(S)=\{x\in \cnj{Q (S)}: d_{Q(S)}(x)=0\}.\]

 Then
\begin{align*}
\sum_{S\in \cF} \cH^{d}(C_{1} B_{Q(S)}\cap \Sigma^{S})
& \stackrel{\eqn{sizesigmaq}}{\lec}
\sum_{S\in \cF} \sum_{R\in m(S)} \ell(R)^{d} + \sum_{S\in \cF}\cH^{d}(d(S))\\
& = I_{1}+I_{2}.
\end{align*}
To bound the first term,  note that for $S,S'\in \cF$, if $S\cap S'\neq\emptyset$, then either $Q(S)\in m(S')$ or $Q(S')\in m(S)$ by construction.   

Also observe also that if $Q\in m(S)$ for some $S\in \cF$, then one of its siblings is in $\cB$,  and so by definition of $\largetheta{E}^{d,A,\ve}(0,1)$,
\[
I_{1} 
 \lec\sum_{S\in \cF} \sum_{R\in m(S)\cap \cB} \ell(R)^{d}
\leq \sum_{Q\in \cB} \ell(Q)^{d}
 \leq \largetheta{E}^{d,A,\ve}(0,1).
\]
Note that this proves \eqn{summinc}.
Now we bound the second term. Let $Q^{\circ}=Q\backslash \d Q$ and note that if $S,S'\in \cF$ and $Q(S)\subseteq T\in m(S')$, then we have $d(S)\subseteq \cnj{Q(S)}$. Hence 
\[d(S)\cap Q(S)^{\circ}\subseteq T^{\circ}\subseteq Q(S')\backslash d(S'),\]
and thus, for $S,S'\in \cF$, by \Lemma{zeroboundary}, $\cH^{d}(d(S)\cap d(S'))=0$.


 Hence,
\[
I_{2}=\sum_{S\in \cF}\cH^{d}(d(S))\leq \cH^{d}(Q_{0})= E\cap B(0,1).\]
\end{proof}

\begin{proposition}[Theorem A Part 1] \label{p:thmapt1}
With the assumptions of Theorem \ref{t:thmii},
\begin{multline}
\sum_{k\geq 0}\sum_{B \in \cX_{k}} \beta_{E}^{d,p}(C_{0}B)^{2}r_{B}^{d}\\
\lec_{A,C_{0},n,\ve,p}
\ps{\cH^{d}(E\cap B(0,1)) + \largetheta{E}^{d,A,\ve}(0,1)}.
\label{e:beta<hd2-part1}
\end{multline}
\end{proposition}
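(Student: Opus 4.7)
The plan is to reduce the sum over the balls in $\cX_k$ to a sum over Christ cubes $Q\in\cD$, then partition this sum according to the stopping-time structure built in the previous paragraphs, and finally apply the flat-case result (Theorem \ref{t:flat-case}) to the Reifenberg-flat surfaces $\Sigma^S$ for each $S\in\cF$, transferring the resulting estimates from $\Sigma^S$ back to $E$ via Lemma \ref{l:2-15}. First, since the balls in $\cX_k$ have radius $2^{-k}$ and the Christ cubes in $\cD_{s(k)}$ have diameter comparable to $2^{-k}$, and since each Christ cube center lies in a bounded number of balls of each generation (and vice versa), we have
\[
\sum_{k\geq 0}\sum_{B\in\cX_k}\beta_E^{d,p}(C_0B)^2 r_B^d \;\lec_{C_0,n}\; \sum_{Q\in\cD}\beta_E^{d,p}(C_0'B_Q)^2\,\ell(Q)^d
\]
for some $C_0'=C_0'(C_0)$. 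It therefore suffices to bound this last sum.

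I now split $\cD=\cG\cup\cB$. For $Q\in\cB$ we bound trivially $\beta_E^{d,p}(C_0'B_Q)\leq C_d$ (using \eqref{e:betabetainf}), and
\[
\sum_{Q\in\cB}\ell(Q)^d \;\lec_{A,n,\ve}\; \largetheta{E}^{d,A,\ve}(0,1),
\]
where the implicit constant accounts for the bounded number of Christ cubes associated with each net-ball in the definition of $\largetheta{E}^{d,A,\ve}$. For the good cubes, recall that each $Q\in\cG$ either lies in $S$ for some $S\in\cF$, or has a sibling belonging to some $m(S')\cap\cB$ (in which case we charge $\ell(Q)^d\sim\ell(\mathrm{sibling})^d$ to \eqref{e:summinc}). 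It thus remains to estimate
\[
\Xi:=\sum_{S\in\cF}\sum_{Q\in S}\beta_E^{d,p}(C_0'B_Q)^2\,\ell(Q)^d.
\]

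I will bound the inner sum for each fixed $S\in\cF$ separately. Fix $S\in\cF$ and let $\Sigma^S$ be its Reifenberg-flat approximating surface. Since $\Sigma^S$ is $C\ve$-Reifenberg flat, lower content regular by \eqref{e:sigmalr}, and agrees with a $d$-plane outside a bounded ball, Theorem \ref{t:flat-case} applies (after rescaling $C_1B_{Q(S)}$ to the unit ball and using \Lemma{rtrim} to trim $\Sigma^S$ outside $C_1B_{Q(S)}$ if needed) and yields
\[
\sum_{Q\in S}\beta_{\Sigma^S}^{d,p}(C_0''B_Q)^2\,\ell(Q)^d \;\lec_{A,C_0,n,\ve,p}\; \cH^d\bigl(C_1B_{Q(S)}\cap\Sigma^S\bigr).
\]
To pass from $\beta_{\Sigma^S}$ to $\beta_E$, I apply Lemma \ref{l:2-15} with $E_1=E$, $E_2=\Sigma^S$ (both lower content regular in $B_{Q(S)}$), getting for each $Q\in S$
\[
\beta_E^{d,p}(C_0'B_Q)^p \;\lec_p\; \beta_{\Sigma^S}^{d,p}(C_0''B_Q)^p + \frac{1}{\ell(Q)^{d+1}}\int_{E\cap 2C_0'B_Q}\dist(y,\Sigma^S)\,d\cH^d_\infty(y).
\]
The error term is controlled as follows: for $y\in E\cap 2C_0'B_Q$ that lies in some descendant $R\subseteq Q(S)$ belonging to $S$, the Reifenberg comparison \eqref{e:9thetaabr} together with \eqref{e:9dx<dT} (applied to $\Sigma^S$) and the local graph structure give $\dist(y,\Sigma^S)\lec\ve\,\ell(Q)$. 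For points $y$ lying in a minimal cube $R\in m(S)$, one only has $\dist(y,\Sigma^S)\lec\ell(R)$, but these points can be grouped and summed by the size of $R$, whereupon the total error is absorbed either into $\cH^d(C_1B_{Q(S)}\cap\Sigma^S)$ (using \eqref{e:sizesigmaq}) or, after one more use of Jensen's inequality \eqref{e:jensens} to pass from $p$ back to $1$ in the error term, into $\sum_{R\in m(S)}\ell(R)^d$, which is controlled by \eqref{e:summinc}.

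Assembling these estimates and summing over $S\in\cF$, the flat-case upper bounds telescope into
\[
\Xi \;\lec_{A,C_0,n,\ve,p}\; \sum_{S\in\cF}\cH^d(C_1B_{Q(S)}\cap\Sigma^S) + \sum_{S\in\cF}\sum_{R\in m(S)\cap\cB}\ell(R)^d,
\]
and the right-hand side is $\lec \cH^d(E\cap B(0,1))+\largetheta{E}^{d,A,\ve}(0,1)$ by \eqref{e:sigmaq<q+c} and \eqref{e:summinc}. The main obstacle is the control of the error term in the beta-transfer step of Lemma \ref{l:2-15}: it requires careful bookkeeping because the distance $\dist(y,\Sigma^S)$ has two qualitatively different regimes (points in $Q\in S$ versus points inside minimal cubes), and the latter has to be summed against the geometry of $m(S)$ without inducing losses that overwhelm the main term. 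Once this bookkeeping is in place, everything flows from Theorem \ref{t:flat-case} plus \eqref{e:sigmaq<q+c}.
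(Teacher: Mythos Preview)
Your overall strategy is exactly the paper's: split off the bad cubes (charged to $\largetheta{E}^{d,A,\ve}$), and for each stopping-time region $S\in\cF$ apply \Lemma{2-15} to pass from $\beta_E$ to $\beta_{\Sigma^S}$, control the main term by \Theorem{flat-case}, and sum over $S$ via \eqn{sigmaq<q+c}. Two points of imprecision are worth noting.

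First, a minor slip: the inequality from \Lemma{2-15} (see \eqn{pushbeta}) is stated for $\beta_{E_1}^{d,p}$, not for its $p$-th power, and the error is already a first-moment Choquet integral of $\dist(\cdot,\Sigma^S)$; no Jensen step is needed to return to $p=1$.

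Second, and more seriously, your two-regime bookkeeping for the error does not close. In ``Regime 1'' you use $\dist(y,\Sigma^S)\lec\ve\,\ell(Q)$ for $y\in 2C_0'B_Q$; this is correct but useless for summing, since it only yields
\[
I_2 \;=\; \sum_{Q\in S}\int_{E\cap 2C_0B_Q}\frac{\dist(y,\Sigma^S)}{\ell(Q)}\,d\cH^d_\infty \;\lec\; \ve\sum_{Q\in S}\ell(Q)^d,
\]
and the right-hand side diverges when $S$ is infinite. What the paper actually does is observe that $\dist(y,\Sigma^S)$ is independent of which $Q\in S$ has $2C_0B_Q\ni y$, take a Besicovitch subcover $\{B_j\}$ of $E\cap 2C_0B_{Q(S)}$ by balls $B_j=B(x_j,s_j)$ with $s_j=2\dist(x_j,\Sigma^S)$, and for each fixed $j$ sum geometrically over those $Q\in S$ with $2C_0B_Q\cap B_j\neq\emptyset$ (which forces $\ell(Q)\gec s_j$, so $\sum_Q \ell(Q)^{-1}\lec s_j^{-1}$). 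Lower regularity of $\Sigma^S$ then gives $s_j^{d}\lec\cH^d(B_j\cap\Sigma^S)$, and bounded overlap of the $B_j$ yields $I_2\lec\cH^d(C_1B_{Q(S)}\cap\Sigma^S)$, exactly what is fed into \eqn{sigmaq<q+c}. Your ``Regime 2'' description (``group and sum by the size of $R$'') gestures in this direction, but the dichotomy you set up between points in $S$-cubes and points in minimal cubes is not the right organization: every $y\in Q(S)$ with $d_S(y)>0$ already lies in some $R\in m(S)$, and the points with $d_S(y)=0$ contribute nothing since they lie on $\Sigma^S$.

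Finally, invoking \Lemma{rtrim} is harmless but unnecessary: the paper simply chooses $\rho^n$-nets on $\Sigma^S$ and applies \Theorem{flat-case} directly after rescaling $C_1B_{Q(S)}$ to the unit ball.
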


\begin{proof}
Let $1\leq p < \frac{2d}{2-d}$ if $d>2$ and $1\leq p<\infty$ if $1\leq d\leq 2$. By Jensen's Inequality, we can assume $p\geq 2$. Note that 

\[
\sum_{Q\subseteq Q_{0}} \beta_{E}^{d,p}(C_{0}B_{Q})^{2}\ell(Q)^{d}
 \lec \largetheta{E}^{d,A,\ve}(B_{Q_{0}})
+\sum_{S\in \cF}\sum_{Q\in S} \beta_{E}^{d,p}(C_{0} B_{Q})^{2}\ell(Q)^{d}\]
and so we just need to bound the second sum. For $S\in \cF$, set
\[
\Sigma^{S}=\Sigma^{Q(S)}.\]
If $Q\in S$, we will let $x_{Q}'$ denote the closest point in $\Sigma^{S}$ to $x_{Q}$ and $B_{Q}'=B(x_{Q}',\ell(Q))$. Then
\begin{align*}
\sum_{S\in \cF} & \sum_{Q\in S} \beta_{E}^{d,p}(C_{0} B_{Q})^{2}\ell(Q)^{d}
 \stackrel{\eqn{pushbeta}}{\lec} \sum_{S\in \cF} \sum_{Q\in S}\beta_{\Sigma^{S}}^{d,p}(2C_{0}B_{Q}')^{2}\ell(Q)^{d}\\
& \qquad +\sum_{S\in \cF} \ps{\sum_{Q\in S}\frac{1}{\ell(Q)^{d}}\int_{2C_{0}B_{Q}\cap E}\ps{\frac{\dist(x,\Sigma^{S})}{\ell(Q)}}^{p}d\cH^{d}_{\infty}(x)}^{\frac{2}{p}}\ell(Q)^{d}\\
& = I_{1}+I_{2}.
\end{align*}

We first bound the second term. Let $Q\in S$. Let $k$ be so that $\ell(Q)\leq r_{k}<\rho^{-1}\ell(Q)$, and let $j\in J_{k}$ be so that $x_{Q}\in B_{jk}$. By \eqn{49r}, since $x_{jk}\in P_{jk}$, $\dist(x_{jk},\Sigma_{k}^{S})\lec \ve r_{k}$. If $x_{jk}'\in \Sigma_{k}^{S}$ is closest to $x_{jk}$, then this means $x_{jk}'\in B_{jk}$. By \eqn{sky-y}, $\dist(x_{jk}',\Sigma^{S})\lec \ve r_{k}$, and so $\dist(x_{jk},\Sigma^{S})\lec \ve r_{k}$. Thus, $\dist(x_{Q},\Sigma^{S})\lec r_{k}\sim \ell(Q)$, and so if $x\in E\cap 2C_{0}B_{Q}$, then $\dist(x,\Sigma^{S})\lec C_{0}\ell(Q)$.\\

 {
Let $B_{j}$ be a Besicovitch subcover of the collection 
$$\{B(x,2d(x,\Sigma^S)):x\in 2C_{0}B_{Q(S)}\cap E\}$$ and write $B_{j}=B(x_{j},s_{j})$.
Hence,  if $x_{j}'\in \Sigma^{S}$ is closest to $x_{j}$, then 
\[
s_{j}^{d}\lec  \cH^d(B(x_j', s_j/2)\cap \Sigma^S)\leq \cH^{d}(B_{j}\cap \Sigma^S)\]

If $Q\in S$ is such that $2C_{0}B_{Q}\cap B_{j}\neq\emptyset$, then $s_{j}=d_{S}(x_{j})\lec \ell(Q)$, and so, recalling that $p<2d/(2-d)$ (so $d-2(d/p+1)<0$),
\begin{multline*}
\sum_{Q\in S}
\ps{\ell(Q)^{-d}\int_{2C_{0} B_{Q}\cap E}\frac{ \dist(x,\Sigma^{S})}{\ell(Q)}d\cH^{d}_{\infty}(x)}^{\frac{2}{p}}\ell(Q)^{d}\\
  \stackrel{\eqref{e:intineq}}{\lec}
  \sum_{Q\in S}
\ps{\sum_{j}\ell(Q)^{-d} \int_{2C_{0} B_{Q}\cap E\cap B_{j}}\ps{\frac{ \dist(x,\Sigma^{S})}{\ell(Q)}}^{p}d\cH^{d}_{\infty}(x)}^{\frac{2}{p}}\ell(Q)^{d}
\\
\lec   \sum_{Q\in S}
\ps{\sum_{j} \ell(Q)^{-d}\int_{2C_{0} B_{Q}\cap E\cap B_{j}}\ps{\frac{s_{j}}{\ell(Q)}}^{p}d\cH^{d}_{\infty}(x)}^{\frac{2}{p}}\ell(Q)^{d}\\
\lec   \sum_{Q\in S}
\ps{\sum_{2C_{0} B_{Q}\cap E\cap B_{j} \neq\emptyset} \frac{s_{j}^{d+p}}{\ell(Q)^{d+p}}}^{\frac{2}{p}}\ell(Q)^{d}\\
\leq \sum_{Q\in S}
\sum_{2C_{0} B_{Q}\cap E\cap B_{j} \neq\emptyset} \frac{s_{j}^{2(d/p+1)}}{\ell(Q)^{2(d/p+1)}}\ell(Q)^{d}\\
\sum_{j} s_{j}^{2(d/p+1)}\sum_{Q\in S\atop 2C_{0} B_{Q}\cap E\cap B_{j} \neq\emptyset} \ell(Q)^{d-2(d/p+1)}
\lec \sum_{j} s_{j}^{d} \\
\sum_{j} \cH^{d}(B_{j}\cap \Sigma^{S})
\leq \cH^{d}(C_{1}B_{Q(S)}\cap \Sigma^{S}).
\end{multline*}
Thus,
\[
I_{2}
   \lec \sum_{S\in \cF}\cH^{d}(C_{1}B_{Q(S)}\cap \Sigma^{S})
 \stackrel{\eqn{sigmaq<q+c}}{\lec}
 \cH^{d}(E\cap B(0,1))+\largetheta{E}^{d,A,\ve}(0,1).
\]
}

To bound $I_{1}$, fix $S\in \cF$ and suppose $Q(S)\in \cD_{N}$. For $n\geq N$, let $\cX_{n}$ be a collection of maximally separated $\rho^{n}$ nets for $\Sigma^{Q}$. Then for $n\geq N$ and $Q\in S\cap \cD_{n}$, there is $x^{Q}\in X_{n}$ so that $x_{Q}'\in B^{Q}:=B(x^{Q},\rho^{n})$, and so $2C_{0}B_{Q}'\subseteq 20C_{0}B^{Q}$. Moreover, since the centers of cubes in $\cD_{n}$ are maximally $\rho^{n}$-separated, we know that for any $x\in \cX_{n}$ that $\#\{Q\in S\cap \cD_{n}:x^{Q}=x\}\lec 1$. Thus, if $20 C_{0}\ll C_{1}$, Theorem \ref{t:flat-case} implies
\begin{align*}
\sum_{Q\in S}\beta_{\Sigma^{S}}^{d,p}(2C_{0}B_{Q}')^{2}\ell(Q)^{d}
& \lec  \sum_{n\geq N}\sum_{x\in \cX_{n}} \beta_{\Sigma^{S}}^{d,p}(B(x,20C_{0}\rho^{n})^{2}\rho^{nd} \\
& \lec \cH^{d}(C_{1}B_{Q(S)}\cap \Sigma^{S}).
\end{align*}
Thus,
\[
I_{1}
\lec \sum_{S\in \cF} \cH^{d}(C_{1}B_{Q(S)}\cap \Sigma^{S})
\stackrel{\eqn{sigmaq<q+c}}{\lec}
 \cH^{d}(Q_{0})+\largetheta{E}^{d,A,\ve}(B_{Q_{0}}).\]
 
 Combining the estimates for $I_{1}$ and $I_{2}$ gives \eqn{beta<hd2-part1}.

\end{proof}

\begin{proposition}[Theorem A Part 2] 
\label{p:diamEC}
Let $E\subseteq \bR^{d}$ be closed. Then
\begin{equation}
\label{e:diamEC}
1 \lec_{d} \cH^{d}(E\cap B(0,1))+\largetheta{E}^{d,A,\ve}(0,1).
\end{equation}
\end{proposition}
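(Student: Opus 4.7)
The argument is a direct Whitney-style covering of $B(0,1/2)\setminus E$ combined with the fact that $\cH^d$ on $\bR^d$ is a dimensional multiple of Lebesgue measure. I assume throughout that $0\in E$ (the statement is false for $E=\emptyset$, and this is the standing assumption for the nets $X_k$ to even be nonempty). Since $\cH^d(B(0,1/2))\sim_d 1$ and
\[
\cH^d(B(0,1/2))=\cH^d(E\cap B(0,1/2))+\cH^d(B(0,1/2)\setminus E),
\]
it suffices to prove that the ``hole set'' $U:=B(0,1/2)\setminus E$ satisfies $\cH^d(U)\lec_d\Theta_E^{d,A,\ve}(0,1)$, because then $1\lec_d\cH^d(E\cap B(0,1))+\Theta_E^{d,A,\ve}(0,1)$.

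The plan is to construct a map $y\mapsto B(y)$ sending each $y\in U$ to a ball in the collection $\cX$ which is forced to be ``bad'' (i.e.\ counted in $\Theta$). For $y\in U$ let $r(y)=\dist(y,E)$. Since $0\in E$ and $|y|\leq 1/2$, $r(y)\leq 1/2$, so there is a unique integer $k(y)\geq 1$ with $2^{-k(y)-1}<r(y)\leq 2^{-k(y)}$. Choose a nearest point $x^{*}(y)\in E$ to $y$; then $|x^{*}(y)|\leq|y|+r(y)\leq 1$, so $x^{*}(y)\in E\cap B(0,1)$. By maximality of the net $X_{k(y)}$ in $E\cap B(0,1)$ there is $x(y)\in X_{k(y)}$ with $|x(y)-x^{*}(y)|<2^{-k(y)}$; set $B(y)=B(x(y),2^{-k(y)})\in\cX_{k(y)}$, noting that $r_{B(y)}\leq 1/2<1$ and $x_{B(y)}\in B(0,1)$ as required in the definition of $\Theta$.

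I then verify that $B(y)$ is bad. The triangle inequality gives $|y-x(y)|<2\cdot 2^{-k(y)}=2r_{B(y)}$, so $y\in AB(y)$ once $A\geq 2$. In $\bR^d$ the only $d$-plane is $\bR^d$ itself, so
\[
\vartheta_E^{d}(AB(y))=\frac{1}{Ar_{B(y)}}\sup_{w\in AB(y)}\dist(w,E)\geq\frac{r(y)}{Ar_{B(y)}}\geq\frac{1}{2A}\geq\ve
\]
provided $\ve\leq 1/(2A)$. Thus $B(y)$ contributes $r_{B(y)}^{d}=(2^{-k(y)})^{d}$ to $\Theta_E^{d,A,\ve}(0,1)$.

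Finally, the map $y\mapsto B(y)$ has bounded multiplicity: every $y\in U$ with $B(y)=B^{*}=B(x^{*},2^{-k})$ satisfies $|y-x^{*}|<2\cdot 2^{-k}=2r_{B^{*}}$, so its preimage has $\cH^{d}$-measure at most $\cH^{d}(B(x^{*},2r_{B^{*}}))\lec_{d}r_{B^{*}}^{d}$. Summing over all bad balls in $\cX$ yields $\cH^{d}(U)\lec_{d}\Theta_E^{d,A,\ve}(0,1)$, which together with the volume estimate above gives \eqref{e:diamEC}. No step is a serious obstacle; the only mild subtlety is the restriction to $B(0,1/2)$, which is used precisely so that $x^{*}(y)$ is guaranteed to lie in $B(0,1)$ and hence $x(y)$ actually belongs to the net $X_{k(y)}\subseteq E\cap B(0,1)$.
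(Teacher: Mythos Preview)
Your argument is clean and correct, but only for the degenerate case $n=d$: the ``$E\subseteq\bR^{d}$'' in the displayed statement is a typo for $\bR^{n}$. Throughout Section~\ref{s:TheoremII} the ambient space is $\bR^{n}$ with $1\leq d<n$, and $E$ carries the standing hypotheses of Theorem~\ref{t:thmii} (closed, $0\in E$, $(c,d)$-lower content regular in $B(0,1)$). In that setting your proof fails at two points. First, $\cH^{d}(B(0,1/2))=\infty$ when $d<n$, so the splitting $\cH^{d}(B(0,1/2))=\cH^{d}(E\cap B(0,1/2))+\cH^{d}(U)$ carries no information. Second, there is a continuum of affine $d$-planes in $\bR^{n}$, and $\vartheta_{E}^{d}(AB)$ is an \emph{infimum} over them; a point $y$ at distance $\sim r_{B}$ from $E$ does not force $\vartheta_{E}^{d}(AB(y))\geq\ve$ unless $y$ lies close to the optimal approximating plane, which your covering does not arrange.

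The paper's proof is genuinely different and uses the machinery built up in the section. One first checks whether the top cube $Q_{0}$ already satisfies $\vartheta_{E}(AB_{Q_{0}})\geq\ve$, in which case $\Theta_{E}^{d,A,\ve}(0,1)\gtrsim 1$ trivially. Otherwise one runs the stopping time $S(Q_{0})$ and invokes the David--Toro surface $\Sigma^{Q_{0}}$: by \eqref{e:sigmalr} this $C\ve$-Reifenberg-flat surface has $\cH^{d}_{\infty}(\Sigma^{Q_{0}}\cap c_{0}B_{Q_{0}})\gtrsim 1$, while $(\Sigma^{Q_{0}}\setminus E)\cap \tfrac{c_{0}}{2}B_{Q_{0}}$ is covered by dilated balls $\tfrac{4}{\rho}B_{R'}$ coming from minimal cubes of $S(Q_{0})$, whose total $d$-content is at most $\Theta_{E}^{d,A,\ve}(0,1)$. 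When the latter is small one concludes $\cH^{d}(E\cap B(0,1))\geq\cH^{d}_{\infty}(\Sigma^{Q_{0}}\cap E\cap c_{0}B_{Q_{0}})\gtrsim 1$.
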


\begin{proof}
If $Q_{0}\not\in \cG$, then $\largetheta{E}^{d,A,\ve}(0,1)\geq 1$ and there is nothing to show, so we may assume $Q_{0}\in \cG$. 

Suppose that $\largetheta{E}^{d,A,\ve}(0,1)<\delta \ell(Q_{0})^{d}$, where $\delta>0$ will be decided shortly. Let $x\in (\Sigma^{Q_{0}}\backslash E)\cap \frac{c_{0}}{2} B_{Q_{0}}$. Let $x'\in E$ be closest to $x$, so \eqn{9dx<dT} implies 
\begin{equation}\label{e:x-x'<dqx}
|x-x'|\lec \ve d_{Q_{0}}(x).
\end{equation}
For $\ve>0$ small enough, this implies $x'\in c_{0}B_{Q_{0}}\cap E\subseteq Q_{0}$. Hence, there is $R\in m(S(Q_{0}))$ that contains $x'$ and $\ell(R)\geq d_{Q_{0}}(x)$ by definition of $d_{Q_{0}}$. Since $R\in m(S(Q_{0}))$, it has a sibling $R'$ for which $\vartheta_{E}(AB_{R'})\geq \ve$. Since $\ell(R)=\ell(R')$ and $|x_{R}-x_{R'}|\leq \frac{2}{\rho}\ell(R)$, we have that
\begin{align*} |x_{R'}-x|
& \leq |x_{R'}-x_{R}|+|x_{R}-x'|+|x'-x|
<\frac{2}{\rho}\ell(R)+\ell(R')+\ve d_{Q_{0}}(x)\\
& <\ps{\frac{2}{\rho}+1+\ve}\ell(R')
\leq \frac{4}{\rho} \ell(R').\end{align*}
Thus,
\[
 (\Sigma^{Q_{0}}\backslash E)\cap \frac{c_{0}}{2} B_{Q_{0}}
 \subseteq \bigcup_{R\in m(S(Q_{0}))} \frac{4}{\rho} B_{R'}\]
 and thus
\begin{multline}\label{e:sigmaq-E}
 \cH^{d}_{\infty}\ps{ (\Sigma^{Q_{0}}\backslash E)\cap \frac{c_{0}}{2} B_{Q_{0}}}\\
 \lec \sum_{R\in m(S(Q_{0}))}\ell(R')^{d}
 \lec \largetheta{E}^{d,A,\ve}(B_{Q_{0}})<\delta \ell(Q_{0})^{d}.
 \end{multline}

 {

 Since $\cX_{0}=\{0\}$, $\Sigma_{0}^{Q_{0}}=P_{Q_{0}}$.  }
 By \eqn{sky-y}, $\dist(y,\Sigma^{Q_{0}})\lec \ve $ for all $y\in P_{Q_{0}}$, and since $0=x_{Q_{0}}\in P_{Q_{0}}$, this means $\dist(0,\Sigma^{Q_{0}})\lec \ve <\frac{\ve}{\rho}\ell(Q_{0})\lec \ve $. Thus, for $\ve>0$ small enough, if $x_{Q_{0}}'\in \Sigma^{Q_{0}}$ is closest to $0$,
 \[
 \cH^{d}_{\infty}(\Sigma^{Q_{0}}\cap c_{0}B_{Q_{0}})
 \geq  \cH^{d}_{\infty}\ps{\Sigma^{Q_{0}}\cap B\ps{x_{Q_{0}}',\frac{c_{0}}{2} \ell(Q_{0})}}
\stackrel{\eqn{sigmalr}}{ \gec } \ell(Q_{0})^{d} \gec 1.\]
so for $\ve>0$ small, this and \eqn{sigmaq-E} imply 
\[
\cH^{d}(E\cap B(0,1))=\cH^{d}(Q_{0})
\geq \cH^{d}_{\infty}(\Sigma^{Q_{0}}\cap E \cap c_{0}B_{Q_{0}})\gec 1.\]

\end{proof}

Combining Propositions \ref{p:thmapt1} and \ref{p:diamEC} gives Theorem \ref{t:thmii}.

\section{Proof of Theorem \ref{t:thmiii} - part 1}\label{s:pf-of-thmiii}

In this Section we prove half of  Theorem \ref{t:thmiii} , which is summarized in the following proposition.

\begin{proposition}\label{p:thmiii1case}
With the assumptions of Theorem \ref{t:thmiii}, we have 
\begin{equation}\label{e:thmiii-upper-bound1}
 \cH^{d}(E\cap B(0,1)) \leq C(A,d,n,c)\ps{1+\sum_{k\geq 0}\sum_{B \in \cX_{k}\atop x_{B}\in B(0,1)} \beta_{E}^{d,1}(AB)^{2}r_{B}^{d}}.
 \end{equation}
 Furthermore, if the right hand side of \eqref{e:thmiii-upper-bound1} is finite then we have that $E$ is $d$-rectifiable.
\end{proposition}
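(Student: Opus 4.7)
The plan is to build a corona-style stopping-time decomposition on Christ cubes of $E\cap B(0,1)$ and invoke \Theorem{DT} inside each region to build an approximating surface, mimicking Section~\ref{s:TheoremII} but with the flatness criterion driven by $\beta_{E}^{d,1}$ rather than $\vartheta^{d}_{E}$. Fix Christ cubes $\cD$ for $E\cap B(0,1)$ via \Theorem{Christ}, and for each $Q\in\cD$ let $P_{Q}\ni x_{Q}$ be a minimizing plane for $\beta_{E}^{d,1}(AB_{Q})$. Choose small parameters $\eta,\alpha,\ve$ with $\eta^{1/(d+1)}\ll\alpha\ll\ve$, where $\ve$ is the Reifenberg constant required by \Theorem{DT}. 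Starting from $Q_{0}$, inductively build a family $\cF$ of stopping-time regions: for a top cube $R$, add $Q\subseteq R$ to $S(R)$ provided every sibling $Q'$ along the chain from $R$ to $Q$ satisfies both $\beta_{E}^{d,1}(AB_{Q'})<\eta$ and $\angle(P_{Q'},P_{R})<\alpha$. The minimal cubes of $S(R)$ become the next-generation tops, yielding a partition of $\cD$ with top collection $\Stop(\cF)$.

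Inside each $S\in\cF$, the $(c,d)$-lower content regularity together with \Lemma{betainfbeta} upgrades the condition $\beta_{E}^{d,1}(AB_{Q'})<\eta$ to $\beta_{E,\infty}^{d}(\tfrac{A}{2}B_{Q'})\lec\eta^{1/(d+1)}\ll\ve$, which is exactly what \Theorem{DT} requires of the centers $\{x_{Q}\}_{Q\in S}$ and planes $\{P_{Q}\}_{Q\in S}$. The theorem then yields a $C\ve$-Reifenberg-flat surface $\Sigma^{S}$ containing $E\cap Q(S)$ up to cubes in $m(S)$, and an argument identical to that of \Lemma{sizesigmaq} gives
\[
\cH^{d}(C_{1}B_{Q(S)}\cap\Sigma^{S})\lec\sum_{R'\in m(S)}\ell(R')^{d}+\cH^{d}\{x\in\overline{Q(S)}:d_{S}(x)=0\}.
\]
Summing this over $S\in\cF$ and absorbing the boundary terms via \Lemma{zeroboundary} (exactly as in \eqref{e:sigmaq<q+c}) gives
\[
\cH^{d}(E\cap B(0,1))\lec 1+\sum_{R\in\Stop(\cF)}\ell(R)^{d}.
\]
Rectifiability, whenever the right-hand side of \eqref{e:thmiii-upper-bound1} is finite, follows because each $\Sigma^{S}$ is a bi-Lipschitz image of a $d$-plane (by \Theorem{DT}), and the tops of $\cF$ exhaust $\cH^{d}$-a.e.\ point of $E$ by iterating the exhaustion into the minimal cubes.

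The main obstacle is the Carleson packing estimate
\[
\sum_{R\in\Stop(\cF)}\ell(R)^{d}\lec_{\eta,\alpha}\sum_{k\geq 0}\sum_{B\in\cX_{k},\;x_{B}\in B(0,1)}\beta_{E}^{d,1}(AB)^{2}r_{B}^{d}.
\]
Split $\Stop(\cF)\setminus\{Q_{0}\}$ into \emph{$\beta$-stops} (some ancestor sibling $Q'$ had $\beta_{E}^{d,1}(AB_{Q'})\geq\eta$) and \emph{angle-stops} ($\angle(P_{Q'},P_{R^{\uparrow}})\geq\alpha$ for some sibling $Q'$, with $R^{\uparrow}$ the top of the previous-generation region containing $R$). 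The $\beta$-stops are immediate from $\ell(R)^{d}\lec\eta^{-2}\beta_{E}^{d,1}(AB_{Q'})^{2}\ell(Q')^{d}$ together with bounded overlap. For angle-stops, iterate \Lemma{betabeta'} to bound each one-scale rotation $\angle(P_{Q},P_{Q^{(1)}})$ by $\beta_{E}^{d,1}$ values at comparable scales, then telescope along the chain from $R^{\uparrow}$ down to $R$ and apply a standard Cauchy--Schwarz/Bessel argument to convert the threshold $\angle\geq\alpha$ into a square-sum lower bound on those $\beta$-values; this packs the angle-stops into the quadratic $\beta$-sum. A final appeal to \Proposition{diamEC} absorbs the $+1$, completing \eqref{e:thmiii-upper-bound1}.
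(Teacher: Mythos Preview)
There are two genuine gaps. First, your route through \Lemma{sizesigmaq} and ``exactly as in \eqref{e:sigmaq<q+c}'' goes the wrong way: that estimate bounds $\cH^d(\Sigma^S)$ by $\sum_{m(S)}\ell^d+\cH^d(d(S))$, and $d(S)\subseteq E$ is precisely part of what you want to control, so summing over $S$ and then bounding $\cH^d(E)$ via $G\subseteq\bigcup_S\Sigma^S$ is circular. What you actually need is the direct area bound $\cH^d(\Sigma^S\cap C_1B_{Q(S)})\lec\ell(Q(S))^d$. Your angle condition \emph{does} deliver this via the Lipschitz-graph structure of \Lemma{graph}, but you never invoke it; and \Theorem{DT} alone does not---it gives only a bi-H\"older parametrization, so neither the area bound nor the claimed bi-Lipschitz rectifiability of $\Sigma^S$ follows from it. Second, the angle-stop packing is not a ``standard Cauchy--Schwarz/Bessel argument.'' Telescoping \Lemma{betabeta'} gives $\alpha\lec\sum_{R\subseteq T\subseteq R^\uparrow}\beta_E^{d,1}(AB_T)$; any Cauchy--Schwarz weighting that makes the geometric series converge leaves you with $\alpha^2\ell(R)^d\lec\sum_T\beta(T)^2\ell(T)^d$, and summing over angle-stops $R$ then requires a uniform bound on $\#\{R\in m(S):R\subseteq T\}$ for each fixed $T\in S$, which is unbounded. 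I do not see how to close this without substantially more work.

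The paper's argument in Section~\ref{s:pf-of-thmiii} avoids both issues with one move: stop on the \emph{cumulative} condition $\sum_{R'\subset T\subseteq Q}\beta_E^{d,1}(MB_T)^2\geq\varepsilon^2$, with no separate angle rule. This is exactly the hypothesis of the bi-Lipschitz David--Toro theorem (Theorem~\ref{DTtheorem2.5}), so each $\Sigma_S$ is a bi-Lipschitz image of $\bR^d$ with $\cH^d(\Sigma_S)\lec_{\varepsilon,n}\ell(Q(S))^d$ directly---no appeal to \Lemma{sizesigmaq}---and rectifiability of $G\subseteq\bigcup_S\Sigma_S$ is immediate. At the same time, every minimal cube $R$ now carries $\sum_{T\in S(R),\,T\supseteq R}\beta^2\gec\varepsilon^2$ along its own chain, which makes the packing $\sum_{R\in\min\cS}\ell(R)^d\lec\varepsilon^{-2}\sum_Q\beta(MB_Q)^2\ell(Q)^d$ a two-line computation (\Lemma{sumS_Jlarge}); there are no angle-stops to handle.
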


Let $\epsilon$ be sufficiently small for the applicaiton of Theorem \ref{t:DT}.
Let $\cD$ be the cubes from \Theorem{Christ} for $E\cap B(0,1)$ so that $\cX_{0}=\{0\}$. Again, in this way, all cubes are contained in $B(0,1)$ and $Q_{0}=B(0,1)\cap E$.

We wish to show that for $M$ sufficiently large (we may fix $M=10^5$ to avoid ambiguity), 
\begin{equation}\label{e:10-upper-bound}
\cH^d(Q_{0})\leq \cH^d\ps{\bigcup_i \Sigma_{S^i}} \leq  C_{\epsilon, n}
\ps{1+		\sum_{Q\subseteq Q_{0}} \beta_E^{d,1}(MB_Q)^{2} \ell(Q)^d}, 
\end{equation}
which $A$ large enough, gives \eqref{e:thmiii-upper-bound}.

We recall a result from \cite{DT12}.

\begin{theorem}[David, Toro {\cite[Theorem 2.5]{DT12}}]\label{DTtheorem2.5}
With the notation and assumptions of \Theorem{DT}, assume additionally that for some $M<\infty$ that
\[
\sum_{k\geq 0} \ve_{k}'(f_{k}(z))^2\leq M_1 \mbox{ for } z\in \Sigma_{0}\]
where 
\begin{multline*}
\ve_{k}'(z)=\sup\{d_{x_{i,l},100r_{l}}(P_{jk},P_{il}): j\in J_{k}, |l-k|\leq 2, i\in J_{l}, \\
x\in 10 B_{jk}\cap 11 B_{il}\}.
\end{multline*}
Then $f=\lim f_{N}=\lim_{N\rightarrow \infty} \sigma_{N}\circ\cdots \circ \sigma_{0}:\Sigma_{0}\rightarrow \Sigma$ is $C(M_1,n)$-bi-Lipschitz.
\end{theorem}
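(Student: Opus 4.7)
The plan is to obtain uniform bi-Lipschitz bounds on the finite compositions $f_N := \sigma_N \circ \cdots \circ \sigma_0$ that do not depend on $N$, and then pass to the limit. Fix $x, y \in \Sigma_0$ and set $r_k := |f_k(x) - f_k(y)|$. The scheme reduces the theorem to a \emph{quadratic} per-step distortion estimate
\[
\bigl|\,r_k - r_{k-1}\,\bigr| \lec \ve_k'(f_{k-1}(x))^2 \, r_{k-1}.
\]
Once this is in hand, taking logarithms and telescoping gives
\[
\bigl|\log(r_N / r_0)\bigr| \lec \sum_{k=1}^N \ve_k'(f_{k-1}(x))^2 \leq M_1,
\]
so that $C(M_1)^{-1}|x-y| \leq |f_N(x) - f_N(y)| \leq C(M_1)|x-y|$ uniformly in $N$, and the bounds pass to the limit $f$.

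All the content is therefore in establishing the pointwise per-step claim: for $y, y' \in \Sigma_{k-1}$ lying in a common ball $10B_{j,k}$,
\[
\bigl|\,|\sigma_k(y) - \sigma_k(y')| - |y - y'|\,\bigr| \lec \ve_k'(y)^2 \, |y - y'|.
\]
This I would prove from the explicit formula \eqn{sigmak} expressing $\sigma_k$ as a partition-of-unity average of orthogonal projections $\pi_{j,k}$ onto the planes $P_{j,k}$. Two inputs from \Theorem{DT} are used: (i) the planes $P_{j,k}$ that enter the combination near $y$ are pairwise within angle $O(\ve_k'(y))$ of one another (essentially the definition of $\ve_k'$), and (ii) the tangent plane $T\Sigma_{k-1}(y)$ is within $O(\ve_k'(y))$ of every such $P_{j,k}$, by \eqn{Tsig}. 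These inputs let me show that $D\sigma_k(y)$ restricted to $T\Sigma_{k-1}(y)$ differs from an isometry onto $T\Sigma_k(\sigma_k(y))$ only by a symmetric second-order correction: the first-order angular errors cancel because projection to a plane at angle $\theta$ and comparing with the identity along tangential directions produces a distortion of size $\theta^2$, not $\theta$.

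The principal obstacle is precisely this quadratic improvement. The bounds in \Theorem{DT} such as \eqn{sky-y} and \eqn{skCloseInBall} only supply linear-in-$\ve_k'$ control on $D\sigma_k - I$, which when composed would force the strictly stronger $\ell^1$ summability assumption $\sum_k \ve_k' < \infty$. To extract the $\ve_k'(y)^2$ gain I would argue as follows: write $D\sigma_k(y) = \Pi + A_k(y)$ where $\Pi$ is the orthogonal projection onto a fixed reference plane $P^*$ very close to all the relevant $P_{j,k}$, and $A_k(y)$ is a symmetric remainder of operator norm $O(\ve_k'(y))$ supported in a neighborhood of $T\Sigma_{k-1}(y) \oplus T\Sigma_{k-1}(y)^\perp$. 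Then a direct computation shows
\[
\bigl\|\,D\sigma_k(y)^{T} D\sigma_k(y) - I\,\bigr\|_{T\Sigma_{k-1}(y)} \lec \ve_k'(y)^2,
\]
which by integration along a segment in $\Sigma_{k-1}$ from $y$ to $y'$ (using the graph structure of $\Sigma_{k-1}$ over $T\Sigma_{k-1}(y)$ from \eqn{ygraph}) yields the desired quadratic per-step estimate. Propagating this matrix inequality through the composition and summing gives the bi-Lipschitz bound with constant $C(M_1, n)$.
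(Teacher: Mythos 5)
The paper does not actually contain a proof of this statement; it is quoted as \cite[Theorem 2.5]{DT12} and used as a black box in Section \ref{s:pf-of-thmiii}. So the only thing to assess is whether your sketch would, if fleshed out, reprove the David--Toro result.

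Your overall strategy is sound and does capture the mechanism that makes the theorem work: the per-step displacement of $\sigma_k$ is nearly normal to the local tangent plane, and a nearly-normal perturbation changes lengths within the surface only to \emph{second} order in the angle, so that $\ell^2$-summability of the $\ve_k'$ rather than $\ell^1$-summability is what the telescoped product needs. That is the right intuition. However, two steps in your write-up are not correct as stated and hide the actual content.

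First, the claimed decomposition $D\sigma_k(y)=\Pi+A_k(y)$ with $A_k(y)$ \emph{symmetric} cannot produce the $\ve_k'^2$ gain. If $L=\Pi+A$ with $\Pi$ close to the identity on $T=T\Sigma_{k-1}(y)$ and $A$ symmetric of size $\ve_k'$, then $L^{T}L-I=\Pi^{T}A+A^{T}\Pi+A^{T}A=2A+O(\ve_k'^2)$, which is still linear in $\|A\|$. What actually gives the cancellation is that $A=D\sigma_k(y)|_T-\iota_T$ takes values nearly in $T^\perp$: writing $\sigma_k = \psi_k\,\mathrm{id}+\sum_j\theta_{j,k}\pi_{j,k}$ and using $\sum_j D\theta_{j,k}=0$, one sees that $A$ is a combination of the terms $D\theta_{j,k}\otimes(\pi_{j,k}(y)-y)$ and $\theta_{j,k}(\Pi_{P_{j,k}}|_T-\iota_T)$; each of these is a vector normal to one of the $P_{j,k}$ scaled by $O(\ve_k')$, and since $\angle(T,P_{j,k})\lec\ve_k'$, the \emph{tangential} component of $A$ is $O(\ve_k'^2)$ while only the normal component is $O(\ve_k')$. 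It is this tangential/normal splitting, not symmetry, that makes $L^{T}L-I=\pi_TA+(\pi_TA)^T+A^TA=O(\ve_k'^2)$. Your phrase about $A_k$ being ``supported in a neighborhood of $T\oplus T^\perp$'' is vacuous ($T\oplus T^\perp=\bR^n$) and should be replaced by this statement about the tangential part being second order.

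Second, your per-step estimate is stated (correctly) only for $y,y'$ lying in a common ball $10B_{j,k}$, hence with $|y-y'|\lec r_k$, but the telescoping chain $|\log(\rho_N/\rho_0)|\lec\sum_k|\rho_k-\rho_{k-1}|/\rho_{k-1}$ (where $\rho_k:=|f_k(x)-f_k(y)|$) runs over \emph{all} $k$, including scales $k$ so fine that $\rho_{k-1}\gg r_k$ and the two points are not in a common ball. In that regime the per-step quadratic estimate does not apply, and one instead uses the crude displacement bound $|\rho_k-\rho_{k-1}|\lec\ve r_k$, so that the contribution is $\sum_{k>k_0}\ve r_k/\rho_{k-1}\lec\ve\sum_{k>k_0}r_k/r_{k_0}\lec\ve$, a geometric tail. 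That piece of the argument is missing from the sketch and must be supplied, since without it the telescope has an unbounded number of linear-in-$\ve$ terms.

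With those two repairs the outline matches the substance of the proof in \cite{DT12}; as it stands, the central cancellation is misattributed to symmetry and the coarse-scale tail is unaddressed.
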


For each $Q\subseteq Q_{0}$, we define stopping-times $S^{Q}$ as follows: we starting by adding $Q$ to $S^{Q}$ and inductively on each descendant $R$ from largest to small, we add $R$ to $S^{Q}$ if 
\begin{enumerate}
\item $R^{(1)}\in S^{Q}$ or $R=Q$,
\item every sibling $R'$ satisfies
\[
\sum_{R'\subset T\subseteq Q} \beta_{E}^{d,1}(MB_{T})^{2}<\epsilon^{2}.\] 
\end{enumerate}
Now, using the definition of $S^{Q}$, we break up 
\[
\cD(Q_{0}):= \{Q\in \cD:Q\subseteq Q_{0}\}\]
  into a collection of stopping-times $\cS$ with 
$  \bigcup \cS=\cD(Q_{0})$.  Start with $\cS=\emptyset$. First place $S^{Q_0}$ in $\cS$. Now, if $S$ has been added to $\cS$, and $Q$ is a child of a cube in $m(S)$,  add $S^Q$ to $\cS$. Repeat this indefinitely.
This gives us 
$\cS$ with $ \bigcup \cS=\cD$. Note that if $\beta_{E}^{d,1}(MB_{Q})\geq \ve$, then $S^{Q}=\{Q\}$. We enumerate the regions $S\in \cS$ which are {\em not} singletons by $\{S^i\}$.

Our plan is that each $S^i$ will correspond to a surface $\Sigma_{S^i}$, whcih will be obtained from Theorem \ref{t:DT} and the will have the bilipschitz estimates of \cite[Theorem 2.5]{DT12} (Theorem \ref{DTtheorem2.5} above).  This will use the lemmas  from Section \ref{s:betalemmas}.

Let us fix such an $S^i$.

\begin{lemma}
There is a surface $\Sigma_{S^i}$ such that  
\begin{equation}\label{l:11-distRSigma}
\dist(R,\Sigma_{S^i})\lesssim_{n} \ell(R) 
\end{equation}
for each $R\in S^i$, and 
$$\cH^d(\Sigma_{S^i})\lesssim_{\epsilon,  n} \ell(Q(S^i))^d$$
\end{lemma}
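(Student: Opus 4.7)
The plan is to apply the Reifenberg parametrization of David and Toro (Theorem \ref{t:DT}) together with its bi-Lipschitz refinement (Theorem \ref{DTtheorem2.5}) to the cubes of $S^i$, with the stopping-time condition playing the role of the $\ell^2$ summability hypothesis of Theorem \ref{DTtheorem2.5}.

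First I set up the net data. For each integer $k\ge k_0:= k(Q(S^i))$, let $s(k)$ be chosen so that $5\rho^{s(k)}\le r_k<5\rho^{s(k)-1}$ as in Section~\ref{s:approxsurf}, and let $\cX^{S^i}_k=\{x_{j,k}\}_{j\in J_k}$ be a maximal $r_k$-separated subset of $\{x_Q:Q\in S^i\cap \cD_{s(k)}\}$. To each $x_{j,k}$ associate the cube $Q_{j,k}\in S^i$ with $x_{Q_{j,k}}=x_{j,k}$, the plane $P_{j,k}$ parallel to $P_{MB_{Q_{j,k}}}^{1}$ passing through $x_{j,k}$, and the ball $B_{j,k}=B(x_{j,k},r_k)$. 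For $k<k_0$ use the trivial one-point net $\{x_{Q(S^i)}\}$ with plane $P_{MB_{Q(S^i)}}^{1}$, and outside a large ball take $P_0$ to be this same plane. The nesting and $V_{k-1}^{2}$-containment properties needed for Theorem~\ref{t:DT} follow exactly as in Lemma~\ref{l:DTready}.

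Next I verify the small-angle hypothesis. For every chain $R\subsetneq T\subseteq Q(S^i)$ of siblings/descendants inside $S^i$, the stopping rule gives
\[\sum_{R\subsetneq T\subseteq Q(S^i)} \beta_E^{d,1}(MB_T)^{2}<\varepsilon^2.\]
In particular each individual $\beta_E^{d,1}(MB_T)<\varepsilon$. By Lemma~\ref{l:PQPR} (with the lower content regularity hypothesis of Theorem~\ref{t:thmiii} supplying the hypothesis of that lemma), for any two $Q_{j,k},Q_{i,l}\in S^i$ with $|l-k|\le 2$ and $10^4 B_{j,k}\cap 10^4B_{i,l}\ne\emptyset$, one has $\angle(P_{j,k},P_{i,l})\lesssim \varepsilon$. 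Thus $\varepsilon_k(x_{j,k})\lesssim \varepsilon$ uniformly, and Theorem~\ref{t:DT} produces a Reifenberg-flat surface, which we call $\Sigma_{S^i}$, together with approximating surfaces $\Sigma_k^{S^i}$ and a bijection $g_{S^i}:\bR^n\to\bR^n$ with $\Sigma_{S^i}=g_{S^i}(P_0)$.

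The distance bound \eqref{l:11-distRSigma} is then immediate: for $R\in S^i$ take $Q_{j,k}\ni x_R$ with $r_k\sim \ell(R)$; by \eqref{e:49r} and \eqref{e:ytosigma}, $\dist(x_R,\Sigma_{S^i})\lesssim \varepsilon r_k\lesssim\ell(R)$, and any other point of $R$ is within $2\ell(R)$ of $x_R$.

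The key step, and the main technical point, is the area bound, for which I invoke Theorem~\ref{DTtheorem2.5}. I claim that for every $z\in P_0$,
\[
\sum_{k} \varepsilon_k'(f_k(z))^{2}\;\lesssim\;1+\sum_{Q\in S^i}\beta_E^{d,1}(MB_Q)^{2}\;<\;1+\varepsilon^{2}.
\]
Indeed, for each scale $k$ and each $z$, only boundedly many indices $j\in J_k$ satisfy $f_k(z)\in 11B_{j,k}$, and Lemma~\ref{l:betabeta'} (and Lemma~\ref{l:PQPR}) bounds the relevant angles $d_{x_{i,l},100r_l}(P_{j,k},P_{i,l})$ by a constant multiple of $\beta_E^{d,1}(MB_{Q_{j,k}})+\beta_E^{d,1}(MB_{Q_{i,l}})$. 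Squaring and summing in $k$, the telescoping nature of the stopping condition keeps the total at most a constant multiple of the sum along a single chain from $Q(S^i)$ down to the minimal cube containing $f(z)$, which is $<\varepsilon^2$. Theorem~\ref{DTtheorem2.5} then yields that the restriction of $g_{S^i}$ to the disk $P_0\cap B(x_{Q(S^i)},C\ell(Q(S^i)))$ is $C_{\varepsilon,n}$-bi-Lipschitz onto the part of $\Sigma_{S^i}$ relevant to $S^i$ (outside this disk $\Sigma_{S^i}$ agrees with $P_0$ and so contributes nothing). Comparing $d$-dimensional measures under a bi-Lipschitz map gives
\[
\cH^d(\Sigma_{S^i})\;\lesssim_{\varepsilon,n}\;\cH^d\bigl(P_0\cap B(x_{Q(S^i)},C\ell(Q(S^i)))\bigr)\;\lesssim_{\varepsilon,n}\;\ell(Q(S^i))^{d},
\]
which is the required bound. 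The only real obstacle is the telescoping estimate on $\sum \varepsilon_k'(f_k(z))^2$, since one must carefully account for the bounded overlap at each scale and for the fact that $f_k(z)$ moves as $k$ increases; this is handled exactly as in Section~6 of \cite{DT12}.
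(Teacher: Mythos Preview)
Your approach is essentially the same as the paper's: set up nets from the cubes of $S^i$, use Lemma~\ref{l:PQPR} to verify \eqn{vek<ve}, apply Theorem~\ref{t:DT} to obtain $\Sigma_{S^i}$, read off \eqref{l:11-distRSigma} from \eqn{49r}/\eqn{ytosigma}, and then invoke Theorem~\ref{DTtheorem2.5} for the bi-Lipschitz (hence area) bound. The paper places its net points at $x'_Q\in L_Q$ rather than at $x_Q$, but since $|x_Q-x'_Q|\lesssim\ve\ell(Q)$ this is immaterial.

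There is one genuine slip in your write-up. The displayed inequality
\[
\sum_{k} \varepsilon_k'(f_k(z))^{2}\;\lesssim\;1+\sum_{Q\in S^i}\beta_E^{d,1}(MB_Q)^{2}\;<\;1+\varepsilon^{2}
\]
is false as written: the middle term is the sum over \emph{all} cubes of $S^i$, which can be arbitrarily large and is certainly not controlled by $\varepsilon^2$. Your own subsequent sentence gives the correct argument---the cubes $Q_{j,k}$ that actually contribute to $\varepsilon'_k(f_k(z))$ lie within $O(r_k)$ of the limit point $f(z)$ (because $|f_k(z)-f(z)|\lesssim\varepsilon r_k$), and so, up to bounded multiplicity, they trace a single descending chain in $S^i$; it is the stopping condition along \emph{that} chain which is $<\varepsilon^2$. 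The paper makes the same reduction (it records $x_N\in B(x,2r_k)$ for exactly this purpose). You should replace the middle term by the chain sum, and note that ``the minimal cube containing $f(z)$'' is not literally well-defined when $f(z)\notin E$; one instead compares to the ancestor chain of the finest cube $Q_{j,K}$ arising in the sum.
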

\begin{proof}
We will use Theorem \ref{DTtheorem2.5}  stated above, which is building upon Theorem \ref{t:DT}.
Fix $M=\Lambda=10^5$.
For $k\geq 0$ an integer, let $s(k)$ be such that $5\rho^{s(k)}\leq r_{k}< 5\rho^{s(k)-1}$. 
Let $L_Q$ be a plane so that 
$\beta_{E}^{d,1}(MB_{Q},L_{Q})=\beta_{E}^{d,1}(MB_{Q})$.
Let $x'_Q\in L_{Q}\cap MB_{Q}$ be the closest point to $x_Q$.
Associate $L_Q$ to $x'_Q$ and the scale $r_k$.

Consider a maximal  $r_k$ net for
$\cC_k=\{x'_Q:Q\in \cD_{s(k)}\cap S^i\}$. 
For these maximal nets,  
Lemma \ref{l:PQPR} guarantees that the planes $L_Q$ satisfy the assumptions of  Theorem \ref{t:DT}.
with $\epsilon_k(x)\lesssim \beta_{E}^{d,1}(10^5B_Q)$, when $x\in Q \in \cC_k$.

We now wish to apply 
Theorem \ref{DTtheorem2.5}. To this end, 
let $x=f(z)\in \Sigma_{S^i}$. By \eqn{sky-y}, $x_{N}:=f_{N}(z)$ is a Cauchy sequence in $N$ and 
\[|x-x_{N}|\lec \sum_{k\geq N}\ve_{k}(x_{k}) r_{k}
\lec\ve  \sum_{k\geq N} r_{k}\lec \ve r_{N}.
\]
Thus, for $\ve>0$ small enough, $x_{N}\in B(x,2r_{k})$.  
We then have 
$\ve'_k(x)\lesssim \beta_{E}^{d,1}(10^5B_Q)$, when $x\in Q \in \cC_k$.

Theorem \ref{DTtheorem2.5} now assures that $\Sigma_{S^i}$ is a bi-Lipchitz surface with constant depending on $n,\epsilon$.
Finally, note that \eqref{l:11-distRSigma} 
follows from  
\eqref{e:ytosigma} coupled with \eqref{e:skCloseInBall} and $\epsilon_k\lesssim_n\epsilon$.
\end{proof}

Let ${min\cS}$ be the 
minimal cubes of all $S\in \cS$. In particular, for any $R\in {min\cS}$, there is $S(R)\in \cS$ such that $R$ is a minimal cube in $S(R)$.
Furthermore, any $S\in \cS$ has $Q(S)$ either equal to $Q_0$, or a cube in ${min\cS}$.

\begin{lemma}\label{l:sumS_Jlarge}
\begin{equation}\label{e:summins}
\sum_{R\in {min\cS}} \ell(R)^d\lesssim_{\epsilon, n} 
 \sum_{Q\in \cD} \beta_{E}^{d,1}(MB_Q)^{2}\ell(Q)^d
 \end{equation}
\end{lemma}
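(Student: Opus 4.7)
The plan is to exploit the stopping-time construction of $\cS$ to show that each minimal cube $R$ carries a definite amount of beta-mass above it, and then interchange the order of summation and bound an inner packing sum by $\ell(T)^d$ via lower content regularity. (The statement reads with $\beta_E^{d,1}$ unsquared, but the stopping rule defining $\cS$ uses squared betas; I read the RHS as $\sum_Q \beta_E^{d,1}(MB_Q)^2\ell(Q)^d$ throughout.)

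\textbf{Step 1 (Cumulative beta lower bound at each minimal cube).} Fix $R\in {min\cS}$, let $S(R)\in\cS$ be the unique stopping-time region containing $R$, and write $Q_R=Q(S(R))$. I claim
\[
\sum_{R\subseteq T\subseteq Q_R}\beta_E^{d,1}(MB_T)^2\geq \epsilon^2.
\]
If $S(R)$ is non-singleton, then no child of $R$ was added to $S(R)$; unpacking the failure of rule (2) for a child $R'$ of $R$ yields a sibling $R''$ of $R'$ (necessarily a child of $R$) with $\sum_{R''\subsetneq T\subseteq Q_R}\beta_E^{d,1}(MB_T)^2\geq \epsilon^2$, and since $R''\subsetneq R$ the range of $T$ includes all $T\supseteq R$. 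If instead $S(R)=\{R\}$ is a singleton, the same reasoning at the root reduces the sum to the single term $\beta_E^{d,1}(MB_R)^2\geq \epsilon^2$, which is still the claim (the sum then has one summand, $T=R$).

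\textbf{Step 2 (Linearization and swap).} Multiply the displayed inequality by $\epsilon^{-2}\ell(R)^d$ and sum over $R\in{min\cS}$:
\[
\sum_{R\in{min\cS}}\ell(R)^d\leq \epsilon^{-2}\sum_{R\in{min\cS}}\;\sum_{R\subseteq T\subseteq Q_R}\beta_E^{d,1}(MB_T)^2\,\ell(R)^d.
\]
For fixed $T$, the constraint $R\subseteq T\subseteq Q_R$ forces $T\in S(R)$, so $S(T)=S(R)$ and $R$ is a minimal cube of $S(T)$ contained in $T$. Swapping sums:
\[
\sum_{R\in{min\cS}}\ell(R)^d\leq \epsilon^{-2}\sum_{T\in\cD(Q_0)}\beta_E^{d,1}(MB_T)^2\Bigl(\sum_{R\in m(S(T)),\,R\subseteq T}\ell(R)^d\Bigr).
\]

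\textbf{Step 3 (Inner packing bound).} The cubes $\{R\in m(S(T)):R\subseteq T\}$ are pairwise disjoint subsets of $T$; each contains the ball $c_0B_R\cap E$, which by lower content regularity of $E$ has $\cH^d_\infty(c_0B_R\cap E)\gtrsim_c \ell(R)^d$, and these balls are mutually disjoint. Applying Frostman's lemma to each $c_0B_R\cap E$ produces measures $\mu_R$ with total mass $\sim\ell(R)^d$ and uniform $d$-growth; summing $\mu=\sum\mu_R$ (legitimate because the supports are disjoint) and checking that $\mu$ inherits $d$-growth at scale $\ell(T)$ at the centre $x_T$ delivers
\[
\sum_{R\in m(S(T)),\,R\subseteq T}\ell(R)^d\;=\;\mu(T)\;\lesssim_{n,c}\;\ell(T)^d.
\]
Substituting into Step 2 gives \eqref{e:summins}, after absorbing the trivial $\ell(Q_0)^d\lesssim 1$ boundary term if needed.

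\textbf{Main obstacle.} Step 3 is the only delicate step: since Hausdorff content is merely subadditive, the superadditive-type estimate $\sum\ell(R)^d\lesssim\ell(T)^d$ is not automatic from the ball lower bounds, and one must carefully verify that the summed Frostman measure $\mu$ retains a $d$-growth bound at the scale of $T$. The disjointness of the supports $c_0B_R$ and the fact that each $R$ contains a ball of size $\ell(R)$ commensurate with its scale are what make the Frostman construction succeed, and without the lower content regularity hypothesis on $E$ the estimate can fail outright.
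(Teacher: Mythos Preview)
Your Steps 1 and 2 are correct and coincide with the paper's argument (the paper's case split on $\beta_E^{d,1}(MB_{R'})$ is just a slightly more roundabout version of your Step~1). The gap is in Step~3.

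The Frostman argument you sketch is circular. Each $\mu_R$ satisfies $\mu_R(B(x,r))\le r^d$, but the sum $\mu=\sum_R\mu_R$ need not: for the ball $B(x_T,\ell(T))$ one has $\mu(B(x_T,\ell(T)))=\sum_R\mu_R(c_0B_R\cap E)\sim\sum_R\ell(R)^d$, which is exactly the quantity you are trying to bound. Disjointness of the supports $c_0B_R$ says nothing about the $d$-growth of $\mu$ at scales larger than the individual $\ell(R)$'s. More fundamentally, the packing estimate $\sum_{R\in m(S(T)),\,R\subseteq T}\ell(R)^d\lesssim \ell(T)^d$ \emph{cannot} follow from lower content regularity of $E$ alone: take $E=[0,1]^n$, which is $(c,d)$-lower content regular for any $d\le n$, and let the $R$'s be all Christ cubes at generation $k$ inside $T=Q_0$; then $\sum_R\ell(R)^d\sim\rho^{k(d-n)}\to\infty$. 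What makes the packing bound true here is that the $R$'s are minimal cubes of a stopping-time region in which the $\beta$-numbers are uniformly small, so $E$ is nearly $d$-flat throughout $S(T)$. Your argument never uses this.

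The paper exploits the flatness directly: inside $S(T)$ one has the bi-Lipschitz surface $\Sigma_{S(T)}$ from Theorem~\ref{DTtheorem2.5}, which is genuinely $d$-Ahlfors regular. Since $x_R$ is (close to) a net point in the construction, $\cH^d(c_0B_R\cap\Sigma_{S(T)})\gtrsim\ell(R)^d$, the balls $c_0B_R$ are disjoint, and $\cH^d(B_T\cap\Sigma_{S(T)})\lesssim_{\epsilon,n}\ell(T)^d$ by the bi-Lipschitz bound. Summing against $\cH^d|_{\Sigma_{S(T)}}$ (an honest measure, unlike $\cH^d_\infty$) gives the packing estimate. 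This use of $\Sigma_{S(T)}$ is the missing ingredient in your Step~3.
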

\begin{proof}
Each $R\in {min\cS}$ is a minimal cube in an $S\in\cS$, which we call $S(R)$. 
These are disjoint for different $R$. Let $R'$ be a child of $R$ for which
$$\sum_{Q\in S(R), Q\supset R'} \beta_E^{d,1}(MB_Q)^{2}\geq \epsilon^{2}.$$
If $\beta_{E}^{d,1}(MB_{R'})^{2} <\ve^{2}/2$, then this implies 
$$\sum_{Q\in S(R), Q\supset R} \beta_E^{d,1} (MB_Q)^{2}\geq \epsilon^{2}/2.$$
Otherwise, if $\beta_{E}^{d,1}(MB_{R'})^{2}\geq \ve^{2}/2$, by monotonicity this implies 
\[
\sum_{Q\in S(R)\atop Q\supseteq R}  \beta_E^{d,1}(MB_Q)^{2}
\geq \beta_{E}^{d,1}(MB_{R})^{2}\gec\beta_{E}^{d,1}(MB_{R'})^{2} \geq \ve^{2}/2.\]
In any case, we have 
\[
\sum_{Q\in S(R)\atop Q\supseteq R}  \beta_E^{d,1}(MB_Q)^{2} \gec \ve^{2} \mbox{ for all }R\in m(S).\]
Thus
\begin{eqnarray*}
\sum_{R\in {min\cS}}\ell(R)^d  &\leq&
\frac1{\epsilon^{2}}
	\sum_{R\in {min\cS}} \ell(R)^d \sum_{Q\in S(R), Q\supset R}\beta_E^{d,1}(MB_Q)^{2}\\
&\leq&
\frac1{\epsilon^{2}}
	\sum_{S\in \cS}\sum_{Q\in S} \beta_E^{d,1}(MB_Q)^{2}\sum_{R\in {min\cS}\atop Q\in S(R), Q\supset R} \ell(R)^d	 
\end{eqnarray*}
Now observe that for a fixed cube $Q\in \cD$, the cubes $R\in {min\cS}$ such that $Q\in S(R)$
are disjoint, and $\cH^{d}(c_{0} B_{R}\cap \Sigma_{S(R)})\gec \ell(R)^{d}$ for $\ve>0$ small enough, since for $\ve>0$ small, $c_{0}B_{R}$ will intersect a large portion of $\Sigma_{S(R)}$. (If $S$ is a singleton, then $\Sigma_{S(R)}$ was not defined, but then $S=\{Q\}=\{R\}$). Thus,
\begin{align*}
\sum_{R\in {min\cS}\atop Q\in S(R), Q\supset R} \ell(R)^d
& \lec_{d} \sum_{R min \cS \atop Q\in S(R),Q\supset R} \cH^{d}(c_{0} B_{R}\cap \Sigma_{S(R)})\\
&
 \leq \cH^{d}(B_{Q}\cap \Sigma_{S(R)}) \lesssim_{\epsilon, n} \ell(Q)^d
 \end{align*}
where the constant comes form the constant in \cite[Theorem 2.5]{DT12}.
Putting this all together, we have
\begin{eqnarray*}
\sum_{R\in {min\cS}}\ell(R)^d  &\leq&
\frac1{\epsilon^{2}}
		\sum_{S\in \cS}\sum_{Q\in S} \beta_E^{d,1}(MB_Q)^{2}\sum_{R\in {min\cS}\atop Q\in S(R), Q\supset R} \ell(R)^d\\
&\lesssim_{\epsilon, d, n}&
		\sum_{Q\in S} \beta_E^{d,1}(MB_Q)^{2} \ell(Q)^d	 
\end{eqnarray*}
as desired.
\end{proof}

Let $G$ be the points in $E$ which are either stopped a finite number of times, or never stopped, i.e.
$$G:=\{x\in E\cap B(0,1): \sum\beta_{E}^{d,1}(MB_Q)^2\chi_Q(x)<\infty\}.$$
{ Then, by \eqref{l:11-distRSigma}, $G\subset  \cup_i \Sigma_{S^i}$, and we have from the above lemmas the following corollary.}
\begin{corollary}\label{c:Si-total-area}
$$\cH^d(G)\leq \cH^d\ps{\bigcup_i \Sigma_{S^i}}\leq  C_{\epsilon, n}
\ps{1+		\sum_{Q\in \cD} \beta_E^{d,1}(MB_Q)^{2} \ell(Q)^d}.$$	 
\end{corollary}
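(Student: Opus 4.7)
The first inequality $\cH^{d}(G)\leq \cH^{d}(\bigcup_{i}\Sigma_{S^{i}})$ follows from the set containment $G\subseteq \bigcup_{i}\Sigma_{S^{i}}$ noted just above the statement. To justify this containment, I would fix $x\in G$ and use the finiteness of $\sum_{Q\ni x}\beta_{E}^{d,1}(MB_{Q})^{2}$ to ensure that the tail sums along the cube tower containing $x$ are eventually smaller than $\epsilon^{2}$. Consequently, once a fresh stopping-time region $S^{Q}$ is initiated at a sufficiently small cube $Q\ni x$, the stopping criterion in the construction of $S^{Q}$ will never be triggered for a descendant of $Q$ containing $x$. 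Thus $x$ lies in every cube of some non-singleton $S^{i}=S^{Q}$, i.e.\ $x$ is the limit of a sequence of cubes $R\in S^{i}$ with $\ell(R)\to 0$. The distance bound \eqref{l:11-distRSigma} then gives $\dist(R,\Sigma_{S^{i}})\lesssim \ell(R)\to 0$, and since $\Sigma_{S^{i}}$ is closed (being a bi-Lipschitz image of $\bR^{d}$ via the composition of the $\sigma_{k}$), we conclude $x\in \Sigma_{S^{i}}$.

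For the second inequality I would use subadditivity of $\cH^{d}$ together with the area bound on each piece:
\[
\cH^{d}\Bigl(\bigcup_{i}\Sigma_{S^{i}}\Bigr)\leq \sum_{i}\cH^{d}(\Sigma_{S^{i}})\lesssim_{\epsilon,n}\sum_{i}\ell(Q(S^{i}))^{d},
\]
the last inequality being the area estimate supplied by the first lemma of this section. By construction, each top cube $Q(S^{i})$ is either $Q_{0}$ itself or belongs to the collection of minimal cubes $\mathrm{min}\,\cS$. Hence the right-hand side is bounded by $\ell(Q_{0})^{d}+\sum_{R\in\mathrm{min}\,\cS}\ell(R)^{d}\lesssim 1+\sum_{R\in\mathrm{min}\,\cS}\ell(R)^{d}$, and an application of Lemma \ref{l:sumS_Jlarge} controls the remaining sum by $\sum_{Q\in\cD}\beta_{E}^{d,1}(MB_{Q})^{2}\ell(Q)^{d}$, which completes the estimate with the claimed constant.

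No serious obstacle arises, since the stopping-time machinery of the section together with Lemma \ref{l:sumS_Jlarge} do all of the real work. The only conceptual point requiring care is the containment $G\subseteq\bigcup_{i}\Sigma_{S^{i}}$, which reduces to the observation that a convergent total $\beta$-sum along any cube tower must leave the stopping criterion permanently unsatisfied past some scale, a fact that is immediate from the definition of $S^{Q}$.
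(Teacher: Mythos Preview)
Your proposal is correct and follows essentially the same route the paper intends: the containment $G\subseteq\bigcup_i\Sigma_{S^i}$ via \eqref{l:11-distRSigma}, then subadditivity, the area bound $\cH^d(\Sigma_{S^i})\lesssim_{\epsilon,n}\ell(Q(S^i))^d$, and finally Lemma~\ref{l:sumS_Jlarge}. One small imprecision (which the paper shares): the top cubes $Q(S^i)$ are, by the construction of $\cS$, \emph{children} of cubes in $\min\cS$ rather than elements of $\min\cS$ themselves, but since each $R\in\min\cS$ has boundedly many children of sidelength $\rho\,\ell(R)$, your bound $\sum_i\ell(Q(S^i))^d\lesssim 1+\sum_{R\in\min\cS}\ell(R)^d$ still holds.
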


Let $E'= E\cap B(0,1)\setminus G$. Then
$x\in E'$ implies  $x=\lim x_{i_j}$ with $x_{i_j}\in S^{i_j}$, and $Q(S^{i_{j+1}})$.
More is true: 
\begin{lemma}
Let $x\in E'$.
For  $r>0$, there is a cube $Q\in {min\cS}$ with $\ell(Q)<r$ such that 
$x\in Q$.   
\end{lemma}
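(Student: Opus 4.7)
The plan is to argue by contradiction. Assume that for some $r_{0}>0$ no $Q\in {min\cS}$ with $x\in Q$ has $\ell(Q)<r_{0}$, and conclude $x\in G$; this contradicts $x\in E'$.

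I will follow $x$ through the regions of $\cS$: set $Q'_{0}:=Q_{0}$, and inductively, given $x\in Q'_{i}$ with $Q'_{i}$ the root of $S^{Q'_{i}}\in\cS$, distinguish two cases. If $x$ belongs to some minimal cube $\hat{Q}_{i}$ of $S^{Q'_{i}}$, let $Q'_{i+1}$ be the child of $\hat{Q}_{i}$ containing $x$; by the construction of $\cS$, this $Q'_{i+1}$ is again the root of a region in $\cS$. Otherwise, the sequence terminates. Since $Q'_{i+1}$ is a proper descendant of $Q'_{i}$, the generations $k(Q'_{i})$ strictly increase, so $\ell(Q'_{i})\to 0$. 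If the sequence never terminates, then $\hat{Q}_{i}\in{min\cS}$ for every $i$ with $x\in\hat{Q}_{i}$ and $\ell(\hat{Q}_{i})\to 0$, contradicting the standing assumption directly.

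So the sequence must terminate at some $S^{Q'_{n}}$, meaning $x$ lies in \emph{no} minimal cube of $S^{Q'_{n}}$. A short induction using the fact that whether a cube is added to $S^{Q'_{n}}$ depends symmetrically on all its siblings (so either all children of a common parent are added or none are) shows that the nested descendants $Q'_{n}=R_{0}\supsetneq R_{1}\supsetneq R_{2}\supsetneq\cdots$ of successive generations containing $x$ all lie in $S^{Q'_{n}}$. Applying the defining condition for $R_{i+1}\in S^{Q'_{n}}$ with $R_{i+1}$ itself in the sibling role yields, for every $i\geq 0$,
\[
\sum_{j=0}^{i}\beta_{E}^{d,1}(MB_{R_{j}})^{2}
= \sum_{R_{i+1}\subsetneq T\subseteq Q'_{n}}\beta_{E}^{d,1}(MB_{T})^{2}<\epsilon^{2}.
\]
Letting $i\to\infty$ gives $\sum_{T\ni x,\,T\subseteq Q'_{n}}\beta_{E}^{d,1}(MB_{T})^{2}\leq\epsilon^{2}$. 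The cubes $T\ni x$ with $T\not\subseteq Q'_{n}$ form the finite chain of ancestors of $Q'_{n}$ inside $Q_{0}$, each contributing $\beta_{E}^{d,1}(MB_{T})^{2}\lesssim 1$ by \Lemma{betabetainf}. Summing, $\sum_{T\ni x}\beta_{E}^{d,1}(MB_{T})^{2}<\infty$, so $x\in G$, contradicting $x\in E'$.

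The only delicate point is the combinatorial bookkeeping: verifying that ``sibling'' in the definition of $S^{Q}$ includes $R$ itself (as is explicitly spelled out for the earlier region $S_{Q}$ in Section \ref{s:stoppingtime}), and exploiting the ``all siblings uniformly'' feature to propagate the whole chain $\{R_{i}\}$ into $S^{Q'_{n}}$. Once those two observations are in place, the rest is a clean telescoping of the per-scale stopping condition.
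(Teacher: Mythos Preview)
Your proof is correct and takes essentially the same approach as the paper: the paper's proof is a single sentence observing that the lemma is ``a restatement of the fact that $E'$ is the set of points that were stopped infinitely many times,'' whereas you have carefully unpacked this by tracing $x$ through the tree of regions in $\cS$ and invoking the sibling-symmetric stopping rule to show that termination of the trace forces $x\in G$. Your argument is a rigorous expansion of what the paper leaves implicit.
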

\begin{proof}
This lemma is simply a restatement of the fact that $E'$ is the set of points that were stopped infinitely many times, and the scale must decrease to 0 by the properties of $\{Q\in \cD: Q\ni x\}$.
\end{proof}

\begin{corollary}\label{c:EprimeSize}
If the right hand side of \eqref{e:10-upper-bound} is finite, then
 $\cH^d(E')=0$.
 \end{corollary}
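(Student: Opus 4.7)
The plan is to combine the preceding lemma with Lemma \ref{l:sumS_Jlarge} and the finiteness assumption to build, for each $\delta>0$, an efficient cover of $E'$ by cubes in ${min\cS}$ of diameter at most $\delta$, and then to observe that the $d$-th powers of their sidelengths form the tail of a convergent series.

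First I would fix $\delta>0$. By the preceding lemma, every $x\in E'$ lies in some $Q_{x}\in {min\cS}$ with $\ell(Q_{x})<\delta/2$. Since $\cD$ (and hence ${min\cS}$) is countable, the subcollection
\[
\cF_{\delta}:=\{Q\in {min\cS}: \ell(Q)<\delta/2\ \text{and}\ Q\cap E'\neq\emptyset\}
\]
is a countable cover of $E'$, and each $Q\in \cF_{\delta}$ satisfies $\diam Q\leq 2\ell(Q)<\delta$ by \Theorem{Christ}.

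Next, by Lemma \ref{l:sumS_Jlarge} and the assumption that the right hand side of \eqref{e:10-upper-bound} is finite,
\[
\sum_{R\in {min\cS}} \ell(R)^{d}\lesssim_{\epsilon,n} 1+\sum_{Q\in \cD}\beta_{E}^{d,1}(MB_{Q})^{2}\ell(Q)^{d}<\infty.
\]
Since this series converges, its tail
\[
T(\delta):=\sum_{R\in {min\cS},\ \ell(R)<\delta/2}\ell(R)^{d}
\]
tends to zero as $\delta\to 0$. Combining this bound with the cover $\cF_{\delta}$ yields
\[
\cH^{d}_{\delta}(E')\leq \sum_{Q\in \cF_{\delta}}(\diam Q)^{d}\leq 2^{d}\,T(\delta)\xrightarrow[\delta\to 0]{}0,
\]
and therefore $\cH^{d}(E')=\lim_{\delta\to 0}\cH^{d}_{\delta}(E')=0$, as desired.

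There is no genuine obstacle here: the substantive work has already been carried out in Lemma \ref{l:sumS_Jlarge} (which converts the $\beta$-sum into control on $\sum_{R\in {min\cS}}\ell(R)^{d}$) and in the preceding lemma (which guarantees that $E'$ can be covered by ${min\cS}$-cubes at arbitrarily small scales). The corollary is merely the observation that continuity of Hausdorff measure along tails of a convergent sum of $d$-th powers of diameters forces $\cH^{d}(E')=0$.
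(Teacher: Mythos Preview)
Your proof is correct and follows essentially the same approach as the paper's own proof: both use the preceding lemma to cover $E'$ by ${min\cS}$-cubes at arbitrarily small scales, then invoke Lemma~\ref{l:sumS_Jlarge} to see that $\sum_{R\in{min\cS}}\ell(R)^{d}<\infty$, so the tail over small cubes vanishes. The paper states this more tersely (``each point $x\in E'$ is covered by an infinite number of cubes in ${min\cS}$''), while you have spelled out the $\cH^{d}_{\delta}$ estimate explicitly; the content is the same.
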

\begin{proof}
Lemma \ref{l:sumS_Jlarge} together with the previous lemma first give that
$$\cH^d(E')\lesssim_{\epsilon,  n} 
 \sum_{Q\in \cD} \beta_E^{d,1}(MB_Q)^{2} \ell(Q)^d.$$
 However, since each point $x\in E'$ is covered by an infinite number of cubes  in ${min\cS}$
we have that Lemma \ref{l:sumS_Jlarge} gives $\cH^d(E')=0$.
\end{proof}

We now note that $E\cap B(0,1)\subset E' \cup \bigcup_i \Sigma_{S^i}$.
\begin{corollary}\label{c:rect}
If the right hand side of \eqref{e:10-upper-bound} is finite then $E$ is $d$-rectifiable.
\end{corollary}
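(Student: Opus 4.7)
The plan is to combine the decomposition stated immediately before the corollary with the bi-Lipschitz structure of each $\Sigma_{S^i}$ and the vanishing of $\cH^d(E')$ guaranteed by Corollary \ref{c:EprimeSize}. Specifically, I would argue that $E \cap B(0,1)$ is covered, up to an $\cH^d$-null set, by countably many bi-Lipschitz images of subsets of $\bR^d$, which is the definition of $d$-rectifiability.

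First, recall that each stopping-time region $S^i$ yields a surface $\Sigma_{S^i}$ built by applying Theorem \ref{DTtheorem2.5} to the points $x_Q'$ and planes $L_Q$ for $Q \in S^i$. By construction of $S^i$, we have the Carleson-type bound $\sum_{Q \in S^i, Q \supseteq R} \beta_E^{d,1}(MB_Q)^2 \leq \epsilon^2$ for every $R \in S^i$, so the quantity $M_1$ in Theorem \ref{DTtheorem2.5} is bounded by an absolute constant. Therefore the map $f \colon P_0 \to \Sigma_{S^i}$ produced by David--Toro is bi-Lipschitz, and so $\Sigma_{S^i}$ is the bi-Lipschitz image of (a bounded subset of) a $d$-plane. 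In particular, $\Sigma_{S^i}$ is $d$-rectifiable for every $i$.

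Next, assume the right-hand side of \eqref{e:10-upper-bound} is finite. By Corollary \ref{c:EprimeSize}, $\cH^d(E') = 0$, and such a set is trivially $d$-rectifiable. Using the inclusion $E \cap B(0,1) \subseteq E' \cup \bigcup_i \Sigma_{S^i}$ noted in the paragraph preceding the corollary, we conclude that $E \cap B(0,1)$ is contained in a countable union of $d$-rectifiable sets together with a set of $\cH^d$-measure zero, hence is itself $d$-rectifiable.

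The step which requires the most care is verifying that the bi-Lipschitz constant from Theorem \ref{DTtheorem2.5} is indeed controlled uniformly for each $S^i$; the key input here is the definition of the stopping rule for $S^Q$, which guarantees the squared-$\beta$ sum along any branch stays below $\epsilon^2$, and Lemma \ref{l:PQPR}, which translates this into the hypothesis $\sum_k \epsilon_k'(f_k(z))^2 \lesssim 1$ needed by Theorem \ref{DTtheorem2.5}. The rest is soft: countable unions of rectifiable sets are rectifiable, and $\cH^d$-null sets are rectifiable by definition.
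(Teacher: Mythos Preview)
Your proposal is correct and follows exactly the approach the paper intends: the corollary is stated immediately after the inclusion $E\cap B(0,1)\subset E'\cup\bigcup_i\Sigma_{S^i}$, and the paper leaves the reader to combine this with Corollary~\ref{c:EprimeSize} and the bi-Lipschitz structure of each $\Sigma_{S^i}$ coming from Theorem~\ref{DTtheorem2.5}, which is precisely what you do. Your additional remarks about controlling the bi-Lipschitz constant via the stopping rule and Lemma~\ref{l:PQPR} are already handled in the lemma constructing $\Sigma_{S^i}$, so they are correct but not new content.
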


Proposition \ref{p:thmiii1case}
 follows from Corollary \ref{c:Si-total-area}, Corollary \ref{c:EprimeSize} and Corollary \ref{c:rect}. 

\section{Proof of Theorem \ref{t:thmiii} - part 2}\label{s:pf-of-thmiii-part2}
We continue with the same notation of Section \ref{s:pf-of-thmiii}. 
In particular,
$E\subset \bR^n$,
and $\cD$ be the cubes from \Theorem{Christ} for $E\cap B(0,1)$ so that $\cX_{0}=\{0\}$. 
(again, in this way, all cubes are contained in $B(0,1)$ and $Q_{0}=B(0,1)$.)
\begin{proposition}\label{p:theta-large-bdd}
Suppose  that for all $x\in E$ and $0<r\leq1$ we have 
$\cH^d_\infty(B(x,r)\cap E)>t r^d$. Then
\begin{align*}
\sum\{\ell(Q)^d: Q\in \cD,\ \vartheta_{E}(MB_Q)\geq \epsilon\}
\lesssim_{\epsilon,n,t} 1+
   \sum_\cD \beta_E^{d,1}(2MB_Q)^2\ell(Q)^d
\end{align*}
\end{proposition}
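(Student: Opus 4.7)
The plan is to adapt the stopping-time construction used in the proof of Proposition \ref{p:thmiii1case} (Section \ref{s:pf-of-thmiii}). Throughout, write $\cB = \{Q\in\cD : \vartheta_{E}(MB_{Q}) \geq \epsilon\}$. We separate $Q \in \cB$ into two cases according to the size of $\beta_{E,\infty}^{d}(MB_{Q})$.

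In the first case, $\beta_{E,\infty}^{d}(MB_{Q}) \geq \epsilon/C$ for a constant $C$ to be chosen. Here Lemma \ref{l:betainfbeta} applies (using that $E$ is lower content regular by hypothesis with constant $t$), yielding $\beta_{E}^{d,1}(2MB_{Q}) \gtrsim_{\epsilon,t} 1$. Therefore $\ell(Q)^{d} \lesssim_{\epsilon,t} \beta_{E}^{d,1}(2MB_{Q})^{2}\,\ell(Q)^{d}$, which sums directly into the right-hand side of the asserted inequality. This case is the ``direct'' contribution.

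In the second case, $\beta_{E,\infty}^{d}(MB_{Q}) < \epsilon/C$. Then the infimal plane $L^{*}$ satisfies $E \cap MB_{Q}$ is contained in a thin tube around $L^{*}$, yet $\vartheta_{E}(MB_{Q}) \geq \epsilon$ forces a ``hole'': some $y \in L^{*} \cap MB_{Q}$ has $\dist(y,E) \gtrsim \epsilon\,M\ell(Q)$. To handle such cubes, we run the stopping-time construction from Section \ref{s:pf-of-thmiii} but with the threshold $\epsilon^{2}$ replaced by $(\epsilon')^{2}$ for an auxiliary constant $\epsilon' \ll \epsilon$ (to be chosen depending on $\epsilon$, $n$, and $t$). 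This produces collections $\cS = \{S^{i}\}\cup\{\text{singletons}\}$ as before, with minimal cubes ${\min\cS}$ and associated bi-Lipschitz Reifenberg surfaces $\Sigma_{S^{i}}$ via Theorem \ref{DTtheorem2.5}.

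The decisive step is to argue that if $Q\in\cB$ lies strictly inside some non-trivial $S^{i}$, then one obtains a contradiction. Indeed, the cumulative $\beta$-sum from $Q$'s parent up to $Q(S^{i})$ is at most $(\epsilon')^{2}$, which by Theorem \ref{DTtheorem2.5} makes $\Sigma_{S^{i}}$ bi-Lipschitz with constant close to $1$ and in particular $C\epsilon'$-Reifenberg flat. Combining the Reifenberg parametrization estimates (\eqref{e:ytosigma}, \eqref{e:skCloseInBall}) with lower content regularity of $E$, one gets $d_{MB_{Q}}(E,\Sigma_{S^{i}}) \lesssim \epsilon'$, hence $\vartheta_{E}(MB_{Q}) \lesssim \epsilon'$. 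Choosing $\epsilon'$ small enough in terms of $\epsilon$ contradicts $\vartheta_{E}(MB_{Q}) \geq \epsilon$. Consequently, every $Q\in\cB$ is either a singleton stopping region (in which case $\beta_{E}^{d,1}(MB_{Q}) \geq \epsilon'$ and Case 1 applies after rescaling the constants) or is the root $Q(S)$ of some $S\in\cS$. The roots are either $Q_{0}$ or children of cubes in ${\min\cS}$; the number of such children per minimal cube is bounded by a dimensional constant. Hence
\[
\sum_{Q\in\cB} \ell(Q)^{d} \;\lesssim\; 1 + \sum_{R\in {\min\cS}} \ell(R)^{d} \;\lesssim_{\epsilon',n}\; 1 + \sum_{Q\in\cD} \beta_{E}^{d,1}(2MB_{Q})^{2}\,\ell(Q)^{d},
\]
where the last inequality follows from Lemma \ref{l:sumS_Jlarge} applied with threshold $\epsilon'$. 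Absorbing $\epsilon'=\epsilon'(\epsilon,n,t)$ into the constants completes the argument.

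The main technical obstacle is the interior estimate: verifying that for $Q$ strictly inside a non-trivial $S^{i}$, one indeed has $d_{MB_{Q}}(E,\Sigma_{S^{i}}) \lesssim \epsilon'$. This requires a careful accounting of the distance between $E$ and the Reifenberg surface at scale $\ell(Q)$, using both the fine $\epsilon_{k}$-estimates of Theorem \ref{t:DT} and the lower content regularity of $E$ to rule out ``gaps'' in $S^{i}$ of size larger than $\epsilon'\ell(Q)$ inside $MB_{Q}$ (as such gaps would correspond to large minimal cubes of $S^{i}$ in $MB_{Q}$, and a Vitali/content argument then shows they cannot appear without first inflating the $\beta$-sum past $(\epsilon')^{2}$).
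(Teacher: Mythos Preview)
Your first case (large $\beta_{E,\infty}^{d}$) is correct and matches the paper's treatment of the set $\cD_{\beta}$.

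The second case contains a genuine error. Your decisive claim---that if $Q$ lies strictly inside a non-trivial region $S^{i}$ then $d_{MB_{Q}}(E,\Sigma_{S^{i}})\lesssim\epsilon'$, and hence $\vartheta_{E}(MB_{Q})\lesssim\epsilon'$---is false. Take $d=1$ and let $E\subset\bR^{2}$ be a line segment with a single gap of length $g$ (so $E$ is two collinear intervals). This set is $(c,1)$-lower content regular and satisfies $\beta_{E}^{1,1}\equiv 0$; consequently the stopping time with threshold $(\epsilon')^{2}$ never stops and $S^{Q_{0}}$ is the full tree, with no minimal cubes and no singletons. Nevertheless, for every cube $Q$ centered at an endpoint of the gap with $M\ell(Q)\leq g/2$, the best approximating line is the ambient line, and the point of that line at the far end of $MB_{Q}$ inside the gap is at distance $M\ell(Q)$ from $E$, so $\vartheta_{E}(MB_{Q})\sim 1$. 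All of these cubes are strictly interior to $S^{Q_{0}}$, directly contradicting your claim. The surface $\Sigma_{S^{Q_{0}}}$ in this example is simply the full line, which passes through the gap and is therefore \emph{not} $\epsilon'$-close to $E$ there.

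Your proposed resolution---that ``gaps'' in $MB_{Q}$ correspond to large minimal cubes of $S^{i}$ which would inflate the $\beta$-sum---confuses two different phenomena: minimal cubes of the stopping time (places where $\beta$ accumulated) versus holes in $E$ itself (places where $E$ simply has no cubes). The Reifenberg surface $\Sigma_{S^{i}}$ fills in the latter automatically, regardless of whether the stopping time terminated, because the approximating planes $L_{R}$ minimize $\beta$ rather than $\vartheta$. Small $\beta$ along a chain of ancestors gives no control over $\eta_{E}$.

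The paper's route is genuinely different: it does \emph{not} attempt to show that $\vartheta$-bad cubes are confined to roots. Instead, it splits $\cB$ into $\cD_{\beta}\cup\cD_{\eta}$ via Lemma~\ref{l:beta-and-eta-is-theta}, and then for each $S\in\cS$ bounds $\sum_{Q\in S\cap\cD_{\eta}}\ell(Q)^{d}\lesssim\ell(Q(S))^{d}$ by combining two Carleson-type estimates on the bi-Lipschitz surface $\Sigma_{S}$: Lemma~\ref{l:large-theta-sigma} (cubes where $\vartheta_{\Sigma_{S}}$ is large, handled by uniform rectifiability of $\Sigma_{S}$) and Proposition~\ref{p:eta-E-large-eta-sigma-small} (cubes where $\eta_{E}$ is large but $\eta_{\Sigma_{S}}$ is small, handled by finding disjoint ``hole balls'' $b_{Q}\subset\Sigma_{S}\setminus E$ with bounded overlap). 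Summing $\ell(Q(S))^{d}$ over $S$ then uses Lemma~\ref{l:sumS_Jlarge} exactly as you do. The hole-ball argument is the missing ingredient in your plan.
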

The purpose of this Section is to prove the  Proposition \ref{p:theta-large-bdd}, which together with Proposition \ref{p:thmiii1case},  the properties of the cubes in Theorem \ref{t:Christ}, and Lemma \ref{l:theta-monotone-1} gives Theorem \ref{t:thmiii}.

\begin{lemma}\label{l:theta-monotone-1}
If $B_1\subset B_2$ and $r_i$ is the radius of $B_i$, then 
$\vartheta_E(B_1)r_1\leq \vartheta_E(B_2)r_2$.
\end{lemma}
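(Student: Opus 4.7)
The plan is direct: unwind the definitions and observe that $\vartheta_E(B)\,r_B$ is the infimum over $d$-planes $L$ of a quantity that is manifestly monotone in $B$. Recall from Definition \ref{d:dxref} that
\[
d_B(E,L)\cdot r_B \;=\; \max\left\{\sup_{y\in E\cap B}\dist(y,L),\ \sup_{y\in L\cap B}\dist(y,E)\right\},
\]
since $\diam B=2r_B$. Thus
\[
\vartheta_E(B)\cdot r_B \;=\; \inf_{L}\max\left\{\sup_{y\in E\cap B}\dist(y,L),\ \sup_{y\in L\cap B}\dist(y,E)\right\},
\]
where the infimum runs over all $d$-planes $L\subset\bR^n$.

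First I would fix $\delta>0$ and choose a $d$-plane $L$ with $d_{B_2}(E,L)\,r_2\le \vartheta_E(B_2)\,r_2+\delta$. Then, using $B_1\subseteq B_2$, both suprema in the definition of $d_{B_1}(E,L)\,r_1$ are bounded by the corresponding suprema computed over $B_2$:
\[
\sup_{y\in E\cap B_1}\dist(y,L)\le \sup_{y\in E\cap B_2}\dist(y,L),\qquad
\sup_{y\in L\cap B_1}\dist(y,E)\le \sup_{y\in L\cap B_2}\dist(y,E).
\]
Therefore $d_{B_1}(E,L)\,r_1\le d_{B_2}(E,L)\,r_2\le \vartheta_E(B_2)\,r_2+\delta$. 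Taking the infimum over $L$ on the left gives $\vartheta_E(B_1)\,r_1\le\vartheta_E(B_2)\,r_2+\delta$, and letting $\delta\to 0$ completes the proof.

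There is essentially no obstacle here; the lemma is a straightforward monotonicity property of a quantity whose dependence on $B$ enters only through the range of the suprema. The only mild point to be careful about is the factor $2/\diam B$ in the definition of $d_B$: multiplying by $r_B$ cancels it and reduces the statement to the obvious monotonicity of sups over nested sets. No invocation of lower content regularity, Reifenberg flatness, or any deeper machinery is needed.
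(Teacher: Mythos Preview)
Your proof is correct and is precisely the argument the paper has in mind: the paper does not give a detailed proof, stating only that the lemma ``is not hard to show using definitions,'' and your unwinding of $\vartheta_E(B)\,r_B$ as an infimum of a sup that is monotone in $B$ is exactly that.
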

The previous lemma is not hard to show using definitions.


The following lemma is a a simple corollary of  Lemma \ref{l:betainfbeta}.
\begin{lemma}\label{l:content-vs-infty}
Suppose $E$ is such that for all $x\in E$ and $r\in(0,R)$ we have 
$\cH^d_\infty(B(x,r)\cap E)>t r^d$.
Then there is a $c_t>0$ such that 
if $\beta_{\infty,E}(2B)>\delta$ then
$\beta_E^{d,1}(B)^{2} \geq c_t \delta^{\frac{1}{d+1}}$ 
\end{lemma}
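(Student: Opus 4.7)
The plan is to derive this lemma as an immediate corollary of Lemma \ref{l:betainfbeta}. The uniform lower content regularity $\cH^d_\infty(B(x,r)\cap E) > t r^d$ (for $r<R$) supplies exactly the hypothesis ``$\cH^d|_\infty(B')\geq c r_{B'}^d$ for every sub-ball $B'$ centered on $E$'' of Lemma \ref{l:betainfbeta}, with constant $c=t$, on any ball centered on $E$ of radius less than $R$. So the first move is simply to verify that this hypothesis is available on a suitable dilation of $B$.

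The main step is to apply Lemma \ref{l:betainfbeta} with the ball ``$B$'' of that lemma taken to be $4B$ (so that its ``$\tfrac12 B$'' becomes our $2B$), which yields
\[
\beta_{E,\infty}^d(2B) \;\leq\; 2\,t^{-\frac{1}{d+1}}\,\beta_E^{d,1}(4B)^{\frac{1}{d+1}}.
\]
Inserting the hypothesis $\beta_{\infty,E}(2B) > \delta$ and raising to the $(d+1)$-th power gives
\[
\beta_E^{d,1}(4B) \;\geq\; \frac{t}{2^{d+1}}\,\delta^{d+1},
\]
which is the essential quantitative content of the lemma.

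The remaining step is purely cosmetic: passing from $4B$ to $B$ in the conclusion (and matching the form of the exponents stated) is a bookkeeping matter, handled by absorbing the fixed dilation factor $4$ into the constant $c_t$. In the only place where this lemma is invoked (the proof of Proposition \ref{p:theta-large-bdd}), one applies it to balls of the form $2MB_Q$ versus $MB_Q$, so the comparable-ball rescaling is harmless. The plan has no substantive obstacle; the argument is genuinely a one-line invocation of Lemma \ref{l:betainfbeta} together with a rescaling of the underlying ball and absorption of constants depending only on $t$ and $d$.
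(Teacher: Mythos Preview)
Your proposal is correct and matches the paper's approach exactly: the paper merely states that this lemma ``is a simple corollary of Lemma~\ref{l:betainfbeta},'' and that is precisely what you do. Your observation that the roles of $B$ and $2B$ (and the exponent on $\delta$) are slightly off in the stated lemma but are used consistently in Corollary~\ref{c:large-beta} and Proposition~\ref{p:theta-large-bdd} is apt; the content is the inequality you derive, and the discrepancy is indeed cosmetic.
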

Lemma \ref{l:content-vs-infty} immediately gives the following.
\begin{corollary}\label{c:large-beta}
Under the same assumptions of Lemma \ref{l:content-vs-infty},
there is a $c_t>0$ such that 
\begin{multline*}
\sum\{\ell(Q)^d: Q\in \cD,\ \beta_{\infty,E}(MB_Q)\geq \delta\}\\
\leq 
\sum\{\ell(Q)^d: Q\in \cD,\ \beta_{E}^{d,1}(2MB_Q)\geq c_t \delta^{\frac{1}{d+1}}\}\\
\leq
\frac1{(c_t\delta^{\frac{1}{d+1}})^2}
   \sum_{Q\in \cD} \beta_E^{d,1}(2MB_Q)^2\ell(Q)^d
\end{multline*}
\end{corollary}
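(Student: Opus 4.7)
The statement is a direct bookkeeping consequence of \Lemma{content-vs-infty} combined with a Chebyshev-style inequality, so my proof plan is very short.

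\textbf{First inequality.} The plan is to show the set inclusion
\[
\{Q\in\cD:\beta_{\infty,E}(MB_Q)\geq\delta\}\subseteq\{Q\in\cD:\beta_E^{d,1}(2MB_Q)\geq c_t\delta^{\frac{1}{d+1}}\},
\]
from which the first inequality follows at once since we are summing nonnegative quantities $\ell(Q)^d$. Fix $Q$ with $\beta_{\infty,E}(MB_Q)\geq\delta$. I will apply \Lemma{content-vs-infty} to the ball $B:=MB_Q$, whose dilate $2B=2MB_Q$ carries the hypothesis $\beta_{\infty,E}(2MB_Q)\geq\beta_{\infty,E}(MB_Q)\geq\delta$ by monotonicity in the radius (\Lemma{theta-monotone-1} controls $\vartheta_E$; the same monotone/Chebyshev reasoning for $\beta_\infty$ is immediate since it is a supremum). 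The lemma then yields $\beta_E^{d,1}(2MB_Q)^{2}\geq c_t\delta^{1/(d+1)}$, which is what we want (absorbing the square into the constant $c_t$ if needed).

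\textbf{Second inequality.} This step is just Chebyshev. For any $Q$ in the second indexing set, dividing both sides of $\beta_E^{d,1}(2MB_Q)^2\geq c_t\delta^{1/(d+1)}$ (equivalently, of $(\beta_E^{d,1}(2MB_Q))^{2}\geq(c_t\delta^{1/(d+1)})^2$ after absorbing) by $(c_t\delta^{1/(d+1)})^2$ gives
\[
1\leq\frac{\beta_E^{d,1}(2MB_Q)^{2}}{(c_t\delta^{1/(d+1)})^{2}}.
\]
Multiplying by $\ell(Q)^d$ and summing over all $Q$ in the second indexing set, and then enlarging the index set to all of $\cD$ (legal because every summand is nonnegative), produces the right-hand side of the displayed inequality.

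\textbf{Obstacle.} There is essentially none: the whole argument is a set-inclusion followed by Chebyshev, and both ingredients (\Lemma{content-vs-infty} and the trivial monotonicity of $\beta_\infty$ in the ambient ball together with the fact that $\beta$-averages are comparable at comparable scales via the $(r_B/r_{B'})^{d+p}$ estimate proved earlier in Section~\ref{s:betalemmas}) are already in hand. The only minor care needed is the factor of $2$ between $MB_Q$ and $2MB_Q$ in \Lemma{content-vs-infty}, which is handled by the monotonicity step above and may require absorbing a harmless dimensional constant into $c_t$.
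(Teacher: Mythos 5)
Your overall route (a set inclusion via \Lemma{content-vs-infty}, then a Chebyshev sum) is precisely the argument the paper has in mind; the paper merely says ``Lemma \ref{l:content-vs-infty} immediately gives the following.'' Two of your steps, however, are stated incorrectly. First, you invoke ``monotonicity in the radius'' for $\beta_{\infty,E}$, but $\beta_{\infty,E}$ is \emph{not} monotone in the ball: what holds (by the same reasoning as \Lemma{theta-monotone-1}) is the scaled monotonicity $\beta_{\infty,E}(B')\,r_{B'}\leq\beta_{\infty,E}(B)\,r_{B}$ for $B'\subseteq B$, which here gives only $\beta_{\infty,E}(2MB_Q)\geq\tfrac12\beta_{\infty,E}(MB_Q)\geq\delta/2$, not $\geq\delta$. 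The lost factor of two is absorbable into $c_t$, but the literal monotonicity claim is false and should not be asserted. Second, and more substantively, you apply \Lemma{content-vs-infty} with $B:=MB_Q$; the conclusion of that lemma is then $\beta_E^{d,1}(MB_Q)^2\geq c_t\delta^{1/(d+1)}$, i.e.\ a statement about $\beta_E^{d,1}$ on $B=MB_Q$, \emph{not} on $2B=2MB_Q$ as you wrote. To land on the ball $2MB_Q$ you should either apply the lemma with $B:=2MB_Q$ (so the needed hypothesis is $\beta_{\infty,E}(4MB_Q)\gtrsim\delta$, which again follows from $\beta_{\infty,E}(MB_Q)\geq\delta$ up to a radius-ratio factor), or else pass from $MB_Q$ to $2MB_Q$ afterwards using the $\bigl(r_B/r_{B'}\bigr)^{d+p}$ comparison estimate from Section~\ref{s:betalemmas}. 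Relatedly, ``absorbing the square into the constant $c_t$'' is not a legitimate move: passing from $\beta^2\geq c_t\delta^{1/(d+1)}$ to a lower bound on $\beta$ changes the exponent on $\delta$ (from $1/(d+1)$ to $1/(2(d+1))$), not merely the constant. None of these slips is fatal here because the corollary is only used with $\delta$ fixed, so all the constants and $\delta$-exponents wash out; but as written your application of the lemma does not literally yield the displayed inclusion, and the missing bridging step should be made explicit.
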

 
Recall that for $S\in \cS$, $\Sigma_S$ is a bilipschitz topological $d$-plane with bi-Lipschitz constant depending only on $n,\epsilon$.

The following lemma is a special case of \cite[Theorem 2.4, page.32]{DS93}, which holds more generally for uniformly rectifiable sets, but we will apply it to the case of bi-Lipschitz surfaces (namely, the $\Sigma_{S}$). We note that  it is possible to make use of the fact $\Sigma_S$ is a bi-Lipschitz image of a plane to prove it directly, but omit the proof here. In \cite{DS93}, the notaton $b\beta$ ({\it bilateral $\beta$}) is used for $\vartheta$.
\begin{lemma}\label{l:large-theta-sigma}
$$\sum\{\ell(Q)^d: Q\in S,\ \vartheta_{\Sigma_S}(MB_Q)\geq \delta\}\leq C_\delta \cH^d(\Sigma_S)\lesssim_{\delta, \epsilon, n}\ell(Q(S))^d$$ 
\end{lemma}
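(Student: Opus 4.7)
The plan is to pull $\Sigma_{S}$ back to a $d$-plane via its bi-Lipschitz parametrization and reduce the claim to Dorronsoro's theorem. By construction of $\Sigma_{S}$ in Section~\ref{s:pf-of-thmiii}, there is an $L_{0}$-bi-Lipschitz map $f:\bR^{d}\to \Sigma_{S}\subset\bR^{n}$ with $L_{0}=L_{0}(n,\ve)$. In particular $\Sigma_{S}$ is $d$-Ahlfors regular with constants depending only on $n$ and $\ve$, which already gives the second inequality $\cH^{d}(\Sigma_{S})\lec_{\ve,n}\ell(Q(S))^{d}$, so the main task is the first.

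For each $Q\in S$ with $\vartheta_{\Sigma_{S}}(MB_{Q})\geq \delta$, choose a dyadic cube $I_{Q}\subset \bR^{d}$ containing $f^{-1}(MB_{Q}\cap \Sigma_{S})$ with $\cH^{d}(I_{Q})\sim_{n,\ve}\ell(Q)^{d}$; this is possible since $f^{-1}$ is $L_{0}$-Lipschitz. The key step is to show that for some exponent $c=c(d)>0$,
\[\Omega_{f,2}(3I_{Q})\gec_{n,\ve}\delta^{c}.\]
Granting this, Dorronsoro's estimate \eqn{dorronsorocubes} applied to (a suitable Lipschitz extension of) $f$ gives
\[\sum_{\substack{Q\in S\\ \vartheta_{\Sigma_{S}}(MB_{Q})\geq \delta}}\ell(Q)^{d}\lec_{n,\ve} \delta^{-2c}\sum_{I\subset\bR^{d}}\Omega_{f,2}(3I)^{2}\cH^{d}(I)\lec_{n,\ve}\delta^{-2c}\|\grad f\|_{L^{2}(f^{-1}(C_{1}B_{Q(S)}))}^{2}\lec_{n,\ve}\delta^{-2c}\ell(Q(S))^{d},\]
where we absorbed into the constant the fact that, since $f$ is bi-Lipschitz, each dyadic cube $I\subset\bR^{d}$ arises as some $I_{Q}$ for at most $C(n,\ve)$ indices $Q$ (the centres $f^{-1}(x_{Q})$ are $\rho^{k(Q)}/L_{0}$-separated and $I_{Q}$ has sidelength $\sim\ell(Q)$).

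To prove the key claim, let $A:\bR^{d}\to\bR^{n}$ be any affine map and let $L$ be the affine subspace $A(\bR^{d})$ (of dimension at most $d$, contained in some $d$-plane which we also denote $L$). Since $\vartheta_{\Sigma_{S}}(MB_{Q},L)\geq\delta$ there are two cases. In the first case there is $y=f(z)\in \Sigma_{S}\cap MB_{Q}$ with $\dist(y,L)\geq \delta M\ell(Q)/2$; then by the $L_{0}$-Lipschitz estimate on $f$ every $w\in B(z,c\delta\ell(Q))\subseteq 3I_{Q}$ satisfies $\dist(f(w),L)\geq \delta M\ell(Q)/4$, and hence $|f(w)-A(w)|\geq \delta M\ell(Q)/4$; integrating and normalising yields $\Omega_{f,2}(3I_{Q})^{2}\gec_{n,\ve} \delta^{d+2}$. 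In the second (and harder) case there is $y\in L\cap MB_{Q}$ with $\dist(y,\Sigma_{S})\geq \delta M\ell(Q)$. Here I would use that $\Sigma_{S}=f(\bR^{d})$ is a topological $d$-disk with Ahlfors regular image, combined with a projection argument in the spirit of \Lemma{bigproj}: either $\pi_{L}(\Sigma_{S}\cap MB_{Q})$ already fails to surround $y$, in which case there is a large spherical shell around $y$ in $L$ on which $|f-A|$ is forced to be $\gec \delta M\ell(Q)$, or it does surround $y$, in which case $\Sigma_S$ must bend enough to create a Lipschitz bubble of height $\gec \delta M\ell(Q)$ over $L$, producing a set of $w\in 3I_{Q}$ of measure $\gec_{n,\ve}(\delta\ell(Q))^{d}$ on which $|f(w)-A(w)|\gec \delta M\ell(Q)$. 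Either way one obtains $\Omega_{f,2}(3I_{Q})^{2}\gec_{n,\ve}\delta^{d+2}$.

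The main obstacle will be this ``hole'' case: converting a point of $L$ with no nearby point of $\Sigma_{S}$ into a quantitative $L^{2}$ non-flatness of the parametrization $f$. An alternative that bypasses this obstacle is to quote \cite[Theorem 2.4]{DS93}: since $\Sigma_{S}$ is bi-Lipschitz equivalent to $\bR^{d}$ it is uniformly rectifiable, and uniformly rectifiable sets satisfy precisely the bilateral Carleson estimate $\sum\{\ell(Q)^{d}:\vartheta_{\Sigma_{S}}(MB_{Q})\geq\delta\}\lec_{\delta}\cH^{d}(\Sigma_{S})$. Either route delivers the lemma; the direct Dorronsoro route has the advantage of making the dependence on $\delta$ explicit and polynomial.
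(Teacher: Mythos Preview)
Your alternative route---quoting \cite[Theorem 2.4]{DS93} (the bilateral weak geometric lemma for uniformly rectifiable sets) and noting that a bi-Lipschitz image of $\bR^{d}$ is uniformly rectifiable---is exactly what the paper does. The paper gives no argument beyond that citation, remarking only that a direct proof exploiting the bi-Lipschitz parametrization is possible but omitted.

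Your primary Dorronsoro route is an attempt at precisely that direct proof, and the overall scheme is sound: pull $\Sigma_{S}$ back by the bi-Lipschitz $f$, show that $\vartheta_{\Sigma_{S}}(MB_{Q})\geq\delta$ forces $\Omega_{f,2}(3I_{Q})\gec_{n,\ve}\delta^{c}$, then sum via \eqn{dorronsorocubes} (after localizing by subtracting an affine map and cutting off, so that $\|\nabla f\|_{2}^{2}\lec\ell(Q(S))^{d}$). Case~1 is fine. Case~2, however, is not a proof as written: the dichotomy ``either $\pi_{L}(\Sigma_{S}\cap MB_{Q})$ fails to surround $y$\dots or $\Sigma_{S}$ must bend enough to create a Lipschitz bubble'' is vague, and neither branch is made quantitative. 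What is missing is control on the affine competitor $A$ itself: one needs that the linear part of the optimal $A$ has operator norm $\lec L_{0}$ and least singular value $\gec L_{0}^{-1}$ (which follows from Poincar\'e together with the bi-Lipschitz bound on $f$). Once $A$ is known to be bi-Lipschitz, $f-A$ is Lipschitz with controlled constant, so small $\Omega_{f,2}$ upgrades to small $\sup_{3I_{Q}}|f-A|$; then for any $y\in L\cap MB_{Q}$ one writes $y=A(w)$ with $w\in CI_{Q}$ and obtains $\dist(y,\Sigma_{S})\leq|f(w)-A(w)|$, contradicting the hole. With these steps inserted the Dorronsoro route closes and gives the advertised polynomial dependence on $\delta$; without them there is a genuine gap, which is why your fallback to \cite{DS93} (the paper's own choice) is the safe option.
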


For a ball $B$ centered on $E$ and $d$-plane $L$ define
$$\eta_{E}(B,L):=\frac{1}{r_{B}}\sup_{x\in L\cap B}\dist(x,E)$$
and
$\eta_{E}(B):=\inf_{L}\eta_{E}(B,L)$ 
where $L$ ranges over all affine $d$-planes. Thus $\eta_E(\cdot)\leq \vartheta_E(\cdot)$.

\begin{lemma}\label{l:beta-and-eta-is-theta}
There is a constant $C_\vartheta>0$ (independent of dimension) such that the following holds.
Let $B$ be a ball. Suppose $\beta_{E,\infty}(B)<\delta$, and  $\eta_{E}(B)<\delta$.
Then $\vartheta_{E}(B)<C_\vartheta\delta$.
\end{lemma}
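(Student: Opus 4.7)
The natural approach is to fix planes $L_\beta$ and $L_\eta$ witnessing the $\beta_\infty$- and $\eta$-bounds respectively, argue that they must be geometrically close as $d$-planes throughout $B$, and then exhibit one of them as a single plane certifying $\vartheta_E(B)\lec\delta$. The key tool will be \Lemma{ATlemma}, which lets us upgrade ``close on a non-degenerate simplex'' to ``close everywhere on $B$''.

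After scaling so $r_B=1$, pick $L_\beta$ with $\beta_{E,\infty}(B,L_\beta)<\delta$ and $L_\eta$ with $\eta_E(B,L_\eta)<\delta$; in particular $L_\eta\cap B\neq\emptyset$. The first observation is that any point $z\in L_\eta\cap(1-\delta)B$ lies within $\delta$ of some $y\in E$, and because $z\in(1-\delta)B$ the point $y$ must itself lie in $B$; then $\dist(y,L_\beta)<\delta$, so $\dist(z,L_\beta)<2\delta$. Thus $L_\eta\cap(1-\delta)B$ is contained in a $2\delta$-slab around $L_\beta$.

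To promote this to closeness of the full planes, choose $d+1$ points $X=\{x_0,\ldots,x_d\}\subset L_\eta\cap(1-\delta)B$ forming a non-degenerate simplex with $\diam X\sim 1$ and $\eta(X)\gec 1$; each $x_i$ lies on $L_\eta$ and, by the previous step, within $2\delta$ of $L_\beta$. Applying \Lemma{ATlemma} with $P_1=L_\beta$, $P_2=L_\eta$, and $\ve=2\delta$ yields $\dist(y,L_\beta)\lec\delta$ for every $y\in L_\eta\cap B$, and symmetrically $\dist(y,L_\eta)\lec\delta$ for $y\in L_\beta\cap B$: the two planes agree on $B$ up to an $O(\delta)$ error in $d_B$-distance. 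Taking $L=L_\eta$, both halves of $d_B(E,L)$ are then controlled: the bound $\sup_{x\in L_\eta\cap B}\dist(x,E)<\delta$ is the definition of $\eta_E(B,L_\eta)$, and for $y\in E\cap B$ we combine $\dist(y,L_\beta)<\delta$ with the $O(\delta)$-comparability of $L_\beta$ and $L_\eta$ to get $\dist(y,L_\eta)\lec\delta$. Hence $\vartheta_E(B)\le d_B(E,L_\eta)\leq C_\vartheta\delta$.

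The main technical obstacle is arranging that $L_\eta\cap(1-\delta)B$ contains a non-degenerate $d$-simplex of size $\sim r_B$, which could fail if $L_\eta$ is nearly tangent to $B$. In the intended applications (notably \Proposition{theta-large-bdd} and its uses via \Corollary{large-beta}) $E$ is lower content regular, and this ruins any such degeneracy for the minimizing $L_\eta$: the content of $E\cap B$ forces any plane achieving $\eta_E(B,\cdot)<\delta$ to substantially cut through $B$; if needed, one replaces $L_\eta$ by its parallel translate through a suitable point of $E$ and checks that $\eta_E(B,\cdot)$ degrades only by a dimensionless factor. Choosing $X$ as an approximate regular simplex inside $L_\eta\cap(1-\delta)B$ then makes the constant $C_\vartheta$ in the final bound independent of $n$, as claimed.
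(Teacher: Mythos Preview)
Your approach is essentially the same as the paper's: fix $L_\beta$ and $L_\eta$, push points of $L_\eta$ (in a slightly shrunken ball) through $E$ to land near $L_\beta$, invoke \Lemma{ATlemma} to conclude $d_B(L_\eta,L_\beta)\lec\delta$, and then verify that $L_\eta$ witnesses $\vartheta_E(B)\lec\delta$. The paper's write-up is terser but follows the identical outline; you are in fact more explicit than the paper about the potential degeneracy when $L_\eta$ grazes $B$, an issue the paper passes over with the phrase ``one can show (using \Lemma{ATlemma}, for example).''
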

\begin{proof}
Without loss of generality suppose $B$ is the unit ball.

Let $L_\eta$ a  $d$-plane giving $\eta_{E}(B)$ and
Let $L_\beta$ a  $d$-plane giving $\beta_{\infty,E}(B)$.
Let $\cN_\delta(F)$ denote the $\delta$ neighborhood of a set $F$.
Then
\begin{equation}\label{e:appricot-1}
\cN_\delta(E\cap B)\supset  L_\eta\cap \frac12 B.
\end{equation}
Let $\pi_{L_\beta} $ be the projection  to $L_\beta$.  We have that $|\pi_{L_\beta}(x)-x|\leq \delta r_B$ for $x\in E\cap B$, and so 
\begin{equation}\label{e:appricot-2}
\cN_{\delta}(E\cap B) \subset \cN_{2\delta}(L_\beta\cap \frac 12 B).
\end{equation}
Combining  \eqref{e:appricot-1} and\eqref{e:appricot-2}, one can show (using Lemma \ref{l:ATlemma}, for example) that
$d_B(L_\eta, L_\beta)\lesssim \delta$. 
%
Hence, for $x\in E\cap B$, $\pi_{L_{\beta}}(x)\in B\cap L_{\beta}$
\[
\dist(x,L_{\eta})
\leq |x-\pi_{L_{\beta}}(x)|+\dist(\pi_{L_{\beta}}(x),L_{\eta})
\lec \delta r_{B}.\]
Thus, $\beta_{E,\infty}(B,L_{\eta})\lec \delta$, and since we already have $\eta_{E}(B,L_{\eta})<\delta$, the lemma follows.

\end{proof}

\begin{proposition}\label{p:eta-E-large-eta-sigma-small} 
Let $S\in \cS$ be given. 
Set
$$S_{\delta}=\{Q\in S, \ \eta_{E}(MB_Q)>3\delta,\ \eta_{\Sigma_S}(MB_Q)<\delta\}.$$
Then 
$$\sum_{Q\in S_{\delta}}\ell(Q)^d\lesssim_{\delta, \epsilon,n} \ell(Q(S))^d.$$
\end{proposition}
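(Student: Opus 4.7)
The plan is to charge each $Q \in S_\delta$ to a nearby minimal cube $R(Q) \in m(S)$ of comparable size, and then sum over $m(S)$ using that minimal cubes are disjoint and that $E$ is $(c,d)$-lower content regular.

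First I extract a witness point. For $Q \in S_\delta$, let $L$ be a plane realising $\eta_{\Sigma_S}(MB_Q,L) < \delta$. Since $\eta_E(MB_Q) > 3\delta$, applied to this same $L$, there is some $x_0 \in L \cap MB_Q$ with $\dist(x_0,E) > 3\delta M \ell(Q)$. By the defining property of $L$, pick $y_Q \in \Sigma_S$ with $|x_0-y_Q| < \delta M \ell(Q)$, so $y_Q \in 2MB_Q$ and $\dist(y_Q,E) > 2\delta M \ell(Q)$.

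Second, I locate a minimal cube $R = R(Q) \in m(S)$ close to $y_Q$ with $\ell(R) \sim_{M,\delta} \ell(Q)$. Recall $\Sigma_S$ is built by the David--Toro construction on the nets $\cC_k$, which are indexed by cubes of $\cD_{s(k)}\cap S$. The estimates \eqref{e:sky-y}, \eqref{e:ytosigma}, and \eqref{e:skCloseInBall} imply that $y_Q$, at its natural scale $r_{k(y_Q)}$ (the maximal $k$ with $y_Q\in V_{k-1}^{11}$), lies within $O(M r_{k(y_Q)})$ of the centre $x_{Q_k}$ of some $Q_k\in\cD_{s(k)}\cap S$. As $x_{Q_k}\in E$, this forces $r_{k(y_Q)} \gtrsim_M \delta \ell(Q)$. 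At the next finer scale, $y_Q \notin V_{k(y_Q)}^{11}$, meaning no cube of $S$ at that finer scale has its centre close to $y_Q$. Tracing through the cube structure, this forces a minimal cube $R \in m(S)$ within $O_{M,\delta}(\ell(Q))$ of $y_Q$ with $\ell(R) \gtrsim r_{k(y_Q)} \gtrsim_{M,\delta} \ell(Q)$. The reverse bound $\ell(R)\lesssim_M \ell(Q)$ follows because $R \supsetneq Q$ is impossible, with $Q\in S$ and $R$ minimal in $S$.

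Finally I count and sum. For each fixed $R\in m(S)$, the set of $Q \in S_\delta$ with $R(Q) = R$ consists of cubes with $\ell(Q)\sim_{M,\delta}\ell(R)$ lying within $C_{M,\delta}\ell(R)$ of $R$; by the separation of Christ-cube centres (Theorem \ref{t:Christ}) only boundedly many such $Q$ exist, with a bound depending on $n,M,\delta$. Hence
$$\sum_{Q \in S_\delta} \ell(Q)^d \lesssim_{n,M,\delta} \sum_{R\in m(S)} \ell(R)^d.$$
The cubes in $m(S)$ are pairwise disjoint subsets of $Q(S)$. By $(c,d)$-lower content regularity, $\ell(R)^d \lesssim_c \cH^d_\infty(c_0 B_R \cap E)$, and since $c_0 B_R \cap E \subseteq R$ the sets $\{c_0 B_R \cap E\}_{R \in m(S)}$ are disjoint Borel. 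Hence
$$\sum_{R\in m(S)} \ell(R)^d \lesssim_c \cH^d_\infty(Q(S)) \lesssim \ell(Q(S))^d,$$
completing the bound.

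The main obstacle is the second step---rigorously showing that a point of $\Sigma_S$ at distance $\eta$ from $E$ must sit near a minimal cube of $S$ of size $\sim_\delta \eta$. This requires combining the David--Toro approximation estimates with the stopping-time structure of $S$: as long as one has not reached $m(S)$, the approximating planes $L_{Q_k}$ continue to hug $E$, so the first scale at which $\Sigma_S$ visibly detaches from $E$ must be pinned by a minimal cube. The argument is analogous to Lemmas \ref{l:dxsigma} and \ref{l:qxn} from the Reifenberg-flat case, but must be reworked in the $\beta$-stopping-time framework of Section \ref{s:pf-of-thmiii}.
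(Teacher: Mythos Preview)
Your step 1, extracting a witness $y_Q \in \Sigma_S \cap 2MB_Q$ with $\dist(y_Q, E) > 2\delta M\ell(Q)$, is correct and matches the paper's \eqref{e:banana-1}. The gap is step 2: the claim that such a $y_Q$ must lie near a minimal cube $R \in m(S)$ of comparable size is false. The condition $y_Q \notin V_{k(y_Q)}^{11}$ says no cube of $S$ at that scale has its centre near $y_Q$, but this can happen for two distinct reasons: (a) the stopping time has terminated nearby, or (b) $E$ itself has no points there, i.e.\ a hole. Only in case (a) is there a minimal cube to charge to. But $S_\delta$ is built precisely to detect case (b): large $\eta_E$ with small $\beta_E^{d,1}$ means $E$ is flat with a hole, not that the stopping time stopped.

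Concretely, let $E$ be a $d$-plane $P_0$ with an open disk $D$ of radius $r_0$ removed (placed so that $0 \in E$); this is closed and $(c,d)$-lower content regular. Then $\beta_E^{d,1} \equiv 0$, so the stopping time $S = S^{Q_0}$ never stops and $m(S) = \emptyset$. All approximating planes in the David--Toro construction equal $P_0$, so $\Sigma_S = P_0$ and $\eta_{\Sigma_S} \equiv 0$. For cubes $Q$ near $\partial D$ with $\ell(Q) \sim_{M,\delta} r_0$ one has $\eta_E(MB_Q) > 3\delta$, hence $S_\delta \neq \emptyset$; your assignment $Q \mapsto R(Q) \in m(S) = \emptyset$ cannot be made, and the bound $\sum_{Q\in S_\delta}\ell(Q)^d \lesssim \sum_{R\in m(S)}\ell(R)^d = 0$ fails.

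The paper avoids this by charging directly to $\cH^d(\Sigma_S)$ rather than to $m(S)$: set $b_Q = B(y_Q, \delta M\ell(Q)/2)$, so $b_Q \cap E = \emptyset$. If $x \in b_Q \cap b_R$ then $\delta M\ell(R)/2 \leq \dist(x,E) \leq |x-x_Q| \leq 2M\ell(Q)$, forcing $\ell(Q) \sim_\delta \ell(R)$; combined with the bound on $|x - x_Q|$ this gives bounded overlap of the $b_Q$. Since $y_Q \in \Sigma_S$ and $\Sigma_S$ is bi-Lipschitz equivalent to $\bR^d$, one has $\cH^d(b_Q \cap \Sigma_S) \gtrsim_{\delta,\epsilon,n} \ell(Q)^d$, so $\sum_{Q \in S_\delta}\ell(Q)^d \lesssim \cH^d(\Sigma_S) \lesssim_{\epsilon,n} \ell(Q(S))^d$.
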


\begin{proof}
Assume without loss of generality that $S$ is not a singleton, and thus we have defined $\Sigma_S$.
Let $Lip=Lip(n,\epsilon)$ be the bi-Lipschitz constant of the implicit map from $\bR^d$ to $\Sigma_S$.   
We first note that 
$Q\in S_{\delta}$ satisfies
\begin{equation}\label{e:banana-1}
\frac1{M\ell(Q)}\sup_{x\in \Sigma_S\cap MB_Q}\dist(x, E)>\delta\,. 
\end{equation}
Indeed suppose instead that all $x\in \Sigma_S\cap MB_Q$ have 
$\dist(x, E)\leq\delta M \ell(Q)$.
Then take $L_Q$ such that $\delta>\eta_{\Sigma_S}(MB_Q)=\eta_{\Sigma_S}(MB_Q, L_Q)$ 
and $p\in L_Q\cap MB_Q$. Now, by our contrapositive assumption, 
\begin{align*}
\dist(p,E)\leq \dist(p,\Sigma_S \cap MB_Q) + \delta M\ell(Q) \leq\\
 (\eta_{\Sigma_S}(MB_Q) +\delta)M\ell(Q)\leq 2\delta M\ell(Q).
 \end{align*}
This is a contradiction to 
$ \eta_{E}(MB_Q)>3\delta$.


{
Now that we have \eqref{e:banana-1} for all  $Q\in S_{\delta}$, there is $y_{Q}\in MB_Q$  so that $\dist(y_Q,E)\geq \delta M\ell(Q)$. Let 
\[
b_Q=B(y_{Q},\delta M\ell(Q)/2).
\]
We claim that for $Q\in S_{\delta}$, the balls $b_{Q}$ have bounded overlap, that is,
$$\#\{Q\in S_{\delta}: b_Q\ni x\}\leq C(\delta, d, Lip).$$
Indeed, suppose $x\in b_{Q}\cap b_{R}$ for some $Q,R\in S_{\delta}$. Then 
\[
2 M\ell(Q)\geq |y_{Q}-x|\geq \dist(x,E)\geq \dist(b_R,E)\geq \delta M\ell(R)/2\]
and so $\ell(Q)\gec \ell(R)$, and reversing the rolls of $Q$ and $R$ gives $\ell(Q)\sim  \ell(R)$. Thus, all cubes $Q$ with $x\in b_Q$ have comparable sidelengths, so in particular, if $Q$ is another such cube and $Q\in \cD_{k}$ and $R\in \cD_{\ell}$, then $|k-\ell|\lec 1$. Since each such $Q$ is distance at most $M\ell(Q)$ from $x$ and the centers of each $\cD_{k}$ are $\rho^{k}$-separated, these facts imply there are boundedly many such cubes (with constant depending on $d$ and $M$). This proves the claim. 
}

Thus, with implicit constants depending on $\delta, d,Lip$
\begin{align*}
\sum_{Q\in S_{\delta}}\ell(Q)^d\lesssim
\sum_{Q\in S_{\delta}}\cH^d(b_Q)\lesssim \cH^d(\Sigma_S)\lesssim \ell(Q(S))^d 
\end{align*}
giving  the proposition.
\end{proof}

\begin{proof}[Proof of Proposition \ref{p:theta-large-bdd}.]
We use Lemma \ref{l:beta-and-eta-is-theta}  to write
\begin{align*}
\{Q\in \cD:& \vartheta_{E}(MB_Q)\geq \epsilon\} \subset \\
&\{Q\in \cD: \eta_{E}(MB_Q)\geq \epsilon/C_\vartheta\}
\cup
\{Q\in \cD: \beta_{\infty,E}(MB_Q)\geq \epsilon/C_\vartheta\}\\
&=\cD_\eta \cup \cD_\beta,
\end{align*}
where $\cD_\eta$ and $\cD_\beta$ are the two collections of cubes on the penultimate line.
We use Corollary  \ref{c:large-beta} with $\delta=\epsilon/C_\vartheta$ for the sum over $\cD_\beta$ i.e.
$$\sum_{Q\in \cD_\beta} \ell(Q)\lesssim_{t,\epsilon}    \sum_\cD \beta_E(2MB_Q)^2\ell(Q)^d.$$

We write $\cD_\eta=\cup_{S\in \cS} S\cap\cD_\eta$
For each $S\in \cS$,  Proposition \ref{p:eta-E-large-eta-sigma-small} and Lemma \ref{l:large-theta-sigma}, both  with 
$\delta=\epsilon/(3C_\vartheta)$, 
give control over the sum for $S\cap\cD_\eta$ i.e.
$$
\sum_{Q\in S\cap\cD_\eta}\ell(Q)^d\lesssim_{\epsilon,n} \ell(Q(S))^d.
$$
Summing over al $S\in \cS$ is then controlled by Lemma \ref{l:sumS_Jlarge}.
This completes the proof of Proposition \ref{p:theta-large-bdd}.
\end{proof}

\section{Appendix}\label{s:appendix}

\begin{proof}[Proof of Lemma \ref{l:intineq}]
 Let 
 \[
 S_{j}=\{x:f_{j}(x)=\max\{f_{i}(x)\}>0\}.\] 
 Then $\bigcup \supp f_{i}=\bigcup S_{j}$. If $x\in S_{j}$, then there are at most $C$ many indices $i$ for which $f_{i}>0$, and so $\sum f_{i}(x)\leq Cf_{j}(x)$. Therefore,

\begin{align*}
\int \ps{ \sum f_{i}}^{p} d\cH^{d}_{\infty}
& =\int \cH^{d}_{\infty}\ps{\ck{\sum f_{i}>\lambda}}\lambda^{p-1}d\lambda \\
& \leq \sum_{j} \int \cH^{d}_{\infty}\ps{S_{j}\cap \ck{\sum f_{i}>\lambda}}\lambda^{p-1}d\lambda \\
& \leq \sum_{j}\int \cH^{d}_{\infty} \ps{ \{Cf_{j} >\lambda\}} \lambda^{p-1}d\lambda\\
& = C^{p} \sum_{j} \int fd\cH^{d}_{\infty}.
\end{align*}
\end{proof}

\begin{proof}[Proof of Lemma \ref{l:contentlim}]
Extend $f$ to be continuous on all of $\bR^{n}$. Since $E$ is compact and $f$ is continuous, the set
\[
E^{t}:=\{x\in E: f(x)\geq t\}\]
is also compact for each $t>0$. It is not hard to show (using compactness) that if

\[
E_{j}^{t}:=\{x\in E_{j}: f(x)\geq t\}\]
then, since $\bigcap E_{j}^{t}= E^{t}$, we have
\[
\lim_{j\rightarrow\infty} \cH_{\infty}^{d}(E_{j}^{t})= \cH^{d}_{\infty}(E^{t}) \;\; \mbox{for } t>0.
\]
Hence, by the monotone convergence theorem,
\begin{equation}\label{e:etlim}
\int_{0}^{\infty} \cH^{d}(E^{t})dt
=\lim_{j\rightarrow\infty}\int_{0}^{\infty}  \cH^{d}(E_{j}^{t})dt.
\end{equation}

Now we observe that, for any function $g$ and $F$ any set,
\begin{align}\label{e:atleastt1}
\int_{F} g d\cH^{d}_{\infty} 
&  =\int_{F} \cH^{d}_{\infty} (\{x\in F: g(x)>t\})dt \notag \\
& \leq \int_{F} \cH^{d}_{\infty} (\{x\in F: g(x)\geq t\})dt
\end{align}

and 

\begin{align}
\int_{0}^{\infty}  & \cH^{d}_{\infty} (\{x\in F: g(x)\geq t\})dt \notag  \\
&  \leq \inf_{\alpha\in (0,1)} \int_{0}^{\infty}\cH^{d}_{\infty} (\{x\in F: g(x)> \alpha t\})dt \notag \\
& =\inf_{\alpha\in (0,1)} \alpha^{-1}  \int_{0}^{\infty} \cH^{d}_{\infty} (\{x\in F: g(x)>  t\})dt
=\int g \cH^{d}_{\infty}
\label{e:atleastt2}
\end{align}

Combining \eqn{atleastt1} and \eqn{atleastt2} gives

\begin{equation}\label{e:atleastt3}
\int_{F} g d\cH^{d}_{\infty}  = \int_{0}^{\infty} \cH^{d}_{\infty} (\{x\in F: g(x)\geq t\})dt
\end{equation}

Thus, applying this to $g=f$ and $F$ equal to either $E$ or $E_{j}$,
\begin{multline*}
\int_{E} f d\cH^{d}_{\infty} 
 \stackrel{\eqn{atleastt3}}{=} 
\int_{0}^{\infty} \cH^{d}(E^{t})dt
\stackrel{\eqn{etetj}}{=} \lim_{j\rightarrow\infty}\int_{0}^{\infty}  \cH^{d}(E_{j}^{t})dt \\
 \stackrel{\eqn{atleastt3}}{\sim} \lim_{j\rightarrow\infty}\int_{E_{j}}f d\cH^{d}_{\infty}
\end{multline*}

\end{proof}

\begin{proof}[Proof of Lemma \ref{l:jensens}]
First assume that $E$ is open. Without loss of generality, we may assume 
$\cH^{d}_\infty(E)=1$.
Note that $f\one_{E}$ is still lower semicontinuous since $E$ is open.  By the corollary on page 118 of \cite{Ad88}, for $f\geq 0$ lower semicontonuous,

\begin{equation}\label{e:adams}
\int fd\cH^{d}_{\infty} \sim_{n} \sup\ck{\int fd\mu: \mu\in L^{1,d}(\bR^{n}),\;\; ||\mu||=1}.
\end{equation}
Where $L^{1,d}(\bR^{n})$ is the Morrey space of Radon measures with the norm
\[
||\mu||=\sup_{x\in \bR^{n} \atop r>0} |\mu|(B(x,r))r^{-d}.\]
Note that if $A_{i}$ is a cover of $E$, then each $A_{i}$ is contained in a ball of radius $\diam A_{i}$, and so
\[
\mu(E)\leq \sum \mu(A_{i})\leq \sum ||\mu||(\diam A_{i})^{d}\]
and infimizing over all such covers gives 
\[
\mu(E)\leq ||\mu||\cH^{d}_{\infty}(E)=||\mu||=1.\]
Thus, if $\frac{1}{p}+\frac{1}{q}=1$,
\begin{align*}
\int fd\mu 
 & \leq \ps{\int f^{p}d\mu}^{\frac{1}{p}}\ps{\int_{E} d\mu}^{\frac{1}{q}}
\leq \ps{||\mu||\int f^{p}d\cH^{d}_{\infty}}^{\frac{1}{p}} \mu(E)^{\frac{1}{q}}\\
& = \ps{\int f^{p}d\cH^{d}_{\infty}}^{\frac{1}{p}}.
\end{align*}

Supremizing over all $\mu$ and using \eqn{adams} once more gives \eqn{jensens}.

Now assume $E$ be a compact set and $f$ a continuous function on $E$. Again, we may assume $\cH^{d}_{\infty}(E)=1$. Extend $f$ to a continuous function on all of $\bR^{n}$ and let

\[
E_{j}=\{x\in \bR^{n}:\dist(x,E)<j^{-1}\}.\]
Since we know \eqn{jensens} for open sets, we may apply it to the sets $E_{j}$. Since $f$ and $f^{p}$ are  continuous, and since the $E_{j}$ are open, contain $E$, and converge to $E$ in the Hausdorff metric, we may use \Lemma{contentlim} and get

\begin{align*}
\int_{E} fd\cH^{d}_{\infty} 
& \stackrel{\eqn{etetj}}{=}  \lim_{j\rightarrow\infty} \int_{E_{j}} f d\cH^{d}_{\infty}
 \stackrel{\eqn{jensens}}{\lec} \lim_{j\rightarrow\infty} \ps{ \int_{E_{j}}f^{p} d\cH^{d}_{\infty}}^{\frac{1}{p}}\\
&  \stackrel{\eqn{etetj}}{=} \ps{ \int_{E}f^{p} d\cH^{d}_{\infty}}^{\frac{1}{p}}.
\end{align*}

 \end{proof}

 \bigskip

%
%
%
%
 
\bigskip

\begin{proof}[Proof of Lemma \ref{l:tech-integrating-sum}]
Let $\alpha'=1-\alpha$. We will first prove that for all $\lambda>0$
\begin{multline}\label{e:doubling}
\cH^{d}_{\infty} \ps{\ck{x\in F_{1}:\sum_{j\in \cX}\one_{B_{j}'}f(z_{j})>\lambda}}\\
\lec \cH^{d}_{\infty} \ps{\ck{x\in F_{1}:\sum_{j\in \cX}\one_{\alpha'B_{j}'}f(z_{j})>\lambda}}.
\end{multline}
Let 
\[
A=\ck{x\in F_{2}:\sum_{j\in \cX}\one_{\alpha'B_{j}'}f(z_{j})>\lambda}.\]
Let $\cI$ be a collection of balls covering $A$ so that
\begin{equation}\label{e:hda}
\cH^{d}_{\infty}(A)\sim_{d} \sum_{B\in \cI} (2r_{B})^{d}.
\end{equation}
Let 
\[
\cX^{\lambda}=\{j\in \cX: f(z_{j})>\lambda\}.\]
Note that as $|z_{j}-z_{j}'|<\alpha r_{B_{j}}$, 
\[
\alpha' B_{j}'= B(z_{j}',\alpha' r_{B_{j}})\subseteq  B(z_{j},(\alpha' +\alpha)r_{B_{j}})=B_{j}.\]
Thus, since the balls $B_{j}$ are disjoint for $j\in \cX$, so are the balls $B_{j}'$, and hence we have 
\[A=F_{2}\cap \bigcup_{j\in \cX^{\lambda}} \alpha' B_{j}'.\]

We will define a new collection of balls as the limit of a sequence of collections $\cI(j)$ which we define inductively as follows. Assume $\cX^{\lambda}=\bN$ and set $\cI(0)=\emptyset$. Now assume that for some $j>0$, $\cI(j-1)$ has already been defined. Let 
\[
\cI_{j}=\ck{B\in \cI: B\cap \alpha'B_{j}'\neq\emptyset}.\]
\begin{enumerate}
\item If there is $B\in \cI_{j}$ for which $r_{B}\geq  \frac{\alpha}{2} r_{B_{j}}$, we let 
\[
\cI(j)=\cI(j-1)\cup \ck{\frac{4}{\alpha}B}\]
and note that $\frac{4}{\alpha} B\supseteq B_{j}$ since $B\cap B_{j}\supseteq B\cap \alpha'B_{j}'\neq\emptyset$ and $r_{B_{J}}\leq \frac{2}{\alpha} r_{B}$ by assumption.
\item If $r_{B}<\frac{\alpha}{2} r_{B_{j}}$ for all $B\in \cI_{j}$, we let 
\[\cI(j)=\cI(j-1)\cup \{B_{j}\}.\]
Note that in this case, $B\subseteq B_{j}$ for all $B\in \cI_{j}$.
\end{enumerate}
We let $\cI'=\bigcup \cI(j)$. In this way, every $B_{j}$ is contained in a ball from $\cI'$, that is,
\[
\bigcup_{j\in \cX^{\lambda}}B_{j} \subseteq \bigcup_{B\in \cI'}B.\]
For $i=1,2$, let $\cI_{i}'$ be those balls in $\cI_{i}$ added in case $i$ and let $\cX^{\lambda,2}$ be those $j$ for which case 2 happened. Since the $B_{j}$ are disjoint, and $B_{j}\supseteq \alpha'B_{j}'\cap F_{2}$,
\begin{multline}\label{e:sumrb1}
\sum_{B\in \cI} r_{B}^{d}\geq \sum_{j\in \cX^{\lambda,2} }\sum_{B\in \cI(j)} r_{B}^{d}
\gec \sum_{j\in \cX^{\lambda,2} }  \cH^{d}_{\infty} \ps{\alpha'B_{j}'\cap F_{2}}
{\gec}_{\alpha}  \sum_{j\in \cX^{\lambda,2} }  r_{B_{j}}^{d}\\
\gec \sum_{B\in \cI_{2}'} r_{B}^{d}.
\end{multline}
Also,
\begin{equation}\label{e:sumrb2}
\sum_{B\in \cI}  r_{B}^{d} \geq \ps{\frac{\alpha}{4}}^{d} \sum_{B\in \cI_{1}'} r_{B}^{d}\end{equation}
and hence
\begin{equation}\label{e:sumrb3}
\cH^{d}_{\infty} (A) \stackrel{\eqn{hda}}{\gec} \sum_{B\in \cI} r_{B}^{d} \stackrel{\eqn{sumrb1}\atop \eqn{sumrb2}}{\gec} \sum_{B\in \cI'} r_{B}^{d}
\geq \cH^{d}_{\infty}\ps{\bigcup_{j\in \cX^{\lambda}} B_{j}}.
\end{equation}

Let 
\[
A'=\ck{x\in F_{1}:\sum_{j\in \cX}\one_{B_{j}}f(z_{j})>\lambda}.\]
If $x \in A'$, since the $B_{j}$ are disjoint for $j\in \cX$, there is $x\in B_{j}$ for some $j\in \cX^{\lambda}$, and so $x\in \bigcup_{j\in \cX^{\lambda}}B_{j} $, hence
\[
\cH^{d}_{\infty}(A')
\leq  \cH^{d}_{\infty}\ps{\bigcup_{j\in \cX^{\lambda}} B_{j}}
\stackrel{\eqn{sumrb3}}{\lec} \cH^{d}_{\infty} (A)\]
and this finishes the proof of \eqn{doubling}. Hence \eqn{1<4} follows by integrating \eqn{doubling}.

\end{proof}

\def\cprime{$'$}

\end{document}